\newenvironment{proof of}[1]{\emph{Proof of #1.}}{\hfill $\qquad \square$\par}
\newcommand{\CP}{\textrm{CP}}
\newcommand{\Pos}{\textrm{Pos}}
\DeclareMathOperator{\RM}{\mathcal{RM}}
\DeclareMathOperator{\LM}{\mathcal{LM}}
\DeclareMathOperator{\M}{\mathcal{M}}
\DeclareMathOperator{\id}{id}
\DeclareMathOperator{\Aut}{Aut}
\DeclareMathOperator{\clsp}{\overline{span}}
\DeclareMathOperator{\DR}{\mathcal{DR}}
\DeclareMathOperator{\FR}{\mathcal{FR}}
\DeclareMathOperator{\End}{End}
\DeclareMathOperator{\CE}{\bold{E}}
\newcommand{\HH}{\mathcal H}
\newcommand{\K}{\mathcal K}
\newcommand{\VV}{\mathcal V}
\newcommand{\F}{\mathbb F}
\newcommand{\LL}{\mathcal{L}}
\newcommand{\LLL}{\mathbb{L}}
\newcommand{\KK}{\mathcal K}
\newcommand{\al}{\alpha}
\newcommand{\FF}{\mathcal F}
\newcommand{\OO}{\mathcal O}
\newcommand{\A}{\mathcal{NT}(X)}
\renewcommand{\P}{\mathcal{P}}
\newcommand{\B}{\mathcal B}
\newcommand{\C}{\mathbb C}
\newcommand{\N}{\mathbb N}
\newcommand{\TT}{\mathcal T}
\renewcommand{\LL}{\mathcal L}
\newcommand{\NT}{\mathcal{NT}}
\newcommand{\NN}{\mathcal{N}}
\numberwithin{equation}{section}
\newtheorem{thm}{Theorem}[section]\newtheorem{lem}[thm]{Lemma}
\newtheorem{prop}[thm]{Proposition}
\newtheorem{cor}[thm]{Corollary}
\theoremstyle{definition}
\newtheorem{defn}[thm]{Definition}
\newtheorem{ex}[thm]{Example}
\newtheorem{assump}[thm]{Standing assumptions}
\newtheorem{rem}[thm]{Remark}
\title[Nica-Toeplitz algebras over right LCM semigroups]{Nica-Toeplitz algebras associated with\\
 product systems over right LCM semigroups}
\author{Bartosz K.  Kwa\'sniewski}
 \email{bartoszk@math.uwb.edu.pl}
 \address{Institute of Mathematics,  University  of Bia\l ystok,
ul. K. Cio\l kowskiego 1M, 15-245 Bia\-\l ystok, Poland}
\author{Nadia S.  Larsen}
\email{nadiasl@math.uio.no}
\address{Department of Mathematics, University of Oslo, PO Box 1053 Blindern, 0316 Oslo, Norway }
\date{15 June 2017. Revised 17 November 2017 and 18 September 2018.}
\begin{document}

\begin{abstract}
  We prove uniqueness of representations of Nica-Toeplitz algebras associated to product systems of $C^*$-correspondences over right LCM semigroups by applying our previous abstract uniqueness results developed for $C^*$-precategories. Our results provide an interpretation of  conditions identified in work of Fowler and Fowler-Raeburn, and apply also to their crossed product twisted by a product system, in the new context of right LCM semigroups, as well as to a new, Doplicher-Roberts type $C^*$-algebra associated to the Nica-Toeplitz algebra. As a derived construction we develop Nica-Toeplitz crossed products by actions with completely positive maps. This provides a unified framework for Nica-Toeplitz semigroup crossed products by endomorphisms and by transfer operators. We illustrate these two classes of examples with semigroup $C^*$-algebras of right and left semidirect products.
\end{abstract}

\maketitle

 \setcounter{tocdepth}{1}

\section*{Introduction}

Product systems of $C^*$-correspondences were introduced by Fowler following ideas of Arveson. As spelled out in \cite{F99},  Fowler's construction  served as motivation for his investigation with Raeburn into uniqueness theorems for  $C^*$-algebras arising as certain twisted crossed products over positive cones in quasi-lattice ordered groups, \cite{FR}. Three uniqueness theorems in this context have dominated the attention: \cite[Theorem 2.1]{Fow-Rae}, \cite[Theorem 7.2]{F99} and \cite[Theorem 5.1]{FR}. All three give a necessary condition for faithfulness of a representation $\pi$ of a Toeplitz-type $C^*$-algebra $\TT_X$ or $\NT(X)$, where $X$ is  generic notation for a single $C^*$-correspondence or a product system of such over a semigroup, and $\pi$ arises from a representation of $X$.

Two aspects are striking here: the first is that the necessary condition, to which we choose to refer as
 \emph{condition (C)} - for compression or for Coburn, who proved the archetypical result of this form - is only sufficient when  the left action in each correspondence is by generalized compacts. The second is that, in the aforementioned results on product systems, an auxiliary $C^*$-algebra is involved. It has the structural appearance of a crossed product twisted by a product system and  a somewhat  unaccountable involvement in the uniqueness of representations of  $\NT(X)$.

 The first main point we make in the present paper is that there is another $C^*$-algebra for which uniqueness of representations coming from $X$ is precisely encoded, as a \emph{necessary and sufficient condition}, by condition (C). This $C^*$-algebra, which we generically denote $\mathcal{DR}(\NT(X))$,  bears the flavor of a Doplicher-Roberts algebra for $\NT(X)$. The second point we make is that uniqueness of a representation $\pi$ of $\NT(X)$ can, in good situations, be precisely encoded by a weaker condition than (C) which we call \emph{Toeplitz covariance}. The third point we make is that the strategy for proving these results relies on our previous work on $C^*$-precategories developed in \cite{kwa-larI}, and as a very satisfactory bonus provides a clear picture of how $\NT(X)$, the Fowler-Raeburn crossed product twisted by a product system and $\mathcal{DR}(\NT(X))$ are included in each other, respectively, and how uniqueness of representations on $\mathcal{DR}(\NT(X))$ sieves down to corresponding results on the smaller subalgebras.  Together, these three uniqueness results offer a different picture of endeavors by many hands over several decades.
In addition we extend these results beyond the scope of
quasi-lattice ordered pairs, which is non-trivial as LCM semigroups allow invertible elements and might not be cancellative, cf. \cite{kwa-larI}.

 As an application of our uniqueness  results we define a Nica-Toeplitz crossed product for a dynamical system involving a semigroup action of completely positive maps on a $C^*$-algebra. For a single completely positive map, a similar construction was proposed by the first named author in \cite{kwa-exel}. In this new setup of  semigroup actions by completely positive maps our construction models Toeplitz-type crossed products in two important contexts: actions by endomorphisms, see e.g. \cite{F99} (where the assumptions on the acting semigroup and the conventions on covariance are different), and actions by transfer operators, see e.g.  \cite{Larsen} where the acting semigroup is abelian. We formulate uniqueness theorems for our crossed products, and illustrate the two classes of actions with semigroup $C^*$-algebras as in Li \cite{Li}, through the perspective of algebraic dynamical systems developed by the second named author in collaboration with Brownlowe and  Stammeier \cite{bls2}. The left-semidirect product  semigroup $C^*$-algebras coming from \cite{bls2} will serve to motivate crossed products by transfer operators, and, somehow  unexpectedly though in hindsight not that surprisingly, right semidirect product semigroup $C^*$-algebras will motivate crossed products by endomorphisms.

 The paper is organized as follows. In a preliminaries section we review briefly the basics of $C^*$-correspondences and product systems of these, after which we collect the main ingredients needed about $C^*$-precategories and their $C^*$-algebras from \cite{kwa-larI}. In section \ref{technical subsection} we associate a $C^*$-precategory to a product system $X$ of $C^*$-correspondences over a right LCM semigroup $P$ and in  section \ref{product systems over LCMs} we use it to construct a Doplicher-Roberts version $\DR(\NT(X))$ and a reduced version of $\NT(X)$. We introduce conditions under which representations of $X$ give rise to faithful representations of the core subalgebras of $\NT(X)$ and  $\DR(\NT(X))$. In subsection \ref{subsect:uniqueness theorems} we prove uniqueness results for $\NT(X)$ and  $\DR(\NT(X))$ and discuss some implications. In subsection \ref{Fowler-Raeburn section} we extend this discussion   by introducing   $C^*$-algebras $\FR(X)$   generalizing semigroup $C^*$-algebras twisted by product systems studied by Fowler and Raeburn, see \cite{FR}, \cite{F99}. In section~\ref{section:NT-cp-ccp-maps}  we introduce Nica-Toeplitz crossed products of a $C^*$-algebra by the action of a right LCM semigroup of completely positive maps, and prove uniqueness results for the two major types of examples, crossed products by endomorphisms and by transfer operators. Finally, in section~\ref{section:semigroupCstar alg} we show that the  Nica-Toeplitz crossed products by endomorphisms  and by transfer operators  can be perfectly embodied by semigroup $C^*$-algebras associated to right and left semidirect products of semigroups, respectively. By specializing the uniqueness results to these contexts we generalize and complement  earlier results from \cite{LR} and \cite{bls2}.

While we were laying the last hand on this article, another paper appeared  \cite{Fle} in which Fletcher takes on clarifying the uniqueness result \cite[Theorem 7.2]{F99} for $\NT(X)$ in the context of quasi-lattice ordered pairs.

\subsection{Acknowledgements}

The research leading to these
results has received funding from the European Union's
Seventh Framework Programme (FP7/2007-2013) under grant agreement number 621724. B.K. was  partially supported by the NCN (National Centre of Science) grant number 2014/14/E/ST1/00525. Part of the work was carried out when B.K.  participated in the Simons Semester at IMPAN - Fundation grant 346300 and the Polish Government MNiSW 2015-2019 matching fund,  the participation of both  authors in the program "Classification of operator algebras: complexity, rigidity, and dynamics" at the Mittag-Leffler Institute (Sweden) in January 2016.

\section{Preliminaries}

\subsection{LCM semigroups} We refer to \cite{BRRW} and \cite{bls} and the references therein for basic facts about right LCM semigroups. All semigroups considered in this paper will have an identity $e$.  We let $P^*$ be the group of \emph{units}, or invertible elements, in $P$. A \emph{principal right ideal} in $P$ is a right ideal in $P$ of the form $pP=\{ps: s\in P\}$ for some $p\in P$.
The relation of inclusion on the principal right ideals induces a left invariant \emph{preorder} on $P$ given by $p \leq q$ when $qP\subseteq  pP$. Clearly $\leq$ is a partial order if and only if $P^*=\{e\}$.

 A semigroup $P$ is a \emph{right LCM semigroup} if  the family $\{pP\}_{p\in P}$ of principal right ideals extended by the empty set  is closed under intersections, that is if for every pair of elements
$p, q\in P$ we have $pP\cap qP=\emptyset$ or $pP\cap qP=rP$ for some $r\in P$.
In the case that $pP\cap qP=rP$, the element $r$ is a \emph{right least common multiple (LCM)}  of $p$ and $q$.
 If $P$ is a right LCM semigroup then  we  refer to
$
J(P):=\{pP\}_{p\in P}\cup\{\emptyset\}
$ as the \emph{semilattice of principal right ideals} of $P$. Right LCMs in a right LCM semigroup are determined  up to  multiplication from the right by an invertible element. Namely, if  $pP\cap qP=rP$, then  $pP\cap qP=tP$ if and only if there is $h\in P^*$ such that $t=rh$.

\begin{ex} \textnormal{(a)} One of the most known and studied examples of right LCM semigroups are positive cones in quasi-lattice ordered groups, introduced by Nica \cite{N}. In fact, $P$ is  a  positive cone   in a weakly quasi-lattice ordered group $(G,P)$ if and only if $P$ is an  LCM subsemigroup of a group $G$ such that $P^*=\{e\}$.

\textnormal{(b)} In semigroup theory, notions similar to right LCM have been known for some time, see e.g. \cite{Law2}. New large classes of right LCM semigroups with relevance to $C^*$-algebraic context were identified in \cite{BRRW}.  Semidirect product semigroups which are right LCM semigroups were studied in \cite{bls, bls2}. More on this in section~\ref{section:semigroupCstar alg}.
\end{ex}

We recall from \cite[Definition 2.4]{kwa-larI} that a \emph{controlled map of right LCM semigroups} is an identity preserving homomorphism $\theta:P\to \P$ between right LCM semigroups $P,\P$ such that $\theta(P^*)=\P^*$ and for all $s,t\in P$ with $sP\cap tP =rP$ we have
$
\theta(s)\P\cap \theta(t)\P = \theta(r)\P
$
and
$
\theta(s)=\theta(t)$ only if   $s=t$.

\begin{ex}
Let $P_i$, $i\in I$, be a family of right LCM semigroups. Put $P:=\prod^\ast_{i\in I} P_i$  and $\P:=\bigoplus_{i\in I} P_i$. The homomorphism $\theta:P\to \P$  which is the identity on each $P_i$, $i\in I$, is a controlled map of right LCM semigroups, by \cite[Proposition 2.3]{kwa-larI}.
\end{ex}
\subsection{$C^*$-correspondences and product systems}

The notion of a $C^*$-correspondence $X$ over a $C^*$-algebra $A$ and its associated Toeplitz algebra $\TT(X)$ are standard, and we refer to  \cite{Pim, KPW, Fow-Rae} for details. We recall from 	\cite{F99}\label{Fowler's definition} that a \emph{product system} over a semigroup $P$ with coefficients in a $C^*$-algebra $A$ is a semigroup $X= \bigsqcup_{p\in P}X_{p}$, with each $X_p$ a $C^*$-correspondence over $A$,
equipped with a semigroup homomorphism $d\colon X \to P$ such that $X_p = d^{-1}(p)$ is a $C^*$-correspondence over $A$ for each $p\in P$,  $X_e$ is the standard bimodule $_AA_A$, and the multiplication on $X$ extends to isomorphisms $X_p \otimes_A X_q \cong X_{pq}$
for $p,q \in P \setminus \{e\}$ and coincides with the right and left actions of $X_e = A$ on each $X_p$.
For each $p\in P$ we write $\langle\cdot,\cdot\rangle$ for the $A$-valued
inner product on $X_p$  and we denote $\phi_p$ the homomorphism from $A$ into $\LL(X_p)$
 which implements the left action of $A$ on $X_p$.

A \emph{Hilbert $A$-bimodule} is a $C^*$-correspondence which is also a left Hilbert module such that
  \({}_A\langle x,y\rangle\cdot z = x\cdot\langle y, z\rangle_A\)
  for all \(x,y,z\in X\). An \emph{equivalence $A$-bimodule} is a Hilbert bimodule which is full over left and right.
We say that two Hilbert $A$-bimodules $X$, $Y$ are Morita equivalent if there is an equivalence $A$-bimodule $E$	such that $X\otimes_A E\cong Y\otimes_A E$.

\begin{rem}\label{rem:on essentiality and Fell bundles} A product system $X$ is (left) \emph{essential} if each $C^*$-correspondence $X_p$, $p\in P$, is essential. We claim that $X$ is automatically  essential whenever the group $P^*$ of units in $P$  is non-trivial. Indeed, for any $h\in P^*\setminus\{e\}$ and $p\in P$ we have natural isomorphisms
$$
X_p= X_{hh^{-1}p}\cong X_h \otimes_A X_{h^-1}\otimes_A X_p \cong X_e \otimes_A  X_p \cong \phi_p(A)X_p
$$
that give $X_p=\phi_p(A)X_p$. Moreover,  isomorphisms $X_h \otimes_A X_{h^{-1}}\cong A_A$ and $X_{h^{-1}} \otimes_A X_{h}\cong A_A$ imply that $X_h$ and $X_{h^{-1}}$ are mutually adjoint Hilbert bimodules, i.e. there is  an  antilinear isometric bijection $\flat_h:X_{h}\to X_{h^{-1}}$ such that $\flat_h(ab)=\flat_h(b)a$ and $\flat_h(ba)=a\flat_h(b)$ for all $a\in A$ and $b\in X_h$, cf. \cite[Remark 6.2]{bls2}. In particular, the family  of Banach spaces $\{X_{h}\}_{h\in P^*}$ together with multiplication inherited from $X$ and involution defined by
$b^*:=   \flat_h(b)$, for $b\in X_{h}$, $h\in P^*$, is a saturated Fell bundle over the (discrete) group $P^*$, cf. \cite{exel-book}.

\end{rem}
 Given a product system $X$ and $p, q \in P$ with $p \not= e$, there is a homomorphism
$\iota^{pq}_p \colon \LL(X_p) \to \LL(X_{pq})$ characterized by
\begin{equation}\label{iotapq def}
\iota^{pq}_p(S)(xy) = (Sx)y\text{ for all $x \in X_p$, $y \in
X_{q}$ and $S \in \LL(X_p)$.}
\end{equation}
For each $p\in P$, $\K(A,X_p)$ is a $C^*$-correspondence with $A$-valued inner product $\langle T,S\rangle_A=T^*S$
and pointwise  actions. In fact, see \cite[Lemma 2.32]{RaeWill}, there is a $C^*$-correspondence isomorphism  $X_p\cong \K(A,X_p)$ implemented by the  map
\begin{equation}\label{C-correspondence isomorphism}
X_p\ni x\mapsto t_x \in \K(A,X_p) \qquad \textrm{ where } t_x(a)=x\cdot a.
\end{equation}
One defines $\iota^p_e \colon \K(X_e)\to
\LL(X_{p})$ by letting $\iota^p_e(t_a)=\phi_p(a)$ for  $p\in P$, $a\in A$, see \cite[Section 2.2]{SY}.

A \emph{representation of the product system} $X$ in a $C^*$-algebra $B$ is a semigroup homomorphism $\psi:X\to B$, where $B$ is viewed as a semigroup with multiplication, such that
$(\psi_e,\psi_p)$  is a  representation of the $C^*$-correspondence   $X_p$,  for all  $p\in P$,
where we put $\psi_p:=\psi|_{X_p}$ for all $p\in P$. The Toeplitz algebra $\TT(X)$ is the $C^*$-algebra generated by a  universal  representation of $X$.

In the case of a quasi-lattice ordered pair $(G, P)$, Fowler introduced in \cite{F99} the notions of compactly aligned product system over $P$ and Nica covariant representation of it. In \cite{bls2}, these concepts were extended to the case when $P$ is a right LCM semigroup. Given a right LCM  semigroup $P$, a product system
$X$ over $P$ is called \emph{compactly aligned} if for all $p,q\in P$ such that there is a right LCM $r$ for $p,q$, then  $\iota^{r}_p(S) \iota^{r}_q(T) \in \KK(X_{r})$
whenever $S \in \KK(X_p)$ and $T \in \KK(X_q)$. Assume $X$ is a compactly aligned product system over $P$ and let $\psi$ be a  representation of $X$ in a $C^*$-algebra. For each $p\in P$, denote  $\psi^{(p)}$ the Pimsner $*$-homomorphism defined on $\KK(X_p)$ by $\psi^{(p)}(\Theta_{x,y})=
\psi_p(x)\psi_p(y)^*$ for $x,y\in X_p$. Then $\psi$ is \emph{Nica covariant} if
$$
\displaystyle \psi^{(p)}(S)\psi^{(q)}(T) =
\begin{cases}
\psi^{(r)}\big(\iota^{r}_p(S)\iota^{r}_q(T)\big)
& \text{if $pP\cap qP=rP$} \\
0 &\text{otherwise}
\end{cases}
$$
for all $S \in \KK(X_p)$ and $T \in \KK(X_q)$ (see also
\cite[Definition 5.7]{F99}). The \emph{Nica Toeplitz algebra}  $\NT(X)$ is the  $C^*$-algebra generated by a Nica covariant representation $i_X$ which is  universal in the following sense: if
$\psi$ is a Nica covariant Toeplitz representation of $X$ in
$B$ there is a $*$-homomorphism $\psi_* : \NT(X) \to B$ such
that $\psi_*\circ i_X=\psi.$

\subsection{$C^*$-precategories}

$C^*$-precategories should be regarded as  non-unital versions of $C^*$-categories, cf. \cite{glr}, \cite{dr}. We give here a very brief account, for more details and background material on $C^*$-precategories, see \cite{kwa-doplicher}, \cite{kwa-larI}.

Recall that a \emph{$C^*$-precategory} $\LL$  with object set  $P$ is identified with
a collection of Banach spaces  $\{\LL(p,q)\}_{p,q\in P}$, viewed as morphisms, equipped  with bilinear maps, viewed as composition of morphisms,
$
\LL(p,q)\times \LL(q,r)\ni(a, b)\mapsto ab\in \LL(p,r)$, $p,q,r\in P,
$
satisfying $\|ab\|\leq \|a\|\cdot \|b\|$, and  an antilinear involutive contravariant mapping  $^*:\LL\to\LL$ such that if $a \in   \LL(p,q)$, then $a^* \in   \LL(q,p)$ and the $C^*$-equality $\|a^* a\|=\|a\|^2$ holds. In particular, $\LL(p,p)$ is naturally  a $C^*$-algebra, and  we require that for every $a \in   \LL(q,p)$ the element $a^*a$ is positive in the $C^*$-algebra $\LL(p,p)$.

 An \emph{ideal in a $C^*$-precategory} $\LL$ is a collection $\KK=\{\KK(p,q)\}_{p,q\in P}$  of closed linear subspaces $\KK(p,q)$ of $\LL(p,q)$,  $ p,q \in P$,  such that
$$
  \LL(p,q)\KK(q,r) \subseteq \KK(p,r)\quad \textrm{ and } \quad \KK(p,q)\LL(q,r)\, \subseteq \KK(p,r),
  $$
  for all $p,q,r\in P$. Then $\KK$  is automatically selfadjoint and hence  a $C^*$-precategory. An ideal $\KK$  in $\LL$ is uniquely determined by the $C^*$-algebras  $\{\KK(p,p)\}_{p\in P}$, which are in fact ideals in the corresponding $C^*$-algebras $\{\LL(p,p)\}_{p\in P}$. We say that   $\KK$ is an \emph{essential ideal}  in $\LL$  if $\KK(p,p)$ is an essential ideal in  $\LL(p,p)$, for every $p\in P$.

A \emph{representation $\Psi:\LL \to B$ of a $C^*$-precategory} $\LL$ in
a $C^*$-algebra $B$ is a family  $\Psi=\{ \Psi_{p,q}\}_{p,q\in P}$
of linear operators  $\Psi_{p,q}:\LL(p,q)\to B$ such that
$$
\Psi_{p,q}(a)^*=\Psi_{q,p}(a^*), \quad \textrm{ and } \quad  \Psi_{p,r}(ab)=\Psi_{p,q}(a)\Psi_{q,r}(b),
$$
for  all $a\in \LL(p,q)$, $b\in \LL(q,r)$. Then automatically all the maps $\Psi_{p,q}$, $p,q\in P$, are contractions, and they all are isometries if and only if all the maps $\Psi_{p,p}$, $p\in P$, are injective. In the latter case we say that  $\Psi$ is \emph{injective}.
We denote by  $C^*(\Psi(\LL))$ the $C^*$-algebra  generated by the spaces $\Psi(\LL(p,q))$, $p,q\in P$.
A \emph{representation} $\Psi$ of $\LL$ \emph{on a Hilbert space} $H$  is a representation of  $\LL$ in the $C^*$-algebra $\B(H)$ of all bounded operators on $H$. If in addition  $C^*(\Psi(\LL))H=H$ we say that the representation $\Psi$ is \emph{nondegenerate}.

If $\KK$ is an ideal in a $C^*$-precategory $\LL$ and  $\Psi=\{ \Psi_{p,q}\}_{p,q\in P}$ is  a representation of $\KK$ on a Hilbert space $H$, then there is  a unique extension $\overline{\Psi}=\{\overline{\Psi}_{p,q}\}_{p,q\in P}$ of  $\Psi$  to a representation  of  $\LL$   such that the essential subspace of $\overline{\Psi}_{p,q}$ is  contained in the essential subspace of $\Psi_{p,q}$, for every $p,q\in P$.  Namely, we have
\begin{equation}\label{formula defining extensions of right tensor representations}
\overline{\Psi}_{p,q}(a)(\KK(q,q) H)^\bot =0,\quad \text{ and }\quad  \overline{\Psi}_{p,q}(a) \Psi_{q,q}(b)h = \Psi_{p,q}(ab)h
\end{equation}
for all $ a\in \LL(p,q)$, $b \in \KK(q,q)$, $h \in H$.
Moreover, $\overline{\Psi}$ is injective if and only if $\Psi$ is injective and $\KK$ is an essential ideal in $\LL$.

\subsection{Right tensor $C^*$-precategories and their $C^*$-algebras}

We recall the basic definitions and facts from \cite[Section 3]{kwa-larI}.  A \emph{right-tensor $C^*$-precategory} is a  $C^*$-precategory $\LL=\{\LL(p,q)\}_{p,q\in P}$ whose objects form a  semigroup $P$ with identity $e$ and which is equipped with a semigroup $\{\otimes 1_r\}_{r\in P}$ of endomorphisms of $\LL$   sending  $p$ to $pr$, for all $p,r\in P$, and $\otimes 1_e=id$.  More precisely, we have linear operators $\LL(p,q)\in a \mapsto a\otimes 1_r \in \LL(pr,qr)$ such that for each $ a\in \LL(p,q)$, $ b\in \LL(q,s)$,  and $p,q,r, s\in P$ we have
$$
 ((a\otimes 1_r)\otimes 1_s) = a\otimes 1_{rs},
 \qquad
(a\otimes 1_r)^*=a^*\otimes 1_r,\qquad   (a \otimes 1_r)  (b\otimes 1_r)= (ab)\otimes 1_r.
$$
  We refer to $\{\otimes 1_r\}_{r\in P}$  as to a \emph{right tensoring} on $\LL=\{\LL(p,q)\}_{p,q\in P}$.
	
If $\KK$  is an ideal in a right-tensor $C^*$-precategory $\LL$, we say that $\KK$ is $\otimes 1$-\emph{invariant}, and write $\KK\otimes 1\subseteq \KK$,  if  $\KK(p,p)\otimes 1_r \subseteq \KK(pr,pr)$ for all $p,r\in P$. One can show that
 $\KK\otimes 1\subseteq \KK$ if and only if $\KK(p,q)\otimes 1_r \subseteq \KK(pr,qr)$ for all $p,q,r\in P$.	Right tensor representations and the corresponding Toeplitz
	algebras are defined for all ideals, $\otimes 1$-invariant or not, in some $\LL$.

Let $\KK$ be an ideal in a right-tensor $C^*$-precategory $\LL$. We say that a representation $\Psi:\KK\to B$ of  $\KK$  in a $C^*$-algebra $B$ is a \emph{right-tensor  representation} if for all
 $a\in \KK(p,q)$, $b\in   \KK(s,t) $ such that $sP\subseteq qP$ we have
\begin{equation}\label{right tensor representation condition}
\Psi(a)\Psi(b)
=
\Psi \left((a \otimes 1_{q^{-1}s}) b\right).
\end{equation}
Note that, since $\KK$ is an ideal,  the right hand side  of \eqref{right tensor representation condition}  makes sense. One can show there is an injective right-tensor representation $t_{\KK}: \KK \to \TT_{\LL}(\KK) $ with the universal property that
 for every right-tensor  representation $\Psi$ of $\KK$ there is a homomorphism  $\Psi\times P$
of $\TT_{\LL}(\KK)$ such that $\Psi\times P\circ t_{\KK} =\Psi$, and
 $\TT_{\LL}(\KK)=C^*(t_{\KK}(\KK))$. We call  $\TT_{\LL}(\KK)$  the  \emph{Toeplitz algebra} of $\KK$. We write $\TT(\LL)$ for the Toeplitz algebra $\TT_{\LL}(\LL)
$ associated to $\LL$, viewed as an  ideal in itself.

If the underlying semigroup is right LCM, then for  well-aligned ideals we can make sense  of a condition of Nica type, which is stronger than \eqref{right tensor representation condition}.

Let $(\LL, \{\otimes 1_r\}_{r\in P})$ be a right-tensor  $C^*$-precategory  over a right LCM semigroup $P$. An ideal $\KK$ in $\LL$ is \emph{well-aligned} in $\LL$ if   for all $a\in \KK(p,p)$, $b\in \KK(q,q) $ we have
\begin{equation}\label{compact alignment relation}
(a\otimes 1_{p^{-1}r}) (b\otimes 1_{q^{-1}r}) \in \KK(r,r)\qquad \textrm{whenever}\quad pP\cap qP=rP.
 \end{equation}
 By \cite[Lemma 3.7]{kwa-larI}, for any  ideal
$\KK$ condition \eqref{compact alignment relation} implies the formally stronger condition that for every
 $a\in \KK(p,q)$, $b\in   \KK(s,t) $ we have
\begin{equation}\label{compact alignment relation2}
(a\otimes 1_{q^{-1}r}) (b\otimes 1_{s^{-1}r}) \in \KK(pq^{-1}r,ts^{-1}r)\qquad \textrm{whenever}\quad qP\cap sP=rP.
 \end{equation}
\begin{assump}
 For every well-aligned ideal $\KK$ in $\LL$, in this paper   we will also assume the following condition
\begin{itemize}
\item $\KK$  is $\otimes 1$-\emph{nondegenerate}, \cite[Definition 9.6]{kwa-larI} that is
\begin{equation}\label{non-degeneracy condition}
(\KK(p,p)\otimes 1_{r})\KK(pr,pr)=\KK(pr,pr)\textrm{ for every } p\in P\setminus{P^*} \textrm{ and } r\in P.
\end{equation}
\item $\KK$ satisfies  condition (7.6)  in \cite[Proposition 7.6]{kwa-larI} for $t=e$, that is
\begin{equation}\label{condition for reducing Fock-reps}
\overline{\KK(p,e)\KK(e,p)}\text{ is an  essential  ideal in the $C^*$-algebra } \LL(p,p), \textrm{ for every } p\in P.
\end{equation}
\end{itemize}
These conditions will be satisfied by right-tensor $C^*$-precategories arising from product systems.
\end{assump}

\subsection{Nica-Toeplitz algebras associated with right-tensor $C^*$-precategories}
Let us fix a right-tensor $C^*$-precategory     $(\LL, \{\otimes 1_r\}_{r\in P})$   over an LCM semigroup $P$, and   a well-aligned ideal $\KK$ in $\LL$.
A representation $\Psi:\KK\to B$ of  $\KK$  in a $C^*$-algebra $B$ is \emph{Nica covariant} if for all
 $a\in \KK(p,q)$, $b\in   \KK(s,t) $ we have
\begin{equation}\label{Nica covariance}
\Psi(a)\Psi(b)
=\begin{cases}
\Psi \left((a \otimes 1_{q^{-1}r}) (b\otimes 1_{s^{-1}r})\right)  & \textrm{ if } qP\cap sP=rP \textrm{ for some } r\in P,
\\
0 & \textrm{ otherwise}.
\end{cases}
 \end{equation}
Note that by \eqref{compact alignment relation2}  the right hand side of \eqref{Nica covariance} makes sense. By \cite{kwa-larI} there is an injective   Nica covariant representation $i_{\KK}: \KK \to \NT_{\LL}(\KK) $ with the universal property:
 for every Nica covariant  representation $\Psi$ of $\KK$ there is a homomorphism  $\Psi\rtimes P$
of $\NT_{\LL}(\KK)$ such that $\Psi\rtimes P\circ i_{\KK} =\Psi$, and
 $\NT_{\LL}(\KK)=C^*(i_{\KK}(\KK))$. We call $\NT_{\LL}(\KK)$  the \emph{Nica-Toeplitz algebra} of $\KK$. We write $\NT(\LL)$ for the Nica-Toeplitz algebra $\NT_{\LL}(\LL)
$ associated to $\LL$, viewed as a well-aligned ideal in itself (in particular, in this paper we assume that $\LL$ satisfies the analogue of \eqref{condition for reducing Fock-reps}). By \eqref{non-degeneracy condition} and \cite[Lemma 11.1]{kwa-larI} we have a natural embedding
$$\NT_{\LL}(\KK)\hookrightarrow \NT(\LL).
$$

 The Fock representation  of $\KK$ constructed in  \cite{kwa-larI} is a direct sum  of Nica covariant representations. By \cite[Proposition 7.6]{kwa-larI} and  \eqref{condition for reducing Fock-reps},  here we may  use the $e$-th summand of it. We recall the relevant construction.
 For  $s\in P$, the space $X_{s}:=\KK(s,e)$ is naturally equipped with the  structure of a right Hilbert module over $A:=\KK(e,e)$ inherited from $C^*$-precategory structure of $\KK$: we put
$
 x \cdot a:=xa$, $\langle x, y\rangle:=x^*y$, for    $x,y \in X_{s}$,  $a\in A$.
Thus we may consider the  direct sum  Hilbert $A$-module:
$
\FF_{\KK}:=\bigoplus_{s\in P} X_{s}.
$
By \cite[Remark 4.3 and Proposition 5.2]{kwa-larI} we have an injective Nica covariant representation  $\LLL:\KK\to \LL(\FF_{\KK})$, there denoted $T^e$,  determined by
\begin{equation}\label{Toeplitz representation definition}
\LLL_{p,q}(a)x =\begin{cases}
(a \otimes 1_{q^{-1}s})  x & \textrm{ if } s\in   qP ,
\\
0 & \textrm{ otherwise},
\end{cases}
\end{equation}
for $a \in \KK(p,q)$, $x\in X_{s}$ and $p,q,s\in P$.
We call   $\LLL$ given by \eqref{Toeplitz representation definition} the \emph{Fock representation} of $\KK$. The \emph{reduced Nica-Toeplitz algebra} of $\KK$  is the $C^*$-algebra $
\NT^{r}_{\LL}(\KK):=C^*(\LLL(\KK))$.  When $\KK=\LL$, we also write
$
\NT^{r}(\LL):=\NT^{r}_{\LL}(\LL).
$
By  \eqref{condition for reducing Fock-reps} and \cite[Proposition 7.6]{kwa-larI}, the $C^*$-algebra $\NT^{r}_{\LL}(\KK)$ defined above is naturally isomorphic to the one introduced in \cite[Definition 5.3]{kwa-larI}. Hence  the two definitions are consistent.
We refer to
 $\LLL \rtimes P:\NT_\LL(\KK)\to \NT^{r}_{\LL}(\KK)$ as  \emph{the regular representation} of $\NT_{\LL}(\KK)$. We say that $\KK$ is \emph{amenable} when $\LLL \rtimes P$ is an isomorphism. A number of amenability criteria are given in \cite[Section 8]{kwa-larI}.
\section{$C^*$-algebras associated to product systems}\label{section:Cstar-algebras-product-systems}

In this section we construct and analyze a canonical right-tensor $C^*$-precategory associated to an arbitrary product system $X$. We employ it  to prove the announced uniqueness results.

\subsection{Right tensor $C^*$-precategories associated to product systems}\label{technical subsection}
Let $X$ be  a product system  over a semigroup $P$ with coefficients in a $C^*$-algebra $A$. We will associate to $X$ a right-tensor $C^*$-precategory. In the case $P=\N$, it was constructed in \cite[Example 3.2]{kwa-doplicher}, and in the case $P$ is arbitrary, but the product system $X$ is regular, it was introduced in \cite[Definition 3.1]{kwa-szym}.   For $p,q\in P$ we put
$$
\LL_X(p,q):=\begin{cases} \LL(X_{q},X_{p}), & \text{ if }p,q\in P\setminus\{e\},
\\
\KK(X_{q},X_{p}), & \text{ otherwise}.
\end{cases}
$$
 With  operations inherited from the corresponding spaces, $\LL_X$ forms   a $C^*$-precategory.  The reason for considering smaller spaces than $\LL(X_{q},X_{p})$ when $p$ or $q$ is the unit $e$ is that in general it is not clear how to define right tensoring on such spaces, cf. Remark \ref{extending the right tensoring} below.
On the other hand, using the isomorphism \eqref{C-correspondence isomorphism}, for all $p,q\in P$, we have the following isomorphisms of $C^*$-correspondences over $A$:
$$
\LL_X(p,e)\cong X_p,\qquad \LL_X(e,q)\cong \widetilde{X}_q
$$
where $\widetilde{X}_q$ is a (left) $C^*$-correspondence dual to $X_q$. In particular, $\LL_X(e,e)=A$.

We will  describe a right tensoring structure on $\LL_X$  by introducing a family of mappings $\iota_{{p},{q}}^{{pr},{qr}}: \LL(X_{q},X_{p}) \to \LL(X_{qr},X_{pr})$, $p,q,r\in P$, which extends the standard family of diagonal homomorphisms $\iota_{q}^{qp}$, see \eqref{iotapq def}. If $q\neq e$ we put
$$
\iota^{pr,qr}_{p,q}(T)(xy):=(Tx)y,\,\, \,\,\,\,\,\textrm{ where } x\in X_q,\, y\in X_r  \textrm{ and } T\in \LL(X_{q},X_{p}).
$$
Note that under the canonical isomorphisms $ X_{qr}\cong X_q \otimes_A X_r$ and $X_{pr}\cong X_p \otimes_A X_r$, the operator
$\iota^{pr,qr}_{p,q}(T)$ corresponds to $T\otimes 1_{r}$, where $1_r$ is the identity in $\LL(X_r)$, and in particular $\iota^{pr,qr}_{p,q}(T)\in \LL(X_{qr},X_{pr})$. In the case $q=e$,
using \eqref{C-correspondence isomorphism},  the formula
$$
\iota^{pr,r}_{p,e}(t_x)(y):=xy,\,\, \,\,\,\,\,\textrm{ where }\, y\in X_r  \textrm{ and } t_x\in \KK(X_{e},X_{p}),  x\in X_p,
$$
yields a well defined map. As above, under natural identifications, the operator $\iota^{pr,r}_{p,e}(t_x)$ corresponds to $t_x\otimes 1_{r}\in \LL(X_e \otimes_A X_r, X_p \otimes_A X_r)$ and therefore $\iota^{pr,r}_{p,e}(t_x)\in \LL(X_{r},X_{pr})$.   Note that  $\iota_{p,p}^{pr,pr}=\iota_{p}^{pr}$.
\begin{prop}
 The linear maps $\iota_{{p},{q}}^{{pr},{qr}}: \LL_X(p,q) \to \LL_X(pr,qr)$, $p,q,r\in P$,
yield a right tensoring on the $C^*$-precategory $\LL_X$. We write
 $$
T\otimes 1_r:=\iota^{pr,qr}_{p,q}(T),\qquad \quad T\in\LL_X(p,q),\,\, p,q \in P.
 $$
\end{prop}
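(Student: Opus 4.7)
The plan is to verify that each map $\iota^{pr,qr}_{p,q}$ is a well-defined bounded linear operator into $\LL_X(pr,qr)$, and then to check the three right-tensoring axioms: associativity $(T\otimes 1_r)\otimes 1_s = T\otimes 1_{rs}$, compatibility with $*$, and multiplicativity $(T\otimes 1_r)(S\otimes 1_r)=(TS)\otimes 1_r$. The unit condition $\otimes 1_e=\id$ falls out automatically from $X_e=A$ and the identification $X_q\otimes_A A\cong X_q$.

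For well-definedness when $q\neq e$, I would use the canonical isomorphisms $X_{qr}\cong X_q\otimes_A X_r$ and $X_{pr}\cong X_p\otimes_A X_r$ together with the universal property of the interior tensor product: the assignment $x\otimes y\mapsto (Tx)y$ extends to a bounded adjointable operator $T\otimes 1_r\colon X_{qr}\to X_{pr}$, and this operator is compact whenever $T$ is, since tensoring a rank-one $\Theta_{x,y}$ with $1_r$ produces an operator which is a norm-limit of finite-rank operators in $\KK(X_{qr},X_{pr})$. For $q=e$, I would use the isomorphism $\KK(X_e,X_p)\cong X_p$ from \eqref{C-correspondence isomorphism} and verify directly that $y\mapsto xy$ is bounded and adjointable from $X_r$ to $X_{pr}$, with the special case $p=e$ recovering $\phi_r$ as the left action of $A$ on $X_r$.

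The delicate point is to check that $T\otimes 1_r$ lies in $\KK(X_{qr},X_{pr})$ in those boundary cases where the codomain is defined to be the compacts but the domain $\LL_X(p,q)$ is the full space of bounded adjointable operators, namely when $p,q\in P\setminus\{e\}$ but $pr=e$ or $qr=e$. Both equalities force $r$, and correspondingly $p$ or $q$, into the unit group $P^*$, so by Remark~\ref{rem:on essentiality and Fell bundles} the correspondence $X_r$ is an equivalence $A$-bimodule with mutual adjoint $X_{r^{-1}}$. Tensoring with $X_r$ is then a Morita equivalence, and since $\LL(Y,A)=\KK(Y,A)$ for any equivalence $A$-bimodule $Y$ (both being identified with the dual module), the operator $T\otimes 1_r$ is automatically compact in this situation too.

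The three axioms are then routine consequences of the bifunctoriality and associativity of the interior tensor product: $(T\otimes 1_r)\otimes 1_s$ and $T\otimes 1_{rs}$ coincide on elementary tensors and hence on all of $X_{qrs}$; adjoints satisfy $(T\otimes 1_r)^*=T^*\otimes 1_r$ because the interior tensor product is compatible with the $A$-valued inner products on both factors; and the identity $(T\otimes 1_r)(S\otimes 1_r)(x\otimes y)=(TS)(x)\otimes y$ gives multiplicativity. The main obstacle I anticipate is the bookkeeping across the four combinations of $p,q\in\{e\}$ versus $P\setminus\{e\}$ together with the corresponding cases for the codomain, and especially confirming consistency of the $q=e$ formula (written via $t_x$) with the $q\neq e$ formula (written via the tensor product) along the boundary.
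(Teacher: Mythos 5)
Your reduction of the algebraic part is the same as the paper's: the paper also verifies exactly the three identities (compatibility with $*$, multiplicativity, and $(T\otimes 1_r)\otimes 1_s=T\otimes 1_{rs}$) on elementary tensors after identifying $\iota^{pr,qr}_{p,q}(T)$ with $T\otimes 1_r$ under $X_{qr}\cong X_q\otimes_A X_r$, and your treatment of the $q=e$ case via \eqref{C-correspondence isomorphism} matches the discussion preceding the proposition. The problem is precisely the step you flag as delicate. The lemma you invoke there, that $\LL(Y,A)=\KK(Y,A)$ for every equivalence $A$-bimodule $Y$, is false for non-unital $A$: already $Y=A$ gives $\LL(A,A)=\M(A)$ while $\KK(A,A)=A$. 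Moreover the conclusion you want cannot be rescued by a different argument: if $r\in P^*$ then $X_r$ is an equivalence bimodule by Remark \ref{rem:on essentiality and Fell bundles}, and for an equivalence bimodule $E$ the map $T\mapsto T\otimes 1_E$ carries $\KK(X_q,X_p)$ \emph{onto} $\KK(X_q\otimes_A E,X_p\otimes_A E)$ (use $\Theta_{z\otimes e,\,y\otimes f}=\Theta_{z\cdot {}_A\langle e,f\rangle,\,y}\otimes 1_E$) and is injective on adjointables, so $T\otimes 1_r$ is compact if and only if $T$ is. Hence in your boundary case a non-compact $T\in\LL(X_q,X_p)$ with, say, $q\in P^*\setminus\{e\}$ and $r=q^{-1}$ yields $T\otimes 1_r\notin\KK(X_e,X_{pr})=\LL_X(pr,e)$. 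Concretely, take $P=\Z$ and $X_n=A$ for all $n$ with multiplication that of a non-unital $A$: then $\LL_X(1,1)=\LL(A)=\M(A)$, and for $m\in\M(A)\setminus A$ the operator $m\otimes 1_{-1}$ is multiplication by $m$ on $X_e=A$, which does not lie in $\KK(X_e)=A=\LL_X(e,e)$.

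Two related points. First, your auxiliary claim that $T\otimes 1_r$ is compact whenever $T$ is (argued via rank-one operators) is also false in general; it holds precisely when $\phi_r(A)\subseteq\KK(X_r)$, which is the content of Lemma \ref{non-degeneracy of K_X} (in Example \ref{ex:DR-Oinfty} the identity of $\KK(X_e)=\C$ tensors to the identity of $X_1\cong\ell^2(\N)$, which is not compact). Fortunately that claim is not needed: away from the boundary the codomain $\LL_X(pr,qr)$ consists of all adjointables. Second, you were right that the boundary membership is a genuine issue: the paper's proof only checks the three identities, and the preceding discussion only places $\iota^{pr,qr}_{p,q}(T)$ in $\LL(X_{qr},X_{pr})$, so the containment in the smaller spaces when $pr=e$ or $qr=e$ and $P^*\neq\{e\}$ is left unaddressed there; it genuinely requires an extra hypothesis (for instance $P^*=\{e\}$, $A$ unital, or restricting the tensoring to compact operators, which is all that is used on the ideal $\KK_X$). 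What you cannot do is obtain it for free, as your proposal attempts.
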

\begin{proof} It suffices to check that
$
 \iota_{{p},{q}}^{{pr},{qr}}(T)^*=\iota_{{q},{p}}^{{qr},{pr}}(T^*)$,
 $\iota_{{p},{q}}^{{pr},{qr}}(T) \iota_{{q},{s}}^{{qr},{sr}}(S)= \iota_{{p},{s}}^{{pr},{sr}}(TS)$,
 and $
  \iota_{{pr},{qr}}^{{prs},{qrs}}(\iota_{{p},{q}}^{{pr},{qr}}(T)) = \iota_{{p},{q}}^{{prs},{qrs}}(T)$,
for all $T \in \LL_X(X_{q},X_{p})$, $S \in \LL_X(X_{s},X_{q})$, $p,q,r,s\in P$. Viewing operators $\iota^{pr,qr}_{p,q}(T)$ as $T\otimes 1_{r}$, see discussion above, this is straightforward.
\end{proof}
\begin{defn}\label{right tensor C-precategory}
We call the pair $(\LL_X,\{\otimes 1_r\}_{r\in P})$ constructed above,
\emph{the right-tensor $C^*$-precategory associated to the  product system} $X$. We also put
$$
\KK_X(p,q):=\KK(X_{q},X_{p}) \qquad p,q\in P.
$$
Clearly, $\KK_X:=\{\KK_X(p,q)\}_{p,q\in P}$ is an essential  ideal in the $C^*$-precategory $\LL_X$.
\end{defn}
\begin{rem}\label{extending the right tensoring}  If each $C^*$-correspondence $X_p$, $p\in P$, is left essential (which is automatic when $P^*\neq \{e\}$) then the formula
$$
(T\otimes 1_r)(ax):=T(a)x, \qquad T\in\LL(A,X_{p}), \,\, a\in A, \,\,x \in X_r, \,\, p,r \in P,
$$
 allows one  to  extend the right tensoring from $\LL_X$ on the whole $C^*$-category $\{\LL(X_q,X_p)\}_{p,q\in P}$.
Note that $\LL_X$ is a $C^*$-category if and only if $A$ is  unital, if and only if $\LL_X=\{\LL(X_q,X_p)\}_{p,q\in P}$.
\end{rem}

 \begin{lem}\label{non-degeneracy of K_X}
The ideal $\KK_X$ in the right-tensor $C^*$-precategory $(\LL_X,\{\otimes 1_r\}_{r\in P})$  associated to the  product system $X$ is  $\otimes 1$-nondegenerate. Moreover, $\KK_X \otimes 1\subseteq \KK_X$  if and only if $\phi_p(A)\subseteq \KK(X_p)$ for every $p\in P$.
\end{lem}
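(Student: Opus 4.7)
My plan is to handle the two assertions in turn, both by reducing to the structure of the operators $T \otimes 1_r$ on $X_{pr} \cong X_p \otimes_A X_r$.

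For the $\otimes 1$-nondegeneracy, I would fix $p \in P \setminus P^*$ (so $p \neq e$, since $e \in P^*$) and $r \in P$, and take an approximate unit $\{u_\lambda\}$ in the $C^*$-algebra $\KK(X_p)$. Since $X_p$ is a right Hilbert $A$-module, the compacts act non-degenerately on $X_p$, giving $u_\lambda x \to x$ for every $x \in X_p$. Then $u_\lambda \otimes 1_r \in \LL(X_{pr})$ is a bounded net converging strongly to the identity on simple tensors, hence on all of $X_{pr}$ by density and uniform boundedness. Using that every $T \in \KK(X_{pr})$ is a norm-limit of finite-rank operators together with the elementary estimate $\|(u_\lambda \otimes 1_r)\Theta_{\xi,\eta} - \Theta_{\xi,\eta}\| \leq \|(u_\lambda \otimes 1_r)\xi - \xi\|\,\|\eta\|$, I would conclude $(u_\lambda \otimes 1_r) T \to T$ in norm, and hence $\KK(X_{pr}) \subseteq \overline{(\KK(X_p) \otimes 1_r)\KK(X_{pr})}$. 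A standard Cohen--Hewitt factorization argument then removes the closure.

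For the second assertion, I would first invoke the equivalence noted in the preliminaries: $\KK_X \otimes 1 \subseteq \KK_X$ holds iff $\KK(X_p) \otimes 1_r \subseteq \KK(X_{pr})$ for all $p,r \in P$. For the forward implication I would specialize to $p = e$: under $X_e = A$ and $X_{er} = X_r$, the operator $a \otimes 1_r$ acts as $\phi_r(a)$ on $X_r$, so $\KK(X_e) \otimes 1_r = \phi_r(A)$; the inclusion then reads $\phi_r(A) \subseteq \KK(X_r)$ for every $r \in P$, which after renaming is the desired condition (the case $p = e$ being automatic). For the converse, I would appeal to the standard criterion that for a $C^*$-correspondence $Y$ with left action $\phi_Y$, one has $\KK(X) \otimes 1_Y \subseteq \KK(X \otimes_A Y)$ whenever $\phi_Y(A) \subseteq \KK(Y)$; applied with $X = X_p$ and $Y = X_r$, this yields $\KK(X_p) \otimes 1_r \subseteq \KK(X_{pr})$.

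I expect the main subtlety to lie in the nondegeneracy step, specifically in confirming that the approximate unit of $\KK(X_p)$ behaves correctly both as an approximate identity on $X_p$ and after tensoring with $1_r$ on $X_{pr}$. The restriction $p \notin P^*$ is essential: for $p = e$ the analogous claim would require $\phi_r(A) \KK(X_r) = \KK(X_r)$, which in turn demands left-essentiality $\phi_r(A) X_r = X_r$ of $X_r$, a property that need not hold in general.
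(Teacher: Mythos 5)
Your proof is correct. For the second assertion it coincides with the paper's argument: the forward direction is obtained by evaluating the invariance condition on the $e$-fibre, where $t_a\otimes 1_r=\phi_r(a)$, and the converse is exactly the standard fact (Lance, Proposition 4.7, which is what the paper cites) that $\KK(X_p)\otimes 1_r\subseteq\KK(X_{pr})$ once the left action on $X_r$ is by compacts. Where you diverge is in the $\otimes 1$-nondegeneracy step: the paper proves it by the one-line rank-one identity $(\Theta_{x,y}\otimes 1_r)\Theta_{zu,v}=\Theta_{x\langle y,z\rangle u,\,v}$, together with the facts that elements $x\langle y,z\rangle$ span $X_p$ densely and $X_pX_r$ spans $X_{pr}$ densely, so the products already exhaust (the closed span of) $\KK(X_{pr})$; you instead take an approximate unit $\{u_\lambda\}$ of $\KK(X_p)$, check that $u_\lambda\otimes 1_r\to 1$ strongly on $X_{pr}\cong X_p\otimes_A X_r$, deduce norm convergence $(u_\lambda\otimes 1_r)T\to T$ on compacts, and remove the closure by Cohen--Hewitt. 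Both routes are valid and both ultimately rest on factorization to pass from dense span to the closure-free equality in condition \eqref{non-degeneracy condition}; the paper's computation is more explicit and exhibits concrete spanning elements (useful elsewhere in the paper), while your argument is more generic, not needing the rank-one product formula, at the cost of invoking the factorization theorem explicitly. Your closing remark on why $p\notin P^*$ (rather than all $p$) is needed -- that the case $p=e$ would force left-essentiality of the fibres -- is accurate and consistent with Remark \ref{rem:on essentiality and Fell bundles}.
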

\begin{proof}
Let $x,y,z \in X_p$,  $u\in X_r$ and $v\in X_{pr}$ for some $p,r\in P$, $p\neq e$. Then
$
(\Theta_{x,y}\otimes 1_r)  \Theta_{zu,v}=\Theta_{x\langle y,z\rangle_{X_p} u, v}.
$
Since elements of the form $x\langle y,z\rangle_{X_p}$ span $X_p$ and since  $X_pX_r=X_{pr}$, we conclude that  elements  $(\Theta_{x,y}\otimes 1_r)  \Theta_{zu,v}$ span  $\KK(X_{pr})$. Hence $\KK_X$  is  $\otimes 1$-nondegenerate. It is clear that
  $\KK_X \otimes 1\subseteq \KK_X$ implies $\phi_p(A)=A\otimes 1_p \subseteq \KK(X_p)$ for $p\in P$. The converse implication follows from the standard fact \cite[Proposition 4.7]{lance}, cf. \cite[Lemma 3.2]{kwa-szym}.
\end{proof}
\begin{rem}\label{rem:ideals in C*-precategories of Hilbert modules}
We claim that  $\KK_X$ and $A$ may be regarded as being Morita equivalent as $C^*$-precategories, see also Lemma~\ref{representations of C*-precategories of Hilbert modules}. To make this more precise, we introduce some notation first. If $Z$ and $Y$ are two right Hilbert $A$-modules and $I$ is an ideal in $A$, we let
$
\LL_I(Z,Y):=\{a\in \LL(Z,Y): a(Z)\subseteq YI\}.
$
Note that  $a(Z)\subseteq YI$ is equivalent to $a^*(Y)\subseteq ZI$. We also put $\KK_I(Z,Y):=\LL_I(Z,Y)\cap \KK(Z,Y)$, cf. \cite[Lemmas 1.1 and 1.2]{kwa-doplicher}.
 Now, given a product system $X$ over $P$,
 for any ideal $J$  in $A$ the formulas
$$
 \KK_X(J):=\{\KK_{J}(X_q,X_p)\}_{p,q \in P},\qquad  \LL_X(J):=\{\LL_{J}(X_q,X_p)\}_{p,q \in P}
$$
define  ideals in $\LL_X=\{\LL(X_q,X_p)\}_{p,q \in P}$, as the reader may readily verify. We claim that every ideal in $\KK_X=\{\KK(X_q,X_p)\}_{p,q\in P}$ is of the form $\KK_X(J)$ for some  ideal $J$  in $A$. Indeed,   this  is proved in  \cite[Proposition 2.17]{kwa-doplicher} in the case $P=\N$ but the proof works for general $P$.
\end{rem}

\begin{lem}\label{representations of C*-precategories of Hilbert modules}
  We have a one-to-one correspondence, established by  the formula
\begin{equation}\label{precategory representation}
\Psi_{p,q}(\Theta_{x,y})=
\psi_p(x)\psi_q(y)^*\,\,\,  \; \text{for}\; x\in X_p,\,\, y \in X_q, \,\, p,q\in P,
\end{equation}
between representations $\Psi=\{\Psi_{p,q}\}_{p,q\in P}$ of $\KK_X$  and  families $\psi=\{(\psi_e,\psi_{p})\}_{p\in P}$
 where, for each $p\in P$, $(\psi_e,\psi_{p})$ is a representation of the Hilbert $A$-module $X_p$.

Moreover, if $\Psi$ is a representation of $\KK_X$ on a Hilbert space $H$ and $\overline{\Psi}$
is its extension to $\LL_X$ determined by
\eqref{formula defining extensions of right tensor representations}, then with $\psi_e=\Psi_{e,e}$ we have
$$
\ker\Psi= \KK_X(\ker\psi_e)\qquad\text{ and }\qquad \ker \overline{\Psi}= \LL_X(\ker\psi_e).
$$
\end{lem}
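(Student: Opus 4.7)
\textbf{First} I would establish the bijection by writing both directions explicitly. Given $\Psi$, set $\psi_e := \Psi_{e,e}$; this is a $*$-homomorphism $A \to B$ under the identification $\LL_X(e,e) \cong A$, since any representation of a $C^*$-precategory restricts to a $*$-homomorphism on each diagonal $C^*$-algebra. Using the isomorphism $X_p \cong \KK(A,X_p) = \KK_X(p,e)$, $x \mapsto t_x$, from \eqref{C-correspondence isomorphism}, define $\psi_p(x) := \Psi_{p,e}(t_x)$. The two Hilbert-module axioms for $(\psi_e, \psi_p)$ then amount to the precategory identities $t_{xa} = t_x\, a$ (composition of $t_x \in \KK_X(p,e)$ with $a \in \KK_X(e,e)$) and $t_x^*\, t_y = \langle x, y\rangle_A$ in $\KK_X(e,e)$, which after applying $\Psi$ yield $\psi_p(xa) = \psi_p(x)\psi_e(a)$ and $\psi_p(x)^*\psi_p(y) = \psi_e(\langle x, y\rangle)$. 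Conversely, given a family $\psi$, define $\Psi_{p,q}$ on rank-one operators by \eqref{precategory representation}; the standard norm estimate $\|\sum_i \psi_p(x_i)\psi_q(y_i)^*\|^2 \leq \|\sum_i \Theta_{x_i, y_i}\|^2$ (from the inner-product axiom and a ``product-of-matrices'' trick) extends $\Psi_{p,q}$ to a bounded linear map on $\KK(X_q, X_p)$. Selfadjointness and multiplicativity of $\Psi = \{\Psi_{p,q}\}$ follow from $\Theta_{x,y}^* = \Theta_{y,x}$ and $\Theta_{x,y}\Theta_{u,v} = \Theta_{x\langle y, u\rangle, v}$ combined with the axioms of $(\psi_e, \psi_p)$.

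\textbf{Next} I would verify that the two constructions are mutually inverse. Starting from $\Psi$ and recomputing $\Psi'_{p,q}(\Theta_{x,y}) = \Psi_{p,e}(t_x)\Psi_{e,q}(t_y^*) = \Psi_{p,q}(t_x t_y^*)$, one only needs the identity $t_x t_y^* = \Theta_{x,y}$ as operators $X_q \to X_p$. Starting from $\psi$, recovering $\psi_p(x) = \Psi_{p,e}(t_x)$ requires an approximate-unit argument: with $(e_\lambda)$ a self-adjoint approximate unit of $A$, $\Theta_{x, e_\lambda} = t_{x e_\lambda} \to t_x$ in $\KK(A, X_p)$, whence $\Psi_{p,e}(t_x) = \lim_\lambda \psi_p(x)\psi_e(e_\lambda) = \lim_\lambda \psi_p(x e_\lambda) = \psi_p(x)$ by right-$A$-linearity of $\psi_p$.

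\textbf{Finally}, for the kernel identification assume $\Psi$ acts on a Hilbert space $H$ and put $J := \ker\psi_e = \ker\Psi_{e,e}$. Since $\ker\Psi$ is an ideal in the $C^*$-precategory $\KK_X$, Remark~\ref{rem:ideals in C*-precategories of Hilbert modules} yields $\ker\Psi = \KK_X(J')$ for a unique ideal $J'$ of $A$, and evaluating at the $(e,e)$-position gives $J' = J$. For $\overline{\Psi}$: if $T \in \LL_J(X_q, X_p)$, then for every $b \in \KK(X_q)$ the composite $Tb$ lies in $\KK_J(X_q, X_p) = \ker\Psi_{p,q}$, so by \eqref{formula defining extensions of right tensor representations}, $\overline{\Psi}_{p,q}(T)\Psi_{q,q}(b) = \Psi_{p,q}(Tb) = 0$; this together with the vanishing of $\overline{\Psi}_{p,q}(T)$ on the orthogonal complement of the essential subspace of $\Psi_{q,q}$ forces $\overline{\Psi}_{p,q}(T) = 0$. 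Conversely, if $\overline{\Psi}_{p,q}(T) = 0$, then \eqref{formula defining extensions of right tensor representations} gives $\Psi_{p,q}(Tb) = 0$ for every $b \in \KK(X_q)$, so $T\KK(X_q) \subseteq \KK_J(X_q, X_p)$. Taking an approximate unit $(u_\alpha)$ of $\KK(X_q)$, which converges strictly to the identity in $\LL(X_q)$, we get $(Tu_\alpha)(z) = T(u_\alpha z) \to Tz$ in $X_p$ with each $(Tu_\alpha)(z) \in X_p J$; since $X_p J$ is closed in $X_p$, we conclude $Tz \in X_p J$, i.e., $T \in \LL_J(X_q, X_p)$. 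The main technical obstacle is precisely this last step, promoting the ``ideal-level'' condition $T\KK(X_q) \subseteq \KK_J(X_q, X_p)$ to the ``vector-level'' condition $T(X_q) \subseteq X_p J$ via the approximate unit of $\KK(X_q)$ together with closedness of the submodule $X_p J$.
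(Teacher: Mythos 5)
Your proof is correct and follows essentially the same route as the paper: the bijection via $x\mapsto t_x$ and the KPW-type norm estimate on rank-one operators, the identification $\ker\Psi=\KK_X(\ker\psi_e)$ via the classification of ideals in $\KK_X$, and the characterization of $\ker\overline{\Psi}$ from \eqref{formula defining extensions of right tensor representations}. The only (inessential) difference is in the last step, where the paper factors $x=bx'$ with $b\in\KK(X_q)$ while you use an approximate unit of $\KK(X_q)$ together with closedness of $X_p\ker\psi_e$ -- both are forms of the same Cohen-type factorization argument.
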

\begin{proof}
 Let $\psi=\{(\psi_e,\psi_{p})\}_{p\in P}$ be a family of representations of right-Hilbert modules  $X_p$ for $p\in P$ in a $C^*$-algebra $B$. Let  $p,q,r \in P$. It is   well known that  \eqref{precategory representation} determines uniquely a linear contraction $\Psi_{p,q} : \K(X_q,X_p) \longrightarrow B$ which is isometric if $\psi_e$ is injective, see for instance  \cite[Lemma 2.2, Remark 2.3]{KPW}.   It is straightforward to see that the relations
 $$
 \Psi_{p,q}(S)^*= \Psi_{q,p}(S^*)\qquad \text{and}\qquad \Psi_{p,q}(S)\psi_{q,r}(T)=\psi_{p,r} (ST)
 $$
 hold for 'rank one' operators $S=\Theta_{x,y}$, $T=\Theta_{u,w}$. Hence these formulas hold for arbitrary $S\in \K(X_q,X_p)$,  $T \in \K(X_r,X_q)$.
Thus $\Psi=\{\Psi_{p,q}\}_{p,q\in P}$ is a representation of $\KK_X$.

If now $\Psi:=\{\Psi_{p,q}\}_{p,q\in P}$ is an arbitrary  representation  of $\KK_X$ in a $C^*$-algebra $B$ then by  \eqref{C-correspondence isomorphism} we can define a family of maps $\psi_p:X_p\to B$ by
$
\psi_p(x):=\Psi_{p,e}(t_x)$,  $x\in X_p, \,\, p\in P.
$ A routine verification shows that  $(\psi_e,\psi_p):X_p\to B$ is a right-Hilbert module representation.

The equality $\ker\Psi= \KK_X(\ker\psi_e)$ follows from Remark \ref{rem:ideals in C*-precategories of Hilbert modules}, as $\KK_X(\ker\psi_e)(e,e)=\ker\psi_e=(\ker\Psi)(e,e)$. By  \cite[Proposition 2.13]{kwa-larI} we have $(\ker\overline{\Psi})(p,q)=\{a\in \LL_X(p,q): a\KK_X(q,q)\subseteq \ker\Psi_{p,q}\}$. Thus the inclusion  $\LL_X(\ker\psi_e)(p,q) \subseteq (\ker \overline{\Psi}) (p,q)$ is immediate. For the reverse let  $a\in (\ker \overline{\Psi}) (p,q)$ and $x\in  X_q$. Note that $x$ may be written as $x=bx'$ where $b\in \KK(X_q)$ and $x'\in X_q$. Hence $ax= (ab)x'\in  \KK_X(p,q) x'\subseteq X_p(\ker\psi_e)$, and $a\in \LL_X(\ker\psi_e)(p,q)$.
\end{proof}
\begin{rem}
The semigroup operation in $P$ is irrelevant for the assertions in  Remark \ref{rem:ideals in C*-precategories of Hilbert modules} and Lemma \ref{representations of C*-precategories of Hilbert modules} --  they remain true when $P$ is any set with a distinguished element $e\in P$ and $\{X_p\}_{p \in S}$ is a family of right Hilbert modules over a $C^*$-algebra $A$ such that $X_e=A_A$.
\end{rem}
\begin{prop} \label{going forward cor}
Let $X$ be a  product system over  an arbitrary semigroup $P$. The bijective correspondence in Lemma \ref{representations of C*-precategories of Hilbert modules}
restricts to a one-to-one correspondence  between  representations $\psi=\{\psi_{p}\}_{p\in P}$ of $X$  and   right-tensor  representations $\Psi:=\{\Psi_{p,q}\}_{p,q\in P}$ of $\KK_X$. In particular, $\TT(X)$ is isomorphic to $\TT_{\LL_X}(\KK_X)$, the Toeplitz algebra of $\KK_X$.
\end{prop}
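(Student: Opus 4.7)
The bijective correspondence of Lemma \ref{representations of C*-precategories of Hilbert modules} already matches representations $\Psi=\{\Psi_{p,q}\}$ of $\KK_X$ with families $\psi=\{(\psi_e,\psi_p)\}_{p\in P}$ of Hilbert module representations of each $X_p$. My plan is to show that, under this correspondence, $\psi$ being multiplicative in the product system sense (i.e., $\psi_{pq}(xy)=\psi_p(x)\psi_q(y)$ for $x\in X_p$, $y\in X_q$) is equivalent to $\Psi$ satisfying the right-tensor relation \eqref{right tensor representation condition}. Once this is done, the final clause about $\TT(X)\cong \TT_{\LL_X}(\KK_X)$ follows immediately by comparing universal properties: both algebras are generated by matching universal representations, and the formula $\Psi_{p,q}(\Theta_{x,y})=\psi_p(x)\psi_q(y)^*$ shows that $C^*(\Psi(\KK_X))=C^*(\psi(X))$.

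For the forward direction (product system rep $\Rightarrow$ right-tensor rep), fix $a\in\KK_X(p,q)$ and $b\in\KK_X(s,t)$ with $sP\subseteq qP$; write $s=qr$. By linearity and continuity it suffices to check the identity on rank-one operators $a=\Theta_{x,y}$ with $x\in X_p$, $y\in X_q$ and $b=\Theta_{u,w}$ with $u\in X_{qr}$, $w\in X_t$. Since elementary products $u'v$ with $u'\in X_q$, $v\in X_r$ have dense span in $X_{qr}$ via the isomorphism $X_{qr}\cong X_q\otimes_A X_r$, a further approximation reduces to $u=u'v$. Then the computation
\[
\psi_p(x)\psi_q(y)^*\psi_q(u')\psi_r(v)\psi_t(w)^* = \psi_p(x)\psi_e(\langle y,u'\rangle_A)\psi_r(v)\psi_t(w)^*,
\]
combined with $\psi_p(x)\psi_r(z)=\psi_{pr}(xz)$ and the identity $(\Theta_{x,y}\otimes 1_r)(u'v)=(x\langle y,u'\rangle_A)v$, yields $\Psi(a)\Psi(b)=\Psi_{pr,t}(\Theta_{(a\otimes 1_r)u,w})=\Psi((a\otimes 1_r)b)$, which is exactly \eqref{right tensor representation condition}.

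For the reverse direction (right-tensor rep $\Rightarrow$ product system rep), take arbitrary $x\in X_p$, $y\in X_q$ and apply \eqref{right tensor representation condition} to $a:=t_x\in \KK_X(p,e)$ and $b:=t_y\in \KK_X(q,e)$; here the "middle object" is $e$ so the condition $sP\subseteq eP=P$ holds trivially with $r=q$. A direct computation with $A$-valued arguments shows $(t_x\otimes 1_q)\,t_y=t_{xy}$ in $\KK_X(pq,e)$, since for any $a\in A$ one has $(t_x\otimes 1_q)(ya)=xya=t_{xy}(a)$. Consequently
\[
\psi_p(x)\psi_q(y)=\Psi_{p,e}(t_x)\Psi_{q,e}(t_y)=\Psi_{pq,e}(t_{xy})=\psi_{pq}(xy),
\]
so $\psi$ is a representation of the product system $X$.

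The main obstacle is bookkeeping in the forward direction: one has to handle the density of $X_q\cdot X_r$ in $X_{qr}$ and pass the identity $\psi_q(y)^*\psi_q(u')=\psi_e(\langle y,u'\rangle_A)$ through the product so that a rank-one operator $(a\otimes 1_r)b=\Theta_{(a\otimes 1_r)u,w}$ emerges on the nose; the remaining cases where $p$, $q$, $t$, or $s$ equals $e$ are either absorbed into the $t_x$-formalism or follow from the Hilbert module axioms, so no separate argument is needed.
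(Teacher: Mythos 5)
Your proposal is correct and follows essentially the same route as the paper's proof: the reverse direction via $t_x$, $t_y$ and the identity $(t_x\otimes 1_q)t_y=t_{xy}$, and the forward direction by reducing to rank-one operators $\Theta_{x,y}$, $\Theta_{u'v,w}$ with $u'\in X_q$, $v\in X_{q^{-1}s}$ and computing through $\psi_q(y)^*\psi_q(u')=\psi_e(\langle y,u'\rangle_A)$. The concluding identification $\TT(X)\cong\TT_{\LL_X}(\KK_X)$ by comparing universal properties is likewise how the paper treats it.
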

\begin{proof}
 Let $\psi$ and $\Psi$ be the corresponding objects in Lemma \ref{representations of C*-precategories of Hilbert modules}.
Suppose first that $\Psi$ is a right-tensor representation of $\KK_X$. 
For any $x\in X_p$ and $y\in X_q$, $p,q\in P$,  we have  $(t_x\otimes 1_y) y=t_{xy}$. Thus
$$
\psi_p(x)\psi_q(y)=\Psi_{p,e}(t_x) \Psi_{q,e}(t_y)=\Psi_{pq,e}((t_x\otimes 1_y)t_y)=\psi_{pq}(x),
$$
so $\psi=\{\psi_{p}\}_{p\in P}$ is a representation of the product system $X$. Suppose now  that $\psi$ is a representation of $X$.  Let $p,q,s,t\in P$ with $s\geq q$. Consider $S=\Theta_{x,y}\in \KK(X_q,X_p)$ and  $T=\Theta_{u'u,w}\in \KK(X_t,X_s)$ where $u'\in X_{q}$ and $u\in  X_{q^{-1}s}$. Then
\begin{align*}
\Psi_{p,q}(S) \Psi_{s,t}(T)&=\psi(x) \psi(y)^*\psi(u'u)\psi(w)^*= \psi(x\langle y,u'\rangle_A u)\psi(w)^*
= \Psi_{pq^{-1},t}(\Theta_{x\langle y,u'\rangle_A u, w})
\\
&=\Psi_{pq^{-1},t}(\iota^{pq^{-1}s,s}_{p,q}(\Theta_{x, y})\Theta_{u'u,w})= \Psi_{pq^{-1},t}((S\otimes 1_{q^{-1}s}) T).
\end{align*}
Hence, by linearity and continuity, $\Psi$ is a right-tensor representation.

 \end{proof}

\subsection{$C^*$-algebras associated with product systems over LCM semigroups}\label{product systems over LCMs}
For the remaining of this section we assume that $P$ is a right LCM semigroup.
\begin{lem}
A product system $X$ over $P$ is compactly-aligned if and only if the ideal $\KK_X$  is well-aligned  in the associated right-tensor $C^*$-precategory $(\LL_X,\{\otimes 1_r\}_{r\in P})$. In particular,  $\KK_X$ satisfies \eqref{non-degeneracy condition} and \eqref{condition for reducing Fock-reps}.
\end{lem}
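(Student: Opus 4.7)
The plan is to reduce the equivalence to an observation about notation and then deal with the two additional conditions using results already at hand. First I would unpack both definitions side by side. Well-alignedness of $\KK_X$ requires, for $a\in\KK_X(p,p)=\KK(X_p)$ and $b\in\KK_X(q,q)=\KK(X_q)$ with $pP\cap qP=rP$, that $(a\otimes 1_{p^{-1}r})(b\otimes 1_{q^{-1}r})\in\KK(X_r)$. But by construction of the right tensoring on $\LL_X$, we have $a\otimes 1_{p^{-1}r}=\iota^{r,r}_{p,p}(a)=\iota^r_p(a)$ and similarly $b\otimes 1_{q^{-1}r}=\iota^r_q(b)$. Thus the defining condition of well-alignedness matches, letter for letter, Fowler's compactness alignment condition for $X$, so the equivalence holds on the nose.

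Next, for the ``in particular'' assertion, condition \eqref{non-degeneracy condition} is precisely $\otimes 1$-nondegeneracy of $\KK_X$, which is exactly the content of Lemma \ref{non-degeneracy of K_X}, so nothing remains to prove there.

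For condition \eqref{condition for reducing Fock-reps}, I would identify the product $\KK_X(p,e)\KK_X(e,p)$ explicitly. Using the isomorphism $X_p\cong\KK(A,X_p)$, $x\mapsto t_x$, and the fact that $(t_x)^*=t_y^*$ corresponds to the map $z\mapsto\langle y,z\rangle_A$, a direct computation (of the kind performed inside Lemma \ref{representations of C*-precategories of Hilbert modules}) gives $t_x\cdot t_y^*=\Theta_{x,y}$ in $\KK_X(p,p)=\KK(X_p)$. Hence $\overline{\KK_X(p,e)\KK_X(e,p)}=\KK(X_p)$, and by the standard fact that $\KK(X_p)$ is an essential ideal in $\LL(X_p)=\LL_X(p,p)$ (for $p\ne e$, while the case $p=e$ is trivial since both sides equal $A$), condition \eqref{condition for reducing Fock-reps} is satisfied.

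I do not anticipate a real obstacle here: the heart of the argument is dictionary-chasing between two equivalent formalisms. The only mild care needed is to consistently invoke the identification $\LL_X(p,q)=\LL(X_q,X_p)$ (with the reversed composition order inherent in the $C^*$-precategory structure) so that the tensoring $\otimes 1_r$ on $\LL_X$ really does collapse to the diagonal homomorphisms $\iota^{pr}_p$ on $\KK(X_p)$, and to recall the module isomorphism $\KK(A,X_p)\cong X_p$ when handling the factor indexed by $e$.
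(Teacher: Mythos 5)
Your proposal is correct and follows the same route as the paper: the paper's proof simply cites Lemma \ref{non-degeneracy of K_X} for condition \eqref{non-degeneracy condition} and declares the remaining claims immediate, and your dictionary-chasing (the right tensoring on $\KK_X$ collapsing to the maps $\iota^r_p$, including $t_a\otimes 1_r=\phi_r(a)$ for the $e$-indexed case, and $t_xt_y^*=\Theta_{x,y}$ giving $\overline{\KK_X(p,e)\KK_X(e,p)}=\KK(X_p)$, an essential ideal in $\LL(X_p)$) is exactly the routine verification hiding behind ``immediate.''
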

\begin{proof}
$\KK_X$ satisfies \eqref{non-degeneracy condition} by Lemma \ref{non-degeneracy of K_X}. The remaining claims are immediate.
\end{proof}

The next proposition generalizes \cite[Proposition 3.14]{kwa-doplicher} from $\N$ to right LCM semigroups.

\begin{prop}\label{going forward prop}
 If $X$ is a compactly-aligned product system  over a right LCM semigroup $P$, then the bijective correspondence in Proposition~\ref{going forward cor} preserves Nica covariance of representations and, hence, it gives rise to a canonical isomorphism
$$
\NT_{\LL_X}(\KK_X)\cong \NT(X).
$$
\end{prop}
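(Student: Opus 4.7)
The plan is to verify that, under the bijection $\psi\leftrightarrow\Psi$ from Proposition~\ref{going forward cor}, Fowler's Nica covariance of $\psi$ is equivalent to the precategory Nica covariance \eqref{Nica covariance} of $\Psi$; the canonical isomorphism then follows by comparing the universal properties of $\NT(X)$ and $\NT_{\LL_X}(\KK_X)$. Via \eqref{precategory representation}, $\Psi_{p,p}$ coincides with the Pimsner homomorphism $\psi^{(p)}$, and for $a\in\KK(X_p)$ the operator $a\otimes 1_{p^{-1}r}$ coincides with Fowler's diagonal $\iota^{r}_{p}(a)$. The direction ``$\Psi$ Nica covariant $\Rightarrow$ $\psi$ Nica covariant'' is therefore immediate by specializing \eqref{Nica covariance} to $a\in\KK_X(p,p),\,b\in\KK_X(q,q)$.

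For the converse, fix $a\in\KK_X(p,q),\,b\in\KK_X(s,t)$, and let $(e_\lambda^q),\,(e_\mu^s)$ be approximate units of $\KK(X_q),\,\KK(X_s)$. Since $ae_\lambda^q\to a$ and $e_\mu^s b\to b$ in norm, and since $\Psi$ is multiplicative within the category,
\[
\Psi(a)\Psi(b)=\lim_{\lambda,\mu}\Psi(a)\Psi(e_\lambda^q)\Psi(e_\mu^s)\Psi(b).
\]
The middle product $\Psi(e_\lambda^q)\Psi(e_\mu^s)=\psi^{(q)}(e_\lambda^q)\psi^{(s)}(e_\mu^s)$ is now computed by Fowler's Nica covariance: it vanishes if $qP\cap sP=\emptyset$ (forcing $\Psi(a)\Psi(b)=0$, as required), and if $qP\cap sP=rP$ it equals $\Psi(T_{\lambda\mu})$ with $T_{\lambda\mu}:=(e_\lambda^q\otimes 1_{q^{-1}r})(e_\mu^s\otimes 1_{s^{-1}r})\in\KK(X_r)$ by compact alignment.

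The key identity to prove is
\[
\Psi(a)\Psi(T_{\lambda\mu})\Psi(b)=\Psi\bigl((a\otimes 1_{q^{-1}r})\,T_{\lambda\mu}\,(b\otimes 1_{s^{-1}r})\bigr).
\]
By norm density it suffices to verify this when $T_{\lambda\mu}$ is a finite sum $\sum_i\Theta_{\xi_i,\eta_i}$ with $\xi_i,\eta_i\in X_r$, in which case $\Psi(\Theta_{\xi_i,\eta_i})=\Psi(t_{\xi_i})\Psi(t_{\eta_i})^*$. Since $q,s\le r$, the right-tensor condition \eqref{right tensor representation condition}---applied directly to $\Psi(a)\Psi(t_{\xi_i})$ and, after taking adjoints, to $\Psi(t_{\eta_i})^*\Psi(b)$---gives $\Psi(a)\Psi(t_{\xi_i})=\psi_{pq^{-1}r}((a\otimes 1_{q^{-1}r})\xi_i)$ and $\Psi(t_{\eta_i})^*\Psi(b)=\psi_{ts^{-1}r}((b^*\otimes 1_{s^{-1}r})\eta_i)^*$. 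Summing and using $\Theta_{T\xi,S\eta}=T\Theta_{\xi,\eta}S^*$ reassembles the result as $\Psi((a\otimes 1_{q^{-1}r})T_{\lambda\mu}(b\otimes 1_{s^{-1}r}))$. Finally, since
\[
(a\otimes 1_{q^{-1}r})T_{\lambda\mu}(b\otimes 1_{s^{-1}r})=(ae_\lambda^q\otimes 1_{q^{-1}r})(e_\mu^s b\otimes 1_{s^{-1}r})\to(a\otimes 1_{q^{-1}r})(b\otimes 1_{s^{-1}r})
\]
in norm, passing to the limit yields precategory Nica covariance. The main technical delicacy will be the index bookkeeping in the right-tensor and adjoint manipulations; once handled, the canonical isomorphism $\NT_{\LL_X}(\KK_X)\cong\NT(X)$ is a formal consequence of the respective universal properties of the two Nica-Toeplitz algebras.
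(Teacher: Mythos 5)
Your argument is correct, and in the crucial converse direction it differs from the paper's. The paper also reduces to rank-one operators $S=\Theta_{x,y}$, $T=\Theta_{u,w}$ and applies Fowler's Nica covariance to a compactly factored ``middle'' term, but it then finishes \emph{spatially}: it shows that $\Psi_{s,t}(T)$ agrees with $\Psi_{r,ts^{-1}r}(T\otimes 1_{s^{-1}r})$ on the essential subspace $\overline{\psi_{ts^{-1}r}(X_{ts^{-1}r})H}$, that $\Psi_{p,q}(S)$ agrees with $\Psi_{pq^{-1}r,r}(S\otimes 1_{q^{-1}r})$ on $\overline{\psi_r(X_r)H}$, and assembles the Nica identity from these operator identities on dense subspaces. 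You instead sandwich approximate units $e^q_\lambda\in\KK(X_q)$, $e^s_\mu\in\KK(X_s)$ between $\Psi(a)$ and $\Psi(b)$, apply Fowler's Nica covariance only to $\psi^{(q)}(e^q_\lambda)\psi^{(s)}(e^s_\mu)$, and then use the right-tensor property \eqref{right tensor representation condition} of $\Psi$ (already available from Proposition~\ref{going forward cor}, since a Nica covariant $\psi$ is in particular a representation of $X$) together with the rank-one identity $\Theta_{S\xi,R\eta}=S\Theta_{\xi,\eta}R^*$ and contractivity of $\Psi$ and of the right tensoring to pass to the limit. This is a purely algebraic argument: it makes no reference to essential subspaces and in fact works verbatim for representations in an arbitrary $C^*$-algebra rather than on a Hilbert space, at the cost of the approximate-unit and double-limit bookkeeping that the paper's spatial argument avoids. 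The only points to make explicit when writing it up are that $rP\subseteq qP$ and $rP\subseteq sP$ (so \eqref{right tensor representation condition} applies to $\Psi(a)\Psi(t_{\xi_i})$ and, after taking adjoints, to $\Psi(b^*)\Psi(t_{\eta_i})$), that both sides of your key identity are norm-continuous in $T\in\KK(X_r)$ so checking it on finite-rank $T$ suffices, and that the limit element $(a\otimes 1_{q^{-1}r})(b\otimes 1_{s^{-1}r})$ lies in $\KK_X(pq^{-1}r,ts^{-1}r)$ by \eqref{compact alignment relation2}; the final isomorphism then follows, as you say, from the two universal properties.
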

\begin{proof}

If $\Psi:=\{\Psi_{p,q}\}_{p,q\in P}$ is a Nica covariant representation  of $\KK_X$, then it is also a right-tensor representation and therefore $\psi=\{\psi_{p}\}_{p\in P}$ is a representation of the product system $X$.  Since  $\psi^{(p)}=\Psi_{p,p}$ and $\iota^{pq}_p(S)=S\otimes 1_q$,  for $S\in \KK(X_p)$ and $p, q\in P$, Nica covariance of $\Psi$  implies Nica covariance of $\psi$.

 Let $\psi=\{\psi_{p}\}_{p\in P}$ be a Nica covariant representation of $X$ on a Hilbert space $H$. Let  $\Psi=\{\Psi_{p,q}\}_{p,q\in P}$ be the representation of $\KK_X$ given by \eqref{precategory representation}.  To see that $\Psi$ is Nica covariant, let $S=\Theta_{x,y}\in \KK(X_q,X_p)$ and  $T=\Theta_{u,w}\in \KK(X_t,X_s)$ where $p,q,s,t\in P$.
Express $y=Yy'$ with $Y\in \KK(X_q)$ and $y'\in X_q$, and similarly $u=Uu'$ with $U\in \KK(X_s)$ and $u'\in X_s$.
Then  $\psi(y)^*\psi(u)=\psi(y')^*\psi^{(q)}(Y^*)\psi^{(s)}(U)\psi(u')$. Therefore, by Nica covariance of $\psi$, if $qP\cap sP=\emptyset$, then $\psi(y)^*\psi(u)=0$ and hence $\Psi_{p,q}(S)\Psi_{s,t}(T)=0$. Assume that  $qP\cap sP=rP$, for some $r\in P$.
Again by Nica covariance of $\psi$ we get
$$\psi(y)^*\psi(u)= \psi(y')^*\psi^{(r)}\Big( (Y^*\otimes {1_{q^{-1}r}})  (U\otimes {1_{s^{-1}r}})\Big)\psi(u')
$$
We claim that $\psi(y)^*\psi(u)\in \Psi_{q^{-1}r,s^{-1}r } (\KK(X_{s^{-1}r},X_{q^{-1}r})$.
Indeed, the operator $\psi^{(r)}\Big( (Y^*\otimes{1_{q^{-1}r}})  (U\otimes{1_{q^{-1}s}})\Big)$ can be approximated by finite sums of elements of the form $\psi_r(v'v)\psi_r(z'z)^*$ where $v' \in X_{q}$, $v\in X_{q^{-1}r}$, and $z' \in X_{s}$, $z\in X_{s^{-1}r}$. Since
$$
\psi_q(y')^*\psi_r(v'v)\psi_r(z'z)^*\psi_s(u')=\psi_{q^{-1}r}(\langle y',v'\rangle_q v)\psi_{s^{-1}r}(\langle u',z'\rangle_s z)^*
$$
is an element of $\Psi_{q^{-1}r,s^{-1}r }\big(\KK(X_{s^{-1}r},X_{q^{-1}r})\big)$, so is  $\psi(y)^*\psi(u)$.
Accordingly,
$$
\Psi_{p,q}(S)\Psi_{s,t}(T)=\psi_p(x)\psi_q(y)^*\psi_s(u)\psi_t(w)^* \in \Psi_{pq^{-1}r,ts^{-1}r }\big(\KK(X_{ts^{-1}r},X_{pq^{-1}r})\big).
$$
Hence the product $\Psi_{p,q}(S)\Psi_{s,t}(T)$ acts as zero on the orthogonal complement of the space
$
H_{ts^{-1}r }:=\overline{\psi_{ts^{-1}r }(X_{ts^{-1}r})H}.
$
 Clearly, the same is true for the operator $\Psi_{pq^{-1}r,ts^{-1}r }\big((S\otimes 1_{q^{-1}r})(T\otimes 1_{s^{-1}r})\big)$.
Consider an element $\psi_{ts^{-1}r }(w_0u_0)h$ where  $w_0\in X_t$, $u_0\in X_{s^{-1}r}$, $h\in H$. The linear span of such elements  is in $H_{ts^{-1}r }$ and we have
\begin{align*}
\Psi_{s,t}(T)\psi_{ts^{-1}r }(w_0u_0)&= \psi_s(u)\psi_t(w)^* \psi_{t}(w_0)\psi_{s^{-1}r}(u_0)=\psi_{r}(u\langle w, w_0\rangle_{t} u_0)
\\
&
=\psi_{r}\big((\Theta_{u,w}\otimes 1_{s^{-1}r})  w_0 u_0\big)=\Psi_{r,ts^{-1}r}(T\otimes 1_{s^{-1}r})\psi_{ts^{-1}r }(w_0u_0).
\end{align*}
Hence $\Psi_{s,t}(T)$ and $\Psi_{r,ts^{-1}r }\big(T\otimes 1_{s^{-1}r}\big)$ coincide on the space $H_{ts^{-1}r }$, and they map this space into the space $
H_{r }:=\overline{\psi_{r}(X_{r})H}$.  Consider an element $\psi_{r }(y_0x_0)h$ where  $x_0\in X_{q^{-1}r}$, $y_0\in X_{q}$, $h\in H$. The linear span of such elements  is in $H_{r}$ and we have
\begin{align*}
\Psi_{p,q}(S)\psi_{r }(y_0x_0)&= \psi_p(x)\psi_q(y)^* \psi_{q}(y_0)\psi_{q^{-1}r}(x_0)=\psi_{pq^{-1}r}(x \langle y, y_0\rangle_{q} u_0)
\\
&
=\psi_{r}\big((\Theta_{x,y}\otimes 1_{q^{-1}r})  y_0 x_0\big)=\Psi_{pq^{-1}r,r}(S\otimes 1_{q^{-1}r})\psi_{r }(y_0x_0).
\end{align*}
Hence $\Psi_{p,q}(S)$ and $\Psi_{pq^{-1}r,r}(S\otimes 1_{q^{-1}r})$ coincide when restricted to $H_r$. Combining these two observations we get
\begin{align*}
\Psi_{p,q}(S)\Psi_{s,t}(T) &= \Psi_{pq^{-1}r,r}(S\otimes 1_{q^{-1}r})\Psi_{r,ts^{-1}r}(T\otimes 1_{s^{-1}r})
\\
&=\Psi_{pq^{-1}r,ts^{-1}r }\big((S\otimes 1_{q^{-1}r})(T\otimes 1_{s^{-1}r})\big).
\end{align*}
Thus $\Psi:=\{\Psi_{p,q}\}_{p,q\in P}$ is Nica covariant.

 \end{proof}

The above result motivates the following definition.
\begin{defn}  Let $X$ be a compactly-aligned product system  over a right LCM semigroup $P$.
 We  let $\NT^r(X):=\NT_{\LL_X}^r(\KK_X)$ and call it \emph{the reduced Nica-Toeplitz algebra} of $X$.
We also put
$$
\mathcal{DR}(\NT(X)):=\NT(\LL_X)\qquad\text{and}\qquad \mathcal{DR}^r(\NT(X)):=\NT^r(\LL_X)
$$
($\mathcal{DR}$ stands for Doplicher-Roberts).
 We denote by $\overline{\Lambda}:\mathcal{DR}(\NT(X))\to  \mathcal{DR}^r(\NT(X))$ and $\Lambda:\NT(X)\to  \NT^r(X)$ the canonical epimorphisms.
\end{defn}
Let $X$ be a compactly-aligned product system.
In \cite{F99} Fowler constructed the \emph{Fock representation} $l:X\to \LL(\mathcal{F}(X))$ of $X$.
The Fock spaces for $X$, $\KK_X$ and $\LL_X$ coincide and are equal to the Hilbert $A$-module direct sum
$\mathcal{F}(X)=\bigoplus_{p\in P} X_p$. The Fock representation $\overline{\LLL}:\LL_X\to \LL(\mathcal{F}(X))$
of $\LL_X$ is an extension of the  Fock representation  $\LLL:\KK_X\to \LL(\mathcal{F}(X))$ of $\KK_X$ which is in turn an
extension of  $l$. This in particular leads to the inclusion
$$
\NT^r(X)=\clsp\{ l(x)l(y)^*: x, y\in X\} \subseteq \mathcal{DR}^r(\NT(X))
$$
By  \cite[Lemma 11.1]{kwa-larI}, there is a commutative diagram
$$
\begin{xy}
\xymatrix{
\NT(X) \ar[d]_{\Lambda} \ar[rr]^{\hookrightarrow }&  & \mathcal{DR}(\NT(X)) \ar[d]^{\overline{\Lambda}}
  \\
	\NT^r(X) \ar[rr]^{\hookrightarrow}	&         & 	\mathcal{DR}^r(\NT(X))}
  \end{xy}
$$
in which the horizontal maps are embeddings.  The map $\overline{\Lambda}$ is injective on the core subalgebra $B_e^{i_{\LL_X}}=\clsp\{i_{\LL_X}(a):a\in \LL_X(p,p), p \in P\}$  of $\DR(\NT(X))$ and $\Lambda$ is injective on
$$
B_e^X:=B_e^{i_{\KK_X}}=\clsp\{i_X(x) i_X(y)^*: x,y \in X, d(x)=d(y)\},
$$
see \cite[Corollary 6.4]{kwa-larI}. Clearly, $B_e^X\subseteq B_e^{i_{\LL_X}}$.	We will characterize representations of $X$ that give rise to injective representations of  $B_e^X$ and
$B_e^{i_{\LL_X}}$ respectively. To this end, we introduce  canonical projections associated to a  representation of $X$.
\begin{defn}
If $\psi:X\to \B(H)$ is a  representation of a compactly-aligned product system $X$, for every $p\in P$ we denote by $Q^\psi_{p}\in \B(H)$ the projection such that
$$
Q^\psi_{p}H=\begin{cases}
\psi^{(p)}(\KK(X_p))H  & \textrm{ if } p\in P\setminus \{e\},
\\
\overline{\psi(X_e)H} & \textrm{ if } p=e.
\end{cases}
$$
\end{defn}
\begin{rem}\label{properties of family projections2}
If  $\Psi:=\{\Psi_{p,q}\}_{p,q\in P}$ is the  representation of $\KK_X$ given by Proposition~\ref{going forward cor}, then  $Q^\psi_{p}$ equals the projection $Q^\Psi_{p}$ associated to $\Psi$ in \cite[Definition 9.1]{kwa-larI} for $p\in P$.
In particular, if  $\overline{\Psi}:=\{\overline{\Psi}_{p,q}\}_{p,q\in P}$ is the extension of $\Psi$ to $\LL_X$, then $Q^\Phi_p=\overline{\Psi}_{p,q}(i_\LL(1_{p}))$, for all $p\in P\setminus \{e\}$.
\end{rem}
\begin{lem}\label{Nica relation Lemma}
A representation $\psi:X\to \B(H)$ is Nica covariant if and only if the projections $\{Q^\psi_{p}\}_{p\in P}\in \B(H)$ satisfy the Nica covariance relation
\begin{equation}\label{Nica equation for projections}
Q^\psi_{p} Q_{q}^\psi=
\begin{cases}
Q_r^\psi, & \text{if } qP\cap sP=rP \text{ for some }r\in P,
\\
0, & \text{if } qP\cap sP=\emptyset.
\end{cases}
\end{equation}
Moreover, if $\psi:X\to \B(H)$ is Nica covariant,  the representation
$\psi\rtimes P:\NT(X)\to B(H)$ extends uniquely to a representation $\overline{\psi\rtimes P}$ of $\mathcal{DR}(\NT(X))$ such that
\begin{equation}\label{extensions relations}
(\overline{\psi\rtimes P})(i_{\LL_X}(a))=  Q^\psi_{p} (\overline{\psi\rtimes P})(i_{\LL_X}(a))Q^\psi_{q}, \quad a\in \LL(X_p,X_q), p,q\in P\setminus\{e\}.
\end{equation}
In fact, $\overline{\psi\rtimes P}=\overline{\Psi}\rtimes P$ where  $\Psi:=\{\Psi_{p,q}\}_{p,q\in P}$ is the associated representation of  $\KK_X$.
\end{lem}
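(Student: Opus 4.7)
The plan is to deduce the lemma entirely from the precategory framework, via the bijective correspondence $\psi \leftrightarrow \Psi$ set up in Propositions \ref{going forward cor} and \ref{going forward prop}, together with the identification $Q^\psi_p = Q^\Psi_p$ from Remark \ref{properties of family projections2}.

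For the first assertion, I would invoke the Nica-via-projections criterion for representations of $\KK_X$ proved in \cite{kwa-larI}, Section 9: a representation $\Psi$ of the well-aligned ideal $\KK_X$ is Nica covariant if and only if the projection family $\{Q^\Psi_p\}_{p\in P}$ satisfies the Nica relation \eqref{Nica equation for projections}. Combined with Proposition \ref{going forward prop} — which matches Nica covariance of $\psi$ with that of $\Psi$ — this yields the stated equivalence. For a self-contained verification, the forward direction applies Nica covariance of $\psi$ to approximate units for $\KK(X_p)$ and $\KK(X_q)$, whose Pimsner images converge strongly to $Q^\psi_p$ and $Q^\psi_q$; the converse uses compact alignment \eqref{compact alignment relation2} together with the factorization $\Psi_{p,q}(a) = \Psi_{p,q}(a) Q^\Psi_q$ valid for $a \in \KK_X(p,q)$.

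For the second part, let $\overline{\Psi}$ be the unique extension of $\Psi$ to $\LL_X$ supplied by \eqref{formula defining extensions of right tensor representations}; this exists since $\KK_X$ is an essential ideal in $\LL_X$. Because $Q^{\overline{\Psi}}_p = Q^\Psi_p$ for all $p$, the projection criterion above forces $\overline{\Psi}$ to remain Nica covariant on $\LL_X$, so the universal property of $\NT(\LL_X) = \DR(\NT(X))$ yields a $*$-homomorphism $\overline{\Psi} \rtimes P : \DR(\NT(X)) \to \B(H)$. By construction its restriction to $\NT(X) \hookrightarrow \DR(\NT(X))$ agrees with $\Psi \rtimes P = \psi \rtimes P$, so we set $\overline{\psi \rtimes P} := \overline{\Psi} \rtimes P$. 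The compression identity \eqref{extensions relations} is immediate from \eqref{formula defining extensions of right tensor representations}: the latter forces $\overline{\Psi}_{p,q}(a)$ to vanish on $(\KK_X(q,q) H)^\perp = (Q^\psi_q H)^\perp$, giving $\overline{\Psi}_{p,q}(a) = \overline{\Psi}_{p,q}(a) Q^\psi_q$, and the dual reasoning applied to $a^*$ yields $\overline{\Psi}_{p,q}(a) = Q^\psi_p \overline{\Psi}_{p,q}(a)$.

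Uniqueness of an extension subject to \eqref{extensions relations} follows again from the second clause of \eqref{formula defining extensions of right tensor representations}: any compliant extension is prescribed on the total set $\Psi_{q,q}(\KK_X(q,q)) H$, whose closed linear span equals $Q^\psi_q H$, and the compression condition determines its action off $Q^\psi_q H$ to be zero. The main obstacle I anticipate is the separate bookkeeping for the $e$-cases of the projection criterion, where $Q^\psi_e H = \overline{\psi_e(A) H}$ is defined via the bimodule $X_e = A$ rather than through compacts; these cases are absorbed neatly by Lemma \ref{representations of C*-precategories of Hilbert modules} under the identification $\KK_X(e,e) = A$, but they must be tracked explicitly when verifying \eqref{Nica equation for projections} in combinations involving $p = e$ or $q = e$.
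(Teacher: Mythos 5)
Your overall architecture is the paper's: pass to the associated representation $\Psi$ of $\KK_X$, identify $Q^\psi_p=Q^\Psi_p$ via Remark \ref{properties of family projections2}, set $\overline{\psi\rtimes P}:=\overline{\Psi}\rtimes P$ with $\overline{\Psi}$ the canonical extension \eqref{formula defining extensions of right tensor representations}, read off \eqref{extensions relations} from the fact that $\overline{\Psi}_{p,q}(a)$ vanishes on $(Q^\psi_q H)^\perp$ and has range in $Q^\psi_pH$, and get uniqueness because a compliant extension is determined on $\Psi_{q,q}(\KK_X(q,q))H$ (whose closed span is $Q^\psi_qH$) and is forced to vanish on the complement. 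That second half of your argument matches the paper's proof essentially verbatim (the paper simply quotes \cite[Proposition 9.5]{kwa-larI} for Nica covariance of $\overline{\Psi}$ rather than deriving it).

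The gap is in the first assertion, in the implication ``\eqref{Nica equation for projections} $\Rightarrow$ $\psi$ Nica covariant''. You dispose of it by citing an ``if and only if'' projection criterion from \cite[Section 9]{kwa-larI}, but the paper imports from there only the forward implication (Propositions 9.4 and 9.7) and proves the converse by hand --- which is a strong indication that the converse is not quotable in the form you want; the same unverified ``iff'' also underlies your claim that the projection relation alone forces $\overline{\Psi}$ to be Nica covariant on $\LL_X$. Your fallback sketch does not close this: from $\psi^{(p)}(a)=\psi^{(p)}(a)Q^\psi_p$, $Q^\psi_q\psi^{(q)}(b)=\psi^{(q)}(b)$ and \eqref{Nica equation for projections} you only reach $\psi^{(p)}(a)\psi^{(q)}(b)=\psi^{(p)}(a)Q^\psi_r\psi^{(q)}(b)$ (or $0$), and compact alignment merely says $\iota^{r}_p(a)\iota^{r}_q(b)\in\KK(X_r)$. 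The substantive step is the identity $\psi^{(p)}(a)Q^\psi_r\psi^{(q)}(b)=\psi^{(r)}\bigl(\iota^{r}_p(a)\iota^{r}_q(b)\bigr)$, and for that one needs the compatibility of the projections with the right tensoring, namely $\Psi_{p,p}(a)Q^\psi_r=\overline{\Psi}_{r,r}(a\otimes 1_{p^{-1}r})$, which is exactly \cite[Proposition 9.4]{kwa-larI} applied to the extension $\overline{\Psi}$ --- this is how the paper finishes. (Your forward direction via approximate units is fine in outline, using that $\psi^{(p)}(u_\lambda)\to Q^\psi_p$ strongly, that each product lies in $\psi^{(r)}(\KK(X_r))$, and that $Q^\psi_rH\subseteq Q^\psi_pH\cap Q^\psi_qH$.) Supply the tensoring--projection compatibility, or replace the appeal to an ``iff'' by the paper's direct computation, and the proof is complete.
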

\begin{proof}
If $\psi:X\to \B(H)$ is Nica covariant then \eqref{Nica equation for projections} holds by \cite[Propositions  9.4 and 9.7]{kwa-larI}. Conversely, if \eqref{Nica equation for projections} holds then for $a \in \KK(X_p)$ and $b\in \KK(X_q)$
the product   $\psi^{(p)}(a)\psi^{(q)}(b)$ equals $\psi^{(p)}(a)Q^\psi_{p}Q^\psi_{q}\psi^{(q)}(b)$,
which is zero if  $pP\cap qP=\emptyset$ or is $\psi^{(p)}(a)Q^\psi_{r}Q^\psi_{q}\psi^{(q)}(b)$ if $pP\cap qP=rP$. In the latter case, we have $Q^\psi_{r}\leq Q^\psi_{q}$ and  so by \cite[Proposition 9.4]{kwa-larI} applied to $\overline{\Psi}:=\{\overline{\Psi}_{p,q}\}_{p,q\in P}$, we obtain
$\psi^{(p)}(a)Q^\psi_{r}\psi^{(q)}(b)=\psi^{(r)}\big(\iota^{r}_p(a)\iota^{r}_q(b)\big)$, using also that $\iota^{r}_p(a)\iota^{r}_q(b)\in \KK_X(r,r)$.

If $\psi:X\to \B(H)$ is Nica covariant, then $\overline{\Psi}:=\{\overline{\Psi}_{p,q}\}_{p,q\in P}$ is Nica covariant by \cite[Proposition 9.5]{kwa-larI}. Putting $\overline{\psi\rtimes P}:=\overline{\Psi}\rtimes P$ for any  $a\in \LL(X_p,X_q)$, $p,q\in P\setminus\{e\}$ we get
$(\overline{\psi\rtimes P})(i_{\LL_X}(a))= \overline{\Psi}_{p,q}(i_{\LL_X}(a))$, and therefore relations \eqref{extensions relations} are satisfied. Conversely, if $\overline{\psi\rtimes P}$ is any representation of $\mathcal{DR}(\NT(X))$ that extends $\psi\rtimes P$, we get a Nica covariant representation $\Phi$ of $\LL_X$ that extends $\Psi$. The relations \eqref{extensions relations} imply that $\Phi=\overline{\Psi}$ and hence $\overline{\psi\rtimes P}:=\overline{\Psi}\rtimes P$.
\end{proof}

Via the bijective correspondence in \eqref{precategory representation}, we transport the notions of Toeplitz representation and condition (C) for representations of $\KK_X$, see \cite[Definition 6.2 and Definition 10.1]{kwa-larI} to representations of the product system.
\begin{defn} A Nica covariant representation $\psi:X\to \B(H)$  is  \emph{Toeplitz covariant} or \emph{Nica-Toeplitz covariant} if for each  finite family $q_1,\ldots ,q_n\in P\setminus P^*$, $n\in \N$, we have
\begin{equation}\label{Toeplitz condition666}
\psi_e(A)\cap \clsp\{ \psi^{(q_i)}(\KK(X_{q_i})):i=1,\ldots,n\} =\{0\}.
\end{equation}
The representation $\psi$ \emph{satisfies condition (C)} if for each finite family $q_1,\ldots ,q_n\in P\setminus P^*$, $n\in \N$,
\begin{equation}\label{Coburn condition666}
\text{the map $A\ni a \longmapsto  \psi_e(a) \prod_{i=1}^{n}(1-Q^\psi_{q_i})$ is injective.}
\end{equation}
	\end{defn}
\begin{lem}\label{conditions T and C for product systems}
Let $\psi:X\to \B(H)$ be a Nica covariant representation of a compactly-aligned product system $X$  and $\Psi:=\{\Psi_{p,q}\}_{p,q\in P}$ the associated representation of $\KK_X$.
\begin{itemize}
\item[(i)]  $\psi$ is Toeplitz covariant  if and only if $\Psi$ is Toeplitz covariant.
\item[(ii)]  $\psi$ satisfies condition (C) if and only if $\Psi$ satisfies condition (C).
\end{itemize}
\end{lem}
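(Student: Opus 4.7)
The plan is to observe that each of the conditions \eqref{Toeplitz condition666} and \eqref{Coburn condition666} is, under the bijective correspondence from Proposition \ref{going forward cor}, merely a rephrasing of the corresponding condition imposed on $\Psi$ in \cite[Definitions 6.2 and 10.1]{kwa-larI}. Hence the strategy is purely identificational: pin down the matching of ingredients on the two sides, and then read off the equivalences.

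First I would record three identifications that follow at once from \eqref{precategory representation} together with the recipe $\psi_p(x) = \Psi_{p,e}(t_x)$ used in Lemma \ref{representations of C*-precategories of Hilbert modules}. Namely: (a) $\psi_e = \Psi_{e,e}$, so $\psi_e(A) = \Psi_{e,e}(\KK_X(e,e))$; (b) $\psi^{(p)}(\KK(X_p)) = \Psi_{p,p}(\KK_X(p,p))$ for every $p \in P$, since on rank-one operators $\Theta_{x,y} \in \KK(X_p)$ one has $\Psi_{p,p}(\Theta_{x,y}) = \psi_p(x)\psi_p(y)^* = \psi^{(p)}(\Theta_{x,y})$ and $\KK_X(p,p) = \KK(X_p)$ by Definition \ref{right tensor C-precategory}; (c) $Q^\psi_p = Q^\Psi_p$ for every $p \in P$, as already noted in Remark \ref{properties of family projections2}.

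With these identifications in hand, for (i) I would substitute (a) and (b) into \eqref{Toeplitz condition666} to recognise it as precisely the Toeplitz covariance condition of \cite[Definition 6.2]{kwa-larI} applied to $\Psi$. For (ii) I would substitute (a) and (c) into \eqref{Coburn condition666} to recognise it as precisely condition (C) of \cite[Definition 10.1]{kwa-larI} applied to $\Psi$. In both cases the equivalence is an equality of conditions, so no further argument is needed.

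The main (and essentially only) point requiring care is verifying the three identifications (a), (b), (c) above; once these are in place, the lemma is a bookkeeping statement that the definitions \eqref{Toeplitz condition666} and \eqref{Coburn condition666} formulated on the product system side match the abstract $C^*$-precategory conditions on the $\KK_X$ side. Since (a) and (b) are read off directly from \eqref{precategory representation}, and (c) has been supplied by Remark \ref{properties of family projections2}, there is no genuine obstacle to the proof.
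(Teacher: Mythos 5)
Your three identifications (a), (b), (c) are correct, but the premise that the lemma then reduces to bookkeeping is not: the conditions imposed on $\Psi$ in \cite[Definitions 6.2 and 10.1]{kwa-larI} are conditions at \emph{every} object $p\in P$, not just at the unit $e$. Concretely, condition (C) for $\Psi$ demands injectivity of $\KK(X_p)\ni a\mapsto \Psi_{p,p}(a)\prod_{i=1}^{n}(1-Q^{\Psi}_{q_i})$ for all $p\in P$ and all finite families $q_1,\ldots,q_n$ with $p\not\geq q_i$, whereas \eqref{Coburn condition666} only involves the coefficient algebra $A=\KK_X(e,e)$; the same discrepancy occurs for Toeplitz covariance versus \eqref{Toeplitz condition666}. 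So your substitution argument only yields the easy implications ($\Psi$ Toeplitz covariant $\Rightarrow$ $\psi$ Toeplitz covariant, and $\Psi$ satisfies (C) $\Rightarrow$ $\psi$ satisfies (C)), which are indeed immediate because the $e$-level instance of the abstract condition is exactly the product-system condition.

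The substantive content of the lemma is the converse direction in each item, and for that no identification suffices; one has to propagate the hypothesis at level $e$ to all levels $p$. This is where the paper's proof does real work: using \eqref{Nica equation for projections} to rewrite $Q^\psi_{p}\prod_{i}(1-Q^\psi_{q_i})$ as $\prod_{i}(Q^\psi_{p}-Q^\psi_{s_i})$ with $p^{-1}s_i\notin P^*$, observing that $K=\prod_{i}(1-Q^\psi_{p^{-1}s_i})H$ is $\psi_e$-invariant with $\psi_e|_K$ faithful, invoking \cite[Proposition 1.6(2)]{Fow-Rae} to get faithfulness of $\psi^{(p)}$ on $\overline{\psi_p(X_p)K}$, and proving the intertwining relation $\psi_p(X_p)Q^\psi_q=Q^\psi_{pq}\psi_p(X_p)$ via \cite[Proposition 9.4]{kwa-larI}. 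None of this is present in, or replaceable by, your identifications, so as written the proposal has a genuine gap exactly where the lemma is nontrivial.
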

\begin{proof}

(i). Since $\psi^{(q)}(\KK(X_q))=\psi_q(X_q)\psi_q(X_q)^*=\Psi_{q,q}(\KK(X_q))$, Toeplitz covariance of $\Psi$ immediately implies that of $\psi$.  The converse is left to the reader.

(ii). In one direction, it is immediate that $\psi$ satisfies condition (C) when $\Psi$ does. For the converse,  let $p\in P$  and $q_1,\ldots,q_n\in P$ be such that  $p\not\geq q_i$, for $i=1,\ldots,n$ for some $n\in \N$. We must show that $\KK(X_p)\ni a  \longrightarrow \psi^{(p)}(a)\prod_{i=1}^{n}(1-Q^\psi_{q_i})=\Psi_{p,p}(a)\prod_{i=1}^{n}(1-Q^\Psi_{q_i})$ is injective.

 It follows from \eqref{Nica equation for projections} that $Q^\psi_{p}\prod_{i=1}^{n}(1-Q^\psi_{q_i})=\prod_{i=1}^{k}(Q^\psi_{p}-Q^\psi_{s_i})$ for some $k\geq n$ and $p \leq s_i \not\leq p$. Then  $p^{-1}s_i\notin P^*$, for all $i=1,\ldots,k$. Hence the space $K=\prod_{i=1}^{k}(1-Q^\psi_{p^{-1}s_i})H$ is invariant for $\psi_e$ and $\psi_e|_{K}$ is faithful. By  \cite[Proposition 1.6(2)]{Fow-Rae}, this implies that $\overline{\psi_p(X_p)K}$ is invariant for $\psi^{(p)}$ and  $\psi^{(p)}|_{\psi_p(X_p)K}$ is faithful. For any $q\in P$, applying \cite[Proposition 9.4]{kwa-larI} twice we get
$$
\psi_p(X_p)Q^\psi_q=\Psi_{p,e}(X_p)Q^\Psi_q=\overline{\Psi}_{pq,p}(X_p\otimes 1_q)=Q^\Psi_{pq}\Psi_{p,e}(X_p)=Q^\psi_{pq}\psi_p(X_p).
$$
Therefore
$$
\psi_p(X_p)\prod_{i=1}^{k}(1-Q^\psi_{p^{-1}s_i})= \prod_{i=1}^{k}(Q^\psi_{p}-Q^\psi_{s_i}) \psi_p(X_p)= Q^\psi_{p}\prod_{i=1}^{n}(1-Q^\psi_{q_i}) \psi_p(X_p),
$$
which gives the desired injectivity.
\end{proof}
\begin{thm}[Faithfulness on core subalgebras]\label{thm:faithulness on the core subalgebras}
Let $\psi:X\to \B(H)$ be a Nica covariant representation of a compactly-aligned product system $X$. Let $\overline{\psi\rtimes P}:\mathcal{DR}(\NT(X))\to B(H)$  be the extension of $\psi\rtimes P:\NT(X)\to B(H)$ described in Lemma \ref{Nica relation Lemma}.
\begin{itemize}
\item[(i)] $\psi\rtimes P$ is faithful on the core $B_e^X$  of $\NT(X)$ if and only if $\psi$ is injective and Toeplitz covariant.

\item[(ii)]  $\overline{\psi\rtimes P}$ is faithful on  the core  $B_e^{i_{\LL_X}}$  of $\mathcal{DR}(\NT(X))$  if and only if  $\psi$ satisfies condition (C).
\end{itemize}
Moreover, if   $\phi_p(A)\subseteq \KK(X_p)$ for every $p\in P$, then the equivalent conditions in (i) are satisfied if and only if those in (ii) hold.
\end{thm}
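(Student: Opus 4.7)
The strategy is to reduce the theorem to the uniqueness theorems for Nica-Toeplitz algebras of $C^*$-precategories from \cite{kwa-larI}. Let $\Psi=\{\Psi_{p,q}\}_{p,q\in P}$ be the representation of $\KK_X$ associated to $\psi$ by Proposition~\ref{going forward cor}; by Proposition~\ref{going forward prop} it is Nica covariant, and its canonical extension $\overline\Psi$ to $\LL_X$ satisfies $\overline{\psi\rtimes P}=\overline\Psi\rtimes P$ by Lemma~\ref{Nica relation Lemma}. Under the identifications $\NT(X)\cong\NT_{\LL_X}(\KK_X)$ and $\DR(\NT(X))=\NT(\LL_X)$, the cores $B_e^X$ and $B_e^{i_{\LL_X}}$ are exactly the canonical $e$-cores in the two $C^*$-precategory Nica-Toeplitz algebras, so (i) and (ii) reduce to core-faithfulness statements for $\Psi\rtimes P$ and $\overline\Psi\rtimes P$ respectively.

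Parts (i) and (ii) then follow directly by combining the faithfulness-on-the-core theorems from \cite{kwa-larI} with Lemma~\ref{conditions T and C for product systems}. For (i), the former gives that $\Psi\rtimes P$ is faithful on $B_e^{i_{\KK_X}}$ iff $\Psi$ is injective and Toeplitz covariant; injectivity of $\Psi$ is equivalent to injectivity of $\Psi_{e,e}=\psi_e$ (by the $C^*$-equality inherent in the precategory structure), and thus to injectivity of $\psi$ via the identity $\|\psi_p(x)\|^2=\|\psi_e(\langle x,x\rangle_A)\|$; Lemma~\ref{conditions T and C for product systems}(i) then transfers Toeplitz covariance between $\Psi$ and $\psi$. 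Part (ii) is analogous, applying the corresponding result from \cite{kwa-larI} to $\overline\Psi$ on $\NT(\LL_X)$ and invoking Lemma~\ref{conditions T and C for product systems}(ii), together with the observation from Remark~\ref{properties of family projections2} that the projections $Q^{\overline\Psi}_p$ coincide with $Q^\psi_p$.

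For the last assertion, one direction is unconditional: setting $n=0$ in condition (C) yields injectivity of $\psi_e$, and if $\psi_e(a)=\sum_i c_i$ with $c_i\in\psi^{(q_i)}(\KK(X_{q_i}))$ then $c_i(1-Q^\psi_{q_i})=0$ forces $\psi_e(a)\prod_i(1-Q^\psi_{q_i})=0$, whence $a=0$ and therefore $\psi_e(a)=0$, proving Toeplitz covariance. The converse is the main obstacle and is where the hypothesis $\phi_p(A)\subseteq\KK(X_p)$ enters, via the identity $\psi_e(a)Q^\psi_p=\psi^{(p)}(\phi_p(a))\in\psi^{(p)}(\KK(X_p))$ valid for all $p\in P$. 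Expanding $1-\prod_i(1-Q^\psi_{q_i})$ by inclusion-exclusion and using the Nica relation $Q^\psi_{q_{i_1}}\cdots Q^\psi_{q_{i_k}}=Q^\psi_r$, where $r$ is the right LCM of $\{q_{i_j}\}$ when it exists and the product is $0$ otherwise (from Lemma~\ref{Nica relation Lemma}), one rewrites $\psi_e(a)-\psi_e(a)\prod_i(1-Q^\psi_{q_i})$ as an element of $\spane\{\psi^{(r)}(\KK(X_r))\}$; vanishing of the product then places $\psi_e(a)$ in this span, Toeplitz covariance forces $\psi_e(a)=0$, and injectivity concludes $a=0$. The technical care needed is to verify that the right LCMs arising in the expansion lie in $P\setminus P^*$ so that Toeplitz covariance applies directly; this can be handled by the same kind of reduction as in the proof of Lemma~\ref{conditions T and C for product systems}(ii), where $Q^\psi_p\prod_i(1-Q^\psi_{q_i})$ is brought to the form $\prod_j(Q^\psi_p-Q^\psi_{s_j})$ with $p^{-1}s_j\notin P^*$.
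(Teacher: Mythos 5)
Your proposal is correct and, for parts (i) and (ii), follows essentially the paper's route: pass from $\psi$ to the associated representation $\Psi$ of $\KK_X$ (and its extension $\overline{\Psi}$ to $\LL_X$), transfer Toeplitz covariance and condition (C) via Lemma \ref{conditions T and C for product systems}, and invoke the abstract faithfulness-on-the-core result of \cite{kwa-larI} (Corollary 6.3 there) for $\Psi$ and $\overline{\Psi}$ respectively. The one step you leave implicit in (ii) is the bridge between ``$\Psi$ satisfies condition (C)'' and the hypothesis actually fed into the core theorem for $\overline{\Psi}$, namely that $\overline{\Psi}$ is injective and Nica-Toeplitz covariant; the paper supplies this via \cite[Corollary 10.5]{kwa-larI}, so your ``analogous'' is fine in spirit but hides a citation rather than a triviality. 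Where you genuinely diverge is the ``Moreover'' clause: the paper simply quotes Lemma \ref{non-degeneracy of K_X} together with \cite[Proposition 10.4]{kwa-larI}, whereas you give a direct argument --- the unconditional implication (C) $\Rightarrow$ injective and Toeplitz covariant via $c_i(1-Q^\psi_{q_i})=0$, and the converse under $\phi_p(A)\subseteq\KK(X_p)$ via $\psi_e(a)Q^\psi_p=\psi^{(p)}(\phi_p(a))$ and an inclusion--exclusion expansion of $\prod_i(1-Q^\psi_{q_i})$ using the Nica relation for the projections. This argument is sound; the point you flag, that the right LCMs $r$ appearing in the expansion lie in $P\setminus P^*$, is easy: $rP\subseteq q_{i_1}P$ with $q_{i_1}\notin P^*$ forces $rP\subsetneq P$ (left cancellativity gives that $qP=P$ implies $q\in P^*$), so no detour through the reduction in Lemma \ref{conditions T and C for product systems}(ii) is needed. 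The trade-off is that your version makes the final equivalence self-contained at the level of product systems, while the paper's version keeps the whole proof a three-line reduction to the $C^*$-precategory machinery.
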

\begin{proof}
Item (i) follows from Lemma  \ref{conditions T and C for product systems}(i) and \cite[Corollary 6.3]{kwa-larI} applied to $\Psi$. By Lemma  \ref{conditions T and C for product systems}(ii) and \cite[Corollary 10.5]{kwa-larI},
$\psi$ satisfies condition (C) if and only if $\overline{\Psi}$ is Nica-Toeplitz covariant and injective. Hence (ii) follows from \cite[Corollary 6.3]{kwa-larI}  applied to $\overline{\Psi}$. The last claim of the theorem follows from Lemma \ref{non-degeneracy of K_X} and \cite[Proposition 10.4]{kwa-larI}.
\end{proof}

\subsection{Uniqueness theorems}\label{subsect:uniqueness theorems}
We aim to prove a uniqueness result for $\NT(X)$. Our result, see Theorem~\ref{Uniqueness Theorem for  product systems I}, may be considered a far-reaching generalization of \cite[Theorem 2.1]{Fow-Rae} and \cite[Theorem 3.7]{LR}, and was motivated in part by the need to better understand both hypotheses and claims of \cite[Theorem 7.2]{F99}. The proof will employ our abstract uniqueness theorem for $C^*$-algebras associated to well-aligned ideals in $C^*$-precategories, cf. \cite[Corollary 10.14]{kwa-larI}.
As spin-offs of our strategy of proof we will obtain a uniqueness result in a new context, see Theorem~\ref{Uniqueness for product systems II}, and a generalization of \cite[Theorem 5.1]{FR}.

We start with some preparation. We recall that aperiodicity for the group of right tensoring $\{\otimes 1_h\}_{h\in P ^*}$ in a $C^*$-precategory was introduced in \cite[Definition 10.8]{kwa-larI}. The notion of aperiodic Fell bundle is from \cite{KS}. Further, for any product system $X$ the spaces $\{X_h\}_{h\in P^*}$ form a saturated Fell bundle over the discrete group of units $P^*$, see Remark~\ref{rem:on essentiality and Fell bundles}. By  \cite[Theorem 9.8]{KM}, $\{X_h\}_{h\in P^*}$ is \emph{aperiodic} if and only if its dual action on the spectrum
$\widehat{A}$ is \emph{topologically free}, at least when $A$ contains an essential ideal which is separable or of Type I.
\begin{prop}\label{lem: aperiodicity for product systems}
If $X$ is a product system over a semigroup $P$, then the group $\{\otimes 1_{h}\}_{h\in P^*}$ of automorphisms of  $\KK_X$ is aperiodic if and only if the Fell bundle $\{X_h\}_{h\in P^*}$ is aperiodic.
\end{prop}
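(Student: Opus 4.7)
The plan is to prove both implications by directly unpacking the two notions of aperiodicity and reducing them to the same Kishimoto-type condition on the coefficient algebra $A = \KK_X(e,e)$.

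First I would recall the two definitions explicitly. By \cite[Definition 10.8]{kwa-larI}, aperiodicity of $\{\otimes 1_h\}_{h\in P^*}$ on $\KK_X$ amounts to a Kishimoto-type condition: for every $h\in P^*\setminus\{e\}$, every $p\in P$, every positive $a\in\KK_X(p,p)=\KK(X_p)$, and every $\varepsilon>0$, there is a positive contraction $b\in\KK_X(p,p)$ with $\|bab\|>\|a\|-\varepsilon$ and $\|b c (b\otimes 1_h)\|<\varepsilon$ (or the analogous formulation) for all contractions $c$ in the appropriate morphism space $\KK_X(ph,p)$. Aperiodicity of the Fell bundle $\{X_h\}_{h\in P^*}$, as in \cite{KS}, is the analogous condition but stated on $A$: for every $h\neq e$ and every positive $a\in A$ with $\varepsilon>0$, there is a positive contraction $b\in A$ with $\|bab\|>\|a\|-\varepsilon$ and $\|bxb\|<\varepsilon$ for all contractions $x\in X_h$.

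The bridge between the two frameworks is the fact noted in Remark \ref{rem:on essentiality and Fell bundles}: each $X_h$ ($h\in P^*$) is an equivalence $A$-bimodule, so $\otimes 1_h\colon \KK_X(p,q)\to\KK_X(ph,qh)$ is a $C^*$-precategory isomorphism, and in particular $\otimes 1_h\colon A\to\KK(X_h)$ is a $\ast$-isomorphism intertwining the bimodule structure of $X_h$ with the Fell bundle multiplication. The morphism spaces $\KK_X(ph,p)\cong \KK(X_p\otimes_A X_h, X_p)$ may, via the equivalence bimodule structure, be identified with $\KK(X_p)\otimes_A X_{h^{-1}}$, and the relevant products $b\cdot c\cdot (b\otimes 1_h)$ translate, modulo these identifications, into products of the form $b_0 x b_0$ with $b_0\in A$ and $x\in X_h$ (after moving everything through $X_p$, which is full).

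For the ``if'' direction, assume the Fell bundle is aperiodic. Given $h\neq e$, $p\in P$, $a\in\KK(X_p)_+$ and $\varepsilon>0$, pick an approximate identity element in $X_p X_p^*$ to reduce to the case $p=e$ via the isomorphism $\KK(X_p)\cong X_p\otimes_A X_p^*$, apply Fell bundle aperiodicity to obtain $b_0\in A$, and lift to $b:=\phi_p(b_0)$ (or more precisely the compact operator it induces on $X_p$); the fullness and equivalence-bimodule properties ensure $\|bab\|>\|a\|-\varepsilon$ and $\|bcb\otimes 1_h\|<\varepsilon$ for all morphisms $c\in\KK_X(ph,p)$. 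Conversely, the ``only if'' direction is immediate on specializing the precategory aperiodicity condition to $p=e$, since $\KK_X(h,e)\cong X_{h^{-1}}\cong X_h^*$ as a space recovers the Fell bundle fiber up to the mutual adjunction $\flat_h$ of Remark \ref{rem:on essentiality and Fell bundles}.

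I expect the main obstacle to be purely notational: carefully tracking how the right tensoring on morphism spaces between different $X_p$ corresponds, under the equivalence bimodule identifications, to the Fell bundle action on $A$. The mathematical content is forced by the equivalence bimodule structure of each $X_h$ for $h\in P^*$, so once the identifications are set up the two Kishimoto-type conditions become literally the same statement.
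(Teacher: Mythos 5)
Your structural insight is the same as the paper's: the ``only if'' direction is the specialization to $p=e$ (since $X_h=\KK_X(h,e)$), and the bridge for the converse is that $X_p$ is an equivalence $\KK_X(p,p)$--$A$ bimodule under which $\KK_X(ph,p)\cong X_p\otimes_A X_h\otimes_A \widetilde X_p$. However, where the paper finishes by invoking the Morita invariance of aperiodicity (\cite[Lemma 6.4 and Corollary 6.3]{KM}), you try to carry out the transfer of the Kishimoto-type condition by hand, and the key step of that transfer is not justified. Concretely: given $h\in P^*\setminus\{e\}$, a positive $a\in\KK(X_p)$ (equivalently, a nonzero hereditary subalgebra $D\subseteq\KK(X_p)$) and $c\in\KK_X(ph,p)$, the aperiodicity condition for $\{\otimes 1_h\}$ demands an element $b$ located in the hereditary subalgebra determined by $a$ (the requirement $\|bab\|>\|a\|-\varepsilon$ forces $b$ to sit where $a$ is large). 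Your proposal to ``apply Fell bundle aperiodicity to obtain $b_0\in A$ and lift to $b:=\phi_p(b_0)$'' does not produce such an element: $\phi_p(b_0)$ need not be compact (nothing in the proposition assumes $\phi_p(A)\subseteq\KK(X_p)$), and, more seriously, a general positive $a\in\KK(X_p)$ and a general hereditary subalgebra of $\KK(X_p)$ bear no relation to elements of the form $\phi_p(b_0)$, so neither $\|bab\|>\|a\|-\varepsilon$ nor membership in $D$ can be ensured this way. The phrase ``the fullness and equivalence-bimodule properties ensure\dots'' is exactly the point that needs an argument: one must transfer hereditary subalgebras of $\KK(X_p)$ to hereditary subalgebras of $A$ through the equivalence bimodule $X_p$ and control the displayed norms under that transfer, which is precisely the content of the Morita-invariance theorem for aperiodicity proved in \cite{KM}.

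So the gap is not notational, as you predict, but substantive: without citing (or essentially reproving) \cite[Corollary 6.3]{KM}, the ``if'' direction is not established. The fix is short: after observing, as you do, that $\KK_X(ph,p)$ with left action via $\otimes 1_h$ is a $C^*$-correspondence over $\KK(X_p)$ isomorphic to $X_p\otimes_A X_h\otimes_A\widetilde X_p$, conclude that it is Morita equivalent (via the equivalence bimodule $X_p$) to the equivalence $A$-bimodule $X_h$, and then quote the result that aperiodicity is preserved under this equivalence, rather than attempting the elementwise lifting through $\phi_p$. (A minor additional point: your restatement of the aperiodicity condition with ``for all contractions $c$'' simultaneously is not the correct quantifier order --- the condition is for each fixed element of the bimodule, or equivalently for each nonzero hereditary subalgebra --- though your hedge ``or the analogous formulation'' suggests you were aware of this.)
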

\begin{proof}
The only if part is trivial as $X_h=\KK_{X}(h,e)$, $h\in P^*$. For the converse, let $p\in P$ and $h\in P^*\neq \{e\}$. We may view $X_p$ as an equivalence $\KK_X(p,p)$-$A$-bimodule, in an obvious way. Also we  may view $\KK_X(ph,p)$ as a $C^*$-correspondence over $\KK_X(p,p)$  with left action implemented by $\otimes 1_h$. With $\widetilde{X}_{p}$ denoting the dual correspondence, we clearly have isomorphisms of $C^*$-correspondences
$$
\KK_X(ph,p)\cong X_{ph}\otimes_A \widetilde{X}_{p}\cong X_{p}\otimes_A X_{h}\otimes_A \widetilde{X}_{p}
$$
Hence $\KK_X(ph,p)$ is an equivalence bimodule Morita equivalent to the equivalence $A$-bimodule $X_h$, cf. \cite[Lemma 6.4]{KM}.
Thus the assertion follows from \cite[Corollary 6.3]{KM}.
\end{proof}

We recall that  various  criteria for amenability of ideals in right-tensor $C^*$-precategories are  given in \cite[Section 8]{kwa-larI}.  For example, any such ideal is amenable when the underlying semigroup admits a controlled map into an amenable group, see \cite[Theorem 8.4]{kwa-larI} in conjunction with the fact that any Fell bundle over an amenable group has amenable full sectional $C^*$-algebra.

\begin{thm}[Uniqueness Theorem for $\NT(X)$]\label{Uniqueness Theorem for  product systems I}
Let $X$ be a compactly-aligned product system  over a right LCM semigroup $P$ such that  $\KK_X$ is amenable. Suppose that either
$P^*=\{e\}$ or that the Fell bundle $\{X_h\}_{h\in P^*}$  is aperiodic.

 Consider the following conditions
on a Nica covariant representation $\psi:X\to B(H)$:
\begin{itemize}
\item[(i)] $\psi$ satisfies condition (C);
\item[(ii)] $\psi\rtimes P$ is an isomorphism from $\NT(X)$  onto $\clsp\{\psi(x)\psi(y)^*: x,y \in X \}
$;
\item[(iii)] $\psi$ is injective and Toeplitz covariant.
\end{itemize}
Then  (i)$\Rightarrow$(ii)$\Rightarrow$(iii)  and if  $\phi_p(A)\subseteq \KK(X_p)$ for every $p\in P$, then all these three conditions are equivalent.
\end{thm}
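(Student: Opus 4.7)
The plan is to reduce the theorem to the abstract uniqueness result \cite[Corollary 10.14]{kwa-larI} for Nica--Toeplitz algebras of well-aligned ideals in right-tensor $C^*$-precategories. Given a Nica covariant representation $\psi\colon X\to \B(H)$, I pass to the associated representation $\Psi=\{\Psi_{p,q}\}_{p,q\in P}$ of $\KK_X$ via Proposition~\ref{going forward cor}, which is again Nica covariant by Proposition~\ref{going forward prop}. Under the isomorphism $\NT(X)\cong \NT_{\LL_X}(\KK_X)$, the integrated form $\psi\rtimes P$ corresponds to $\Psi\rtimes P$. The hypotheses of \cite[Corollary 10.14]{kwa-larI} translate as follows: amenability of $\KK_X$ is assumed; when $P^*\neq\{e\}$, aperiodicity of $\{X_h\}_{h\in P^*}$ is equivalent, by Proposition~\ref{lem: aperiodicity for product systems}, to aperiodicity of the group $\{\otimes 1_h\}_{h\in P^*}$ of automorphisms of $\KK_X$; and when $P^*=\{e\}$ the relevant aperiodicity hypothesis is vacuous.

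For the implication (i)$\Rightarrow$(ii), assume $\psi$ satisfies condition (C). By Lemma~\ref{conditions T and C for product systems}(ii), $\Psi$ satisfies condition (C) too. Theorem~\ref{thm:faithulness on the core subalgebras}(ii) then says that $\overline{\psi\rtimes P}$ is faithful on the core $B_e^{i_{\LL_X}}$ of $\DR(\NT(X))$, and since $B_e^X\subseteq B_e^{i_{\LL_X}}$, the restriction $\psi\rtimes P$ is faithful on $B_e^X$. Armed with faithfulness on the core, amenability of $\KK_X$, and aperiodicity of $\{\otimes 1_h\}_{h\in P^*}$, the abstract uniqueness result \cite[Corollary 10.14]{kwa-larI} upgrades this to faithfulness of $\Psi\rtimes P$ on all of $\NT_{\LL_X}(\KK_X)$. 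Transporting back via the isomorphism of Proposition~\ref{going forward prop}, $\psi\rtimes P$ is an isomorphism of $\NT(X)$ onto $C^*(\psi(X))=\clsp\{\psi(x)\psi(y)^*: x,y\in X\}$, where the last equality uses Nica covariance together with formula~\eqref{precategory representation}.

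The implication (ii)$\Rightarrow$(iii) is immediate: if $\psi\rtimes P$ is an isomorphism, it is in particular faithful on $B_e^X$, so Theorem~\ref{thm:faithulness on the core subalgebras}(i) gives that $\psi$ is injective and Toeplitz covariant. For the final statement, under the assumption $\phi_p(A)\subseteq \KK(X_p)$ for all $p\in P$, Lemma~\ref{non-degeneracy of K_X} yields $\KK_X\otimes 1\subseteq \KK_X$, and Theorem~\ref{thm:faithulness on the core subalgebras} then says that condition (C) (equivalent to faithfulness on $B_e^{i_{\LL_X}}$) is equivalent to $\psi$ being injective and Toeplitz covariant (equivalent to faithfulness on $B_e^X$). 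This closes the loop (iii)$\Rightarrow$(i).

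The main obstacle is organizational rather than computational: one must verify that the abstract hypotheses of \cite[Corollary 10.14]{kwa-larI} are indeed met in the product system setting. The nontrivial piece is the translation of aperiodicity---specifically, that aperiodicity of the Fell bundle $\{X_h\}_{h\in P^*}$ transfers to aperiodicity of the right-tensoring automorphisms of $\KK_X$---which is precisely what Proposition~\ref{lem: aperiodicity for product systems} delivers via the Morita equivalence $\KK_X(ph,p)\sim X_h$. Once this bridge and the core-faithfulness correspondence in Theorem~\ref{thm:faithulness on the core subalgebras} are in hand, the proof is a matter of assembling the pieces.
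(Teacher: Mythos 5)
Your proposal is correct and follows essentially the same route as the paper: the paper's proof is a one-line reduction to \cite[Corollary 10.14]{kwa-larI} via Lemmas \ref{non-degeneracy of K_X} and \ref{conditions T and C for product systems} and Proposition \ref{going forward prop}, which is exactly the translation you carry out (with the aperiodicity bridge of Proposition \ref{lem: aperiodicity for product systems} and the core-faithfulness dictionary of Theorem \ref{thm:faithulness on the core subalgebras} spelled out rather than left implicit). Your unpacking of the three implications is consistent with how the abstract corollary is meant to be applied, so no gap.
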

\begin{proof}Taking into account Lemmas \ref{non-degeneracy of K_X}, \ref{conditions T and C for product systems} and Proposition \ref{going forward prop}, the assertion follows from \cite[Corollary 10.14]{kwa-larI}.
\end{proof}

In general, condition (i) in Theorem \ref{Uniqueness Theorem for  product systems I} is stronger then (iii), cf. Example \ref{ex:DR-Oinfty}. It is an open problem whether, under the assumptions of Theorem \ref{Uniqueness Theorem for  product systems I},  conditions (ii) and (iii) are always equivalent. We believe the answer to be affirmative and in the next result confirm this under an  assumption of aperiodicity.

\begin{prop}\label{Abstract uniqueness}
Suppose that the LCM semigroup $P$ is a subsemigroup of a group $G$. Let $X$ be a compactly-aligned product system  over
$P$, and let $\B^\theta=\{B_g^\theta\}_{g\in G}$ be the Fell
bundle associated to $\KK_X$ and $\theta=\id$ in \cite[Theorem 8.4]{kwa-larI}. If $\B^\theta$ is amenable and aperiodic, then for any Nica covariant representation $\psi:X\to B(H)$, the representation $\psi\rtimes P$ of $\NT(X)$ is faithful if and only if $\psi$ is injective and Toeplitz covariant.
\end{prop}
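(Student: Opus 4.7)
The plan is to combine Theorem \ref{thm:faithulness on the core subalgebras}(i), which characterizes faithfulness of $\psi\rtimes P$ on the core $B_e^X$, with the standard uniqueness principle for amenable aperiodic Fell bundles, which detects faithfulness on all of $C^*(\B^\theta)$ from faithfulness on the unit fiber. The ``only if'' direction is immediate: if $\psi\rtimes P$ is faithful on $\NT(X)$ then a fortiori it is faithful on $B_e^X$, so Theorem \ref{thm:faithulness on the core subalgebras}(i) forces $\psi$ to be injective and Toeplitz covariant. All the work is in the converse.

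For the converse I would first pass through Proposition \ref{going forward prop} to $\NT_{\LL_X}(\KK_X)\cong \NT(X)$ and the Nica covariant representation $\Psi$ of $\KK_X$ associated to $\psi$. By \cite[Theorem 8.4]{kwa-larI} applied to $\theta=\id$ (using the embedding $P\hookrightarrow G$), the algebra $\NT_{\LL_X}(\KK_X)$ is topologically $G$-graded by the Fell bundle $\B^\theta$, and amenability identifies it with the full cross-sectional algebra $C^*(\B^\theta)$. I would then verify the key identification $B_e^\theta=B_e^X$: the inclusion $B_e^X\subseteq B_e^\theta$ is immediate because $i_{\KK_X}(a)$ for $a\in \KK_X(p,q)$ has $G$-degree $pq^{-1}$, which equals $e$ precisely when $p=q$; for the reverse inclusion one invokes Nica covariance, cf.\ \eqref{Nica covariance}, to reduce any finite product of generators whose $G$-degrees multiply to $e$ to a single $i_{\KK_X}(c)$ with $c\in \KK_X(r,r)$.

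With this in hand, I would invoke the uniqueness theorem for amenable aperiodic Fell bundles over discrete groups, which is precisely the input already used in the derivation of \cite[Corollary 10.14]{kwa-larI} and which rests on the framework of \cite{KS} and \cite{KM}: a representation of $C^*(\B^\theta)$ is faithful if and only if its restriction to the unit fiber is faithful. Applied to $\Psi\rtimes P$, this reduces faithfulness on $\NT(X)$ to faithfulness on $B_e^\theta=B_e^X$, which by Theorem \ref{thm:faithulness on the core subalgebras}(i) is equivalent to injectivity and Toeplitz covariance of $\psi$.

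The main obstacle I anticipate is pinning down the precise form of the aperiodicity-based uniqueness theorem for $\B^\theta$ (ensuring that the notion of aperiodicity imposed on $\B^\theta$ matches the one used in \cite{KS, KM}) and verifying the identification of $B_e^\theta$ with the core $B_e^X$ via iterated Nica covariance. Once both are secured, the rest of the argument is a straightforward assembly of results already established in the paper and in \cite{kwa-larI}.
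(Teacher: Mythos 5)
Your proposal is correct and follows essentially the same route as the paper: the paper reduces faithfulness of $\psi\rtimes P$ on $\NT(X)$ to faithfulness on the core $B_e^\theta=B_e^X$ by citing \cite[Proposition 12.10]{kwa-larI} (cf. \cite[Corollary 4.3]{KS}), which packages exactly the amenable-aperiodic Fell bundle uniqueness and the identification of $B_e^\theta$ with $B_e^X$ that you sketch, and then applies Theorem \ref{thm:faithulness on the core subalgebras}(i). The details you flag as remaining obstacles are precisely what those cited results supply, so your assembly matches the paper's proof.
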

\begin{proof}
By \cite[Proposition 12.10]{kwa-larI}, see also  \cite[Corollary 4.3]{KS},  $\psi\rtimes P$ is faithful on $\NT(X)$ if and only if it is faithful on the core subalgebra  $B_e^\theta= B_e^X$. By Theorem \ref{thm:faithulness on the core subalgebras}, this holds if and only if   $\psi$ is injective and Toeplitz covariant. See also \cite[Remark 10.13]{kwa-larI}.
\end{proof}
We can use  condition (C) in its full force by  exploiting the Doplicher-Roberts version of the Nica-Toeplitz algebra.

\begin{thm}[Uniqueness Theorem for $\mathcal{DR}(\NT(X))$]\label{Uniqueness for product systems II}
Let $X$ be a compactly-aligned product system  over a right LCM semigroup $P$. Suppose   that either
$P^*=\{e\}$ or that the Fell bundle $\{X_h\}_{h\in P^*}$  is aperiodic. Assume also that $\LL_X$ is  amenable.
 Then  for a Nica covariant representation $\psi:X\to B(H)$ the following are equivalent:
\begin{itemize}
\item[(i)] $\psi$ satisfies  condition (C);
\item[(ii)] $\overline{\psi\rtimes P}$ is an isomorphism from $\mathcal{DR}(\NT(X))$ onto the
closed linear span of operators  $T$ satisfying $T\in \psi(X_e)\cup \psi(X_e)^*$ or
$$
T\in Q_{p}^{\psi}B(H)Q_{q}^{\psi}\, \text{ where }\,  T\psi(X_q)\subseteq \psi(X_p)\, \text{ and } \,T^{*}\psi(X_p)\subseteq \psi(X_q), \text{ for } p,q\in P\setminus\{e\}.
$$
\end{itemize}
The isomorphism in item (ii) restricts, under the embedding $\NT(X)\hookrightarrow \mathcal{DR}(\NT(X)))$, to a natural isomorphism
$\NT(X) \cong \clsp\{\psi(x)\psi(y)^*: x,y \in X \}.$
\end{thm}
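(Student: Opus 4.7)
The plan is to mirror the strategy used for Theorem~\ref{Uniqueness Theorem for  product systems I}, but applied to the whole $C^*$-precategory $\LL_X$ (viewed as a well-aligned ideal in itself) rather than to the ideal $\KK_X$. The underlying machine is again the abstract uniqueness result \cite[Corollary 10.14]{kwa-larI}, and the role previously played by $\NT(X)\cong\NT_{\LL_X}(\KK_X)$ is now played by $\DR(\NT(X))=\NT(\LL_X)$.

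First I would translate the data. By Lemma~\ref{Nica relation Lemma}, $\overline{\psi\rtimes P}=\overline{\Psi}\rtimes P$, where $\Psi:\KK_X\to B(H)$ is the representation associated to $\psi$ through Proposition~\ref{going forward cor} and $\overline{\Psi}$ is its canonical extension to $\LL_X$ provided by \eqref{formula defining extensions of right tensor representations}. By Proposition~\ref{lem: aperiodicity for product systems}, aperiodicity of the Fell bundle $\{X_h\}_{h\in P^*}$ is equivalent to aperiodicity of the right tensoring $\{\otimes 1_h\}_{h\in P^*}$ on $\LL_X$, which is the hypothesis required by \cite[Corollary 10.14]{kwa-larI}; amenability of $\LL_X$ is assumed. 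With these ingredients in place, Lemma~\ref{conditions T and C for product systems}(ii) lets me identify condition (C) for $\psi$ with condition (C) for $\Psi$, which in turn by \cite[Corollary 10.5]{kwa-larI} is equivalent to $\overline{\Psi}$ being injective and Nica-Toeplitz covariant. Feeding this into \cite[Corollary 10.14]{kwa-larI} gives (i)$\Leftrightarrow$($\overline{\Psi}\rtimes P$ is an isomorphism onto $C^*(\overline{\Psi}(\LL_X))$). The converse direction (ii)$\Rightarrow$(i) also follows from this equivalence, or alternatively just from restricting an isomorphism to the core $B_e^{i_{\LL_X}}$ and invoking Theorem~\ref{thm:faithulness on the core subalgebras}(ii).

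The step that will require the most care is to identify $C^*(\overline{\Psi}(\LL_X))$ with the concrete closed linear span described in (ii). For $p=e$ or $q=e$ the relevant elements of $\LL_X$ are in $\KK_X$, so $\overline{\Psi}$ agrees with $\Psi$ there and produces exactly $\psi_e(A)$, $\psi(X)$ and $\psi(X)^*$; these are absorbed into the $\psi(X_e)\cup\psi(X_e)^*$ clause and the $p,q\neq e$ clause via an approximation argument. For $p,q\in P\setminus\{e\}$ and $a\in\LL_X(p,q)=\LL(X_q,X_p)$, relation \eqref{extensions relations} gives
\[
\overline{\Psi}_{p,q}(a)=Q_p^\psi\,\overline{\Psi}_{p,q}(a)\,Q_q^\psi,
\]
while \eqref{formula defining extensions of right tensor representations} applied to rank-one elements yields the intertwining relations $\overline{\Psi}_{p,q}(a)\psi_q(y)=\psi_p(ay)$ and $\overline{\Psi}_{p,q}(a)^*\psi_p(x)=\psi_q(a^*x)$. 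Thus $\overline{\Psi}_{p,q}(a)$ lies in the described span. The main obstacle is the converse: given $T\in Q_p^\psi B(H)Q_q^\psi$ with $T\psi(X_q)\subseteq\psi(X_p)$ and $T^*\psi(X_p)\subseteq\psi(X_q)$, one must manufacture an $a\in\LL(X_q,X_p)$ with $\overline{\Psi}_{p,q}(a)=T$. Here I would use injectivity of $\psi_e$ (which, in the interesting direction, comes from condition (C)) to identify $X_p$ with $\overline{\psi_p(X_p)Q_e^\psi H}$ as a right Hilbert $A$-module via Proposition \ref{going forward cor}, and define $a$ by $a(y)$ = the unique preimage under $\psi_p$ of $T\psi_q(y)$, whose adjointability and boundedness are guaranteed by the hypothesis on $T^*$.

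Finally, the ``moreover'' clause follows by restricting the isomorphism along the embedding $\NT(X)\hookrightarrow\DR(\NT(X))$ from the commutative diagram recalled above. Under this embedding, $\NT(X)$ is generated by $i_X(X)$ and, by Nica covariance together with the defining property of a product system, is equal to $\clsp\{i_X(x)i_X(y)^*:x,y\in X\}$. The restriction of $\overline{\psi\rtimes P}$ to this subalgebra is $\psi\rtimes P$, whose image is $\clsp\{\psi(x)\psi(y)^*:x,y\in X\}$, yielding the announced identification.
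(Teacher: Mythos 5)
Your proposal follows essentially the same route as the paper: translate $\psi$ into the representation $\Psi$ of $\KK_X$ and its extension $\overline{\Psi}$ to $\LL_X$, invoke the abstract uniqueness machinery of \cite{kwa-larI} for the Nica--Toeplitz algebra $\NT(\LL_X)=\DR(\NT(X))$, and then identify $\overline{\Psi}(\LL(X_q,X_p))$ with the set of operators $T\in Q_p^\psi B(H)Q_q^\psi$ intertwining $\psi_q$ and $\psi_p$. The paper does the last step exactly as you do, using that $\psi$ is isometric on fibers and that $\overline{\Psi}(a)$ is determined by $\overline{\Psi}(a)\psi_q(x)=\psi_p(ax)$ together with $\overline{\Psi}(a)(Q_q^\psi)^\perp=0$; the only structural difference is that the paper quotes \cite[Theorem 10.15]{kwa-larI} directly, whereas you assemble the abstract input from \cite[Corollary 10.14]{kwa-larI} applied to $\LL_X$ viewed as a well-aligned ideal in itself, combined with \cite[Corollary 10.5]{kwa-larI}.

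One step in that assembly is not justified as written: you claim that Proposition~\ref{lem: aperiodicity for product systems} shows aperiodicity of the Fell bundle $\{X_h\}_{h\in P^*}$ is equivalent to aperiodicity of the right tensoring $\{\otimes 1_h\}_{h\in P^*}$ on $\LL_X$, but that proposition only treats $\KK_X$. Since in your route the well-aligned ideal fed into \cite[Corollary 10.14]{kwa-larI} is $\LL_X$ itself, the aperiodicity hypothesis you need lives at the level of $\LL_X$, and passing from aperiodicity on the essential ideal $\KK_X$ to aperiodicity on $\LL_X$ requires an additional argument (or a version of the abstract theorem whose hypotheses are stated at the level of $\KK_X$, which is precisely what the paper's reference \cite[Theorem 10.15]{kwa-larI} provides). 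Apart from this citation mismatch, the argument, including the treatment of the fibers with $p=e$ or $q=e$ and the ``moreover'' clause via the embedding $\NT(X)\hookrightarrow\DR(\NT(X))$, matches the paper's proof.
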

\begin{proof} In view of Lemmas \ref{non-degeneracy of K_X}, \ref{conditions T and C for product systems}, we may apply \cite[Theorem 10.15]{kwa-larI}. To finish the proof, we need to show that for any $p,q\in P\setminus\{e\}$, we have
$$
\overline{\Psi}(\LL(X_q,X_p))=\{T\in Q_{p}^{\psi}B(H)Q_{q}^{\psi}:T\psi(X_q)\subseteq \psi(X_p)\,\text{ and } \, T^{*}\psi(X_p)\subseteq \psi(X_q)\}
$$
where $\overline{\Psi}:\LL_X\to B(H)$ is the extension of the Nica-Toeplitz representation  $\Psi:=\{\Psi_{p,q}\}_{p,q\in P}$ of $\KK_X$. This equality readily follows from the fact that $\psi$ is injective (and hence isometric on each fiber) and for $a\in \LL(X_q,X_p)$,  $\overline{\Psi}(a)$ is  determined by the formulas $\overline{\Psi}(a)\psi_q(x)=\psi_p(ax)$ and $\overline{\Psi}(a) (Q_{q}^{\psi})^\bot=0$, where $x\in X_q$.
  \end{proof}

	\begin{ex}[Doplicher-Roberts version of $\OO_\infty$]\label{ex:DR-Oinfty} We will now illustrate the above uniqueness results for $C^*$-algebras of product systems in the case of the Cuntz algebra $\OO_\infty$. This in particular will explain the results and phenomena encountered in \cite{Fow-Rae}.

Let $\{u_i: i\in \N\}\subset \B(H)$ be a family of isometries with orthogonal ranges, so 	$u_i^*u_j=\delta_{i,j} 1$ for all $i,j \in \N$. For $n>0$ and any finite collection $i_1,\dots, i_n$ of indices, let  $u_{i_1i_2\dots i_n}:=u_{i_1}u_{i_2}\dots u_{i_n}$ and define
$$
	X_n:=\clsp\{u_{i_1i_2\dots i_n}: i_1, \dots, i_n\in \N\} \qquad Q_n:=\sum_{i_1, \dots, i_n\in \N} u_{i_1i_2\dots i_n} u_{i_1i_2\dots i_n}^*.
					$$
					We also put $X_0=\C I$ and $Q_0=1$.
	 	The family $X=\{X_n\}_{n\in \N}$ with operations inherited from $\B(H)$ becomes a product system over the semigroup $\N$ with coefficient algebra $A=\C$. For each $n>0$, $Q_n$ is   the orthogonal projection onto the space  $X_nH$.
	Note that  \eqref{Coburn condition666}, which is our geometric condition (C), is equivalent to asking that $Q_1$ is not equal to $1$, i.e.
\begin{equation}\label{eq:ranges dont span}
  \sum_{i\in \N}u_iu_i^* < 1,
\end{equation}
where the infinite sum is defined using the strong operator topology.
 Since $\NT(X)$ is generated by an infinite family of isometries with orthogonal ranges given by $\{i_X(u_j):j\in \N\}$, \cite[Theorem 1.12]{Cu77} gives an isomorphism
\begin{equation}\label{normal O_infty}
	\NT(X)\cong \clsp\{ X_n X_m^*: n,m\in \N\}\cong \OO_\infty.
\end{equation}
In particular, every countably infinite family of isometries with orthogonal ranges gives rise to an injective Nica-Toeplitz representation of $X$ - the algebraic condition \eqref{Toeplitz condition666} is satisfied automatically.
We denote by $\mathcal{DR}(\OO_\infty)$ the Doplicher-Roberts algebra associated to $(X_n)_{n\in \N}$.  Theorem \ref{Uniqueness for product systems II} implies that \eqref{eq:ranges dont span} is equivalent to having an isomorphism
\begin{equation}\label{Doplicher-Roberts O_infty}
	\mathcal{DR}(\OO_\infty)\cong \clsp\left\{\bigcup_{n,m\in \N } \{T \in Q_mB(H)Q_n: \,\,TX_n \subseteq X_m \,\, \text{ and } \,\,T^{*}X_m\subseteq X_n\}\right\}.
	\end{equation}
Without  condition \eqref{eq:ranges dont span}, all we can say is that there is a  surjective homomorphism from $\mathcal{DR}(\OO_\infty)$ to the right-hand side of \eqref{Doplicher-Roberts O_infty} obtained from the universal property of $\mathcal{DR}(\NT(X))$.
\end{ex}

This example illustrates the fact that condition (C) captures uniqueness of  the Doplicher-Roberts  algebra $\mathcal{DR}(\OO_\infty)$, which is a $C^*$-algebra containing $\OO_\infty$, and that uniqueness of $\OO_\infty$ as a $C^*$-algebra generated by isometries with orthogonal ranges is independent of condition (C). This phenomenon  is consistent with our Theorem~\ref{Uniqueness Theorem for  product systems I}, as  the left action of $A=\C 1$ on $X_1\cong \ell^2(\N)$ is not by generalized compacts.

In order to get an efficient uniqueness theorem for $\OO_\infty$ one needs to view it as a Nica-Toeplitz algebra over the free semigroup $\F_{\N}^+$. This idea, in disguise, was exploited in \cite{Fow-Rae}. With our results in hand we can make it formal and explicit. Note that any product system over a free semigroup $\F_\Lambda^+$ is automatically compactly-aligned.
\begin{lem}\label{bla bla lemma for free product systems}
Let $Y:=\bigoplus_{\lambda \in \Lambda} Y_\lambda$ be a direct sum of $C^*$-correspondences $Y_\lambda$, $\lambda \in \Lambda$, over a  $C^*$-algebra $A$. There is a  product system  $X=\{X_p\}_{p\in  \F_\Lambda^+}$ over $A$ such that for any  word  $p=\lambda_1\dots \lambda_n\in \F_\Lambda^+$ we have
$$
X_{p}:=Y_{\lambda_1}\otimes Y_{\lambda_2}\otimes\dots\otimes Y_{\lambda_n}
$$
and the product in $X$ is given by the iterated internal tensor product.
We have a  one-to-one correspondence between Nica covariant representations $\Psi$ of $X=\{X_p\}_{p\in  \F_\Lambda^+}$ and  representations $(\pi,\psi)$ of the $C^*$-correspondence $Y$ where
$$
\Psi(y_{\lambda_1}\otimes y_{\lambda_2}\otimes\dots\otimes y_{\lambda_n})=\psi(y_{\lambda_1})\psi(y_{\lambda_2})\dots\psi(y_{\lambda_n}) ,\qquad y_{\lambda_i}\in Y_{\lambda_i}, i=1,\dots,n.
$$
 Thus we have a natural isomorphism $\TT_Y\cong \NT(X)$.
\end{lem}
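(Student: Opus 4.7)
The plan is to construct the product system $X$ and then establish the stated bijective correspondence; the isomorphism $\TT_Y\cong \NT(X)$ will follow at once from the universal properties.

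For the product system, set $X_e:=A$ and, for any word $p=\lambda_1\dots \lambda_n$ of length $n\geq 1$, let $X_p:=Y_{\lambda_1}\otimes_A\dots\otimes_A Y_{\lambda_n}$. Concatenation of words matches the iterated internal tensor product, so the canonical maps $X_p\otimes_A X_q\cong X_{pq}$ are isomorphisms and $X$ is a product system over $\F_\Lambda^+$. In $\F_\Lambda^+$ two words admit a common right multiple if and only if one is a prefix of the other, in which case the longer word is their right LCM. This forces $\iota^r_p$ or $\iota^r_q$ in \eqref{iotapq def} to restrict to an identity on the appropriate tail factor, so $X$ is automatically compactly aligned and $\NT(X)$ is well-defined.

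For the forward direction, given a representation $(\pi,\psi)$ of $Y$, set $\psi_\lambda:=\psi|_{Y_\lambda}$, so each pair $(\pi,\psi_\lambda)$ is a representation of $Y_\lambda$, and define
$$
\Psi(y_{\lambda_1}\otimes\cdots\otimes y_{\lambda_n}):=\psi_{\lambda_1}(y_{\lambda_1})\cdots\psi_{\lambda_n}(y_{\lambda_n}).
$$
Well-definedness on the balanced tensor product follows from $\psi_{\lambda}(a\cdot y)=\pi(a)\psi_{\lambda}(y)$; the inner product identity $\Psi(y)^*\Psi(z)=\pi(\langle y,z\rangle_A)$ on each $X_p$ is obtained by telescoping $\psi_{\lambda_i}(y_i)^*\psi_{\lambda_i}(z_i)=\pi(\langle y_i,z_i\rangle_A)$; and multiplicativity $\Psi(xy)=\Psi(x)\Psi(y)$ is immediate. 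For Nica covariance, the orthogonality $\psi_\lambda(y)^*\psi_\mu(z)=\pi(\langle y,z\rangle_A)=0$ for $\lambda\neq \mu$, valid because $Y=\bigoplus_\lambda Y_\lambda$, handles the case $qP\cap sP=\emptyset$ by locating the first position where the words $q$ and $s$ differ; the complementary case, when one of $q,s$ is a prefix of the other, reduces to the elementary identity $\Psi^{(q)}(\Theta_{x,y})\Psi(z\otimes w)=\Psi(x\otimes \langle y,z\rangle_A\cdot w)$ for $z\in X_q$, $w\in X_{q^{-1}s}$.

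For the reverse direction, given a Nica covariant representation $\Psi$ of $X$, set $\pi:=\Psi|_A$ and $\psi_\lambda:=\Psi|_{X_\lambda}$, and assemble them into $\psi:\bigoplus_\lambda Y_\lambda\to B$ by linearity. The main technical point is to prove the orthogonality $\psi_\lambda(y)^*\psi_\mu(z)=0$ for $\lambda\neq \mu$; granting this, $\psi$ satisfies the inner product identity, extends by continuity to $Y$ via the estimate $\|\psi(y)\|^2\leq \|\langle y,y\rangle_A\|$, and forms a representation of $Y$. To prove the orthogonality, pick an approximate unit $(e_i)$ for $\KK(X_\mu)$, so that $\Psi^{(\mu)}(e_i)\psi_\mu(z)=\psi_\mu(e_i\cdot z)\to \psi_\mu(z)$, while Nica covariance combined with $\lambda\F_\Lambda^+\cap \mu\F_\Lambda^+=\emptyset$ gives $\Psi^{(\lambda)}(\Theta_{y,y})\Psi^{(\mu)}(e_i)=0$; passing to the limit yields $\Psi^{(\lambda)}(\Theta_{y,y})\psi_\mu(z)=0$, and then
$$
(\psi_\lambda(y)^*\psi_\mu(z))^*(\psi_\lambda(y)^*\psi_\mu(z))=\psi_\mu(z)^*\Psi^{(\lambda)}(\Theta_{y,y})\psi_\mu(z)=0
$$
forces the claimed vanishing. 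The two constructions are mutually inverse on generators, hence the correspondence is bijective; applying it to the universal representations of $Y$ and of $X$ produces the isomorphism $\TT_Y\cong \NT(X)$.
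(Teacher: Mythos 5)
Your proof is correct; the paper offers no argument here (its proof reads ``straightforward, left to the reader''), and your write-up supplies exactly the intended details in the natural way. In particular you correctly identify and settle the only point of real substance: automatic compact alignment over $\F_\Lambda^+$ and the telescoping inner-product computations are routine, while the off-diagonal orthogonality $\psi_\lambda(y)^*\psi_\mu(z)=0$ for $\lambda\neq\mu$ in the reverse direction genuinely uses Nica covariance, which you extract correctly via an approximate unit of $\KK(X_\mu)$ and the $C^*$-identity.
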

\begin{proof}
The proof is straightforward. We leave the details to the reader. 
\end{proof}
\begin{cor}\label{Fowler-Raeburn result}
Let $Y=\bigoplus_{\lambda \in \Lambda} Y_\lambda$ be a direct sum of $C^*$-correspondences $Y_\lambda$, $\lambda \in \Lambda$, over a  $C^*$-algebra $A$. Consider the following conditions that a representation  $(\pi,\psi)$ of the  $C^*$-correspondence $Y$ in a Hilbert space $H$ may satisfy:
\begin{itemize}
\item[(i)] $A$ acts, via $\pi$, faithfully on $(\psi(\bigoplus_{\lambda \in F} Y_\lambda)H)^\bot$ for every finite subset $F$ of $\Lambda$;
\item[(ii)] The $C^*$-algebra generated by $\pi(A)\cup \psi(Y)$ is naturally isomorphic to  $\TT_Y$;
\item[(iii)] $\pi(A)\cap \clsp\{\psi(x)\psi(y)^*: x,y\in Y_\lambda,  \lambda \in F\}=\{0\}
$ for every finite  $F\subseteq \Lambda$.
\end{itemize}
Then   (i)$\Rightarrow$(ii)$\Rightarrow$(iii). Moreover, if $A$ acts by generalized compacts on the left of each  $Y_\lambda$, $\lambda\in \Lambda$,   then all the above conditions are equivalent.
\end{cor}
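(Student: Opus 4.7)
The plan is to convert the question into one about Nica covariant representations of the product system $X=\{X_p\}_{p\in\F_\Lambda^+}$ associated to $Y$ via Lemma~\ref{bla bla lemma for free product systems}, and then invoke Theorem~\ref{Uniqueness Theorem for  product systems I}. First I would identify $(\pi,\psi)$ with the corresponding Nica covariant representation $\Psi$ of $X$, so that $\Psi\rtimes P$ is the canonical map $\TT_Y\cong\NT(X)\to C^*(\pi(A)\cup\psi(Y))$. The hypotheses of Theorem~\ref{Uniqueness Theorem for  product systems I} hold for $X$: the free semigroup $\F_\Lambda^+$ has trivial units (so the aperiodicity clause is vacuous), $X$ is automatically compactly aligned as noted before the lemma, and amenability of $\KK_X$ may be extracted from the identification $\NT(X)\cong\TT_Y$ together with faithfulness of the Fock representation of a single correspondence.

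Next I would match the three conditions using two elementary observations about $\F_\Lambda^+$: for every non-unit $q$ with first letter $\lambda(q)\in\Lambda$, the inclusion $q\F_\Lambda^+\subseteq\lambda(q)\F_\Lambda^+$ and Nica covariance give $Q^\psi_q\leq Q^\psi_{\lambda(q)}$; and for distinct letters $\lambda\neq\mu$ in $\Lambda$ the right ideals $\lambda\F_\Lambda^+$ and $\mu\F_\Lambda^+$ are disjoint, so $Q^\psi_\lambda Q^\psi_\mu=0$. The second observation identifies $\prod_{\lambda\in F}(1-Q^\psi_\lambda)$ with the projection onto $(\psi(\bigoplus_{\lambda\in F}Y_\lambda)H)^\bot$ for every finite $F\subseteq\Lambda$, so condition~(i) of the corollary is exactly condition~(i) of the theorem restricted to single-letter families. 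Combined with the first observation, for an arbitrary finite family $\{q_i\}\subseteq\F_\Lambda^+\setminus\{e\}$ and $F=\{\lambda(q_i)\}_i$ we get $\prod_i(1-Q^\psi_{q_i})\geq\prod_{\lambda\in F}(1-Q^\psi_\lambda)$, which shows that the two versions of condition~(i) agree. Condition~(ii) of the corollary is a direct rewriting of condition~(ii) of the theorem under $\TT_Y\cong\NT(X)$, while condition~(iii) of the corollary is the single-letter specialisation of Toeplitz covariance in condition~(iii) of the theorem. The chain (i)$\Rightarrow$(ii)$\Rightarrow$(iii) of the theorem therefore transports to the corollary.

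For the moreover clause, the hypothesis $\phi_\lambda(A)\subseteq\KK(Y_\lambda)$ for each $\lambda\in\Lambda$ extends inductively to $\phi_p(A)\subseteq\KK(X_p)$ for every $p\in\F_\Lambda^+$, because the left action on an iterated tensor product $Y_{\lambda_1}\otimes\cdots\otimes Y_{\lambda_n}$ is implemented on the first factor by $\phi_{\lambda_1}(a)\otimes 1$. Thus Theorem~\ref{Uniqueness Theorem for  product systems I} upgrades its three conditions to equivalences, and it remains to close the loop at the level of the corollary. Suppose an injective $\pi$ satisfies corollary~(iii); I would verify corollary~(i) directly. If $\pi(a)\prod_{\lambda\in F}(1-Q^\psi_\lambda)=0$, then $\pi(a)=\sum_{\lambda\in F}\pi(a)Q^\psi_\lambda$, and under the compactness assumption each summand equals $\psi^{(\lambda)}(\phi_\lambda(a))\in\clsp\{\psi(x)\psi(y)^*:x,y\in Y_\lambda\}$. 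Hence $\pi(a)$ lies in $\clsp\{\psi(x)\psi(y)^*:x,y\in Y_\lambda,\,\lambda\in F\}$, corollary~(iii) forces $\pi(a)=0$, and injectivity of $\pi$ yields $a=0$.

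The main obstacle I anticipate is the amenability justification for $\KK_X$ over the free semigroup, since the obvious length homomorphism $\F_\Lambda^+\to\N$ fails the injectivity requirement of a controlled map in the sense of \cite{kwa-larI}; the argument has to be routed through the single-correspondence Fock representation of $\TT_Y$. A secondary but needed point is the implicit injectivity of $\pi$ used to run the final implication under the compactness hypothesis, which has to be extracted from condition~(iii) exactly as in the theorem.
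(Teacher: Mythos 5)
Your proposal takes essentially the same route as the paper: reduce to Theorem~\ref{Uniqueness Theorem for  product systems I} applied to the product system $X$ over $\F_\Lambda^+$ from Lemma~\ref{bla bla lemma for free product systems}, use that $(\F_\Lambda^+)^*=\{e\}$ and that $X$ is automatically compactly aligned, and translate the three conditions; the paper's proof is precisely this reduction, with the translation (which you carry out correctly, including the passage from single letters to arbitrary words via $Q^\psi_q\le Q^\psi_{\lambda(q)}$ and orthogonality of the $Q^\psi_\lambda$ for distinct letters) left implicit.

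Two corrections to the points you flag as obstacles. First, amenability: the injectivity clause in the definition of a controlled map (\cite[Definition 2.4]{kwa-larI}, as quoted in the preliminaries) is only imposed on pairs $s,t$ possessing a right LCM, and in $\F_\Lambda^+$ such pairs are comparable, so the length homomorphism into $\N$ does satisfy it; in any case the paper settles this step simply by citing \cite[Corollary 8.6]{kwa-larI} for $\NT(X)\cong\NT^r(X)$. Your alternative --- identifying the Fock module of $X$ with the Fock module of $Y$ and invoking faithfulness of the Fock representation of $\TT_Y$ --- also works, but it imports an external classical fact where a citation suffices. Second, injectivity of $\pi$ cannot be ``extracted from condition (iii)'': in Theorem~\ref{Uniqueness Theorem for  product systems I} injectivity is an explicit part of condition (iii), and condition (iii) of the corollary by itself does not yield it (it holds vacuously for representations with non-injective $\pi$). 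It is automatic under (ii), since $A$ embeds in $\TT_Y$, so the chain (i)$\Rightarrow$(ii)$\Rightarrow$(iii) is unaffected; but in your closing argument for (iii)$\Rightarrow$(i) under the compactness hypothesis it must simply be assumed, exactly as the theorem's condition (iii) does --- a point which the corollary's wording and the paper's one-line proof gloss over in the same way. With these adjustments your argument is complete and matches the paper's.
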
	
\begin{proof}
Let $X=\{X_p\}_{p\in  \F_\Lambda^+}$ be the product system described in Lemma \ref{bla bla lemma for free product systems}. Since $\NT(X)$ and $\NT^r(X)$ are isomorphic by \cite[Corollary 8.6]{kwa-larI} and $(\F_\Lambda^+)^*=\{e\}$, we may apply Theorem \ref{Uniqueness Theorem for  product systems I} to $X$. Translating the result, using Lemma \ref{bla bla lemma for free product systems}, to $C^*$-correspondences $Y_\lambda$, $\lambda\in \Lambda$, we get the assertions.
\end{proof}
\begin{rem}
The relationship between conditions (i) and (ii) in Corollary \ref{Fowler-Raeburn result} 		was established in \cite[Theorem 3.1]{Fow-Rae}. The  algebraic condition (iii), which is what we call Toeplitz covariance, in general does not imply (i), see Example \ref{ex:DR-Oinfty}. We have already seen pieces of evidence  that in general Toeplitz covariance could be the right condition for characterizing uniqueness of
$\NT(X)$. Another evidence for this is again the case of $\OO_\infty$, as we shall now explain.

If we specialize Corollary~\ref{Fowler-Raeburn result} to the $C^*$-correspondence $X_1\cong \ell^2(\N)$ over $\C$ from  Example~\ref{ex:DR-Oinfty}, then we may view $X_1$ as a direct sum over $\N$ of finite dimensional (even one dimensional) spaces $Y_n$.
It is readily seen that for  every representation of $X$ coming from  a countably infinite family of isometries  with orthogonal ranges, both of conditions (i) and (iii) in  Corollary~\ref{Fowler-Raeburn result} are satisfied. Since the left action  is by  compacts in each $Y_n$, $n\geq 1$,   we may use each of these conditions to recover the uniqueness of $\OO_\infty$.
\end{rem}
\subsection{Semigroup $C^*$-algebras twisted by product systems}\label{Fowler-Raeburn section}
Let $X$ be a compactly-aligned product system over a right LCM semigroup $P$.
For each $p\in P$, let  $\mathds{1}_p \in \ell^\infty(P)$ be the characteristic function of $pP$. Since the
product $\mathds{1}_p \mathds{1}_q$ is either $\mathds{1}_r$ (if $pP\cap qP=rP$) or $0$, we have that $B_P:= \clsp\{\mathds{1}_p:p\in P\}$
 is a $C^*$-subalgebra of $\ell^\infty(P)$. Moreover, the projections $\mathds{1}_p$ form a semilattice isomorphic to $J(P)$. Recall that $1_r$ denotes the identity in $\LL(X_r)$ for every $r\in P$.
If $1$ is the identity in the unitization $\DR(\A)^{\sim}$  of $\DR(\A)$,
then the projections $\{i_{\LL_X}(1_p)\}_{p\in P\setminus\{e\}}\cup \{1\}$ form a semilattice isomorphic to $J(P)$, cf. \cite[Lemma 5.8]{kwa-larI}.  Since the family $J(P)$ is independent, see \cite[Corollary 3.6]{bls},
it follows from \cite[Proposition 2.4]{Li2} that  the assignment
$$
B_P \ni \mathds{1}_p \longmapsto i_{\LL_X}(1_{p})\in \DR(\A),   \qquad  p\in P\setminus \{e\},
$$
extends  uniquely to an injective unital homomorphism $B_P \hookrightarrow \DR(\A)^{\sim}$. We will use it
  to identify $B_P$ with a $C^*$-subalgebra of $\DR(\A)^{\sim}$.
\begin{defn}
Let $X$ be a compactly-aligned product system.  We call the $C^*$-algebra
$$
\FR(X):=C^*\left(B_P\cdot \NT(X)\right)\subseteq \DR(\A),
$$
generated by elements $i_{\LL_X}(1_p)i_{\KK_X}(a) $ where
$a\in \KK(q,r), p,q,r\in P$, $p\neq e$, the   \emph{Fowler-Raeburn algebra} of $X$ or the \emph{semigroup $C^*$-algebra of $P$ twisted by $X$}.
\end{defn}
\begin{prop}\label{Fowler-Raeburn algebra form}
We have  $\FR(X)=\clsp\{i_{\KK_X}(x) i_{\LL_X}(1_p)i_{\KK_X}(y)^* : x,y\in X, p \in P\}$.
In particular, $B_P \subseteq M(\FR(X))$. Moreover,
$\FR(X)=\A$ if and only if the left action of $A$ on each fiber $X_p$ is by generalized compacts.
\end{prop}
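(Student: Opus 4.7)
The plan is three-fold.

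For the spanning identity, let $\SS:=\clsp\{i_{\KK_X}(x)\, i_{\LL_X}(1_p)\, i_{\KK_X}(y)^*: x,y\in X, p\in P\}$. Each generator is a product of elements of $\FR(X)$, so $\SS\subseteq \FR(X)$. For the reverse inclusion, I would verify that $\SS$ is a self-adjoint subalgebra of $\DR(\NT(X))$ containing every $\mathds{1}_p\cdot T$ with $T\in \NT(X)$. Self-adjointness is obvious. The algebraic heart is the pair of \emph{movement identities}, obtained from Nica covariance of the extension $\overline{\Psi}$ to $\LL_X$ (Lemma~\ref{Nica relation Lemma}):
\begin{align*}
i_{\LL_X}(1_p)\, i_X(x) &= i_X(x)\, i_{\LL_X}(1_{d(x)^{-1}r}) &&\text{when}\ pP\cap d(x)P = rP,\\
i_X(x)\, i_{\LL_X}(1_p) &= i_{\LL_X}(1_{d(x)p})\, i_X(x),
\end{align*}
both sides vanishing when the corresponding intersection is empty. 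These encode, at the level of a Nica covariant representation $\psi$, the intertwining $\psi_{d(x)}(x)\, Q^\psi_q = Q^\psi_{d(x)q}\, \psi_{d(x)}(x)$. Applied termwise they give $\mathds{1}_p\cdot i_X(x) i_X(y)^* = i_X(x)\, \mathds{1}_{d(x)^{-1}r}\, i_X(y)^*\in \SS$, so by linearity and continuity $\mathds{1}_p T\in \SS$ for all $T\in \NT(X)$.

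Closure of $\SS$ under products is proved by simplifying $(i_X(x_1)\mathds{1}_{p_1} i_X(y_1)^*)(i_X(x_2)\mathds{1}_{p_2} i_X(y_2)^*)$ in the middle: Nica covariance of $i_{\KK_X}$ expresses $i_X(y_1)^* i_X(x_2)$ as a norm-convergent sum $\sum_i i_X(v_i) i_X(w_i)^*$ in $\NT(X)$ (zero if the LCM of $d(y_1), d(x_2)$ fails to exist). The movement identities then move the outer $\mathds{1}_{p_j}$'s past the product-system factors, and the product-system multiplication $i_X(\xi) i_X(\eta) = i_X(\xi\eta)$ regroups each summand into a generator of $\SS$. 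Thus $\SS=\FR(X)$. The movement identities also show $\mathds{1}_p\SS+\SS\mathds{1}_p\subseteq \SS$, whence $B_P\subseteq M(\FR(X))$.

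For the final characterization, $(\Leftarrow)$ is a direct computation: if $\phi_p(A)\subseteq \KK(X_p)$ for every $p$, then Lemma~\ref{non-degeneracy of K_X} gives $\KK_X\otimes 1\subseteq \KK_X$, and Nica covariance produces $\mathds{1}_p\cdot i_{\KK_X}(a) = i_{\KK_X}(a\otimes 1_{q^{-1}s})\in \NT(X)$ for any $a\in \KK_X(q,r)$ with $pP\cap qP=sP$; since such elements generate $\FR(X)$, we get $\FR(X)=\NT(X)$. For $(\Rightarrow)$, assume $\FR(X)=\NT(X)$; then for $a\in A$, the identification $t_a\otimes 1_p = \phi_p(a)$ yields $\mathds{1}_p\cdot i_X(a) = i_{\LL_X}(\phi_p(a))\in \NT(X)$. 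I would conclude $\phi_p(a)\in \KK(X_p)$ from the identity
\[
i_{\LL_X}(\LL(X_p))\cap \NT(X) = i_{\KK_X}(\KK(X_p))
\]
together with injectivity of $i_{\LL_X}|_{\LL(X_p)}$. The main obstacle is to justify this intersection formula: I would pass to the reduced algebras via $\overline{\Lambda}$ and $\Lambda$ (which are injective on the cores $B_e^{i_{\LL_X}}$ and $B_e^X$, as recorded after the definition of $\DR(\NT(X))$), and then use the faithful conditional expectation onto the core available in the reduced setting together with the fiber decomposition to force $\overline{\LLL}(\phi_p(a))\in \LLL(\KK(X_p))$, whence $\phi_p(a)\in \KK(X_p)$ by injectivity of $\overline{\LLL}$ on $\LL(X_p)$.
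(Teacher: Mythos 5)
Your treatment of the spanning identity, of $B_P\subseteq M(\FR(X))$, and of the implication assuming $\phi_p(A)\subseteq\KK(X_p)$ coincides with the paper's proof: the same two movement identities obtained from Nica covariance of $i_{\LL_X}$, the same approximation of $i_{\KK_X}(y)^*i_{\KK_X}(z)$ by finite sums of elements $i_{\KK_X}(f)i_{\KK_X}(g)^*$ to close the span under products, and the same use of $\KK_X\otimes 1\subseteq\KK_X$ (Lemma \ref{non-degeneracy of K_X}) to place the generators of $\FR(X)$ inside $\NT(X)$.

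The converse direction, however, has a genuine gap at exactly the point you flag. Knowing $i_{\LL_X}(\phi_p(a))\in\NT(X)$ and passing to the reduced algebra, the conditional expectation $E^{\LLL}$ onto the core gives you nothing by itself: $\overline{\LLL}(\phi_p(a))$ is already a ``diagonal'' operator on the Fock module, so it is fixed by $E^{\LLL}$, and all you learn is that it lies in the core $\clsp\{\LLL(\KK(X_s)):s\in P\}$. There is no ``fiber decomposition'' of this core to invoke: the subalgebras $\LLL(\KK(X_s))$ are not direct summands of it (they overlap and multiply into one another), so membership in the core does not let you read off membership in the single fiber $\LLL(\KK(X_p))$, which is what your intersection formula $i_{\LL_X}(\LL(X_p))\cap\NT(X)=i_{\KK_X}(\KK(X_p))$ requires. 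The paper closes precisely this gap quantitatively: it approximates $i_{\LL_X}(\phi_p(a))$ within $\varepsilon$ by a finite sum $S=\sum_{s,t}i_{\KK_X}(a_{s,t})$ with $a_{s,t}\in\KK(X_t,X_s)$, applies the contraction $E^{\LLL}\circ\Lambda$ together with the explicit formula $E^{\LLL}(\LLL(a_{s,t}))=\bigoplus_{w}\LLL^{(w)}_{s,t}(a_{s,t})$ from \cite[Proposition 5.4]{kwa-larI} (where $\LLL^{(p)}_{p,p}=\id$), and compares components to obtain $\|\phi_p(a)-a_{p,p}\|<\varepsilon$; since $\KK(X_p)$ is closed, this yields $\phi_p(a)\in\KK(X_p)$. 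Without such a componentwise estimate (or some other genuine argument for your intersection formula), the step ``force $\overline{\LLL}(\phi_p(a))\in\LLL(\KK(X_p))$'' is an assertion of the conclusion rather than a proof of it.
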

\begin{proof}
For any $x\in X$ and $p\in P$, using Nica covariance of $i_{\LL_X}$ twice, we get
$
i_{\KK_X}(x) i_{\LL_X}(1_p)=i_{\LL_X}(x\otimes 1_{p})= i_{\LL_X}(1_{d(x)p})i_{\KK_X}(x),
$
and similarly
\begin{equation}\label{Fowlers proof relation}
i_{\LL_X}(1_p) i_{\KK_X}(x)=
\begin{cases}
i_{\KK_X}(x)i_{\LL_X}(1_{d(x)^{-1}r}) & \text{if }pP\cap d(x)P=rP,
\\
0, & \text{otherwise}.
\end{cases}
\end{equation}
This implies that $B_P\cdot \NT(X)\subseteq \clsp\{i_{\KK_X}(x) i_{\LL_X}(1_p)i_{\KK_X}(y)^* : x,y\in X, p \in P\}\subseteq \FR(X)$. Hence to prove the first part of the assertion, it suffices to show that the product of two elements of the form $i_{\KK_X}(x) i_{\LL_X}(1_p)i_{\KK_X}(y)^*$ and  $ i_{\KK_X}(z) i_{\LL_X}(1_s)i_{\KK_X}(w)^*$, $x,y,z,w \in X$, $p,s\in P$, can be approximated by a finite sum of elements of that form. The product $i_{\KK_X}(y)^* i_{\KK_X}(z)$ can be approximated by a finite sum of elements of the form $i_{\KK_X}(f) i_{\KK_X}(g)^*$, $f,g\in X$. Applying \eqref{Fowlers proof relation} twice,  we see that  the product
$$
i_{\KK_X}(x) i_{\LL_X}(1_p) i_{\KK_X}(f) i_{\KK_X}(g)^* i_{\LL_X}(1_s)i_{\KK_X}(w)^*
$$
is either zero or of the form  $i_{\KK_X}(xf) i_{\LL_X}(1_t)i_{\KK_X}(wg)^*$. Thus $\FR(X)$ is the closed linear span of $\{i_{\KK_X}(x) i_{\LL_X}(1_p)i_{\KK_X}(y)^* : x,y\in X, p \in P\}$. Now, \eqref{Fowlers proof relation} implies $B_P \subseteq M(\FR(X))$.

If the left action of $A$ on each fiber $X_p$ is by compact operators, then
$\KK_X\otimes 1 \subseteq \KK_X$, by Lemma \ref{non-degeneracy of K_X}. Hence
$i_{\KK_X}(x) i_{\LL_X}(1_p)i_{\KK_X}(y)^*= i_{\KK_X}(x\otimes 1_{p})i_{\KK_X}(y)^*\in \A$ for every $x,y\in X$ and $p\in P$.
Therefore $\FR(X)\subseteq \A$ and the reverse inclusion is obvious.

Conversely, if $\FR(X)\subseteq \A$, then for any $a\in A$ and $p\in P\setminus\{e\}$ we have that
 $i_{\LL_X}(\phi_p(a))=i_{\LL_X}(1_p)i_{\KK_X}(a)\in \NT(X)$. Hence for any $\varepsilon >0$ there is a finite
sum of the form $S=\sum_{s,t} i_{\KK_X}(a_{s,t})$ where $a_{s,t}\in \KK(X_t,X_s)$ such that  $\|i_{\LL_X}(\phi_p(a)) -S\|< \varepsilon$. Hence $\|E^\LLL(\Lambda (i_{\LL_X}(\phi_p(a)))- E^\LLL(\Lambda (S))\|< \varepsilon$ where $E^\LLL$ is the transcendental conditional expectation on $\NT^r(X)$ constructed in \cite{kwa-larI}. By   \cite[Proposition 5.4]{kwa-larI}, cf. also \cite[Remark 5.7]{kwa-larI}, we have $E^{\LLL}\Big(\LLL(a_{p,q})\Big)= \bigoplus_{w\in pP\cap qP,\atop p^{-1}w=q^{-1}w}\LLL_{p,q}^{(w)}(a_{p,q})$, for $a_{p,q}\in \KK(X_q,X_p)$, $p,q \in P$ where  $\LLL^{(p)}_{p,p}=\id$ for every $p\in P$. This implies that $\|\phi_p(a)-a_{p,p} \|< \varepsilon$. Thus  $\phi_p(a)\in \KK(X_p)$.
\end{proof}

 Left translation on $\ell^\infty(P)$ restricts to a unital semigroup homomorphism $\tau:P \to  \End(\psi_e(A)')$,
determined by $\tau_q(\mathds{1}_p) = \mathds{1}_{qp}$ for $p,q\in P$. The isometric crossed product $B_P \rtimes _\tau P$ is naturally isomorphic to the semigroup $C^*$-algebra $C^*(P)$, see \cite[Lemma 2.14]{Li}, so $\FR(X)$ may be viewed as a version of $C^*(P)$ twisted by $X$, see  \cite{FR}, \cite{F99}. We  make this  explicit in our setting.

\begin{lem}\label{induced_endomorphic_action}
Let  $\psi$ be a nondegenerate representation of $X$ on a Hilbert space $H$. For each $p\in P\setminus \{e\}$ there is a unique endomorphism
$\alpha^\psi_p$ of $\psi_e(A)'$ such that
$$
\alpha^\psi_p(S) \psi_p(x) =\psi_p(x)S, \qquad \text{for all } S \in \psi_e(A)', \,\, x\in X_p,
$$
and  $\alpha^\psi_p(1)$ vanishes on $(\psi_p(X_p)H)^\bot$. We put $\alpha_e^\psi=\id$.
Then $\alpha^\psi: P \to  \End(\psi_e(A)')$  is a
unital semigroup homomorphism.
\end{lem}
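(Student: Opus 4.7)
For each $p \in P \setminus \{e\}$ and $S \in \psi_e(A)'$, the plan is to define $\alpha_p^\psi(S)$ on the dense subspace $\psi_p(X_p) H$ of $\overline{\psi_p(X_p) H}$ by the prescription $\sum_i \psi_p(x_i) h_i \mapsto \sum_i \psi_p(x_i) Sh_i$, and then extend it by zero on the orthogonal complement. Well-definedness together with the bound $\|\alpha_p^\psi(S)\| \le \|S\|$ follows from a standard Gram-matrix argument: the positive matrix $M = [\psi_e(\langle x_i, x_j\rangle_A)] \in B(H^n)$, whose quadratic form equals $\|\sum_i \psi_p(x_i) h_i\|^2$, commutes with $\mathrm{diag}(S,\ldots,S)$ and with $\mathrm{diag}(S^*,\ldots,S^*)$ because $S \in \psi_e(A)'$, hence $\mathrm{diag}(S)^* M \,\mathrm{diag}(S) = \mathrm{diag}(S^*S) M \le \|S\|^2 M$, which yields $\|\sum_i \psi_p(x_i) Sh_i\|^2 \le \|S\|^2 \|\sum_i \psi_p(x_i) h_i\|^2$.

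Next I would verify the algebraic properties. Since $\psi_e(a)\psi_p(x) = \psi_p(a \cdot x)$, the subspace $\overline{\psi_p(X_p) H}$ and its orthogonal complement are both $\psi_e(A)$-invariant, so a direct computation shows $\alpha_p^\psi(S) \in \psi_e(A)'$. The relations $\alpha_p^\psi(S^*) = \alpha_p^\psi(S)^*$ and $\alpha_p^\psi(SR) = \alpha_p^\psi(S)\alpha_p^\psi(R)$ follow from $\psi_p(x)^* \psi_p(y) = \psi_e(\langle x, y\rangle_A)$ combined with the observation that both sides of each identity are supported on $\overline{\psi_p(X_p) H}$. Uniqueness is immediate: any endomorphism satisfying the two stated conditions must obey $\alpha_p^\psi(S) = \alpha_p^\psi(S)\alpha_p^\psi(1)$, which forces vanishing on $(\psi_p(X_p) H)^\bot$ and agreement with the prescribed formula on $\overline{\psi_p(X_p) H}$.

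The main obstacle is the semigroup identity $\alpha_{pq}^\psi = \alpha_p^\psi \circ \alpha_q^\psi$ for $p, q \in P \setminus \{e\}$, since the natural vanishing sets of the two sides do not a priori coincide. Both operators send $\psi_p(x)\psi_q(y) h = \psi_{pq}(xy) h$ to $\psi_{pq}(xy) Sh$, so they agree on the dense subspace $\psi_p(X_p)\psi_q(X_q) H$ of $\overline{\psi_{pq}(X_{pq}) H}$. For vanishing on the larger set $(\psi_{pq}(X_{pq}) H)^\bot$, I would compute $\alpha_p^\psi(\alpha_q^\psi(1))$: since $\alpha_q^\psi(1)$ is the projection onto $\overline{\psi_q(X_q) H}$, the defining formula of $\alpha_p^\psi$ shows that $\alpha_p^\psi(\alpha_q^\psi(1))$ is the projection onto $\overline{\psi_p(X_p)\,\overline{\psi_q(X_q) H}} = \overline{\psi_{pq}(X_{pq}) H}$. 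The homomorphism identity $\alpha_p^\psi(\alpha_q^\psi(S)) = \alpha_p^\psi(\alpha_q^\psi(S))\, \alpha_p^\psi(\alpha_q^\psi(1))$ then forces the required vanishing, and the uniqueness just established yields the equality. The cases involving units are routine: $\alpha_e^\psi \circ \alpha_p^\psi = \alpha_p^\psi \circ \alpha_e^\psi = \alpha_p^\psi$ is immediate, and for $p \in P^*$ Remark~\ref{rem:on essentiality and Fell bundles} ensures that $X_p$ is an equivalence bimodule with $X_p \otimes_A X_{p^{-1}} \cong A$, so nondegeneracy of $\psi_e$ yields $\overline{\psi_p(X_p) H} = H$, making $\alpha_p^\psi$ unital and giving $\alpha_p^\psi \circ \alpha_{p^{-1}}^\psi = \id = \alpha_e^\psi$ in the edge case $pq = e$.
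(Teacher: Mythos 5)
Your proof is correct, but it takes a genuinely different route from the paper's: the paper does not reprove existence or the semigroup law at all, it cites Fowler's Proposition 4.1(1) for the existence and uniqueness of $\alpha^\psi_p$ and Proposition 4.1(2) for $\alpha^\psi_p\circ\alpha^\psi_q=\alpha^\psi_{pq}$ when $p,q\neq e$, and then merely notes that, with the convention $\alpha^\psi_e=\id$, the cases where one of $p,q$ equals $e$ follow by direct verification (Fowler needs all fibres essential to treat $p=e$). What you write out -- the Gram-matrix estimate giving well-definedness and $\|\alpha^\psi_p(S)\|\le\|S\|$, the check that $\alpha^\psi_p(S)\in\psi_e(A)'$, the identification of $\alpha^\psi_p(\alpha^\psi_q(1))$ with the projection onto $\overline{\psi_{pq}(X_{pq})H}$, and the appeal to the uniqueness characterization to get $\alpha^\psi_p\circ\alpha^\psi_q=\alpha^\psi_{pq}$ -- is in effect a self-contained reconstruction of Fowler's argument, and it is sound. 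Your explicit handling of the edge case $pq=e$ with $p,q\in P^*\setminus\{e\}$, using that $X_h$ is an equivalence bimodule for $h\in P^*$ so that $\overline{\psi_h(X_h)H}=H$ by nondegeneracy of $\psi_e$, is precisely the ``direct verification'' the paper leaves implicit; to close it fully you should add the one-line observation that $\alpha^\psi_p\circ\alpha^\psi_{p^{-1}}(S)$ agrees with $S$ on the span of $\psi_p(X_p)\psi_{p^{-1}}(X_{p^{-1}})H=\psi_e(\spane X_pX_{p^{-1}})H$, which is dense in $H$, so that unitality alone is supplemented by the same density computation as in your main case. The trade-off is clear: the paper's proof is two lines and leans on the literature, while yours is self-contained, avoids Fowler's essentiality hypothesis, and makes the adjustments needed when $P^*$ is non-trivial explicit, which is exactly the new feature of the right LCM setting.
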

\begin{proof} The existence of $\alpha^\psi_p$ for each $p\in P$ is proved in \cite[Proposition 4.1 (1)]{F99}. The semigroup law $\alpha^\psi_p\circ \alpha^\psi_q=\alpha^\psi_{pq}$ for $p,q\in P\setminus\{e\}$ is proved in \cite[Proposition 4.1 (2)]{F99}. To allow $p=e$, Fowler assumes all $X_p$ are essential. With our definition of $\alpha_e^\psi$, the semigroup law follows if one or both of $p,q$ equal $e$ by a direct verification.
\end{proof}

\begin{defn}
A \emph{covariant representation} of the quadruple $(B_P, P, \tau, X)$  on a Hilbert
space $H$ is a pair $(\pi,\psi)$ consisting of a nondegenerate representation $\pi:B_P \to B(H)$  and a nondegenerate representation
 $\psi:X\to B(H)$ such that
$
\pi(B_P)\subseteq \psi_e(A)'$  and $\pi \circ \tau_p = \alpha_p^\psi \circ \pi$,  $p \in P,$
where $\alpha^\psi: P \to  \End(\psi_e(A)')$ is defined in Lemma \ref{induced_endomorphic_action}.
\end{defn}
\begin{prop}
There is a bijective correspondence between  covariant representations $(\pi,\psi)$ of $(B_P, P, \tau, X)$ and Nica covariant representations $\psi$ of $X$ implemented by $\pi(\mathds{1}_p)= \alpha_p^\psi(1)$ for $p\in P$.

In particular,  there is a covariant representation $(i_{B_P}, i_X)$ of $(B_P, P, \tau, X)$ such that
\begin{itemize}
\item[1)] $\FR(X)=C^*(i_{B_P}(B_P) i_X(X))$
\item[2)] for every covariant representation $(\pi,\psi)$ of $(B_P, P, \tau, X)$ there is a representation $\pi\rtimes \psi$ of $\FR(X)$ such that $\overline{\pi\rtimes \psi} \circ i_{B_P} = \pi$ and
$\overline{\pi\rtimes \psi} \circ i_{X} =  \psi$.
\end{itemize}
  \end{prop}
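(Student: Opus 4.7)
The plan is to construct the two directions of the bijection and then read off the universal property from the universal Nica covariant representation $i_X$ together with the extension procedure of Lemma~\ref{Nica relation Lemma}. The starting point is the identification $\alpha_p^{\psi}(1)=Q_p^{\psi}$, valid for any nondegenerate representation $\psi$ of $X$: by Lemma~\ref{induced_endomorphic_action}, $\alpha_p^{\psi}(1)$ vanishes on $(\psi_p(X_p)H)^\bot$ and acts as the identity on $\psi_p(X_p)H$, whereas $Q_p^{\psi}H=\overline{\psi_p(X_p)H}$ by definition (using nondegeneracy of $\psi$ for the case $p=e$). So a candidate $\pi$ must satisfy $\pi(\mathds{1}_p)=Q_p^{\psi}$.

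Going from $\psi$ to $(\pi,\psi)$: assume $\psi$ is Nica covariant. By Lemma~\ref{Nica relation Lemma} the projections $\{Q_p^{\psi}\}_{p\in P}$ obey the Nica relation \eqref{Nica equation for projections}, which is exactly the multiplication rule $\mathds{1}_p\mathds{1}_q=\mathds{1}_r$ or $0$ in $B_P$. The same independence-of-$J(P)$ argument from \cite[Corollary~3.6]{bls} and \cite[Proposition~2.4]{Li2} that was used to embed $B_P\hookrightarrow \DR(\A)^{\sim}$ now yields a well-defined unital $*$-homomorphism $\pi:B_P\to B(H)$ with $\pi(\mathds{1}_p)=Q_p^{\psi}=\alpha_p^{\psi}(1)$. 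Since each $Q_p^{\psi}H$ is $\psi_e(A)$-invariant, $\pi(B_P)\subseteq \psi_e(A)'$. For the covariance $\pi\circ\tau_p=\alpha_p^{\psi}\circ\pi$ it suffices to check on generators: $\pi(\tau_p(\mathds{1}_q))=\pi(\mathds{1}_{pq})=Q_{pq}^{\psi}$, while $\alpha_p^{\psi}(\pi(\mathds{1}_q))=\alpha_p^{\psi}(Q_q^{\psi})$ is a projection whose range is
$$\psi_p(X_p)\,Q_q^{\psi}H=\overline{\psi_p(X_p)\psi_q(X_q)H}=\overline{\psi_{pq}(X_{pq})H}=Q_{pq}^{\psi}H,$$
using that $\psi_p(x)\psi_q(y)=\psi_{pq}(xy)$ and that the products $X_pX_q$ are dense in $X_{pq}$. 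Hence the two sides agree.

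Going back: if $(\pi,\psi)$ is covariant, nondegeneracy of $\pi$ forces $\pi(\mathds{1}_e)=1$ and then the covariance gives $\pi(\mathds{1}_p)=\pi(\tau_p(\mathds{1}_e))=\alpha_p^{\psi}(1)=Q_p^{\psi}$, so $\pi$ is uniquely determined by $\psi$. Moreover, since $\mathds{1}_p\mathds{1}_q$ equals $\mathds{1}_r$ (when $pP\cap qP=rP$) or $0$, applying $\pi$ shows that the projections $\{Q_p^{\psi}\}_{p\in P}$ satisfy \eqref{Nica equation for projections}, whence $\psi$ is Nica covariant by Lemma~\ref{Nica relation Lemma}. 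This closes the bijection.

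For the two consequences, apply the construction to the universal Nica covariant representation $i_X$ of $X$: the resulting $i_{B_P}:B_P\to M(\FR(X))$ is precisely the embedding already used to identify $B_P$ as a $C^*$-subalgebra of $\DR(\A)^{\sim}$, namely $i_{B_P}(\mathds{1}_p)=i_{\LL_X}(1_p)$, and $(i_{B_P},i_X)$ is covariant by the argument above. Property~1) is then the definition of $\FR(X)$. For~2), given a covariant $(\pi,\psi)$, Lemma~\ref{Nica relation Lemma} extends $\psi\rtimes P:\NT(X)\to B(H)$ to $\overline{\psi\rtimes P}:\DR(\NT(X))\to B(H)$; restrict it to $\FR(X)\subseteq \DR(\NT(X))$ to obtain $\pi\rtimes\psi$. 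Compatibility with $i_X$ is immediate, and compatibility with $i_{B_P}$ follows from Remark~\ref{properties of family projections2}, which gives $\overline{\psi\rtimes P}(i_{\LL_X}(1_p))=Q_p^{\psi}=\pi(\mathds{1}_p)$.

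The only genuine obstacle is the identity $\alpha_p^{\psi}(Q_q^{\psi})=Q_{pq}^{\psi}$ needed for covariance; it is precisely where the semigroup-of-endomorphisms structure of $\alpha^{\psi}$ meets the Nica-type geometry of the projections $Q_p^{\psi}$, and the argument above relies on the density of $X_pX_q$ in $X_{pq}$, which is a product-system fact. Everything else is a bookkeeping consequence of results already assembled in the paper.
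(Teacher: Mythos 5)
Your proof is correct and follows essentially the same route as the paper: identify $\pi(\mathds{1}_p)=\alpha_p^\psi(1)=Q_p^\psi$, use Lemma~\ref{Nica relation Lemma} together with the independence of $J(P)$ and \cite[Proposition 2.4]{Li2} to pass between $\pi$ and the Nica relation for the projections, and obtain 1) and 2) from the universal representation and the extension $\overline{\psi\rtimes P}$. The only deviation is at what you call the "only genuine obstacle": the paper gets $\alpha_p^\psi(\pi(\mathds{1}_q))=\alpha_{pq}^\psi(1)=Q_{pq}^\psi$ immediately from the semigroup law $\alpha_{pq}^\psi=\alpha_p^\psi\circ\alpha_q^\psi$ of Lemma~\ref{induced_endomorphic_action}, whereas you reprove this by a direct range computation using density of $X_pX_q$ in $X_{pq}$ -- a correct but avoidable detour.
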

\begin{proof}
If  $(\pi,\psi)$ is a covariant representation of $(B_P, P, \tau, X)$, then  $\pi(\mathds{1}_p)= \alpha_p^\psi(1)$, $p\in P$ and this relation determines $\pi$. Moreover,  we have $\pi(\mathds{1}_p)= \alpha_p^\psi(1)=Q_p^\psi$, and therefore $\psi$ is Nica covariant by Lemma \ref{Nica relation Lemma}. Conversely, if $\psi$ is a Nica covariant representations  of $X$, then  $\alpha_p^\psi(1)=Q_p^\psi$ satisfy \eqref{Nica equation for projections} and belong to $\psi_e(A)'$, cf. \cite[Proposition 9.5]{kwa-larI}. Hence there is a representation $\pi:B_P\to \psi_e(A)'$ determined by $\pi(\mathds{1}_p)= \alpha_p^\psi(1)$, $p\in P$, by \cite[Proposition 2.4]{Li2}. Since
$
\pi (\tau_p (\mathds{1}_{q})) = \pi(\mathds{1}_{pq})=  \alpha_{pq}^\psi(1)= \alpha_{p}^\psi(\alpha_{q}^\psi(1))=  \alpha_{p}^\psi(\pi(\mathds{1}_q))
$
for every $p,q\in P$, we conclude that  $(\pi,\psi)$ is a covariant representation of $(B_P, P, \tau, X)$.

The second part of the assertion is now immediate (by representing $\FR(X)$ faithfully and nondegenerately  on a Hilbert space).
\end{proof}

\begin{thm}[Uniqueness Theorem for $\FR(X)$]\label{Uniqueness for product systems III}
Retaining the assumptions in Theorem \ref{Uniqueness for product systems II} each of the conditions (i) and (ii)  therein
are equivalent to the following one:
\begin{itemize}
\item[(iii)]  $\FR(X)\cong \clsp\{\psi(x) Q_p^\psi \psi(y)^*: x,y\in X, p \in P\} $. In particular, $\pi\rtimes \psi$
is faithful.
\end{itemize}

\end{thm}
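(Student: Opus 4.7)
The plan is to obtain this as a corollary of Theorem~\ref{Uniqueness for product systems II}, exploiting the inclusion $\FR(X)\subseteq\mathcal{DR}(\NT(X))$, with the Fock representation serving as a universal model for the converse direction. Since (i)$\Leftrightarrow$(ii) is already established, it suffices to prove (i)$\Leftrightarrow$(iii).

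For the implication (i)$\Rightarrow$(iii), I would invoke Theorem~\ref{Uniqueness for product systems II}(ii): under condition (C), the homomorphism $\overline{\psi\rtimes P}$ is an isomorphism of $\mathcal{DR}(\NT(X))$, so its restriction to $\FR(X)$ is faithful. The image is then computed from the spanning description $\FR(X)=\clsp\{i_{\KK_X}(x)\, i_{\LL_X}(1_p)\, i_{\KK_X}(y)^*\}$ of Proposition~\ref{Fowler-Raeburn algebra form} combined with the identities $\overline{\psi\rtimes P}(i_{\KK_X}(x))=\psi(x)$ and $\overline{\psi\rtimes P}(i_{\LL_X}(1_p))=Q_p^\psi$ (the latter from Remark~\ref{properties of family projections2}), which yield exactly $\clsp\{\psi(x)Q_p^\psi\psi(y)^*:x,y\in X,\ p\in P\}$.

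For (iii)$\Rightarrow$(i), the key idea is to first run (i)$\Rightarrow$(iii) for the Fock representation $\LLL$. I would begin by verifying that $\LLL$ automatically satisfies condition (C): for $q\in P\setminus P^*$ one has $e\notin qP$, so $Q_q^\LLL$ annihilates the $e$-th summand $X_e=A$ of the Fock module $\FF_{\KK_X}$, and hence $\prod_i(1-Q_{q_i}^\LLL)$ restricts to the identity on this summand while $\LLL_e(a)$ acts there as (faithful) left multiplication by $a$. Applying (i)$\Rightarrow$(iii) to $\LLL$ thus gives $\FR(X)\cong\clsp\{\LLL(x)Q_p^\LLL\LLL(y)^*\}$; consequently the image $\LLL_e(a)\prod_i(1-Q_{q_i}^\LLL)$ of the element $i_{\KK_X}(a)\prod_i(1-i_{\LL_X}(1_{q_i}))\in\FR(X)$ is nonzero whenever $a\neq 0$ and the $q_i$ lie in $P\setminus P^*$, so the element itself is nonzero in $\FR(X)$. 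Granting (iii), faithfulness of $\pi\rtimes\psi$ transports this: from $\psi_e(a)\prod_i(1-Q_{q_i}^\psi)=0$ we conclude $i_{\KK_X}(a)\prod_i(1-i_{\LL_X}(1_{q_i}))=0$ and therefore $a=0$, which is precisely condition (C) for $\psi$.

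The only delicate bookkeeping I foresee is checking that $i_{\KK_X}(a)\prod_i(1-i_{\LL_X}(1_{q_i}))$ genuinely lies in $\FR(X)$ (not merely in its unitization) and that it maps under $\pi\rtimes\psi$ to $\psi_e(a)\prod_i(1-Q_{q_i}^\psi)$. Expanding the product, each summand is either of the form $\mathds{1}_r\cdot a\in B_P\cdot A\subseteq \FR(X)$ (where $r$ is a right LCM of some subfamily of the $q_i$, with the summand vanishing if no such right LCM exists) or reduces to $a\in A\subseteq\NT(X)\subseteq\FR(X)$, using $\mathds{1}_e=1$ under the embedding $B_P\hookrightarrow\mathcal{DR}(\NT(X))^{\sim}$; the corresponding image calculation on the $\psi$-side is identical after replacing each $\mathds{1}_{q_i}$ by $Q_{q_i}^\psi$.
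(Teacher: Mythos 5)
Your proof is correct and follows essentially the same route as the paper's: both implications exploit the inclusion $\FR(X)\subseteq\mathcal{DR}(\NT(X))$ together with the observation that the elements $\psi_e(a)\prod_{i=1}^{n}(1-Q^{\psi}_{q_i})$ occurring in condition (C) are images under $\pi\rtimes\psi$ of elements of $\FR(X)$. The paper's argument is merely terser, leaving implicit exactly the points you spell out, namely that these elements lie in $\FR(X)$ and that they are nonzero there for $a\neq 0$ (which you certify via condition (C) for the Fock representation, using the amenability hypothesis).
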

\begin{proof}
 The implication (ii)$\Rightarrow$(iii) is obvious as  $\FR(X)\subseteq \DR(\A)$. To see that (iii)$\Rightarrow$(i) follows because  condition \eqref{Coburn condition666} involves only elements lying in the image of the corresponding representation of $\FR(X)$. Hence if they are satisfied in
 $\DR(\A)$ they need to be satisfied in $\FR(X)$.
\end{proof}
\begin{rem}
When $X$ is compactly aligned, $P$ is a positive cone in a quasi-lattice ordered group  and all the fibers  $X_p$, $p\in P$, are essential,
then  $\FR(X)$ coincides with the algebra denoted by $B_P\rtimes_{\tau,X}P$ in \cite{F99}, see also \cite{FR}.  In this case the equivalence of (i) and (iii) in Theorem \ref{Uniqueness for product systems III}
 is \cite[Theorem 7.2]{F99}, which in turn is a generalization of \cite[Theorem 5.1]{FR}.
\end{rem}
\begin{ex}[Fowler-Raeburn version of $\OO_\infty$]\label{ex:FR-Oinfty}
Retain the notation of Example \ref{ex:DR-Oinfty}. We noticed there that conditions \eqref{eq:ranges dont span}
and \eqref{Doplicher-Roberts O_infty} are equivalent. Denoting by  $\mathcal{FR}(\OO_\infty)$ the Fowler-Raeburn algebra $\FR(X)$ associated to $(X_n)_{n\in \N}$  we see now, using Theorem \ref{Uniqueness for product systems III}, that  these equivalent conditions are further equivalent  to having an isomorphism
\begin{equation}\label{Fowler-Raburn O_infty}
	\mathcal{FR}(\OO_\infty)\cong \clsp\{ X_n Q_k X_m^*: n,m,k\in \N\}.
	\end{equation}
In particular, $\OO_\infty$ embeds as a proper subalgebra of $\mathcal{FR}(\OO_\infty)$ by \eqref{normal O_infty} and the second part of Proposition \ref{Fowler-Raeburn algebra form}, cf. \cite[Example 5.6(2)]{FR}. The algebra $\mathcal{FR}(\OO_\infty)$
 is separable while $\mathcal{DR}(\OO_\infty)$ is not.
\end{ex}
\section{Nica-Toeplitz crossed products by completely positive maps}\label{section:NT-cp-ccp-maps}

In this section we will introduce a general definition of Nica-Toeplitz $C^*$-algebra for an action of a LCM semigroup by completely positive maps.
We will do it in two steps. First we introduce a Toeplitz $C^*$-algebra, and then obtain a Nica-Toeplitz $C^*$-algebra as a quotient by ‘eliminating redundancies’.
In the next sections we will analyze these $C^*$-algebras in more detail in two special cases. Namely, when the action is   by endomorphisms or  by transfer operators.

\subsection{General construction}
Let  $P$  be a right LCM semigroup. Let $\CP(A)$ denote a semigroup of completely positive maps on a $C^*$-algebra $A$ (with semigroup operation given by composition).
\begin{defn}\label{C*-dynamical system}
Let $\varrho:P \ni p \mapsto \varrho_p\in \CP(A)$ be a unital semigroup antihomomorphism, i.e. $\alpha_e=id$ and $\varrho_q\circ \varrho_p=\varrho_{pq}$ for all $p,q\in P$. We call $(A,P,\varrho)$  a \emph{$C^*$-dynamical system}.
\end{defn}
A \emph{representation of the semigroup} $P$ in a Hilbert space $H$ is a unital semigroup homomorphism $S:P\to \B(H)$ into the multiplicative semigroup of  $\B(H)$.
The following is an obvious semigroup generalization of \cite[Definition 3.1]{kwa-exel}.
\begin{defn}
 A \emph{representation} of a $C^*$-dynamical system  $(A,P,\varrho)$ on a Hilbert space $H$ is a pair $(\pi, S)$ consisting of a nondegenerate representation $\pi:A\to \B(H)$ and a homomorphism  $S:P\to \B(H)$ such that
\begin{equation}\label{cp map representation relation}
S_p^*\pi(a)S_p=\pi(\varrho_p(a))
\end{equation}
for all $p,q\in P$ and $a\in A$. We put $C^*(\pi,S):=C^*(\bigcup_{p\in P} \pi(A)S_p)$. Exactly as  in the proof of \cite[Lemma 3.2]{kwa-exel},  one can prove that there is a universal representation $(i_A, \hat{t})$ of $(A,P,\varrho)$; universal in the  sense that for any  other representation $ (\pi,S)$ of $(A,P,\varrho)$
the maps
\begin{equation}\label{toeplitz epimorphism}
 i_A(a)\longmapsto \pi(a), \qquad i_A(a)\hat{t}_p \longmapsto \pi(a)S_p, \qquad a\in A,\,\, p\in P,
 \end{equation}
give rise to an epimorphism from $C^*(i_A,\hat{t})$ onto $C^*(\pi,S)$. Up to a natural isomorphism the $C^*$-algebra $\TT(A,P,\varrho):=C^*(i_A(A),\hat{t})$ is uniquely determined by $(A,P,\varrho)$, and we call it the \emph{Toeplitz algebra of $(A,P,\varrho)$}.
\end{defn}
For any $C^*$-algebra $C$ we denote by $\RM(C)$, $\LM(C)$ and $\M(C)$ the algebras of right, left and two-sided multipliers of $C$, respectively, cf. \cite[3.12]{pedersen}.
We say that a map $\varrho$ on  $C$ is \emph{strict} if for any approximate unit $\{\mu_\lambda\}$ in $C$, the net $\{\varrho(\mu_\lambda)\}$ converges strictly to a multiplier of $C$.
\begin{lem}\label{lem: Toeplitz algebra for completely positives}
We have $\TT(A,P,\varrho)=C^*(\bigcup_{p\in P} i_{A}(A)\hat{t}_p i_{A}(A))$. Hence $i_{A}(A)$ is a nondegenerate  subalgebra of $\TT(A,P,\varrho)$ and $\{\hat{t}_p\}_{p\in P}\subseteq \RM(\TT(A,P,\varrho))$. If every  $\varrho_p$, $p\in P$, is strict then  $\TT(A,P,\varrho)=C^*(\bigcup_{p\in P} \hat{t}_p i_{A}(A))$  and  $\{\hat{t}_p\}_{p\in P}\subseteq \M(\TT(A,P,\varrho))$.
\end{lem}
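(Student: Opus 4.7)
The plan is to fix a faithful representation of $\TT(A,P,\varrho)$ on a Hilbert space $H$ and exploit the covariance relation $\hat{t}_p^* i_A(a)\hat{t}_p = i_A(\varrho_p(a))$ together with an approximate unit $(\mu_\lambda)$ of $A$. The first identity is obtained from the norm estimate
\[
\|i_A(a)\hat{t}_p - i_A(a)\hat{t}_p i_A(\mu_\lambda)\|^2 = \|(1-i_A(\mu_\lambda))\, i_A(\varrho_p(a^*a))\, (1-i_A(\mu_\lambda))\|,
\]
obtained from the $C^*$-identity and covariance, which tends to $0$ since $\varrho_p(a^*a)\in A$ and $(\mu_\lambda)$ is an approximate unit for $A$. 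Hence every generator $i_A(a)\hat{t}_p$ lies in $\overline{i_A(A)\hat{t}_p i_A(A)}$, which gives $\TT(A,P,\varrho)=C^*(\bigcup_p i_A(A)\hat{t}_p i_A(A))$. The same approximation, rewritten as $i_A(a\mu_\lambda)\hat{t}_p i_A(b)=i_A(a)\cdot i_A(\mu_\lambda)\hat{t}_p i_A(b)$, exhibits every generator of $\TT(A,P,\varrho)$ as a norm-limit of elements in $i_A(A)\cdot \TT(A,P,\varrho)$, yielding the nondegeneracy of $i_A(A)$.

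For the right-multiplier property I would work with the closed subspace $M=\{y\in\B(H):y\hat{t}_p\in \TT(A,P,\varrho)\}$, which is closed under left multiplication by $\TT(A,P,\varrho)$. To force $\TT(A,P,\varrho)\subseteq M$ it suffices to show that the generators and their adjoints lie in $M$. For $i_A(a)\hat{t}_q$ this is immediate since $i_A(a)\hat{t}_q\hat{t}_p=i_A(a)\hat{t}_{qp}$; for $(i_A(a)\hat{t}_q)^*=\hat{t}_q^*i_A(a^*)$, inserting an approximate unit gives
\[
\hat{t}_q^*i_A(a^*)\hat{t}_p=\lim_\lambda (i_A(a)\hat{t}_q)^*\cdot (i_A(\mu_\lambda)\hat{t}_p),
\]
with norm convergence from $\|a^*-a^*\mu_\lambda\|\to 0$ and each factor on the right lying in $\TT(A,P,\varrho)$.

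The strict case reduces to showing $\hat{t}_p i_A(a)\in \TT(A,P,\varrho)$ for every $a\in A$; granted this, the identity $\TT(A,P,\varrho)=C^*(\bigcup_p \hat{t}_p i_A(A))$ follows by combining with the previous one (using the adjoints $(\hat{t}_p i_A(a))^*=i_A(a^*)\hat{t}_p^*$), and the left-multiplier property follows from the symmetric submodule argument $L=\{y:\hat{t}_p y\in \TT(A,P,\varrho)\}$ together with nondegeneracy of $i_A(A)$. To show $\hat{t}_p i_A(a)\in \TT(A,P,\varrho)$ I would approximate by $i_A(\mu_\lambda)\hat{t}_p i_A(a)$ and establish that it is norm-Cauchy via
\[
\|(i_A(\mu_\lambda)-i_A(\mu_\kappa))\hat{t}_p i_A(a)\|^2\leq \|a^*\,\varrho_p((\mu_\lambda-\mu_\kappa)^2)\,a\|,
\]
again a consequence of the $C^*$-identity and covariance. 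Now strictness of $\varrho_p$ extends it to a strictly continuous map on $M(A)$; since $(\mu_\lambda-\mu_\kappa)^2\to 0$ strictly, this forces $\varrho_p((\mu_\lambda-\mu_\kappa)^2)\cdot a\to 0$ in norm, so the sequence is Cauchy, and identification of its limit with $\hat{t}_p i_A(a)$ uses that $i_A(\mu_\lambda)\to 1$ strongly on $H$, which is exactly the nondegeneracy secured in the first step.

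The main obstacle is the strict case, specifically the transition from strictness of $\varrho_p$ as a condition on approximate-unit nets to the norm-vanishing of $\varrho_p((\mu_\lambda-\mu_\kappa)^2)\cdot a$; one has to check that strict maps in this sense admit a unique strictly continuous extension to $M(A)$ which is well behaved on bounded nets tending strictly to $0$. The remainder of the lemma is essentially bookkeeping: once the approximate-unit trick and nondegeneracy of $i_A(A)$ are in hand, the two $C^*$-algebraic identities and both multiplier statements follow from the same mechanism applied once on the right and once on the left.
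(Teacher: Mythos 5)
Your proposal is correct and follows essentially the same route as the paper: the key facts you establish directly from the covariance relation $\hat{t}_p^{\,*} i_A(a)\hat{t}_p=i_A(\varrho_p(a))$ and the $C^*$-identity — namely $\textrm{s-}\lim_\lambda i_A(\mu_\lambda)\hat{t}_p=\hat{t}_p$, $i_A(A)\hat{t}_p\subseteq \overline{i_A(A)\hat{t}_p i_A(A)}$, and in the strict case $\hat{t}_p i_A(A)\subseteq \overline{i_A(A)\hat{t}_p i_A(A)}$ — are exactly what the paper imports by citing \cite[Proposition 3.10, Lemma 3.8 and Remark 3.9]{kwa-exel}, and the remaining multiplier bookkeeping is the same mechanism (the paper factorizes $b=b'i_A(a)$ via nondegeneracy where you run the approximate-unit/submodule argument). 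The one external ingredient you flag as the main obstacle — that a strict completely positive map admits an extension to $\M(A)$ which is strictly continuous on bounded sets, so that $\varrho_p\bigl((\mu_\lambda-\mu_\kappa)^2\bigr)a\to 0$ in norm — is standard (see \cite[Corollary 5.7]{lance}) and plays precisely the role of the paper's citation, so it is not a gap.
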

\begin{proof}
Suppose that $\TT(A,P,\varrho)$ acts in a nondegenerate  way on a Hilbert space $H$. By  \cite[Proposition 3.10 and Lemma 3.8]{kwa-exel}, for any $p\in P$ and an approximate unit $\{\mu_\lambda\}_{\lambda\in \Lambda}$ in $A$ we have
$\textrm{s-}\lim_{\lambda\in \Lambda} i_{A}(\mu_\lambda)\hat{t}_p=\hat{t}_p$  and  $i_{A}(A)\hat{t}_p \subseteq  i_{A}(A)\hat{t}_p i_{A}(A)$.
In particular, $\hat{t}_p\in \TT(A,P,\varrho)''$ and  $ \TT(A,P,\varrho)\subseteq C^*(\bigcup_{p\in P} i_{A}(A)\hat{t}_p i_{A}(A))$. The reverse inclusion $ C^*(\bigcup_{p\in P} i_{A}(A)\hat{t}_p i_{A}(A)) \subseteq \TT(A,P,\varrho)$ is clear since  $i_{A}(A)=i_{A}(A)\hat{t}_e\subseteq \TT(A,P,\varrho)$.  Thus $i_{A}(A)$ is a nondegenerate  subalgebra of $\TT(A,P,\varrho)$. Every $b\in\TT(A,P,\varrho)$ is of the form $b'i_{A}(a)$, where $b'\in\TT(A,P,\varrho)$, $a\in A$, and
$
b \hat{t}_p= b'i_{A}(a)\hat{t}_p\in b' i_{A}(A)\hat{t}_p i_{A}(A) \subseteq \TT(A,P,\varrho).
$
Hence $\hat{t}_p \in \RM(\TT(A,P,\varrho))$,  for every $p\in P$.

 Suppose now that  every map $\varrho_p$, $p\in P$, is strict. By \cite[Proposition 3.10 and Remark 3.9]{kwa-exel}, we get $\hat{t}_p i_{A}(A) \subseteq  i_{A}(A)\hat{t}_p i_{A}(A)$, for every $p\in P$. Using this, similarly as above, one gets that $\TT(A,P,\varrho)=C^*(\bigcup_{p\in P} \hat{t}_p i_{A}(A)) $ and $\hat{t}_p \in \LM(\TT(A,P,\varrho))$,  for every $p\in P$.
\end{proof}
Let $(\pi, S)$ be a representation of  $(A,P,\varrho)$. In view of \eqref{cp map representation relation} the Banach spaces
$$
\KK_{(\pi,S)}(p,q):=\overline{\pi(A)S_p\pi(A)S_q^*\pi(A)}, \qquad p,q\in P,
$$
form a  $C^*$-precategory. In general, it  is not obvious that there exists a  right-tensor $C^*$-precategory containing $\KK_{(\pi,S)}$ as an ideal. Nevertheless, we can  mimic the definition of Nica covariance  to define a Nica-Toeplitz algebra  as follows, where we also draw on inspiration from \cite{exel3}.
\begin{defn}\label{redundancy definition}
Let $(\pi, S)$ be a representation of  $(A,P,\varrho)$. We say that a pair  $(a \cdot b,k)$ is a\emph{ redundancy for $(\pi, S)$} if  $a\in \KK_{(\pi,S)}(p,q)$, $b\in   \KK_{(\pi,S)}(s,t) $  and
$k\in \KK_{(\pi,S)}(pq^{-1}r,ts^{-1}r)$, for some $p,q,s,t,r\in P$ with  $qP\cap sP=rP$, are such that
\begin{equation}\label{eq:redundancy condition}
ab \pi(c)S_{ts^{-1}r}=k \pi(c)S_{ts^{-1}r} \,\, \textrm{ for all }c \in A.
 \end{equation}
We say that $(\pi, S)$ is \emph{Nica covariant} if
\begin{itemize}
\item[(1)]  for every redundancy $(a \cdot b,k)$ we have $a \cdot b=k$;
\item[(2)]  $\KK_{(\pi,S)}(p,q) \KK_{(\pi,S)}(s,t)=\{0\}$ whenever $qP\cap sP=\emptyset$.
\end{itemize}
 \end{defn}
\begin{rem}\label{rem:uniqueness of redundancy}
Note that if $(a \cdot b,k)$ is a redundancy for $(\pi, S)$, then  $a \cdot b$ determines $k$ uniquely, via \eqref{eq:redundancy condition}. Indeed, just note that the essential subspace for $k$ is
 $$
\overline{\KK_{(\pi,S)}(ts^{-1}r,ts^{-1}r)H}=\overline{\pi(A)S_{ts^{-1}r}\pi(A)H}=\overline{\pi(A)S_{ts^{-1}r}H}.
$$
\end{rem}
We define the \emph{Nica-Toeplitz algebra of the $C^*$-dynamical system $(A,P,\varrho)$} to be the $C^*$-algebra
$\NT(A,P,\varrho):=C^*(j_A,\hat{s})$ generated by a universal Nica covariant representation $(j_A,\hat{s})$ of $(A,P,\varrho)$. As the next result shows, there is an alternate way to justify its existence.
\begin{lem}
We have $\NT(A,P,\varrho)\cong \TT(A,P,\varrho)/\NN$ where $\NN$ is the ideal of $\TT(A,P,\varrho)$ generated by the differences
$$
a\cdot b -k  \quad \text{ where } \quad (a \cdot b,k) \text{ is a redundancy for }(i_A,\hat{t})
$$
and products
$$
a\cdot b  \quad \text{ where }  a\in \KK_{(i_A,\hat{t})}(p,q),\,\, b\in   \KK_{(i_A,\hat{t})}(s,t) \text{ and } qP\cap sP=\emptyset.
$$
\end{lem}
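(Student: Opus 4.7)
My plan is to verify that $\TT(A,P,\varrho)/\NN$ satisfies the universal property defining $\NT(A,P,\varrho)$, by constructing mutually inverse $*$-homomorphisms with $\bar\psi:\TT(A,P,\varrho)/\NN\to \NT(A,P,\varrho)$ and its inverse.

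First I would use universality of $\TT(A,P,\varrho)$ applied to $(j_A,\hat{s})$ to obtain a surjective $*$-homomorphism $\psi:\TT(A,P,\varrho)\to \NT(A,P,\varrho)$ with $\psi\circ i_A=j_A$ and $\psi(i_A(a)\hat{t}_p)=j_A(a)\hat{s}_p$. Since $\psi$ is a $*$-homomorphism it sends $\KK_{(i_A,\hat{t})}(p,q)$ into $\KK_{(j_A,\hat{s})}(p,q)$, so every redundancy $(a\cdot b,k)$ for $(i_A,\hat{t})$ in $\TT(A,P,\varrho)$ is transported by $\psi$ to a redundancy $(\psi(a)\psi(b),\psi(k))$ for $(j_A,\hat{s})$; Nica covariance of the latter forces $\psi(a\cdot b-k)=0$. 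Similarly, generators of the second type are killed by the orthogonality clause in Nica covariance, so $\psi$ descends to a surjection $\bar\psi:\TT(A,P,\varrho)/\NN\to \NT(A,P,\varrho)$.

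The second step constructs an inverse by showing that the pair $(q\circ i_A,\,q\circ \hat{t})$ in $\TT(A,P,\varrho)/\NN$ is itself a Nica covariant representation of $(A,P,\varrho)$: after faithfully and nondegenerately representing $\TT(A,P,\varrho)/\NN$ on a Hilbert space, the defining relation $\hat{t}_p^*i_A(a)\hat{t}_p=i_A(\varrho_p(a))$ descends to the quotient and $q\circ i_A$ remains nondegenerate by Lemma~\ref{lem: Toeplitz algebra for completely positives}. The orthogonality part of Nica covariance is immediate from the very generators of $\NN$. For the redundancy part, I would pick lifts $\tilde a,\tilde b,\tilde k$ of a redundancy $(\bar a\cdot \bar b,\bar k)$ in the quotient and, by exploiting uniqueness of the element $\bar k$ attached to $\bar a\cdot\bar b$ recorded in Remark~\ref{rem:uniqueness of redundancy}, replace $\tilde k$ by a representative yielding an honest redundancy $(\tilde a\cdot \tilde b,\tilde k)$ for $(i_A,\hat{t})$ in $\TT(A,P,\varrho)$, so that $\tilde a\tilde b-\tilde k\in \NN$ by construction and hence $\bar a\bar b=\bar k$. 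Universality of $\NT(A,P,\varrho)$ then produces the inverse to $\bar\psi$, with equality on generators giving the isomorphism.

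The main obstacle will be the redundancy verification inside the quotient: a redundancy there only forces \eqref{eq:redundancy condition} to hold modulo $\NN$, whereas the generators of $\NN$ arise from redundancies satisfied exactly in $\TT(A,P,\varrho)$. Bridging this gap amounts to producing, for any redundancy in the quotient, an honest redundancy in $\TT(A,P,\varrho)$ whose image coincides with it; I would handle this by combining the uniqueness statement of Remark~\ref{rem:uniqueness of redundancy} with a density argument inside $\KK_{(i_A,\hat{t})}(pq^{-1}r,ts^{-1}r)$ and the approximate-unit behaviour of $i_A(A)$ in $\TT(A,P,\varrho)$ provided by Lemma~\ref{lem: Toeplitz algebra for completely positives}.
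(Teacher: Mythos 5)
Your overall route (check that $\TT(A,P,\varrho)/\NN$ has the universal property, with the easy direction coming from universality of $\TT(A,P,\varrho)$ applied to $(j_A,\hat{s})$) is the intended one, and your first step is fine: redundancies and orthogonal products for $(i_A,\hat t)$ are transported by the canonical epimorphism to redundancies and orthogonal products for $(j_A,\hat s)$, so $\NN$ dies and $\bar\psi$ exists. The genuine problem is in the converse direction, exactly at the point you flag: to verify condition (1) of Definition~\ref{redundancy definition} for the quotient representation you propose to replace a lift $\tilde k$ by one making $(\tilde a\cdot\tilde b,\tilde k)$ an \emph{exact} redundancy for $(i_A,\hat t)$ in $\TT(A,P,\varrho)$. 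This presupposes that some $k'\in \KK_{(i_A,\hat t)}(pq^{-1}r,ts^{-1}r)$ with $\tilde a\tilde b\, i_A(c)\hat t_{ts^{-1}r}=k'\, i_A(c)\hat t_{ts^{-1}r}$ for all $c$ exists at all. Remark~\ref{rem:uniqueness of redundancy} gives only \emph{uniqueness} of such an element, never existence; existence of redundancy completions is precisely the well-alignedness issue isolated in Remark~\ref{well-aligned dynamical system}, and for a general $C^*$-dynamical system it is not available. No density argument in $\KK_{(i_A,\hat t)}(pq^{-1}r,ts^{-1}r)$ together with approximate units can manufacture it: a redundancy in the quotient controls $\bar a\bar b$ only on the essential subspace $\overline{\pi(A)S_{ts^{-1}r}H}$ (indeed it gives $\bar a\bar b\,u=\bar k u$ for $u$ in the $C^*$-algebra $\KK_{(\pi,S)}(ts^{-1}r,ts^{-1}r)$, whence $\bar k=\lim \bar a\bar b\, u_i$ for an approximate unit $u_i$), and what must be excluded to conclude $\bar a\bar b=\bar k$ is a nonzero behaviour of $\bar a\bar b$ on the orthogonal complement; your proposed bridge does not address this.

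Two further comments. First, under well-alignedness your argument does close: choose an exact completion $k_0$ upstairs, so $\bar a\bar b=\bar k_0\in\KK_{(\pi,S)}(pq^{-1}r,ts^{-1}r)$ in the quotient, and then Remark~\ref{rem:uniqueness of redundancy} identifies $\bar k=\bar k_0$; this covers the endomorphism and transfer-operator systems treated later in the paper, but it is not the generality in which the lemma is stated, so as written the key step needs either a direct argument that every redundancy in the quotient collapses or an explicit well-alignedness-type hypothesis. Second, a smaller point you should make explicit: $\hat t_p$ lies only in $\RM(\TT(A,P,\varrho))$ in general, so ``$q\circ\hat t$'' has to be interpreted, e.g.\ by defining $S_p$ as the strong limit of $\sigma\bigl(q(i_A(\mu_\lambda)\hat t_p)\bigr)$ in a faithful nondegenerate representation $\sigma$ of the quotient and checking the semigroup law and \eqref{cp map representation relation} there; this is routine but not literally ``the defining relation descends''.
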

\begin{proof}
It is straightforward and therefore left to the reader.
\end{proof}
We aim to investigate uniqueness of representations of $\NT(A, P, \rho)$. We will do this by specializing to two classes of actions where $\NT(A, P, \rho)$ admits realizations as a Nica-Toeplitz algebra of a right-tensor $C^*$-precategory.

\begin{rem}\label{well-aligned dynamical system}
Even though in the greatest generality of an action by completely positive maps there is no obvious structure of a right-tensor category, it is still possible to define a notion similar to well-aligned for $C^*$-precategory and show that it provides a structural description of $\NT(A, P, \rho)$ similar to the Nica-Toeplitz algebra of a $C^*$-precategory, cf.  \cite[Remark 3.9]{kwa-larI}. We include the details here for two reasons: first of all because the  classes of examples we consider exhibit this additional feature and second because we believe the observation may be of use in future investigations.

We say that a $C^*$-dynamical system $(A,P,\varrho)$ is \emph{well-aligned} if  for every representation  $(\pi,S)$ of $(A,P,\varrho)$ and all pairs $a \in \KK_{(\pi,S)}(p,q)$ and $b\in \KK_{(\pi,S)}(s,t)$  with $qP\cap sP=
rP$ there is  $k\in \KK_{(\pi,S)}(pq^{-1}r,ts^{-1}r)$ such that $(a \cdot b,k)$ is a redundancy for ${(\pi,S)}$ (obviously it suffices to check this requirement only for the universal  representation $(i_A,\hat{t})$).

We now claim that if a $C^*$-dynamical system $(A,P,\varrho)$ is well-aligned, then
$$
\NT(A,P,\varrho)=\clsp\{\bigcup_{p,q\in P} \KK_{(j_A,\hat{s})}(p,q) \}.
$$
Indeed, the Banach space $\clsp\{\bigcup_{p,q\in P} \KK_{(j_A,\hat{s})}(p,q) \}$ is closed under taking adjoints. Thus we only need to check that it is closed under multiplication. Let  $a \in \KK_{(j_A,\hat{s})}(p,q)$ and $b\in \KK_{(j_A,\hat{s})}(s,t)$. If $qP\cap sP=\emptyset$, then $a\cdot b= 0$ by Nica covariance of $(j_A,\hat{s})$.
 Assume then that   $qP\cap sP=rP$.
 By well-alignment there is $k\in \KK_{(j_A,\hat{s})}(pq^{-1}r,ts^{-1}r)$ such that $(a \cdot b,k)$ is a redundancy for $(j_A,\hat{s})$. Hence $a\cdot b= k\in \KK_{(j_A,\hat{s})}(pq^{-1}r,ts^{-1}r)$, again by Nica covariance.
\end{rem}

\subsection{Nica-Toeplitz crossed products by endomorphisms}\label{subsection:NT-cp-endo}
Throughout this subsection we
let  $P$  be a right LCM semigroup and denote by $\alpha:P \ni p \mapsto \alpha_p\in \End(A)$  a unital semigroup antihomomorphism, i.e. $\alpha_e=id$ and $\alpha_q\circ \alpha_p=\alpha_{pq}$ for all $p,q\in P$, where we assume that $\alpha_p$ is an endomorphism of $A$ for each $p\in P$. Since $*$-homomorphisms are completely positive maps,  $(A,P,\alpha)$  is a $C^*$-dynamical system in the sense of Definition \ref{C*-dynamical system}.

Earlier approaches to associating a Toeplitz-type crossed product to $(A,P,\alpha)$ involve a product system over $P$,  see e.g. \cite[Section 3]{F99}. Along the same lines, for each $p\in P$, let $E_p:=\alpha_p(A)A$  be the $C^*$-correspondence over $A$ where
$$
  \quad \langle x, y\rangle_p :=x^*y, \qquad
a\cdot x\cdot b:=\alpha_p(a)xb, \quad x,y \in E_p,\,\, a,b \in A.
$$
We define multiplication on $E_\alpha=\bigsqcup_{p\in P} E_p$  by
\begin{equation}\label{multiplication-Ealpha}
E_p\times E_q\ni(x,y)\longmapsto \alpha_q(x)y\in E_{pq}.
\end{equation}
It is readily seen that the above map induces an isomorphism $E_p\otimes E_q\cong E_{pq}$ and hence $E_\alpha$ is a product system, cf. \cite[Lemma 3.2]{F99}. The left action on each fiber is by generalized compacts, cf. \cite[Lemma 3.25]{kwa-exel}. Hence by Lemma \ref{non-degeneracy of K_X} we have a  right-tensor $C^*$-precategory $\KK_{E_\alpha}=\{\KK(E_p,E_q)\}_{p,q\in P}$ which is a well-aligned ideal in the $C^*$-precategory associated to $E$.

We describe next another right-tensor $C^*$-precategory from $(A,P,\alpha)$ which will be useful in proving   that our dynamical system is well-aligned. As in  \cite[Example 3.4]{kwa-doplicher},
  $\KK_\alpha:=\{\alpha_p(A)A\alpha_q(A)\}_{p,q\in P}$ is a $C^*$-precategory with multiplication, involution and norm inherited from $A$. There is a right tensoring on $\KK_\alpha$  given by
$$
\al_p(A)A\al_q(A)\ni a \longrightarrow a \otimes 1_r :=  \alpha_r(a) \in \al_{pr}(A)A\al_{qr}(A).
$$
Our first observation is that  $\KK_\alpha$ and $\KK_{E_\alpha}$ are the same, up to isomorphism.

\begin{lem}\label{isomorphism of categories for endomorphisms}
The right-tensor $C^*$-precategories  $\KK_{E_\alpha}$ and  $\KK_{\alpha}$ are isomorphic with
the isomorphism given by  $\KK(E_p,E_q) \ni\Theta_{x,y}\longmapsto xy^*\in \KK_{\alpha}(p,q)$.
\end{lem}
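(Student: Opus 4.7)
The plan is to verify in turn that the prescription $\Theta_{x,y}\mapsto xy^{*}$, for $x\in E_{p}$ and $y\in E_{q}$, extends first to a $C^{*}$-precategory isomorphism, and then to an isomorphism of right tensorings. Throughout, I will use that the inner product on $E_{p}$ takes the very concrete form $\langle y,z\rangle_{p}=y^{*}z$, which makes the usual identification $\KK(E_{q},E_{p})\cong \clsp\{xy^{*}:x\in E_{p},\, y\in E_{q}\}$ particularly transparent.

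First, I would check that on algebraic rank-one operators the formula $\Theta_{x,y}\mapsto xy^{*}$ extends to a well-defined linear isometry $\KK(E_{q},E_{p})\to \alpha_{p}(A)A\alpha_{q}(A)=\KK_{\alpha}(p,q)$. Linearity on each factor and the sesquilinearity of $\Theta$ match the bilinearity of $(x,y)\mapsto xy^{*}$, and the usual norm computation on finite sums of rank-one operators goes through verbatim (cf.\ \cite[Lemma 2.2]{KPW}) because the adjointable structure on $E_{p}$ is inherited from $A$. Since every element of $\alpha_{p}(A)A\alpha_{q}(A)$ is a norm-limit of finite sums $\sum_{i}x_{i}y_{i}^{*}$ with $x_{i}\in E_{p}$, $y_{i}\in E_{q}$, the map is surjective. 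Next, I would verify that it preserves involution and composition: $\Theta_{x,y}^{*}=\Theta_{y,x}\mapsto yx^{*}=(xy^{*})^{*}$, and
$$
\Theta_{x,y}\,\Theta_{u,v}=\Theta_{x\cdot\langle y,u\rangle_{q},\,v}=\Theta_{xy^{*}u,\,v}\mapsto (xy^{*}u)v^{*}=(xy^{*})(uv^{*}),
$$
so the family $\{\Theta_{x,y}\mapsto xy^{*}\}_{p,q\in P}$ is an isomorphism of $C^{*}$-precategories.

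Second, I would check compatibility with the right tensorings. Under the canonical product-system isomorphism $E_{q}\otimes_{A}E_{r}\cong E_{qr}$ given by \eqref{multiplication-Ealpha}, the generator $z\otimes w$ is sent to $\alpha_{r}(z)w$, and similarly for $E_{p}\otimes_{A}E_{r}\cong E_{pr}$. The operator $\Theta_{x,y}\otimes 1_{r}\in \KK(E_{qr},E_{pr})$ is, by definition of $\iota^{pr,qr}_{p,q}$, characterized by
$$
(\Theta_{x,y}\otimes 1_{r})(\alpha_{r}(z)w)=(\Theta_{x,y}z)\cdot w=\alpha_{r}(xy^{*}z)w.
$$
On the other hand, $\Theta_{\alpha_{r}(x),\alpha_{r}(y)}\in \KK(E_{qr},E_{pr})$ sends $\alpha_{r}(z)w$ to $\alpha_{r}(x)\bigl(\alpha_{r}(y)^{*}\alpha_{r}(z)w\bigr)=\alpha_{r}(xy^{*}z)w$; hence the two operators coincide. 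Under our isomorphism, $\Theta_{\alpha_{r}(x),\alpha_{r}(y)}$ is sent to $\alpha_{r}(x)\alpha_{r}(y)^{*}=\alpha_{r}(xy^{*})$, which is precisely the image of $xy^{*}=\Theta_{x,y}$ under the right tensoring on $\KK_{\alpha}$. Passing to norm limits finishes the compatibility check.

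I expect none of the steps to be genuinely hard; the bookkeeping around the product-system multiplication \eqref{multiplication-Ealpha} and the resulting formula for $\otimes 1_{r}$ is the part most likely to cause confusion, so I would carry it out carefully on the elementary tensors above and then invoke continuity and density.
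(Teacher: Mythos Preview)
Your proposal is correct and follows essentially the same route as the paper: both establish that $\Theta_{x,y}\mapsto xy^{*}$ is an isometric $C^{*}$-precategory isomorphism and then verify compatibility with right tensoring by checking $\Theta_{x,y}\otimes 1_{r}=\Theta_{\alpha_{r}(x),\alpha_{r}(y)}$ on elementary tensors. The only cosmetic difference is that the paper writes out the norm identity $\|\sum_{i}x_{i}y_{i}^{*}\|=\|\sum_{i}\Theta_{x_{i},y_{i}}\|$ explicitly via a supremum over unit vectors in $E_{p}$, whereas you defer this to \cite[Lemma~2.2]{KPW}.
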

\begin{proof}
Let $x_i\in E_q$ and $y_i\in E_p$, for $i=1,\dots ,n$. Since $\sum_{i=1}^n x_iy_i^*\in \alpha_q(A)A\alpha_p(A)$ we have
$$
\|\sum_{i=1}^n x_iy_i^*\|=\sup_{y\in\alpha_p(A)A \atop\|y\|=1}\|\sum_{i=1}^n x_iy_i^*y\|=\sup_{y\in E_p\atop\|y\|=1}\|\sum_{i=1}^n x_i\langle y_i, y\rangle\|
=\|\sum_{i=1}^n \Theta_{x_i,y_i}\|.
$$
Thus $\KK(E_p,E_q) \ni\Theta_{x,y}\mapsto xy^*\in \KK_{\alpha}(p,q)$ extends to an isometric isomorphism, and straightforward calculations show that these maps form an isomorphism of $C^*$-precategories $\KK_{E_\alpha}$ and  $\KK_{\alpha}$. Further, the maps intertwine right tensoring because
for $x\in E_q$, $y, z\in E_p$, $w\in E_r$ we have $\alpha_r(x)\in E_{qr}$ and  $\alpha_r(y)\in E_{pr}$, thus
$$
(\Theta_{x,y}\otimes 1_r) (z \cdot w)=x \cdot \langle y,z\rangle_p\cdot w=\alpha_r(x)\alpha_r(y^*z)w=\Theta_{\alpha_r(x),\alpha_r(y)} z \cdot w.
$$
\end{proof}

\begin{lem}\label{properties of representations of endomorphisms}
Let $(\pi,S)$ be a representation of  $(A,P,\alpha)$ on a Hilbert space $H$.
\begin{itemize}
\item[(i)] For every $p\in P$,
$S_p$ is a partial isometry
and
$$
\pi(a) S_p=S_p \pi(\alpha(a)),\qquad \textrm{ for all }a\in A.
$$
In particular,  the projection $S_pS_p^*$ belongs to the commutant of $\pi(A)$ and
 $$
\KK_{(\pi,S)}(p,q)=S_p\pi\big(\alpha_p(A)A\alpha_q(A)\big)S_q^*,\qquad\text{ for all }p,q\in P;
$$
\item[(ii)] For any approximate unit $\{\mu_\lambda\}$ in $A$ and  all $p\in P$ we have $
S_p^*S_p=\text{s-}\lim_{\lambda\in \Lambda} \pi(\alpha_p(\mu_\lambda))$. In particular,
$\pi(a)S_p^*S_p=\pi(a)$ for all $a\in A\alpha_p(A)$;
\item[(iii)] The family of projections $\{S_p^*S_p\}_{p\in P^{op}}$ forms a decreasing net, that is
$$
q=tp \,\,\, \Longrightarrow \,\,\,  S_q^*S_q \leq S_p^*S_p;
$$
\item[(iv)] Let $a\in \KK_{(\pi,S)}(p,q)$, $b\in   \KK_{(\pi,S)}(s,t) $  and
$k\in \KK_{(\pi,S)}(pq^{-1}r,ts^{-1}r)$,  where $p,q,s,t,r\in P$ with  $qP\cap sP=rP$. The pair $(a\cdot b,k)$ is a redundancy if and only if
$$
k=S_{pq^{-1}r} \pi(\alpha_{q^{-1}r}(a_0)\alpha_{s^{-1}r}(b_0) )S_{ts^{-1}r}^*
$$
where $a_0\in\alpha_p(A)A\alpha_q(A)$ and $b_0\in\alpha_s(A)A\alpha_t(A)$ are such that $a=S_p\pi(a_0)S_q^*$, $b=S_s\pi(b_0)S_t^*$.
\end{itemize}
\end{lem}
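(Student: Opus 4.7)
My plan is to systematically unwind consequences of the defining covariance relation $S_p^*\pi(a)S_p = \pi(\alpha_p(a))$, exploiting crucially that here each $\alpha_p$ is a $*$-homomorphism rather than merely a completely positive map. I will derive (i) first, then (ii) and (iii) as direct consequences, and finally (iv) as the main computational step.

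For (i), the key identity comes from applying covariance to $c^*c$ and comparing with the square of covariance applied to $c$, using multiplicativity of $\alpha_p$; this yields
$$
S_p^*\pi(c^*)(1-S_pS_p^*)\pi(c)S_p = 0\qquad\text{for all } c\in A.
$$
Positivity of $1-S_pS_p^*$ then forces $(1-S_pS_p^*)^{1/2}\pi(c)S_p = 0$, giving the intertwining identity $\pi(a)S_p = S_p\pi(\alpha_p(a))$. Taking adjoints gives $S_p^*\pi(a) = \pi(\alpha_p(a))S_p^*$, whence $S_pS_p^*\pi(a) = \pi(a)S_pS_p^*$, so $S_pS_p^*$ commutes with $\pi(A)$. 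For the partial isometry property I will apply the intertwining to an approximate unit $\{\mu_\lambda\}$ of $A$: nondegeneracy of $\pi$ gives $\pi(\mu_\lambda)S_p\to S_p$ strongly, and the same expression equals $S_p\pi(\alpha_p(\mu_\lambda))$, so $S_pS_p^*\pi(\mu_\lambda)S_p$ converges to both $S_pS_p^*S_p$ and $S_p$, forcing $S_p = S_pS_p^*S_p$. The formula for $\KK_{(\pi,S)}(p,q)$ then follows by sliding $\pi(A)$ past $S_p$ from the left and past $S_q^*$ from the right via the intertwining.

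Claims (ii) and (iii) are quick. For (ii), covariance for an approximate unit gives $S_p^*\pi(\mu_\lambda)S_p = \pi(\alpha_p(\mu_\lambda))$, and nondegeneracy of $\pi$ yields $S_p^*S_p = \mathrm{s\text{-}lim}\,\pi(\alpha_p(\mu_\lambda))$; for $a = c\alpha_p(d)\in A\alpha_p(A)$, norm continuity of $\alpha_p$ then gives $\pi(a)S_p^*S_p = \pi(c)\,\mathrm{s\text{-}lim}\,\pi(\alpha_p(d\mu_\lambda)) = \pi(a)$. For (iii), $q = tp$ combined with $S_{tp} = S_tS_p$ (homomorphism) yields $S_q^*S_q = S_p^*(S_t^*S_t)S_p\leq S_p^*S_p$, since $S_t^*S_t$ is a projection.

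The main computational work lies in (iv). Writing $r = qq' = ss'$ with $q' = q^{-1}r$, $s' = s^{-1}r$, my plan is to compute $ab\pi(c)S_{ts'}$ for $a = S_p\pi(a_0)S_q^*$, $b = S_s\pi(b_0)S_t^*$ through the following chain of elementary rewrites: (1) move $\pi(c)$ past $S_t^*$ via $S_t^*\pi(c) = \pi(\alpha_t(c))S_t^*$; (2) factor $S_{ts'} = S_tS_{s'}$ and invoke (ii) to absorb $S_t^*S_t$, using that $b_0\alpha_t(c)\in A\alpha_t(A)$; (3) merge $S_s\pi(\cdot)S_{s'} = S_r\pi(\alpha_{s'}(\cdot))$ via the intertwining; (4) factor $S_r = S_qS_{q'}$ and absorb $S_q^*S_q$ via (ii), using that $a_0\in A\alpha_q(A)$; (5) collapse $S_p\pi(a_0)S_{q'} = S_{pq'}\pi(\alpha_{q'}(a_0))$. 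The antihomomorphism identity $\alpha_{ts'} = \alpha_{s'}\circ\alpha_t$ and covariance $\pi(\alpha_{ts'}(c)) = S_{ts'}^*\pi(c)S_{ts'}$ then give
$$
ab\pi(c)S_{ts'} = S_{pq^{-1}r}\pi(\alpha_{q^{-1}r}(a_0)\alpha_{s^{-1}r}(b_0))S_{ts^{-1}r}^*\pi(c)S_{ts^{-1}r} =: k_0\pi(c)S_{ts^{-1}r}.
$$
Verifying $k_0\in\KK_{(\pi,S)}(pq^{-1}r, ts^{-1}r)$ reduces to checking that $\alpha_{q^{-1}r}(a_0)\alpha_{s^{-1}r}(b_0)\in \alpha_{pq^{-1}r}(A)A\alpha_{ts^{-1}r}(A)$, which follows from the antihomomorphism relations $\alpha_{q^{-1}r}\circ\alpha_p = \alpha_{pq^{-1}r}$ and $\alpha_{q^{-1}r}\circ\alpha_q = \alpha_r$, and symmetrically on the $s$-side. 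This establishes the ``if'' direction of (iv); the ``only if'' direction is then immediate from the uniqueness of redundancies recorded in Remark~\ref{rem:uniqueness of redundancy}. The main obstacle is simply keeping track of the semigroup identities through the chain of rewrites; each individual step is elementary.
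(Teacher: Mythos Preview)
Your proof is correct and follows essentially the same route as the paper. The paper outsources (i) to \cite[Proposition 3.12]{kwa-exel}, whose content is exactly the argument you spell out (one minor point: you use positivity of $1-S_pS_p^*$, i.e.\ $\|S_p\|\le 1$, without comment---this follows immediately from $S_p^*\pi(\mu_\lambda)S_p=\pi(\alpha_p(\mu_\lambda))$ and nondegeneracy, so it is harmless). For (iii) your argument $S_q^*S_q=S_p^*(S_t^*S_t)S_p\le S_p^*S_p$ is more direct than the paper's, which instead computes $(S_q^*S_q)(S_p^*S_p)=S_q^*S_q$ via the strong-limit description of these projections from (ii); both work, yours is cleaner. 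Your treatment of (ii) and (iv) matches the paper's computation step for step.
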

\begin{proof}
Part (i)  follows from  \cite[Proposition 3.12]{kwa-exel}. Part (ii) follows from \eqref{cp map representation relation}, because $\pi$ is nondegenerate  and therefore   $\pi(\mu_\lambda)$ converges strongly to identity. To see part (iii) assume that  $q=tp$, and notice that  by part (ii) we have
\begin{align*}
(S_q^*S_q)(S_p^*S_p)&=\text{s-}\lim_{\lambda\in \Lambda}\text{s-} \lim_{\lambda'\in \Lambda} \pi\big(\alpha_q(\mu_\lambda) \alpha_p(\mu_\lambda')\big)= \text{s-}\lim_{\lambda\in \Lambda}\text{s-} \lim_{\lambda'\in \Lambda}\pi\big(\alpha_p(\alpha_t(\mu_\lambda)\mu_\lambda')\big)
\\
&=\text{s-}\lim_{\lambda\in \Lambda}\pi\big(\alpha_p(\alpha_t(\mu_\lambda))\big)=\text{s-}\lim_{\lambda\in \Lambda}\pi\big(\alpha_q(\mu_\lambda)\big)=S_q^*S_q.
\end{align*}
Let now $a,b$ and $k$ be as in part (iv). By part (i) there are $a_0\in\alpha_p(A)A\alpha_q(A)$ and $b_0\in\alpha_s(A)A\alpha_t(A)$  such that $a=S_p\pi(a_0)S_q^*$, $b=S_s\pi(b_0)S_t^*$. By Remark \ref{rem:uniqueness of redundancy}, condition \eqref{eq:redundancy condition} determines $k$ uniquely.
Thus  it suffices to show that for $k:=S_{pq^{-1}r} \pi(\alpha_{q^{-1}r}(a_0)\alpha_{s^{-1}r}(b_0) )S_{ts^{-1}r}^*$, the pair $(a\cdot b,k)$ is a redundancy. This follows from the following computation:
\begin{align*}
a\cdot b \pi(c)S_{ts^{-1}r}&
=\big(S_p\pi(a_0)S_q^*\big)  \big(S_s\pi(b_0)S_t^*\big)  \pi(c)S_{t}S_{s^{-1}r}
\stackrel{\eqref{cp map representation relation}}{=}S_p\pi(a_0)S_q^*  S_s\pi(b_0 \alpha_t(c))S_{s^{-1}r}
\\
&\stackrel{(i)}{=}S_p\pi(a_0)S_q^*  S_r\pi\big(\alpha_{s^{-1}r}(b_0 \alpha_t(c))\big)
=S_p\pi(a_0)S_q^* S_q S_{q^{-1}r}\pi\big(\alpha_{s^{-1}r}(b_0 \alpha_t(c))\big)
\\
&\stackrel{(ii)}{=}S_p\pi(a_0) S_{q^{-1}r}\pi\big(\alpha_{s^{-1}r}(b_0 \alpha_t(c))\big)
\\
&\stackrel{(i)}{=}S_{pq^{-1}r}\pi\big(\alpha_{q^{-1}r}(a_0)\alpha_{s^{-1}r}(b_0 \alpha_t(c))\big)
\\
&=
S_{pq^{-1}r}\pi\big(\alpha_{q^{-1}r}(a_0)\alpha_{s^{-1}r}(b_0)\big) \pi(\alpha_{ts^{-1}r}(c))\stackrel{\eqref{cp map representation relation}}{=}k \pi(c)S_{ts^{-1}r}.
\end{align*}
\end{proof}
\begin{prop}\label{representations of product systems for endomorphisms}
There are bijective correspondences between:
\begin{itemize}
\item[(i)]   representations $(\pi,S)$ of the $C^*$-dynamical system $(A,P,\alpha)$;
\item[(ii)] nondegenerate  right-tensor  representations $\Psi$ of  $\KK_\alpha$ on a Hilbert space;
\item[(iii)]  nondegenerate  representations $\psi$ of the product system  $E_\alpha$ on a Hilbert space.
\end{itemize}
Explicitly, these correspondences are determined   by
\begin{equation}\label{representation of category from endomorphisms}
 \Psi_{p,q}(a)=S_p\pi(a)S_q^*, \qquad\text{ for } a\in \KK_\alpha(p,q), \,\, p,q\in P
\end{equation}
  and
\begin{equation}\label{representation of system from endomorphisms}
 \psi_{p}(x)=S_p\pi(x), \qquad\text{ for } x\in E_{p}=\alpha_p(A)A \text{ and }  S_p=\text{s-}\lim_{\lambda\in \Lambda} \psi_{p}(\alpha_p(\mu_\lambda)),
\end{equation}
where $\{\mu_\lambda\}$ is an approximate unit in $A$ and  $p,q\in P$.  In particular, there are canonical isomorphisms
$
\TT(E_\alpha)\cong \TT(\KK_\alpha)\cong \TT(A,P,\alpha).
$
\end{prop}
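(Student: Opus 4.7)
The strategy is to establish the bijection (i)$\Leftrightarrow$(iii) directly; the correspondence (ii)$\Leftrightarrow$(iii) then comes for free from Lemma~\ref{isomorphism of categories for endomorphisms} combined with Proposition~\ref{going forward cor}, under which the formula \eqref{representation of category from endomorphisms} translates into \eqref{representation of system from endomorphisms}.

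The forward direction $(\pi,S)\mapsto\psi$ is a matter of verification. Setting $\psi_p(x):=S_p\pi(x)$ for $x\in E_p=\alpha_p(A)A$, the inner-product relation $\psi_p(x)^*\psi_p(y)=\pi(\langle x,y\rangle_p)$ reduces to $S_p^*S_p\pi(y)=\pi(y)$ for $y\in E_p$, which is the adjoint of Lemma~\ref{properties of representations of endomorphisms}(ii). The left and right actions and the multiplicativity $\psi_p(x)\psi_q(y)=\psi_{pq}(\alpha_q(x)y)$ then follow by combining the intertwining $\pi(a)S_p=S_p\pi(\alpha_p(a))$ from Lemma~\ref{properties of representations of endomorphisms}(i) with the semigroup law $S_{pq}=S_pS_q$. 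Nondegeneracy of $\psi_e=\pi$ is inherited from the definition of a representation of $(A,P,\alpha)$.

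For the converse, given a nondegenerate $\psi$, set $\pi:=\psi_e$ and define $S_p$ as the strong operator limit of $\psi_p(\alpha_p(\mu_\lambda))$ for an approximate unit $(\mu_\lambda)$ of $A$. Existence of this limit is the main technical point. On the dense subspace $\pi(E_p)H$ the limit is obtained in norm from $\alpha_p(\mu_\lambda)x\to x$ in $E_p$ (since $(\alpha_p(\mu_\lambda))$ is a left approximate unit for $E_p$), coupled with contractivity of $\psi_p$. On the orthogonal complement the net vanishes: the computation $\|\psi_p(\alpha_p(\mu_\lambda))h\|^2=\langle h,\pi(\alpha_p(\mu_\lambda^2))h\rangle$ pairs $h$ against a vector in $\overline{\pi(E_p)H}=\overline{\pi(\alpha_p(A))H}$ (the two subspaces coincide by nondegeneracy of $\pi$). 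One then reads off $\psi_p(x)=S_p\pi(x)$ for $x\in E_p$; the semigroup law $S_pS_q=S_{pq}$ is obtained from the iterated limit based on $\psi_p(\alpha_p(\mu_\lambda))\psi_q(\alpha_q(\mu_{\lambda'}))=\psi_{pq}(\alpha_{pq}(\mu_\lambda)\alpha_q(\mu_{\lambda'}))$ together with the fact that $\alpha_q(\mu_{\lambda'})$ is an approximate right unit for $\alpha_{pq}(A)$; and the covariance $S_p^*\pi(a)S_p=\pi(\alpha_p(a))$ follows from $\psi_p(\alpha_p(\mu_\lambda))^*\pi(a)\psi_p(\alpha_p(\mu_{\lambda'}))=\pi(\alpha_p(\mu_\lambda a\mu_{\lambda'}))\to\pi(\alpha_p(a))$.

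Inverseness of the two constructions is a direct check, using Lemma~\ref{properties of representations of endomorphisms}(ii) and $S_p S_p^*S_p=S_p$ in one direction and the norm approximation $\alpha_p(\mu_\lambda)x\to x$ in the other. Finally, the isomorphisms $\TT(E_\alpha)\cong\TT(\KK_\alpha)\cong\TT(A,P,\alpha)$ are obtained by representing each of these $C^*$-algebras faithfully and nondegenerately on a Hilbert space and transporting the universal generating objects along the bijective correspondences, then invoking the respective universal properties to produce mutually inverse $*$-homomorphisms. The main obstacle is really the strong-limit construction of $S_p$, complicated by the non-unitality of $E_p$ and by the fact that its left action is twisted through $\alpha_p$; however, the left approximate unit $(\alpha_p(\mu_\lambda))$ for $E_p$ together with Lemma~\ref{properties of representations of endomorphisms} makes the bookkeeping go through.
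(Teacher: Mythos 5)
Your proposal is correct and follows essentially the same route as the paper: you establish (i)$\Leftrightarrow$(iii) directly (checking the semigroup laws on both sides via the same iterated strong-limit computation), obtain (ii)$\Leftrightarrow$(iii) from Lemma~\ref{isomorphism of categories for endomorphisms} combined with Proposition~\ref{going forward cor}, and deduce the Toeplitz-algebra isomorphisms from the universal properties. The only difference is that where the paper invokes \cite[Proposition 3.10]{kwa-exel} for the single-endomorphism correspondence between representations of the correspondence $E_p$ and pairs $(\pi,S_p)$, you prove that step directly, and your strong-limit construction of $S_p$ (norm convergence on $\overline{\pi(E_p)H}$, vanishing on its orthogonal complement, plus the covariance and inverseness checks) is sound.
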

\begin{proof}
By  \cite[Proposition 3.10]{kwa-exel}, modulo \cite[Lemma 3.25]{kwa-exel}, for each $p\in P$ the formula for $\psi_p$ in \eqref{representation of system from endomorphisms} yields a bijective correspondence between representations $(\psi_p,\psi_e)$ of the $C^*$-correspondence $E_p$ and representations $(\pi,S_p)$ of the single endomorphism $\alpha_p$.
Thus to establish the bijective correspondence between representations in (i) and (iii)  it suffices to check the equivalence of semigroup laws. Suppose that $\psi$ is given by  \eqref{representation of system from endomorphisms} for a representation  $(\pi,S)$ of  $(A,P,\alpha)$. By Lemma~\ref{properties of representations of endomorphisms},
$$
\psi_{p}(x)\psi_q(y)=S_p\pi(x)S_q\pi(y)=S_pS_q\pi(\alpha_q(x))\pi(y)=S_{pq}\pi( \alpha_q(x)y)=\psi_{pq}(x\cdot y).
$$
Hence, $\psi$ is a representation of $E_\alpha$. Conversely, if $\psi$ is a representation of $E_\alpha$ and $S$ is given by the strong limits in \eqref{representation of system from endomorphisms}, then
\begin{align*}
S_pS_q&=\text{s-}\lim_{\lambda\in \Lambda}\text{s-} \lim_{\lambda'\in \Lambda} \psi_{p}(\alpha_p(\mu_\lambda))\psi_{q}(\alpha_q(\mu_\lambda'))= \text{s-}\lim_{\lambda\in \Lambda}\text{s-} \lim_{\lambda'\in \Lambda}\psi_{pq}(\alpha_q(\alpha_p(\mu_\lambda)\mu_\lambda'))
\\
&=\text{s-}\lim_{\lambda\in \Lambda}\psi_{pq}(\alpha_q(\alpha_p(\mu_\lambda)))=S_{pq}.
\end{align*}
This proves the bijective correspondence between representations in (i) and (iii). By virtue of Lemma \ref{isomorphism of categories for endomorphisms}, the correspondence between representations in (ii) and (iii) is given by Corollary~\ref{going forward cor}. In particular, in view of
\eqref{representation of system from endomorphisms}, \eqref{precategory representation} translates to \eqref{representation of category from endomorphisms}.

\end{proof}
\begin{prop}\label{prop:Nica covariance of various representations} The system
$(A,P,\alpha)$ is well-aligned and the bijective correspondences in  Proposition~\ref{representations of product systems for endomorphisms} respect Nica covariance of representations. In particular,
$$
\NT(E_\alpha)\cong \NT(\KK_\alpha)\cong \NT(A,P,\alpha).
$$
Moreover, a representation $(\pi,S)$ of $(A,P,\alpha)$ is Nica covariant if and only if
$S$ is Nica covariant as a representation of $P$, i.e.:
\begin{equation}\label{Nica covariance for semigroups}
(S_pS_p^*)  (S_qS_q^*)=
\begin{cases}
S_r S_r^*, & \text{if } qP\cap sP=rP \text{ for some }r\in P,
\\
0, & \text{if } qP\cap sP=\emptyset.
\end{cases}
\end{equation}
\end{prop}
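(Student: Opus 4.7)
The plan has three stages. First, to establish that $(A,P,\alpha)$ is well-aligned, I invoke Lemma \ref{properties of representations of endomorphisms}(iv) directly: given a representation $(\pi,S)$ and elements $a = S_p\pi(a_0)S_q^* \in \KK_{(\pi,S)}(p,q)$, $b = S_s\pi(b_0)S_t^* \in \KK_{(\pi,S)}(s,t)$ with $qP\cap sP = rP$, part (iv) of that lemma exhibits an explicit
\[
k = S_{pq^{-1}r}\pi\big(\alpha_{q^{-1}r}(a_0)\alpha_{s^{-1}r}(b_0)\big)S_{ts^{-1}r}^*
\]
such that $(a\cdot b, k)$ is a redundancy, and since $\alpha_{q^{-1}r}(a_0)\alpha_{s^{-1}r}(b_0) \in \alpha_{pq^{-1}r}(A)A\alpha_{ts^{-1}r}(A)$, we indeed have $k \in \KK_{(\pi,S)}(pq^{-1}r, ts^{-1}r)$. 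This verifies well-alignment.

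The second stage is to translate Nica covariance across the bijections in Proposition \ref{representations of product systems for endomorphisms}. Using the formula $\Psi_{p,q}(a_0) = S_p\pi(a_0)S_q^*$ and the right-tensor structure $a_0 \otimes 1_r = \alpha_r(a_0)$ on $\KK_\alpha$, the Nica covariance identity
\[
\Psi_{p,q}(a_0)\Psi_{s,t}(b_0) = \Psi_{pq^{-1}r,ts^{-1}r}\big((a_0\otimes 1_{q^{-1}r})(b_0\otimes 1_{s^{-1}r})\big)
\]
becomes verbatim the redundancy identity $a\cdot b = k$ of the first stage, while the vanishing case $qP\cap sP = \emptyset$ corresponds precisely to clause (2) in Definition \ref{redundancy definition}. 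Conversely, Lemma \ref{properties of representations of endomorphisms}(iv) together with the uniqueness of $k$ (Remark \ref{rem:uniqueness of redundancy}) guarantees that every redundancy arises from a right-tensor Nica covariance relation. Combining this with Lemma \ref{isomorphism of categories for endomorphisms} (which identifies $\KK_\alpha \cong \KK_{E_\alpha}$ as right-tensor $C^*$-precategories) and Proposition \ref{going forward prop} (which handles $\KK_{E_\alpha} \leftrightarrow E_\alpha$), I conclude that Nica covariance is preserved under the bijections $(\pi,S) \leftrightarrow \Psi \leftrightarrow \psi$, and the announced isomorphisms $\NT(A,P,\alpha) \cong \NT(\KK_\alpha) \cong \NT(E_\alpha)$ follow from the respective universal properties.

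For the final equivalence with \eqref{Nica covariance for semigroups}, I identify the projection $Q_p^\psi$ associated with $\psi$ with the range projection $S_pS_p^*$. Using Lemma \ref{properties of representations of endomorphisms}(i)--(ii), the projection $S_pS_p^*$ lies in $\pi(A)'$, and its range is $\overline{S_p H} = \overline{S_p\pi(A)H} = \overline{\psi_p(A\alpha_p(A))H}$; on the other hand $Q_p^\psi H = \overline{\psi^{(p)}(\KK(E_p))H} = \overline{S_p\pi(\alpha_p(A)A\alpha_p(A))S_p^*H}$, and a short calculation using the approximate-unit formula $S_p^*S_p = \mathrm{s\text{-}lim}\,\pi(\alpha_p(\mu_\lambda))$ from (ii) together with the nondegeneracy of $\pi$ shows that the two subspaces agree. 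Granted $Q_p^\psi = S_pS_p^*$, Lemma \ref{Nica relation Lemma} immediately yields that Nica covariance of $\psi$ (equivalently of $(\pi,S)$) coincides with \eqref{Nica covariance for semigroups}.

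The main obstacle will lie not in the structural bookkeeping of the first two stages but in the identification $Q_p^\psi = S_pS_p^*$: reconciling the two natural candidate projections requires careful use of the relation $S_pS_p^* \in \pi(A)'$ together with the way the ideal $\alpha_p(A)A\alpha_p(A)$ absorbs $\alpha_p(A)$ approximately, since otherwise one obtains only one inclusion rather than equality.
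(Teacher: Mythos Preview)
Your argument is correct. The first two stages match the paper's approach closely (the paper compresses them into a single sentence citing Lemma~\ref{properties of representations of endomorphisms}(iv), Proposition~\ref{representations of product systems for endomorphisms}, and Proposition~\ref{going forward prop}), and your more detailed justification that every redundancy must have the specific form given by Lemma~\ref{properties of representations of endomorphisms}(iv) via Remark~\ref{rem:uniqueness of redundancy} is a welcome clarification.

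The genuine difference lies in the final stage. The paper does \emph{not} identify $Q_p^\psi$ with $S_pS_p^*$ and invoke Lemma~\ref{Nica relation Lemma}. Instead, it argues directly: assuming $(\pi,S)$ is Nica covariant, it writes the Nica relation for $\Psi_{p,p}(a)\Psi_{q,q}(b)$, substitutes $a=\alpha_p(\mu_\lambda)$, $b=\alpha_q(\mu_\lambda)$ for an approximate unit $\{\mu_\lambda\}$, and passes to strong limits using Lemma~\ref{properties of representations of endomorphisms}(ii) to obtain \eqref{Nica covariance for semigroups}. For the converse it inserts $S_rS_r^*$ between $S_p^*$ and $S_q$, factors $S_r=S_pS_{p^{-1}r}$ and $S_r^*=S_{q^{-1}r}^*S_q^*$, and then absorbs $S_p^*S_p$ and $S_q^*S_q$ using Lemma~\ref{properties of representations of endomorphisms}(ii) and the covariance relation of (i). This verifies the diagonal Nica relation for $\psi$ directly, which via Proposition~\ref{going forward prop} suffices. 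Your route through $Q_p^\psi=S_pS_p^*$ is more conceptual and reusable (indeed the paper later uses exactly this identification in the proof of Theorem~\ref{Uniqueness Theorem for  crossed products by endomorphisms}), while the paper's computation avoids the need to prove that identification separately. Both arguments rest on the same ingredients: Lemma~\ref{properties of representations of endomorphisms}(i)--(ii) and nondegeneracy of $\pi$.
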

\begin{proof}
The first claim in the proposition follows from applying
 Proposition \ref{representations of product systems for endomorphisms}, Lemma \ref{properties of representations of endomorphisms} (iv),
  and Proposition \ref{going forward prop}, see \eqref{precategory representation}. Chasing universal properties will give the claimed isomorphisms of Nica-Toeplitz algebras.

To prove the last claim of the proposition, let  $(\pi,S)$, $\psi$  and $\Psi$ be in the correspondence described in  Proposition \ref{representations of product systems for endomorphisms}. Assume that $(\pi,S)$, and therefore also $\Psi$, is Nica covariant. Then for every $p,q\in P$ and
 $a\in \alpha_p(A)A\alpha_p(A)$, $b \in \alpha_q(A)A\alpha_q(A)$ we have
$$
S_p \pi(a)S_p^* S_q \pi(b)S_q^*=
\begin{cases}
S_r \pi(\alpha_{p^{-1}r}(a)\alpha_{q^{-1}r}(b))S_r^*, & \text{if } qP\cap sP=rP \text{ for some }r\in P,
\\
0, & \text{if } qP\cap sP=\emptyset.
\end{cases}
$$
Inserting in the above formula $a=\alpha_p(\mu_\lambda)$ and $b=\alpha_q(\mu_\lambda)$, where $\{\mu_\lambda\}$ is an approximate unit  in $A$, and passing to strong limit gives
$$
(S_pS_p^*)  (S_qS_q^*)=
\begin{cases}
S_r (S_{p^{-1}r}^*S_{p^{-1}r}) (S_{q^{-1}r}^*S_{q^{-1}r}) S_r^*, & \text{if } qP\cap sP=rP \text{ for some }r\in P,
\\
0, & \text{if } qP\cap sP=\emptyset,
\end{cases}
$$
 by Lemma \ref{properties of representations of endomorphisms} (ii). By Lemma \ref{properties of representations of endomorphisms}  we have
 $S_r (S_{p^{-1}r}^*S_{p^{-1}r})=S_r$ and  $(S_{q^{-1}r}^*S_{q^{-1}r})S_r^*=S_r^*$. Hence we get \eqref{Nica covariance for semigroups}.

Conversely, suppose that \eqref{Nica covariance for semigroups} holds. Let $a\in \alpha_p(A)A\alpha_p(A)$ and $b \in \alpha_q(A)A\alpha_q(A)$
for some $p,q\in P$. If $qP\cap sP=\emptyset$, then $S_p \pi(a)S_p^* S_q \pi(b)S_q^*=0$ because $S_p$ and $S_q$ have orthogonal ranges.
Assume that  $qP\cap sP=rP$. By appealing to Lemma~\ref{properties of representations of endomorphisms} (ii) and (i), we get
\begin{align*}
S_p \pi(a)S_p^* S_q \pi(b)S_q^*&
\stackrel{\eqref{Nica covariance for semigroups}}{=}S_p \pi(a)S_p^* S_r S_r^*S_q \pi(b)S_q^*
\\
&=S_p \pi(a)(S_p^* S_p)S_{p^{-1}r} S_{q^{-1}r}^*(S_q^* S_q) \pi(b)S_q^*
\\
&{=}S_p \pi(a)S_{p^{-1}r} S_{q^{-1}r}^*\pi(b)S_q^*
\\
&{=}S_r \pi(\alpha_{p^{-1}r}(a)\alpha_{q^{-1}r}(b))S_r^*.
\end{align*}
 This, in conjunction with Lemma \ref{isomorphism of categories for endomorphisms}, proves Nica covariance of $\psi$. Hence $(\pi,S)$ is Nica covariant, by the first part of the proposition.
\end{proof}
Let us notice that for $h\in P^*$ the endomorphism $\alpha_h$ is in fact an automorphism. Thus we have a group action $(P^*)^{op}\ni h \mapsto \alpha_h\in \Aut(A)$ of the opposite group to $P^*$. Recall, cf.  \cite[Definition 2.15]{KM}, that the group $\{\alpha_{h}\}_{h\in P^{*op}}$ of automorphisms of  $A$  is \emph{aperiodic} if
for every $h\in  P^*\setminus \{e\}$ and every non-zero hereditary subalgebra $D$ of $A$ we have
$
\inf \{\|\alpha_h(a) a\| : a\in D^+,\,\, \|a\|=1\}=0.
$

\begin{lem}\label{aperiodicity for endomorphisms} The group $\{\alpha_{h}\}_{h\in P^{*op}}$   is aperiodic if and only if the Fell bundle   $\{E_{\alpha_{h}}\}_{h\in P^*}$ is aperiodic if and only if the group of automorphisms $\{\otimes 1_{h}\}_{h\in P^*}$ of $\KK_\alpha$ is aperiodic.
\end{lem}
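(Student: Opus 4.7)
The equivalence between aperiodicity of the Fell bundle $\{E_{\alpha_h}\}_{h\in P^*}$ and aperiodicity of the automorphism group $\{\otimes 1_h\}_{h\in P^*}$ of $\KK_\alpha$ follows at once from Proposition~\ref{lem: aperiodicity for product systems} applied to the product system $X = E_\alpha$. The key point is that Lemma~\ref{isomorphism of categories for endomorphisms} furnishes an isomorphism of right-tensor $C^*$-precategories $\KK_\alpha \cong \KK_{E_\alpha}$ intertwining the right tensoring operations, so aperiodicity on one side holds if and only if it holds on the other, and Proposition~\ref{lem: aperiodicity for product systems} then matches this with aperiodicity of the Fell bundle of $E_\alpha$, which is by construction $\{E_{\alpha_h}\}_{h \in P^*}$.

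For the remaining equivalence, the plan is to first identify $\{E_{\alpha_h}\}_{h\in P^*}$ with the semidirect product Fell bundle of the action $\alpha$ of $P^{*op}$ on $A$. Since for $h \in P^*$ the map $\alpha_h$ is a $*$-automorphism of $A$ with inverse $\alpha_{h^{-1}}$, we have $E_{\alpha_h} = \alpha_h(A)A = A$ as a Banach space, with $x \cdot b = xb$, $\langle x, y\rangle = x^* y$ and $a \cdot x = \alpha_h(a) x$. Under these identifications the bundle multiplication \eqref{multiplication-Ealpha} reads $(x, y) \mapsto \alpha_k(x) y$ and the involution $\flat_h$ of Remark~\ref{rem:on essentiality and Fell bundles} becomes $x \mapsto \alpha_{h^{-1}}(x^*)$; these are precisely the structural maps of the crossed product Fell bundle of $\alpha$ viewed as an action of the opposite group $P^{*op}$.

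I would then conclude by invoking the standard equivalence, for semidirect product Fell bundles of an action by $*$-automorphisms, between aperiodicity of the bundle in the sense of \cite{KM} and Kishimoto aperiodicity of the action as recorded in the definition preceding the lemma. One direction is nearly tautological: testing the Fell bundle condition on a nonzero hereditary subalgebra $D \subseteq A$ with elements of the fiber supplied by an approximate identity of $A$ reduces to the inequality $\inf\{\|\alpha_h(a) a\| : a \in D^+, \|a\|=1\} = 0$; the opposite implication follows from submultiplicativity of the fiber norm against an arbitrary element of $E_{\alpha_h}$. I expect this last step to be the main item requiring care, but not a real obstacle, as the semidirect product case is explicit and its translation between the two aperiodicity notions is well documented. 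If desired, one may instead invoke \cite[Theorem 9.8]{KM} to compare both notions with topological freeness of the dual action on $\widehat{A}$ under the mild hypothesis that $A$ contains a separable or Type~I essential ideal.
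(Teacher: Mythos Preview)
Your treatment of the equivalence between aperiodicity of the Fell bundle $\{E_{\alpha_h}\}_{h\in P^*}$ and aperiodicity of $\{\otimes 1_h\}_{h\in P^*}$ on $\KK_\alpha$ is the same as the paper's: apply Proposition~\ref{lem: aperiodicity for product systems} to $X=E_\alpha$ and transport via the right-tensor $C^*$-precategory isomorphism $\KK_\alpha\cong\KK_{E_\alpha}$ of Lemma~\ref{isomorphism of categories for endomorphisms}.

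For the remaining equivalence you correctly identify the Fell bundle $\{E_{\alpha_h}\}$ with the semidirect-product bundle of the action, so that Fell bundle aperiodicity reads: for each $h\neq e$, each $b\in A$ and each nonzero hereditary $D\subseteq A$, $\inf\{\|\alpha_h(a)\,b\,a\|:a\in D^+,\ \|a\|=1\}=0$. The gap is in the sketched passage between this and Kishimoto's condition $\inf\{\|\alpha_h(a)a\|:a\in D^+,\ \|a\|=1\}=0$. Your ``approximate identity'' argument for one direction runs into a limit-exchange: choosing $b=\mu_\lambda$ gives, for each fixed $\lambda$, an $a=a_\lambda$ with $\|\alpha_h(a_\lambda)\mu_\lambda a_\lambda\|$ small, but there is no control on $\|\alpha_h(a_\lambda)(1-\mu_\lambda)a_\lambda\|$ since $a_\lambda$ varies with $\lambda$. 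In the other direction, the ``submultiplicativity'' you invoke would require an inequality of the form $\|\alpha_h(a)ba\|\le \|b\|\,\|\alpha_h(a)a\|$, which is simply false in general. Your fallback via \cite[Theorem 9.8]{KM} would work, but imposes the extra hypothesis that $A$ contain a separable or Type~I essential ideal, which the lemma does not assume.

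The paper closes this gap by invoking \cite[Theorem 2.9]{KM}, which states precisely that Kishimoto aperiodicity of $\{\alpha_h\}$ is equivalent to the ostensibly stronger condition $\inf\{\|\alpha_h(a)ba\|:a\in D^+,\ \|a\|=1\}=0$ for \emph{every} $b\in A$, nonzero hereditary $D$ and $h\neq e$; and the latter is exactly aperiodicity of the Fell bundle $\{E_{\alpha_h}\}$. So your overall strategy is right, but the equivalence you called ``standard'' genuinely requires the non-trivial input of \cite[Theorem 2.9]{KM} rather than the elementary estimates you outlined.
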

\begin{proof}
It is known, see \cite[Theorem 2.9]{KM}, that aperiodicity of $\{\alpha_{h}\}_{h\in P^*}$ is equivalent to the following condition: for every $h\in  P^*\setminus \{e\}$, every $b\in A$  and every non-zero hereditary subalgebra $D$ of $A$ we have
$
\inf \{\|\alpha_h(a)b a\| : a\in D^+,\,\, \|a\|=1\}=0.
$ The latter is exactly aperiodicity of $\{E_{\alpha_{h}}\}_{h\in P^*}$. Aperiodicity of $\{E_{\alpha_{h}}\}_{h\in P^*}$ is equivalent to aperiodicity of $\{\otimes 1_{h}\}_{h\in P^*}$ on $\KK_\alpha$, by Proposition \ref{lem: aperiodicity for product systems} and Lemma \ref{isomorphism of categories for endomorphisms}.
\end{proof}
We are now ready to  state the uniqueness theorem for Nica-Toeplitz crossed products associated to $(A,P,\alpha)$.

\begin{thm}[Uniqueness Theorem for   $\NT(A,P,\alpha)$]\label{Uniqueness Theorem for  crossed products by endomorphisms}
Let $(A,P,\alpha)$ be a $C^*$-dynamical system where each $\alpha_p$, $p\in P$, is an endomorphism,  and $P$ is  a right LCM semigroup. Suppose that  either
 $P^*=\{e\}$ or that the group $\{\alpha_{h}\}_{h\in P^{*op}}$ of automorphisms of  $A$ is aperiodic.
Assume moreover that $\KK_\alpha$ is amenable. Then for a Nica covariant representation
$(\pi,S)$ of $(A,P,\alpha)$, i.e. a representation satisfying \eqref{Nica covariance for semigroups}, the canonical epimorphism:
$$\NT(A,P,\alpha) \longrightarrow
\clsp\{S_p\pi(a)S_q^*: a\in \alpha_p(A)A\alpha_q(A), p,q\in P\}$$
is an isomorphism if and only if for any finite family $q_1,\ldots,q_n\in P\setminus P^*$ the representation
$A\ni a \mapsto  \pi(a) \prod_{i=1}^{n}(1-S_{q_i}S_{q_i}^*)$
 is faithful.
\end{thm}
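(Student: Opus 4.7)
The plan is to deduce this theorem from the Uniqueness Theorem~\ref{Uniqueness Theorem for  product systems I} for $\NT(X)$ applied to the product system $E_\alpha$. By Propositions~\ref{representations of product systems for endomorphisms} and~\ref{prop:Nica covariance of various representations}, a Nica covariant representation $(\pi,S)$ of $(A,P,\alpha)$ corresponds bijectively to a nondegenerate Nica covariant representation $\psi$ of $E_\alpha$ via $\psi_p(x) = S_p\pi(x)$ for $x\in E_p$, and under the isomorphism $\NT(A,P,\alpha) \cong \NT(E_\alpha)$ the canonical epimorphism onto $C^*(\pi,S)$ is intertwined with $\psi\rtimes P: \NT(E_\alpha)\to \clsp\{\psi(x)\psi(y)^*: x,y\in E_\alpha\}$. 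Moreover, for $x\in E_p$, $y\in E_q$, one has $\psi(x)\psi(y)^* = S_p\pi(xy^*)S_q^*$ with $xy^* \in \alpha_p(A)A\alpha_q(A)$, so the target of the epimorphism coincides with the subalgebra appearing in the statement.

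Next I would check the hypotheses of Theorem~\ref{Uniqueness Theorem for  product systems I}. The left action on each fiber $E_p$ is by generalized compacts, so $E_\alpha$ is automatically compactly aligned, and amenability of $\KK_{E_\alpha}$ follows from amenability of $\KK_\alpha$ via the isomorphism of right-tensor $C^*$-precategories in Lemma~\ref{isomorphism of categories for endomorphisms}. The assumption on $P^*$ translates directly: by Lemma~\ref{aperiodicity for endomorphisms}, aperiodicity of $\{\alpha_h\}_{h\in P^{*op}}$ is equivalent to aperiodicity of the Fell bundle $\{E_{\alpha_h}\}_{h\in P^*}$. Finally, since the left action on each fiber is by generalized compacts, the equivalent conditions (i)--(iii) of Theorem~\ref{Uniqueness Theorem for  product systems I} all apply.

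The remaining point is to identify the algebraic faithfulness condition of the statement with condition~(C) for $\psi$. Using $\pi(a)S_p = S_p\pi(\alpha_p(a))$ from Lemma~\ref{properties of representations of endomorphisms}(i) together with nondegeneracy of $\pi$, one has
$$\overline{\psi_p(E_p)H} = \overline{S_p\pi(\alpha_p(A)A)H} = \overline{\pi(A)S_pH} = S_pH,$$
so the projection $Q^\psi_p$ equals the range projection $S_pS_p^*$ of the partial isometry $S_p$ for every $p\in P\setminus\{e\}$. Hence the map $a\mapsto \pi(a)\prod_{i=1}^{n}(1-S_{q_i}S_{q_i}^*)$ is exactly $a\mapsto \psi_e(a)\prod_{i=1}^{n}(1-Q^\psi_{q_i})$, and its faithfulness for every finite family $q_1,\ldots,q_n\in P\setminus P^*$ is precisely condition~(C) for $\psi$. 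The equivalence (i)$\Leftrightarrow$(ii) in Theorem~\ref{Uniqueness Theorem for  product systems I} then yields the result. The main substantive step is the computation identifying $Q^\psi_p$ with $S_pS_p^*$; the rest is a careful translation through the correspondences already established in this section.
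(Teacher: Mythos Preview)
Your proposal is correct and follows essentially the same route as the paper: identify $\NT(A,P,\alpha)\cong\NT(E_\alpha)$ via Proposition~\ref{prop:Nica covariance of various representations}, translate aperiodicity via Lemma~\ref{aperiodicity for endomorphisms}, and reduce to Theorem~\ref{Uniqueness Theorem for  product systems I} after checking $Q^\psi_p=S_pS_p^*$. The only cosmetic difference is that the paper computes $Q^\Psi_pH=S_pS_p^*H$ through the $\KK_\alpha$-representation $\Psi$ and Lemma~\ref{properties of representations of endomorphisms}(ii), whereas you compute $\overline{\psi_p(E_p)H}=S_pH$ directly using part~(i) and nondegeneracy of $\pi$; both arguments are valid and equivalent.
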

\begin{proof} By Proposition \ref{prop:Nica covariance of various representations}, we may view $\NT(A,P,\alpha)$ as the Nica-Toeplitz algebra
$\NT(E_\alpha)$ of the compactly-aligned  product system $E_\alpha$, where the left action on each fiber is by compacts. Thus the assertion follows from Theorem \ref{Uniqueness Theorem for  product systems I} modulo Lemma \ref{aperiodicity for endomorphisms} and the observation that for a  representation $\Psi$ of $\KK_\alpha$ associated to $(\pi,S)$ we have, due to Lemma~ \ref{properties of representations of endomorphisms} (ii),
$
Q_p^\Psi H=\Psi_{p,p}(\KK_\alpha(p,p))H=S_p\pi(\KK_\alpha(p,p))S_p^*H=S_pS_p^*H.
$
\end{proof}

\subsection{Nica-Toeplitz crossed products by transfer operators}\label{subsection:NT-cp-transfer}
Throughout this section we assume that  $L:P \ni p \mapsto L_p\in \Pos(A)$ is a unital semigroup antihomomorphism taking values in the semigroup of positive maps on a $C^*$-algebra $A$. We additionally assume that for each $p\in P$  the map $L_p:A\to A$ admits a `multiplicative section', i.e. a $*$-homomorphism
$\alpha_p:A\to M(A)$ such that
\begin{equation}\label{transfer operator equality}
L_p(a\alpha_p(b))=L_p(a)b, \qquad a,b \in A.
\end{equation}
Thus $(A,\alpha_p,L_p)$ is a so called Exel-system and $L_p$ is a (generalized) \emph{transfer operator} for the endomorphism $\alpha_p$ \cite{exel3}, \cite{exel-royer}.  We emphasize that the choice of endomorphisms $\{\alpha_p\}_{p\in P}$ in general is far from being unique, cf. \cite{kwa-exel}. In particular, we do not assume that the family $\{\alpha_p\}_{p\in P}$ forms a semigroup.
Nevertheless, we show that we may associate to $(A,P,L)$  a product system  mimicking \cite{Larsen}. We also note that  \eqref{transfer operator equality} implies that each $L_p$ is not only positive but in fact a completely positive map, cf.  \cite[Lemma 4.1]{kwa-exel}.
Thus $(A,P,L)$ is a $C^*$-dynamical system in the sense of Definition \ref{C*-dynamical system}.

Let $p\in P$. The \emph{$C^*$-correspondence $M_{p}$ associated to the transfer operator} $L_p$ is  the completion  of the  space $A_{p}:=A$ endowed with a  right semi-inner-product $A$-bimodule structure  given by
$$
a\cdot  x \cdot b:=ax\alpha_p(b)\,\, \textrm{ for }\,\,a,b\in A \,\,\textrm{ and } \,\,\langle x, y \rangle_p:=L_p(x^*y)\,\, \textrm{ for all }x,y, a\in A_{p}.
$$
 The image of $x \in A_{p}=A$ in $M_{p}$ will be denoted by $(p, x)$.
\begin{rem}\label{remark_on_KSGNS_vs_transfers}
 By \cite[Lemma 4.4]{kwa-exel}, for each $p\in P$ the map $ (p,a\alpha_p(b)) \mapsto a\otimes_{L_p} b$, $a, b\in A$, determines an isomorphism of  $C^*$-correspondences from $M_p$ onto the KSGNS-correspondence $X_{L_p}$ of the completely positive map $L_p$ (\cite[Lemma 4.4]{kwa-exel} is stated under the assumption that $\alpha_p(A)\subseteq A$, but a quick inspection of the proof shows that this assumption is not needed, cf. also Lemma \ref{lemma before Product system for transfer operators} below).
\end{rem}
\begin{lem}\label{lemma before Product system for transfer operators}
Let $p,q\in P$. For any $a\in A$, $x\in A_{pq}$, and any approximate unit  $\{\mu_\lambda\}$  in $A$ the elements $(pq,x\alpha_p(\mu_\lambda))$ converge to $(pq,x)$ in $M_{pq}$.
\end{lem}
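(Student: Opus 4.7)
The plan is to compute $\|(pq,\, x\alpha_p(\mu_\lambda)) - (pq,\, x)\|^2_{M_{pq}}$ directly and show it tends to zero. By definition of the inner product on $M_{pq}$ this norm-square equals
$$\bigl\| L_{pq}\bigl((x\alpha_p(\mu_\lambda) - x)^*(x\alpha_p(\mu_\lambda) - x)\bigr)\bigr\|,$$
so everything reduces to an estimate on the value of $L_{pq}$ on a certain positive element of $A$.

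The key algebraic input is the transfer identity \eqref{transfer operator equality}, together with its "left-handed" version $L_p(\alpha_p(b) a) = b L_p(a)$, which follows from \eqref{transfer operator equality} by taking adjoints (since $L_p$ is positive and hence $*$-preserving, and $\alpha_p : A \to M(A)$ is a $*$-homomorphism). Combined with the semigroup law $L_{pq} = L_q\circ L_p$ (the antihomomorphism condition on $L$), these identities allow me to pull every occurrence of $\alpha_p(\mu_\lambda)$ past $L_p$, converting it into an ordinary $\mu_\lambda\in A$. Setting $c := L_p(x^* x)\in A$, expanding the product and applying the two transfer identities twice yields
\begin{equation*}
L_{pq}\bigl((x\alpha_p(\mu_\lambda) - x)^*(x\alpha_p(\mu_\lambda) - x)\bigr)
= L_q\bigl(\mu_\lambda c\mu_\lambda - \mu_\lambda c - c\mu_\lambda + c\bigr).
\end{equation*}

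To conclude, I use that $\{\mu_\lambda\}$ is an approximate unit and $c\in A$, so $\mu_\lambda c$, $c\mu_\lambda$ and $\mu_\lambda c\mu_\lambda$ all converge in norm to $c$; hence the argument of $L_q$ tends to zero in norm. Since $L_q$ is completely positive (see \cite[Lemma 4.1]{kwa-exel}), in particular bounded, the full expression goes to zero, establishing the claimed convergence in $M_{pq}$.

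No single step is substantial on its own; the only point requiring care is the bookkeeping with the transfer identity on both sides, and the verification that the "left-handed" version is legitimate — once those are in hand, the computation is a short direct calculation followed by the standard approximate-unit argument.
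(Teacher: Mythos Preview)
Your proof is correct and follows essentially the same approach as the paper's own proof: both compute the norm-square via the defining inner product, expand, apply the transfer identity (on both sides) together with $L_{pq}=L_q\circ L_p$ to reduce to $L_q(\mu_\lambda c\mu_\lambda - \mu_\lambda c - c\mu_\lambda + c)$ with $c=L_p(x^*x)$, and then take the limit. Your justification of the left-handed transfer identity and the boundedness of $L_q$ makes explicit two small points the paper leaves implicit, but there is no real difference in strategy.
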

\begin{proof}
This follows from taking limits in the equalities
\begin{align*}
\|(pq,x\alpha_p(\mu_\lambda)-x)\|^2&=\|L_{pq}(\alpha_p(\mu_\lambda)x^*x\alpha_p(\mu_\lambda)- \alpha_p(\mu_\lambda)x^*x - x^*x\alpha_p(\mu_\lambda) +x^*x)\|
\\
&=\|L_{q}\Big( \mu_\lambda L_p(x^*x) \mu_\lambda- \mu_\lambda L_p(x^*x) - L_p(x^*x)\mu_\lambda +L_p(x^*x)\Big)\|.
\end{align*}
\end{proof}
\begin{prop}\label{Product system for transfer operators}
The disjoint union of $C^*$-correspondences $M_L=\bigsqcup_{p\in P} M_p$ is a product system over the semigroup $P$, with multiplication determined by
\begin{equation}\label{product system multiplication}
(p,x)(q,y):=(pq,x\alpha_p(y)), \qquad x,y\in A,\,\, p,q\in P.
\end{equation}
\end{prop}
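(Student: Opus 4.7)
The plan is to verify three things: (a) the assignment $(p,x)(q,y):=(pq,x\alpha_p(y))$ descends from representatives in $A\times A$ to a well-defined $C^*$-correspondence isomorphism $M_p\otimes_A M_q\cong M_{pq}$; (b) the resulting operation is associative so that $M_L$ is genuinely a semigroup; and (c) on the fiber $M_e$ the multiplication reduces to the bimodule actions (which is immediate because $L_e=\id$, $\alpha_e=\id$, so $M_e\cong {}_AA_A$).

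For (a) I would define the bilinear map $A\times A\ni(x,y)\mapsto (pq,x\alpha_p(y))\in M_{pq}$ and check it is $A$-balanced, since $(x\alpha_p(a))\alpha_p(y)=x\alpha_p(ay)$ follows from $\alpha_p$ being a $*$-homomorphism. Inner product preservation reads
\[
\big\langle(pq,x\alpha_p(y)),(pq,x'\alpha_p(y'))\big\rangle_{pq}=L_{pq}(\alpha_p(y^*)x^*x'\alpha_p(y'))=L_q\bigl(y^*L_p(x^*x')y'\bigr),
\]
where the first equality uses $L_{pq}=L_q\circ L_p$ and the second uses \eqref{transfer operator equality} twice. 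This matches $\langle(q,y),\langle(p,x),(p,x')\rangle_p\cdot(q,y')\rangle_q$. Hence the map extends to an isometry $M_p\otimes_A M_q\to M_{pq}$. For dense range, given $(pq,z)\in M_{pq}$ with $z\in A$, choose an approximate unit $\{\mu_\lambda\}$ of $A$; then $(p,z)(q,\mu_\lambda)=(pq,z\alpha_p(\mu_\lambda))$ lies in the image and converges to $(pq,z)$ by Lemma~\ref{lemma before Product system for transfer operators}.

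The main obstacle is (b), because by hypothesis $\{\alpha_p\}_{p\in P}$ need not be a semigroup, so $\alpha_{pq}$ and $\alpha_p\circ\alpha_q$ may disagree on $A$. The rescue is that both furnish multiplicative sections for $L_{pq}$. Concretely, compare
\[
((p,x)(q,y))(r,z)=(pqr,\, x\alpha_p(y)\alpha_{pq}(z)),\qquad (p,x)((q,y)(r,z))=(pqr,\, x\alpha_p(y)\alpha_p(\alpha_q(z))).
\]
Setting $c:=\alpha_{pq}(z)-\alpha_p(\alpha_q(z))$ and $b:=\alpha_p(y^*)x^*x\alpha_p(y)\in A^+$, I would expand $L_{pqr}(c^*bc)$ into four terms and show each evaluates to $L_r(z^*L_{pq}(b)z)$. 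Each evaluation uses \eqref{transfer operator equality} (applied to $L_{pq}$ using $\alpha_{pq}$, or to $L_p$ and $L_q$ in sequence using $\alpha_p$ and $\alpha_q$), together with self-adjointness of $b$, which yields $L_{pq}(\alpha_{pq}(z^*)b)=z^*L_{pq}(b)$ and analogously for $\alpha_p\circ\alpha_q$. The four terms cancel in pairs, so $\|((p,x)(q,y))(r,z)-(p,x)((q,y)(r,z))\|^2=0$, proving associativity.

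Finally, for (c), the multiplication with $M_e$ gives $(e,a)(p,x)=(p,a\alpha_e(x))=(p,ax)=a\cdot(p,x)$ and $(p,x)(e,b)=(p,x\alpha_p(b))=(p,x)\cdot b$, matching the bimodule actions on $M_p$; moreover $d:M_L\to P$, $(p,x)\mapsto p$, is a semigroup homomorphism by construction. Together with (a) and (b) this verifies all the axioms in Fowler's definition recalled on page~\pageref{Fowler's definition}, so $M_L$ is a product system over $P$.
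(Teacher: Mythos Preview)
Your proposal is correct and follows essentially the same approach as the paper: the inner-product computation for part (a) is identical, the surjectivity argument via Lemma~\ref{lemma before Product system for transfer operators} is the same, and your associativity argument differs only cosmetically---the paper shows $L_{pqr}\bigl(a\,x\alpha_p(y)\alpha_{pq}(z)\bigr)=L_{pqr}\bigl(a\,x\alpha_p(y\alpha_q(z))\bigr)$ for all $a\in A$ via a single chain of transfer identities, whereas you expand the four terms of the squared norm of the difference and cancel. Both arguments rely on exactly the same instances of \eqref{transfer operator equality}, so there is no substantive difference.
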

\begin{proof}
For all $x,y,x',y'\in A,\,\, p,q\in P$ we have
\begin{align*}
\langle (p,x)\otimes_A (q,y),(p,x')\otimes_A (q,y') \rangle&=\langle  (q,y),\langle(p,x),(p,x')\rangle (q,y')\rangle
\\
&=L_{q}( y^*L_p(x^*x') y')= L_{pq}(\alpha_p(y^*)x^*x\alpha_p(y)\big)
\\
&=\langle (pq,x\alpha_p(y)),(pq,x'\alpha_p(y')) \rangle
\\
&=\langle (p,x)(q,y) ,(p,x')(q,y') \rangle.
\end{align*}
Thus we see that \eqref{product system multiplication} extends uniquely to a  multiplication $M_p\times M_q\to M_{pq}$ that factors trough to an isometric $C^*$-correspondence map $M_p\otimes_A M_q\to M_{pq}$, which is also surjective by Lemma \ref{lemma before Product system for transfer operators}.
 What is left to be shown is that multiplication \eqref{product system multiplication} is associative. To this end, we note that for any $x,y,z\in A$ and $p,q,r\in P$ we have
$$
\big( (p,x) (q,y)\big)(r,z)=(pqr,x\alpha_p(y)\alpha_{qp}(z)),\qquad  (p,x) \big( (q,y)(r,z)\big)=(pqr,x\alpha_p\big(y\alpha_{q}(z)\big)).
$$
Thus it suffices to show that $\|x\alpha_p(y)\alpha_{qp}(z)-x\alpha_p\big(y\alpha_{q}(z)\big)\|_{M_{pqr}}^2=0$.
This however follows from the transfer property of $L$ since for any $a\in A$ we have
\begin{align*}
L_{pqr}\left(ax\alpha_p\big(y\alpha_{q}(z\big)\right)&=L_{qr}\big(L_{p}(ax)y\alpha_{q}(z)\big)=L_{r}\Big(L_q\big(L_{p}(ax\alpha_p(y))\big) z\big)
\\
&=L_{r}\Big(L_{pq}\big(ax\alpha_p(y)\alpha_{qp}(z)\big)\Big)=L_{pqr}\big(ax\alpha_p(y) \alpha_{qp}(z)\big).
\end{align*}
\end{proof}

\begin{prop}\label{prop:representations of Exel systems and product systems}
Let $M_L$  be the product system constructed above.  We have a one-to-one correspondence between   representations $(\pi,S)$ of  $(A,P, L)$ and  nondegenerate  representations $\psi$ of $M_L$ on  Hilbert spaces, given by
\begin{equation}\label{associated correspondence representation}
\psi_p(p,x)= \pi(x)S_p,\qquad    x\in A,
\end{equation}
\begin{equation}\label{conjugate by limit}
S_p=\textrm{s-}\lim_{\lambda\in \Lambda} \psi_p(p,\mu_\lambda)
\end{equation}
where $\{\mu_\lambda\}_{\lambda\in \Lambda}$ is an approximate unit in $A$. For  the corresponding representations, we have $C^*(\pi,S)=C^*(\psi(M_L))$, and in particular
$
\TT(A,P, L)\cong \TT(M_L).
$
\end{prop}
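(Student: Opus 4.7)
The plan is to construct the bijection in both directions, verify they are mutually inverse, and then read off the isomorphism $\TT(A,P,L)\cong \TT(M_L)$ via the respective universal properties. The structural heart of the proof is that the defining identity $S_p^*\pi(a)S_p=\pi(L_p(a))$ automatically forces an Exel-type covariance relation between $S_p$ and the endomorphisms $\alpha_p$.

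For the forward direction, I start from a representation $(\pi,S)$ of $(A,P,L)$ and set $\psi_p(p,x):=\pi(x)S_p$ for $p\in P$, $x\in A$. The key preliminary claim is that
\[
\pi(\alpha_p(b))S_p = S_p\pi(b),\qquad p\in P,\,\, b\in A.
\]
To prove this I take the adjoint of the transfer identity $L_p(a\alpha_p(b))=L_p(a)b$ to obtain $L_p(\alpha_p(b)a)=b L_p(a)$; approximate-unit arguments then yield $S_p^*\pi(\alpha_p(b))S_p = S_p^*S_p\pi(b)=\pi(b)S_p^*S_p$, so $S_p^*S_p$ is central in $\pi(A)'$. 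Expanding
\[
[\pi(\alpha_p(b))S_p-S_p\pi(b)]^*[\pi(\alpha_p(b))S_p-S_p\pi(b)],
\]
each of the four resulting terms reduces to $S_p^*S_p\,\pi(b^*b)$ (computing $S_p^*\pi(\alpha_p(b^*b))S_p$ as the strong limit of $\pi(L_p(\mu_\lambda)b^*b)$), so the expression vanishes. Granted Exel covariance, $(\pi,\psi_p)$ extends to a well-defined $C^*$-correspondence representation of $M_p$ with $\|\psi_p(p,x)\|^2=\|\pi(L_p(x^*x))\|\leq \|(p,x)\|^2$: bilinearity is immediate on the left and uses Exel covariance on the right, while inner products are preserved by the $L$-relation. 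The semigroup law follows from
\[
\psi_p(p,x)\psi_q(q,y)=\pi(x)S_p\pi(y)S_q=\pi(x\alpha_p(y))S_{pq}=\psi_{pq}((p,x)(q,y)),
\]
and nondegeneracy of $\psi$ reduces to nondegeneracy of $\psi_e=\pi$ since $S_e=1$.

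For the reverse direction, given a nondegenerate representation $\psi$ of $M_L$, I set $\pi:=\psi_e$ and aim to define $S_p$ by the strong limit formula. By nondegeneracy, vectors of the form $\psi_q(q,y)\eta$ span a dense subspace, and by the product system multiplication
\[
\psi_p(p,\mu_\lambda)\psi_q(q,y)\eta = \psi_{pq}(pq,\mu_\lambda\alpha_p(y))\eta;
\]
Lemma~\ref{lemma before Product system for transfer operators} (applied with $x=\alpha_p(y)\mu$ for approximate units $\mu$, after a transfer-identity expansion) shows these vectors form a Cauchy net in $\lambda$, while $\|\psi_p(p,\mu_\lambda)\|\leq \|L_p(\mu_\lambda^2)\|^{1/2}$ is uniformly bounded, so the strong limit $S_p$ exists on all of $H$. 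The identities $\psi_p(p,x)=\pi(x)S_p$ and $S_p^*\pi(a)S_p=\pi(L_p(a))$ are then verified on the dense subspace using $(e,x)(p,\mu_\lambda)=(p,x\mu_\lambda)\to(p,x)$ in $M_p$ and the inner product formula $\langle(p,\mu_\lambda),a\cdot(p,\mu_{\lambda'})\rangle=L_p(\mu_\lambda a\mu_{\lambda'})$. The semigroup law $S_pS_q=S_{pq}$ follows from iterated strong limits applied to $\psi_p(p,\mu_\lambda)\psi_q(q,\mu_{\lambda'})=\psi_{pq}(pq,\mu_\lambda\alpha_p(\mu_{\lambda'}))$ via Lemma~\ref{lemma before Product system for transfer operators}.

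That the two constructions are mutually inverse is immediate from the formulas, and $C^*(\pi,S)=C^*(\psi(M_L))$ follows because each generating family is recovered from the other. Applying the bijection to the universal representation $(i_A,\hat t)$ on one side and to the universal representation of $M_L$ on the other furnishes the isomorphism $\TT(A,P,L)\cong \TT(M_L)$. The main obstacle is the automatic Exel covariance $\pi(\alpha_p(b))S_p=S_p\pi(b)$, which is the nontrivial structural input that promotes the $L$-relation to a genuine right-module action compatible with the product system multiplication; once it is in hand, the rest is careful bookkeeping with approximate units.
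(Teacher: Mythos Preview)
Your overall architecture matches the paper's: establish the bijection for each fixed $p$ separately, then verify the semigroup laws $\psi_p\psi_q=\psi_{pq}$ and $S_pS_q=S_{pq}$ (the latter via Lemma~\ref{lemma before Product system for transfer operators}, exactly as in the paper). The difference is that the paper outsources the single-$p$ bijection entirely to \cite[Proposition~3.10]{kwa-exel} together with the KSGNS isomorphism of Remark~\ref{remark_on_KSGNS_vs_transfers}, whereas you attempt to reprove it from scratch. Your forward direction is fine: the derivation of the Exel covariance $\bar{\pi}(\alpha_p(b))S_p=S_p\pi(b)$ by expanding the square and using $S_p^*\bar{\pi}(\alpha_p(c))S_p=S_p^*S_p\,\pi(c)=\pi(c)\,S_p^*S_p$ is correct and is essentially the argument behind \cite[Proposition~3.12]{kwa-exel}.

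There is, however, a genuine gap in your reverse direction, namely in the existence of the strong limit $S_p=\text{s-}\lim_\lambda \psi_p(p,\mu_\lambda)$. Your appeal to Lemma~\ref{lemma before Product system for transfer operators} is misplaced: that lemma gives $(pq,x\alpha_p(\mu_\lambda))\to(pq,x)$, i.e.\ convergence when the approximate unit sits \emph{inside} $\alpha_p$, whereas what you need is that $(pq,\mu_\lambda\alpha_p(y))$ is Cauchy, with $\mu_\lambda$ \emph{outside}. No ``transfer-identity expansion'' turns one into the other, and setting $x=\alpha_p(y)\mu$ does not help. The claim is nonetheless true. Taking $q=e$ and $\xi=\pi(b)\eta$ one has
\[
\|\psi_p(p,\mu_\lambda)\xi-\psi_p(p,\mu_{\lambda'})\xi\|^2
=\langle\eta,\pi\bigl(b^*L_p((\mu_\lambda-\mu_{\lambda'})^2)b\bigr)\eta\rangle,
\]
and for an increasing approximate unit with $\lambda\leq\lambda'$ one has $(\mu_{\lambda'}-\mu_\lambda)^2\leq\mu_{\lambda'}-\mu_\lambda$, whence
\[
0\leq b^*L_p((\mu_\lambda-\mu_{\lambda'})^2)b\leq b^*\bigl(L_p(\mu_{\lambda'})-L_p(\mu_\lambda)\bigr)b\to 0
\]
in norm, since the bounded increasing net $\{L_p(\mu_\lambda)\}$ converges strictly in $M(A)$ and hence $L_p(\mu_\lambda)b$ is norm-Cauchy. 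This (or the equivalent Stinespring/KSGNS construction) is what \cite[Proposition~3.10]{kwa-exel} supplies; once you replace your Lemma~\ref{lemma before Product system for transfer operators} appeal by this estimate, the rest of your argument goes through.
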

\begin{proof} In view of Remark \ref{remark_on_KSGNS_vs_transfers},  it follows from \cite[Proposition 3.10]{kwa-exel}  that
 relations \eqref{associated correspondence representation} and \eqref{conjugate by limit} establish bijective correspondence between representations   $(\pi,S_p)$ of  $(A,L_p)$ and  nondegenerate  representations  $(\psi_e, \psi_p)$ of $M_p$. Thus  we only need to check the semigroup laws.  Assume  first that  $(\pi,S)$ is a representation of  $(A,L)$ and let $\psi$ be given by  \eqref{associated correspondence representation}. The isomorphism $M_p\cong X_{L_p}$ in Remark \ref{remark_on_KSGNS_vs_transfers} implies that $\pi(a\alpha_p(b))S_p=\psi_p(p,a\alpha_p(b))=\pi(a)S_p \pi(b)$ for any $a,b\in A$. Using this, for $x,y\in A, p,q \in P$, we get
\begin{align*}
\psi(p,x)\psi(q,y)&= \pi(x)S_p \pi(y) S_q=\pi(x \alpha_p(y)) S_{pq}
=\psi(pq,x\alpha_p(y)).
\end{align*}
Hence $\psi:M_L\to B(H)$ is a semigroup homomorphism.
\\
Now assume that $\psi:M_L\to B(H)$ is a representation. By \eqref{associated correspondence representation},  \eqref{conjugate by limit} and Lemma \ref{lemma before Product system for transfer operators} we have
\begin{align*}
S_p S_q &= \textrm{s-}\lim_{\lambda\in \Lambda} \Big( \textrm{s-}\lim_{\lambda'\in \Lambda'} \psi(p,\mu_\lambda) \psi(q,\mu'_{\lambda'})\Big)=\textrm{s-}\lim_{\lambda\in \Lambda} \Big( \textrm{s-}\lim_{\lambda'\in \Lambda'} \psi(pq,\mu_\lambda \alpha_p(\mu'_{\lambda'}))\Big)
\\
&=\textrm{s-}\lim_{\lambda\in \Lambda}  \psi(pq,\mu_\lambda)\Big)=\textrm{s-}\lim_{\lambda\in \Lambda}  \pi(\mu_\lambda) S_{pq}=S_{pq}.
\end{align*}
\end{proof}
\begin{rem} If for  $p\in P$ we may choose $\alpha_p$ taking values in $A$, then for each $p\in P$, $L_p$ extends to a strictly continuous map $\overline{L}_p:\M(A)\to \M(A)$, see \cite[Proposition 4.2]{kwa-exel}.
This implies that  the limit \eqref{conjugate by limit} may be taken in the strict topology of $\M(C^*(\pi,S))$ and so the  multiplier $S_p$ is determined by  $S_p\pi(a)=\psi_p(p, \alpha_p(a))$,  cf. \cite[Proposition 3.10]{kwa-exel} and Lemma \ref{lem: Toeplitz algebra for completely positives}.
\end{rem}
\begin{prop}\label{Nica-Toeplitz algebras for transfer operators vs product systems}
Suppose that the product system $M_L$ is  compactly aligned. The bijective correspondence in Proposition \ref{prop:representations of Exel systems and product systems} restricts to a bijective correspondence between Nica-Toeplitz representations  $(\pi,S)$ of  $(A,P, L)$ and nondegenerate  Nica-Toeplitz representations $\psi$ of $M_L$.  In particular,
$$
\NT(A,P, L)=\clsp\{j_A(a)\hat{s}_p\hat{s}_q^*j_A(b): a\in \alpha_p(A)A, b\in \alpha_q(A)A, p,q\in P\}\cong \NT(M_L).
$$
\end{prop}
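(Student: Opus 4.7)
The plan is to route the argument through the associated $C^*$-precategory representation $\Psi=\{\Psi_{p,q}\}_{p,q\in P}$ of $\KK_{M_L}$ obtained from $\psi$ via Proposition~\ref{going forward cor}. Since Proposition~\ref{going forward prop} already exchanges Nica covariance between $\psi$ and $\Psi$, it will suffice to show (a) that $\Psi_{p,q}(\KK(M_q,M_p))=\KK_{(\pi,S)}(p,q)$ as subspaces of $B(H)$, and (b) that under this identification, Nica covariance of $(\pi,S)$ in the redundancy sense of Definition~\ref{redundancy definition} matches Nica covariance of $\Psi$. The displayed isomorphism and spanning formula will then follow from the universal property of $\NT_{\LL_{M_L}}(\KK_{M_L})=\NT(M_L)$ together with well-alignment.

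For (a), the key identity $S_p\pi(b)=\widetilde{\pi}(\alpha_p(b))S_p$ follows from the relation $\pi(a)S_p\pi(b)=\pi(a\alpha_p(b))S_p$ derived in the proof of Proposition~\ref{prop:representations of Exel systems and product systems}, and it yields the inclusion $\pi(A)S_p\pi(A)S_q^*\pi(A)\subseteq\pi(A)S_pS_q^*\pi(A)$. For the reverse inclusion I would invoke Lemma~\ref{lemma before Product system for transfer operators}: the convergence $(p,x\alpha_p(\mu_\lambda))\to(p,x)$ in $M_p$ for an approximate unit $\{\mu_\lambda\}$ translates into the norm convergence $\pi(x)S_p\pi(\mu_\lambda)\to\pi(x)S_p$, so composing with $S_q^*\pi(y)$ and passing to the norm limit places $\pi(x)S_pS_q^*\pi(y)$ inside $\overline{\pi(A)S_p\pi(A)S_q^*\pi(A)}=\KK_{(\pi,S)}(p,q)$. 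Combined with $\Psi_{p,q}(\KK(M_q,M_p))=\overline{\pi(A)S_pS_q^*\pi(A)}$ from Lemma~\ref{representations of C*-precategories of Hilbert modules}, the identification follows.

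The heart of the argument is (b). I would argue that for any $\alpha\in\KK(M_q,M_p)$ and $\beta\in\KK(M_t,M_s)$ with $qP\cap sP=rP$, the pair $\bigl(\Psi(\alpha)\Psi(\beta),\,\Psi_{pq^{-1}r,ts^{-1}r}((\alpha\otimes 1_{q^{-1}r})(\beta\otimes 1_{s^{-1}r}))\bigr)$ is automatically a redundancy for $(\pi,S)$, regardless of Nica covariance. Indeed, the calculation in the proof of Proposition~\ref{going forward prop}, which invokes only the product-system axioms, shows that these two operators agree on $\overline{\psi_{ts^{-1}r}(M_{ts^{-1}r})H}$, and this contains the essential subspace $\pi(A)S_{ts^{-1}r}H$ appearing in \eqref{eq:redundancy condition}. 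Consequently, condition~(1) of Nica covariance of $(\pi,S)$ forces the operator equality $\Psi(\alpha)\Psi(\beta)=\Psi((\alpha\otimes 1)(\beta\otimes 1))$, which is exactly the product formula in the Nica covariance of $\Psi$; condition~(2) together with (a) gives the empty-intersection part. Conversely, if $\psi$ is Nica covariant, then $\Psi(\alpha)\Psi(\beta)$ equals the explicit element $\Psi((\alpha\otimes 1)(\beta\otimes 1))$, and the uniqueness of the second entry of a redundancy (Remark~\ref{rem:uniqueness of redundancy}) forces any alternative $k$ appearing in a redundancy to coincide with $a\cdot b$.

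With (a) and (b) in place, the universal property yields the isomorphism $\NT(A,P,L)\cong\NT(M_L)$. The spanning formula follows from the general description $\NT(M_L)=\clsp\{\Psi_{p,q}(\KK(M_q,M_p)):p,q\in P\}$, valid because $\KK_{M_L}$ is well-aligned in $\LL_{M_L}$ under compact alignment (cf.\ Remark~\ref{well-aligned dynamical system}), combined with (a) and the commutation $\hat{s}_p j_A(a)=\widetilde{j_A}(\alpha_p(a))\hat{s}_p$ used to rewrite a generic element $j_A(a)\hat{s}_p j_A(b)\hat{s}_q^* j_A(c)$ in the announced form with $a\in\alpha_p(A)A$ and $b\in\alpha_q(A)A$. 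The main obstacle is (b): Nica covariance for $(\pi,S)$ is asymmetric in the sense that equality holds only on an essential subspace, whereas Nica covariance of $\Psi$ is a full operator equality; bridging them hinges on applying the compactly-aligned product-system computation from Proposition~\ref{going forward prop} without yet assuming Nica covariance of $\psi$.
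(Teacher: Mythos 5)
Your argument is correct and follows essentially the same route as the paper: both pass to the associated representation $\Psi$ of $\KK_{M_L}$, identify $\Psi_{p,q}(\KK(M_q,M_p))$ with $\KK_{(\pi,S)}(p,q)$, and rest on the observation that $\bigl(\Psi(\alpha)\Psi(\beta),\,\Psi_{pq^{-1}r,ts^{-1}r}((\alpha\otimes 1_{q^{-1}r})(\beta\otimes 1_{s^{-1}r}))\bigr)$ is automatically a redundancy because the two operators agree on $\overline{\pi(A)S_{ts^{-1}r}H}$, after which Proposition~\ref{going forward prop}, Remark~\ref{rem:uniqueness of redundancy} and Remark~\ref{well-aligned dynamical system} give the equivalence and the spanning formula. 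The only cosmetic difference is that the paper verifies this redundancy by a direct approximate-unit computation with $(\pi,S)$ and $\overline{\Psi}$, whereas you extract the same agreement from the Nica-covariance-free portion of the proof of Proposition~\ref{going forward prop}; this is the same key step.
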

\begin{proof}
Let $(\pi, S)$ be a representation of  $(A,P,L)$ and $\psi$ a representation of $M_L$ such that \eqref{associated correspondence representation} and \eqref{conjugate by limit} hold.  By Proposition \ref{going forward prop}, $\psi$ is Nica covariant if and only if the representation $\Psi:=\{\Psi_{p,q}\}_{p,q\in P}$ of $\KK_{M_L}$, given by \eqref{precategory representation},  is  Nica covariant. Note that for every $p,q\in P$, the map    $\Psi_{p,q}:\K(M_{q}, M_{p})\to \overline{\pi(A)S_p\pi(A)S_q^*\pi(A)}$,
 given by \eqref{precategory representation}, is surjective.
 Thus if
 $a\in \KK_{(\pi,S)}(p,q)$, $b\in   \KK_{(\pi,S)}(s,t) $ for some $p,q,s,t\in P$,   then there are $a'\in \KK_{M_L}(p,q)$, $b'\in   \KK_{M_L}(s,t) $ such that
 $\Psi_{p,q}(a')=a$ and $\Psi_{s,t}(b')=b$.     It suffices to show that if  $qP\cap sP=rP$, for some $r\in P$, then
$(a\cdot b, k)$, where $k=\Psi_{pq^{-1}r,ts^{-1}r}\Big((a' \otimes 1_{q^{-1}r}) (b'\otimes 1_{s^{-1}r})\Big)$, is a redundancy for $(\pi,S)$. 
For any $c\in A$ we have
\begin{align*}
 b\pi(c)S_{ts^{-1}r}&=b\pi(c)S_{t}S_{s^{-1}r} = b\psi_{t}((t,c))S_{s^{-1}r}=\psi_{s}\big(b'(t,c)\big)S_{s^{-1}r}
\\
& \stackrel{\eqref{conjugate by limit}}{=}
\textrm{s-}\lim_{\lambda\in \Lambda} \psi_{s}\big(b'(t,c)\big) \psi_{s^{-1}r}\big( (s^{-1}r,\mu_\lambda)\big)
 \\
 &=\textrm{s-}\lim_{\lambda\in \Lambda} \psi_{r}\big(b'(t,c)  (s^{-1}r,\mu_\lambda)\big)
 \\
 &=\textrm{s-}\lim_{\lambda\in \Lambda}\overline{\Psi}_{r,ts^{-1}r}\Big(b'\otimes 1_{s^{-1}r}\Big)\psi_{ts^{-1}r}\big((t,c)  (s^{-1}r,\mu_\lambda)\big)
 \\
 &=\textrm{s-}\lim_{\lambda\in \Lambda}\overline{\Psi}_{r,ts^{-1}r}\Big(b'\otimes 1_{s^{-1}r}\Big)
\pi(c)S_{t}\psi_{s^{-1}r}\big( (s^{-1}r,\mu_\lambda)\big)
  \\
 &=\overline{\Psi}_{r,ts^{-1}r}\Big(b'\otimes 1_{s^{-1}r}\Big)\pi(c)S_{ts^{-1}r}.
\end{align*}
Hence for any $c\in A$ we have $b\pi(c)S_{ts^{-1}r}=\overline{\Psi}_{r,ts^{-1}r}\Big(b'\otimes 1_{s^{-1}r}\Big)\pi(c)S_{ts^{-1}r}\in \psi_{r}\big(M_r)$.
Since, for any $x\in M_r$ we have $a \psi_{r}(x)=\overline{\Psi}_{pq^{-1}r,r}\Big(a'\otimes 1_{s^{-1}r}\Big)\psi_{r}(x)$, we conclude that
 for any $c\in A$ we have $a\cdot b\,\pi(c)S_{ts^{-1}r}=\Psi_{pq^{-1}r,ts^{-1}r}\Big((a' \otimes 1_{q^{-1}r}) (b'\otimes 1_{s^{-1}r})\Big)\pi(c)S_{ts^{-1}r}$.
 Hence $(a\cdot b, \Psi_{pq^{-1}r,ts^{-1}r}\Big((a' \otimes 1_{q^{-1}r}) (b'\otimes 1_{s^{-1}r})\Big))$ is a redundancy for $(\pi,S)$.

\end{proof}

Using the above result we can apply Theorem \ref{Uniqueness Theorem for  product systems I} to the product system $ M_L$ to get a uniqueness theorem for the Nica-Toeplitz crossed product
$\NT(A,P, L)$. Nevertheless, in this generality we can not simplify the assertion of Theorem \ref{Uniqueness Theorem for  product systems I} in a meaningful way. Therefore we will specialize to the case of `transfer operators of finite type'.

\subsection{Nica-Toeplitz crossed products by transfer operators of finite type}\label{subsection:NT-cp-transfer-finitetype}
As in the previous subsection, we let $L:P \ni p \mapsto L_p\in \Pos(A)$ be a unital semigroup antihomomorphism.
We recall that if $\varrho:A\to A$ is a positive map, then the \emph{multiplicative domain} of $\varrho$ is  the $C^*$-subalgebra of $A$
given by
$$
MD(\varrho):=\{a\in A: \varrho(b)\varrho(a)=\varrho(ba) \text{ and }\varrho(a)\varrho(b)=\varrho(ab)\text{ for every } b\in A\}.
$$

Throughout this subsection for every $p\in P$ we  make the following standing assumptions:
\begin{enumerate}
\item[(A1)] $L_p$  faithful, i.e. 
$L_p(a^*a)=0$ implies $a=0$;
\item[(A2)]  $L_p$ maps its multiplicative domain  onto $A$.
\end{enumerate}

We note that in the presence of axiom (A1), axiom (A2) is equivalent to the following two conditions:
\begin{enumerate}
\item[(A2a)]  there is an  endomorphism $\alpha_p:A\to A$ such that $L_p$ is a transfer operator for $\alpha_p$ as in \cite{exel3} and $\CE_p:=\alpha_p\circ L_p$ is a conditional expectation onto the range of $\alpha_p$;
\item[(A2b)]  $L_p(\mu_\lambda)$ converges strictly to $1\in \M(A)$, for any approximate unit $\{\mu_\lambda\}$ in $A$.
\end{enumerate}
Specifically, (A1) and (A2) imply that  $L_{p}|_{MD(L_p)}$ is a $*$-isomorphism onto $A$ and  its inverse:
\begin{equation}\label{endomorphism definition}
\alpha_p:=(L_{p}|_{MD(L_p)})^{-1}
\end{equation}
defines a monomorphism $\alpha_p$ with properties as in (A2a), cf. \cite[Propositions 4.16]{kwa-exel}. Then property (A2b) follows from
\cite[Propositions 4.13]{kwa-exel}, and  we also have $\|L_p\|=1$, cf. \cite[Lemma 2.1]{kwa-exel}.
Conversely, properties (A2a), (A2b) imply
(A2) by \cite[Propositions 4.16]{kwa-exel}, and then (A1) implies that $\alpha_p$ in (A2a)
has to be of the form \eqref{endomorphism definition}, see  \cite[Propositions 4.18]{kwa-exel}.
\begin{lem}
The maps in \eqref{endomorphism definition} form an action of $P$ by endomorphisms of $A$.
\end{lem}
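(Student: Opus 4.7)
The plan is to verify, directly from the definition \eqref{endomorphism definition}, that $\alpha_e=\id_A$ and $\alpha_p\circ\alpha_q=\alpha_{pq}$ for $p,q\in P$; this is the correct composition law because $L$ is an antihomomorphism, so $L_{pq}=L_q\circ L_p$ inverts to $\alpha_{pq}=\alpha_p\circ\alpha_q$. The identity case is immediate: since $L_e=\id_A$, its multiplicative domain is all of $A$ and $\alpha_e=\id_A$. So the content is in the semigroup law.

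First I will fix $a\in A$, set $b:=\alpha_q(a)\in MD(L_q)$ and $c:=\alpha_p(b)=\alpha_p(\alpha_q(a))\in MD(L_p)$. The key step is to show that $c$ lies in $MD(L_{pq})$. This is a direct computation using only the defining properties of the multiplicative domain: for any $d\in A$,
\begin{equation*}
L_{pq}(dc)=L_q\bigl(L_p(d\,\alpha_p(b))\bigr)=L_q\bigl(L_p(d)\,L_p(\alpha_p(b))\bigr)=L_q\bigl(L_p(d)\,b\bigr)=L_q(L_p(d))\,L_q(b)=L_{pq}(d)\,L_{pq}(c),
\end{equation*}
where in the second equality I used $c=\alpha_p(b)\in MD(L_p)$, in the fourth $b\in MD(L_q)$, and in the last $L_{pq}(c)=L_q(L_p(\alpha_p(b)))=L_q(b)=a$. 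The symmetric computation $L_{pq}(cd)=L_{pq}(c)L_{pq}(d)$ is identical. This places $c$ in $MD(L_{pq})$.

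Next, I will invoke injectivity of $L_{pq}$ on its multiplicative domain: by (A1), if $x\in MD(L_{pq})$ and $L_{pq}(x)=0$, then $L_{pq}(x^*x)=L_{pq}(x)^*L_{pq}(x)=0$, hence $x^*x=0$ and $x=0$. Combined with $L_{pq}(c)=a=L_{pq}(\alpha_{pq}(a))$ and the fact that both $c$ and $\alpha_{pq}(a)$ lie in $MD(L_{pq})$, this forces $\alpha_p(\alpha_q(a))=c=\alpha_{pq}(a)$, which is the desired semigroup law.

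There is no real obstacle; the only subtlety is confirming that the multiplicative domain is stable under the construction $\alpha_p$ in the precise sense needed, which is really just the observation that $\alpha_p$ maps $A$ bijectively onto $MD(L_p)$ together with $L_p$ being multiplicative there. Note that injectivity of $\alpha_p$ also gives that each $\alpha_p$ is an endomorphism (monomorphism) of $A$, as already recorded in the paragraph preceding the lemma, so the maps indeed take values in $\End(A)$ and the assertion follows.
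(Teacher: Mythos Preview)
Your proof is correct. It takes a somewhat different, and arguably more direct, route than the paper's argument.

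The paper proceeds structurally: it first establishes the inclusions $L_p(MD(L_{pq}))\subseteq MD(L_q)$ and $MD(L_{pq})\subseteq MD(L_p)$, then argues that $L_p$ restricts to an isomorphism $MD(L_{pq})\to MD(L_q)$ (using that both $L_{pq}=L_q\circ L_p$ and $L_q$ restrict to isomorphisms onto $A$), and finally unwinds the inverses to get $\alpha_{pq}=\alpha_p\circ\alpha_q$. You go in the opposite direction: rather than showing $MD(L_{pq})\subseteq MD(L_p)$, you show the image inclusion $\alpha_p(MD(L_q))\subseteq MD(L_{pq})$ by a direct computation, check that $L_{pq}(\alpha_p(\alpha_q(a)))=a$, and then invoke injectivity of $L_{pq}$ on its multiplicative domain. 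Your approach is shorter and avoids the intermediate structural statements about multiplicative domains; the paper's approach has the advantage of recording those inclusions explicitly, which may be of independent interest. Both rely on (A1) in exactly the same way (faithfulness gives injectivity of $L_{pq}$ on $MD(L_{pq})$).
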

\begin{proof}
We need to prove that $
\alpha_{pq}=\alpha_{p}\circ \alpha_q$  for all $p,q\in P$. We claim first that $L_p(MD(L_{pq}))\subseteq MD(L_q)$. Let $a\in A$ and  $b\in MD(L_{qp})$. Then
$$
L_q(L_p(b)a)=L_q(L_p(b\alpha_p(a)))=L_{pq}(b\alpha_p(a))=L_{pq}(b)L_{pq}(\alpha_p(a))=L_q(L_p(b))L_q(a),
$$
and similarly one gets $L_q(aL_p(b))=L_q(a)L_q(L_p(b))$.
\\
Secondly, we show that $MD(L_{pq})\subseteq MD(L_p)$. Indeed, since $L_{p}$ is a contractive completely positive map, we have
$$
MD(L_p)=\{a\in A: L_p(a^*)L_p(a)=L_p(a^*a) \text{ and } L_p(a)L_p(a^*)=L_p(aa^*)\},
$$
cf. \cite[Proposition 2.6]{kwa-exel}.
Now,  since $L_p(MD(L_{pq}))\subseteq MD(L_q)$ and $L_{pq}=L_q\circ L_p$, for any $a\in MD(L_{pq})$ we get
$$
L_q(L_p(a^*)L_p(a))= L_{pq}(a^*)L_{pq}(a)=L_{pq}(a^*a)=L_q(L_p(a^*a)).
$$
Faithfulness of $L_q$ implies that $L_p(a^*)L_p(a)=L_p(a^*a)$. Replacing $a$ with $a^*$, we get  $L_p(a)L_p(a^*)=L_p(aa^*)$.
Hence $MD(L_{pq})\subseteq MD(L_p)$.

Using the above inclusions, we conclude that $L_p$ restricts to a  monomorphism  $L_p:MD (L_{pq}) \to MD(L_q)$. In fact, since $L_{pq}=L_q\circ L_p$ restricts to an isomorphism from $MD(L_{pq})$  onto $A$ and $L_q$ restricts to an  isomorphism from $MD(L_q)$  onto $A$, we  see that
  $L_p:MD (L_{pq}) \to MD(L_q)$ is  an isomorphism. It is restriction of the isomorphism $L_p:MD (L_{p}) \to A$. Hence $(L_{p}|_{MD(L_{p})})^{-1}|_{MD(L_q)}=(L_{p}|_{MD(L_{pq})})^{-1}$. Thus we obtain
\begin{align*}
\alpha_{pq}&=(L_{pq}|_{MD(L_{pq})})^{-1}=(L_{q}|_{MD(L_{q})}\circ L_{p}|_{MD(L_{pq})})^{-1}=(L_{p}|_{MD(L_{pq})})^{-1}\circ (L_{q}|_{MD(L_{q})})^{-1}
\\
 &= (L_{p}|_{MD(L_{p})})^{-1}|_{MD(L_q)}\circ (L_{q}|_{MD(L_{q})})^{-1}=\alpha_p\circ \alpha_q.
\end{align*}
\end{proof}

For each $p\in P$,   $\CE_p=\alpha_p\circ L_p$ is a faithful conditional expectation onto the multiplicative domain $MD(L_p)=\alpha_p(A)$ of $L_p$. We will assume that each $\CE_p$ for $p\in P$ is of \emph{index-finite type} as in \cite{Wat}. Namely, for every $p\in  P$  we assume that
\begin{enumerate}
\item[(A3)] there is a finite  quasi-basis $\{u_{1}^p,...,u_{m_p}^{p}\}\subseteq A$  for $\CE_p$, for each $p\in P$, i.e. we have
\begin{equation}\label{finite-index equality}
a =\sum_{i=1}^{m_p}  u_i^p \CE_p((u_i^p)^*a),\quad \text{ for all }a\in A.
\end{equation}
\end{enumerate}
Associated to  $(A,P,L)$  we have the product system $M_L$ of Proposition \ref{Product system for transfer operators}.
Axiom (A1) implies that the map $A\ni a \mapsto (p,a)\in M_p$ is injective, for each $p\in P$.
Axiom (A3) implies that the left action of $A$ on each $M_p$, $p \in P$, is by compacts because for $a,x\in A$, $p\in P$, a simple calculation using \eqref{finite-index equality} gives
$$
\sum_{i=1}^{m_p}\Theta_{(p,u_i^p), (p,a^* u_i^p)}(p,x)=   (p,ax).
$$
Therefore the left action of $a$ on $M_p$ is given by the operator $\sum_{i=1}^{m_p}\Theta_{(p,u_i^p), (p,a^* u_i^p)}\in \K(M_p)$.
By Lemma \ref{non-degeneracy of K_X}, the ideal  $\KK_{M_L}$ in $\LL_{M_L}$ is invariant under right tensoring. Hence $\KK_{M_L}$ is a right-tensor  $C^*$-precategory itself.

Under the assumptions (A1)--(A3) it is possible to describe a new right-tensor $C^*$-precategory, isomorphic to $\KK_{M_L}$, but admitting an explicit formula for the right tensoring. With this at hand, after invoking Propositions ~\ref{Nica-Toeplitz algebras for transfer operators vs product systems} and \ref{going forward prop}, we give a more explicit characterization of  Nica covariance of a representation $(\pi, S)$ of $(A, P, L)$.

For each $p\in P$  denote by $\K_p$ the  \emph{reduced $C^*$-basic construction} associated to the conditional expectation $\CE_p$ cf. \cite[Subsection 2.1]{Wat}. Thus   $\K_p:=\K(\mathcal{E}_p)$ where $\mathcal{E}_p$ is the right Hilbert $\alpha_p(A)$-module obtained by completion of $A$ with  respect to the norm induced by the sesquilinear form $\langle x,y \rangle_{\alpha_p(A)}= \CE_p(x^*y)$, $x,y \in A$. We recall that there is an injective left action of $A$ on $\mathcal{E}_p$, induced by multiplication in $A$. Thus we  identify $A$ as a subalgebra of $\LL(\mathcal{E}_p)$. The operator $\CE_p:A\to A$ extends to an idempotent $e_p\in \LL(\mathcal{E}_p)$, and then
$$
\K_p=\clsp\{ a e_p b: a,b \in A\}.
$$
For each $p,q\in P$ we  equip
the algebraic tensor product $A\odot A$ with  the $\K_q$-valued sesquilinear form determined by
$$
\langle a\odot b, c\odot d\rangle_{p,q}:=b^* \alpha_q (L_p(a^*c))e_q d, \qquad a,b,c,d\in A.
$$
We let  $\K_L(p,q)$  be the Hilbert $\K_q$-module arising as the completion of $A\odot A$ with the semi-norm associated to the above sesquilinear form. We denote by $a\otimes_{p,q} b$ the image of a simple tensor $a\odot b$ in the space $\K_L(p,q)$.

\begin{prop}\label{C^*-category associated to transfer operators}
 The family of Banach spaces $\K_L:=\{\K_L(p,q)\}_{p,q\in P}$ defined above form a right-tensor $C^*$-precategory where
\begin{equation}\label{C*-category relations to be checked}
(a\otimes_{p,q} b)^*:=b\otimes_{q,p} a, \qquad (a\otimes_{p,q} b)\cdot  (c\otimes_{q,r} d)  :=a\alpha_p(L_q(bc))\otimes_{p,r} d,
 \end{equation}
 \begin{equation}\label{right tensoring to be checked}
   (a\otimes_{p,q} b)\otimes 1_r:= \sum_{i=1}^{m_r} a\alpha_p(u_{i}^{r}) \otimes_{pr,qr} \alpha_q(u_{i}^{r})^* b,
 \end{equation}
   for all $a,b,c,d \in A$, $p,q,r \in P$. Moreover, if $M_L$ is the product system associated to $L$, then the map
   \begin{equation}\label{iso of compacts}
   a\otimes_{p,q} b\longmapsto \Theta_{(p,a), \, (q,b^*)}, \qquad a,b \in A,
   \end{equation}
   establishes an isomorphism of right-tensor $C^*$-precategories from  $\K_L$ onto the right-tensor $C^*$-precategory $\K_{M_L}=\{\K(M_q,M_p)\}_{p,q\in P}$.
   \end{prop}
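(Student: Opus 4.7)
My plan is to prove all three claims in one stroke by establishing that the map in \eqref{iso of compacts} is a well-defined isometric bijection, and using it to transport the right-tensor $C^*$-precategory structure from $\K_{M_L}$ (known to be well-defined by Lemma~\ref{non-degeneracy of K_X} applied to the product system $M_L$) onto $\K_L$. This way, verifying \eqref{C*-category relations to be checked} and \eqref{right tensoring to be checked} reduces to computing how the operations on rank-one operators $\Theta_{(p,a),(q,b^*)}$ translate back to the elementary tensors $a\otimes_{p,q}b$.

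First I would identify $\K(M_q)$ with the reduced basic construction $\K_q$. As Banach spaces, $M_q$ and $\mathcal{E}_q$ are both the completion of $A$ with norm $\|x\|^2=\|L_q(x^*x)\|=\|\CE_q(x^*x)\|$, where the latter equality follows because $\alpha_q$ is an injective $*$-homomorphism and $\CE_q=\alpha_q\circ L_q$. Hence the two $C^*$-algebras of compacts coincide. Next I would define $\phi_{p,q}\colon A\odot A\to \K(M_q,M_p)$ by $a\odot b\mapsto \Theta_{(p,a),(q,b^*)}$ and compute, using $\Theta_{(p,a),(q,b^*)}(q,c)=(p,a)\cdot L_q(bc)=(p,a\alpha_p(L_q(bc)))$ together with the transfer identity \eqref{transfer operator equality}, that
\[
\Theta_{(p,a),(q,b^*)}^*\,\Theta_{(p,c),(q,d^*)}(q,x)=\bigl(q,\,b^*\alpha_q(L_p(a^*c))\,\CE_q(dx)\bigr).
\]
Under the identification $\K(M_q)=\K_q$, the right-hand side is precisely the action of $b^*\alpha_q(L_p(a^*c))e_q d$ on $\mathcal{E}_q$. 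Therefore $\phi_{p,q}$ intertwines the sesquilinear form $\langle\,\cdot\,,\,\cdot\,\rangle_{p,q}$ with the $\K(M_q)$-valued inner product $\langle S,T\rangle=S^*T$. Consequently $\phi_{p,q}$ factors through the completion and extends to an isometry $\K_L(p,q)\to \K(M_q,M_p)$; surjectivity is immediate since the image contains all rank-one operators.

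Next I would pull back the precategory operations. Computing $\Theta_{(p,a),(q,b^*)}^*=\Theta_{(q,b^*),(p,a)}$ and $\Theta_{(p,a),(q,b^*)}\,\Theta_{(q,c),(r,d^*)}=\Theta_{(p,a\alpha_p(L_q(bc))),(r,d^*)}$ shows exactly that the involution and composition on $\K(M_q,M_p)$ correspond to the formulas \eqref{C*-category relations to be checked}. This simultaneously gives that $\K_L$ is a $C^*$-precategory and that the maps \eqref{iso of compacts} form a $C^*$-precategory isomorphism onto $\K_{M_L}$.

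The hard part will be verifying the right tensoring formula \eqref{right tensoring to be checked}, which is where the finite-index hypothesis (A3) enters. Under the canonical identification $M_{pr}\cong M_p\otimes_A M_r$, the operator $\Theta_{(p,a),(q,b^*)}\otimes 1_r$ acts on $(qr,c\alpha_q(z))=(q,c)(r,z)$ to produce $(pr,a\alpha_p(L_q(bc))\alpha_p(z))$. I would then act with the right-hand side $\sum_i \Theta_{(pr,a\alpha_p(u_i^r)),(qr,b^*\alpha_q(u_i^r))}$ on the same element; using that $\alpha_q(u_i^r)^*$ and $\alpha_q(z)$ lie in the multiplicative domain of $L_q$, the inner product collapses to $L_r((u_i^r)^*L_q(bc)z)$, and then $\alpha_{pr}=\alpha_p\circ\alpha_r$ together with $\alpha_r\circ L_r=\CE_r$ reduces the sum to $(pr,a\alpha_p(\sum_i u_i^r\CE_r((u_i^r)^*L_q(bc)z)))$. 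The quasi-basis identity \eqref{finite-index equality} then collapses the sum to $(pr,a\alpha_p(L_q(bc)z))$, matching the action of $\Theta_{(p,a),(q,b^*)}\otimes 1_r$. The main subtlety here is bookkeeping of the two actions (and of $\alpha_{pr}$ versus $\alpha_p\circ\alpha_r$), which is why I would emphasize axiom (A3) and the transfer property as the essential ingredients at this step.
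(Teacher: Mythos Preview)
Your proof is correct and follows the same overall architecture as the paper: you transport the right-tensor $C^*$-precategory structure from $\K_{M_L}$ to $\K_L$ via the map \eqref{iso of compacts}, so that the verifications of \eqref{C*-category relations to be checked} and \eqref{right tensoring to be checked} become computations with rank-one operators. The computations you outline for the involution, composition, and right tensoring are exactly those in the paper, and your bookkeeping for the right-tensoring step (multiplicative-domain factorization, $\alpha_{pr}=\alpha_p\circ\alpha_r$, then the quasi-basis identity) is accurate.

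The one genuine difference is in how you establish that \eqref{iso of compacts} is isometric. The paper does this by observing that $\HH=\alpha_q\circ L_p$ and $\VV=\alpha_p\circ L_q$ form an interaction in the sense of Exel, and then invoking \cite[Proposition~5.4]{exel-inter} together with \cite[Lemma~2.1]{KPW} to obtain a closed-form norm formula for finite sums of rank-one operators. You instead identify $\K(M_q)$ with the reduced basic construction $\K_q$ (legitimate, since $M_q$ and $\mathcal{E}_q$ are the same completion of $A$ and the $\Theta$-operators agree because $x\cdot L_q(y^*z)=x\CE_q(y^*z)$) and then check directly that $\phi_{p,q}$ intertwines the $\K_q$-valued sesquilinear form with the $\K(M_q)$-valued inner product $S^*T$. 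Your route is more self-contained, avoiding the external appeal to Exel's interaction machinery, at the cost of needing the $\K(M_q)\cong\K_q$ identification up front; the paper's route gives an explicit norm formula but imports more theory.
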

   \begin{proof} The strategy of the proof is to show that \eqref{iso of compacts} yields an isometric isomorphism $\K_L(p,q)\cong \K(M_q,M_p)$ under which the right-tensor $C^*$-precategory operations from $\K_{M_L}$ translate to the prescribed formulas for $\K_L$. To this end, note that for any $p, q\in P$ the maps
   $
   \HH:=\alpha_q\circ L_p  \textrm{ and } \VV:=\alpha_p\circ L_q
   $
     form an interaction in the sense of \cite[Definition 3.1]{exel-inter}. Indeed,  we have
     $
     \HH \circ \VV=\alpha_q \circ L_p\circ \alpha_p \circ L_q =\alpha_q \circ L_q =\CE_q,
     $
 and thus  $\HH \circ \VV \circ \HH= \CE_q\circ \HH=\HH$. For any $a,b\in A$ we get
  $$
     \HH(\VV(a)b)=\alpha_q\Big(L_p\big(\alpha_p(L_q(a))b\big)\Big)=\alpha_q\big(L_q(a)\big) \cdot \alpha_q\big(L_p(b)\big)=\HH(\VV(a))\HH(b).
 $$
 The other relations follow by symmetric arguments.
 Now, by \cite[Proposition 5.4]{exel-inter},  for $x=\sum_{i=1}^{n}a_i\otimes_{p,q} b_i$, $a_i,b_i\in A$, 
 we have
\begin{align*}
\|x\|_{\K(p,q)}&=\|[\HH(a_i^*a_j)]_{i,j}^{\frac{1}{2}} [\HH(\VV(b_ib_j^*))]_{i,j}^{\frac{1}{2}}\|_{M_n(A)}
=\|[\alpha_q(L_p(a_i^*a_j)]_{i,j}^{\frac{1}{2}} [\alpha_q(L_q(b_ib_j^*))]_{i,j}^{\frac{1}{2}}\|_{M_n(A)}.
\end{align*}
Using the fact that $\alpha_q$ amplifies to an isometric $*$-homomorphism on $M_{n}(A)$ we get
\begin{align*}
\|x\|_{\K(p,q)}&=\|[L_p(a_i^*a_j)]_{i,j}^{\frac{1}{2}} [L_q(b_ib_j^*)]_{i,j}^{\frac{1}{2}}\|_{M_n(A)}
\\
&=\|[\langle (p, a_i), (p,a_j)\rangle_p]_{i,j}^{\frac{1}{2}} [\langle (q,b_i^*), (q,b_j^*)\rangle_q]_{i,j}^{\frac{1}{2}}\|_{M_n(A)}.
\end{align*}
Comparing this with the norm of the operator $\sum_{i=1}^{n}\Theta_{(p,a_i), (q,b_i^*)}$ described in \cite[Lemma 2.1]{KPW}, we finally arrive at
$
\|\sum_{i=1}^{n}a_i\otimes_{p,q} b_i\|_{\K_L(p,q)}=\|\sum_{i=1}^{n}\Theta_{(p,a_i), (q,b_i^*)}\|_{\K(M_q,M_p)}.
$
Thus \eqref{iso of compacts} defines a linear isometry.

The standard formulas: $\Theta_{x,y}^*=\Theta_{y,x}$,  $\Theta_{x,y}\circ \Theta_{z,v}=\Theta_{x\langle y, z\rangle_q, v}$ for $x\in M_p$, $y,z\in M_q$, and $v\in M_r$, translate via \eqref{iso of compacts} to \eqref{C*-category relations to be checked}. Hence relations \eqref{C*-category relations to be checked} indeed define a $C^*$-precategory structure on $\K_L$, and $\K_L$ is isomorphic
to $\K_{M_L}$ as a $C^*$-precategory. Thus it remains to show that the right tensoring in $\K_{M_L}$ translates to \eqref{right tensoring to be checked} on the level of $\K_L$.

Note that the product system  $M_L$ is (left) essential.  Let $a,b,x,y\in A$, $p,q,r\in P$, and $T=\Theta_{(p,a), \,(q, b^*)} $. Taking into account that
$(p,x)\otimes_A (q,y)=(pq,x\alpha_p(y))$ we get
\begin{align*}
(T\otimes 1_r) (q,x)\otimes_A (r,y)&
=\big(p, a\alpha_p(L_q(bx))\big)\otimes_A (r,y)=\Big(pr,a\alpha_p\big(L_q(bx)y\big)\Big)\\
&= \Big(pr,a\alpha_p\Big(\sum_{i=1}^{m_r} u_i^r (\alpha_r\circ L_r)\big((u_i^r)^*L_q(bx\alpha_q(y))\big) \Big)\Big)\\
&=\Big(pr,\sum_{i=1}^{m_r} a\alpha_p(u_i^r) \alpha_{pr}\Big(L_{qr}\big(\alpha_q(u_i^r)^*bx\alpha_q(y)\big)\Big)\Big)\\
&=\Big(pr,\Big(\sum_{i=1}^{m_r} \Theta_{(pr,a\alpha_p(u_i^r)),(qr, b^*\alpha_q(u_i^r))}\Big)  x\alpha_q(y) \Big)
\\
&=\Big(\sum_{i=1}^{m_r} \Theta_{(pr,a\alpha_p(u_i^r)), (qr,b^*\alpha_q(u_i^r))}\Big)  (q,x)\otimes_A (r,y).
\end{align*}
Thus $\Theta_{(p,a), (q, b^*)}=\Big(\sum_{i=1}^{m_r} \Theta_{(pr,a\alpha_p(u_i^r)), (qr,b^*\alpha_q(u_i^r))}\Big) $
 and therefore \eqref{right tensoring to be checked} defines the desired right tensoring on $\K_L$.
\end{proof}
\begin{rem}\label{Not important remark}
   In view of Proposition \ref{C^*-category associated to transfer operators},  since  the image of  $A\alpha_p(A)$ in $M_p$ is a dense subspace of $M_p$, cf. Lemma \ref{lemma before Product system for transfer operators}, we have that
  $
  \KK_L(p,q)=\clsp\{a\otimes_{p,q} b: a\in A\alpha_p(A), b\in \alpha_q(A)A\}  $.
\end{rem}
\begin{lem}\label{relation for Nica covariance of transfers} Let
$pP\cap qP=rP$. For any $a,b,c,d\in A$ we have
\begin{equation}\label{eq:product in K_L}
\bigl((a\otimes_{p,p} b)\otimes 1_{p^{-1}r}\bigr) \cdot \bigl((c\otimes_{q,q} d)\otimes 1_{q^{-1}r}\bigr)= \sum_{i=1}^{m_{q^{-1}r}} a\CE_p\bigl(bc\alpha_q(u_i^{q^{-1}r})\bigr) \otimes_{r,r} \alpha_q(u_i^{q^{-1}r})^* d.
\end{equation}
\end{lem}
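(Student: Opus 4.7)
The plan is a direct computation using the two structural formulas of Proposition \ref{C^*-category associated to transfer operators}, collapsed at the end by one application of the quasi-basis identity \eqref{finite-index equality}. First I would expand each factor via \eqref{right tensoring to be checked}, producing a double sum indexed by the quasi-bases $\{u_j^{p^{-1}r}\}_j$ and $\{u_i^{q^{-1}r}\}_i$ of simple tensors of type $(r,r)$. Composing these pairs according to \eqref{C*-category relations to be checked} at level $(r,r)$ and recognizing $\alpha_r\circ L_r=\CE_r$ yields summands of the form
$$a\,\alpha_p(u_j^{p^{-1}r})\,\CE_r\!\bigl(\alpha_p(u_j^{p^{-1}r})^*\,bc\,\alpha_q(u_i^{q^{-1}r})\bigr)\otimes_{r,r}\alpha_q(u_i^{q^{-1}r})^*d.$$

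The main step is then to collapse the sum over $j$. Setting $X_i:=bc\,\alpha_q(u_i^{q^{-1}r})$, the factorization $r=p\cdot(p^{-1}r)$ combined with the semigroup laws $L_r=L_{p^{-1}r}\circ L_p$ and $\alpha_r=\alpha_p\circ\alpha_{p^{-1}r}$ (established in the preceding lemma) gives $\CE_r=\alpha_p\circ\CE_{p^{-1}r}\circ L_p$. Since $\alpha_p(A)\subseteq MD(L_p)$, the transfer identity \eqref{transfer operator equality} furnishes $L_p(\alpha_p(u_j^{p^{-1}r})^*X_i)=(u_j^{p^{-1}r})^*L_p(X_i)$, so the inner summand rewrites as
$$a\,\alpha_p\!\bigl(u_j^{p^{-1}r}\,\CE_{p^{-1}r}((u_j^{p^{-1}r})^*L_p(X_i))\bigr).$$
Summing over $j$ and invoking the quasi-basis identity \eqref{finite-index equality} for $\CE_{p^{-1}r}$ collapses the argument of $\alpha_p(\cdot)$ to $L_p(X_i)$, so the full $j$-sum reduces to $a\,\alpha_p(L_p(X_i))=a\,\CE_p(X_i)$. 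Reinserting this into the remaining sum over $i$ produces precisely the right-hand side of \eqref{eq:product in K_L}.

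The computation is essentially careful bookkeeping, and I do not anticipate a genuine obstacle. The only delicate point is the factorization $\CE_r=\alpha_p\circ\CE_{p^{-1}r}\circ L_p$ and the passage of $\alpha_p(u_j^{p^{-1}r})^*$ through $L_p$, both of which are direct consequences of the semigroup laws for $\alpha$ and $L$ together with the definition of the multiplicative domain.
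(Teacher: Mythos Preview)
Your proposal is correct and follows essentially the same route as the paper's proof: expand both factors via \eqref{right tensoring to be checked}, multiply using \eqref{C*-category relations to be checked} to obtain the double sum with $\CE_r$ in the middle, then collapse the $p^{-1}r$-indexed sum by writing $\CE_r=\alpha_p\circ\CE_{p^{-1}r}\circ L_p$ and invoking the quasi-basis identity for $\CE_{p^{-1}r}$. The only cosmetic difference is that the paper packages the collapse step as a separate identity $\sum_j \alpha_p(u_j^{p^{-1}r})\CE_r(\alpha_p(u_j^{p^{-1}r})^*f)=\CE_p(f)$ valid for arbitrary $f\in A$, whereas you carry the specific $X_i$ through; the content is identical.
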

\begin{proof}  By \eqref{C*-category relations to be checked} and  \eqref{right tensoring to be checked}, the left hand side of \eqref{eq:product in K_L} is equal to
$$
\sum_{i=1}^{m_{p^{-1}r}} \bigl(a\alpha_p(u_{i}^{p^{-1}r}) \otimes_{r,r} \alpha_p(u_{i}^{p^{-1}r})^* b\bigr) \cdot \sum_{i=1}^{m_{q^{-1}r}} \bigl(c\alpha_q(u_{i}^{q^{-1}r}) \otimes_{r,r} \alpha_q(u_{i}^{q^{-1}r})^* d\bigr)
$$
$$
\,\,\,\,\,\,\,\,\,\,\, =\sum_{i=1,j=1}^{m_{p^{-1}r}, m_{q^{-1}r}} a\alpha_p(u_{i}^{p^{-1}r}) \CE_{r} \Big(\alpha_p(u_{i}^{p^{-1}r})^* b  c\alpha_q(u_{j}^{q^{-1}r})\Big) \otimes_{r,r} \alpha_q(u_{i}^{q^{-1}r})^* d.
$$
However,  using that $\CE_r=\alpha_p\circ \CE_{p^{-1}r} \circ L_p$, for any $f\in A$, it follows that
\begin{align*}
\sum_{i=1}^{m_{p^{-1}r}} \alpha_p (u_{i}^{p^{-1}r}) \CE_{r} \Big(\alpha_p(u_{i}^{p^{-1}r})^* f\Big)&=\sum_{i=1}^{m_{p^{-1}r}} \alpha_p\Big(u_{i}^{p^{-1}r} \CE_{p^{-1}r} \big(u_{i}^{p^{-1}r*} L_p( f)\big)\Big)
\\
&= \alpha_p\big( L_p( f)\big)=\CE_p(f).
\end{align*}
Now inserting $f=b  c\alpha_q(u_{j}^{q^{-1}r})$ in the  computations above gives the assertion.
\end{proof}
 \begin{prop}\label{Nica covariance for transfers characterised}
A representation  $(\pi,S)$ of  $(A,P, L)$  is Nica covariant if and only if for every $p,q\in P$, and $a\in \alpha_p(A)A\alpha_q(A)$ the following are satisfied:
\begin{equation}\label{orthogonality for Nica for transfers}
S_p^*\pi(a)S_q = 0 \quad\text{ if }\quad pP\cap qP=\emptyset
\end{equation}
and
\begin{equation}\label{eq:Nica covariance as Wick ordering}
S_pS_p^*\pi(a)S_qS_q^*
=\sum_{i=1}^{m_{q^{-1}r}} \pi\big(\CE_p(a\alpha_q(u_i^{q^{-1}r}))\big)S_r S_r^*\pi(u_i^{q^{-1}r})^* \quad\text{ if } \quad pP\cap qP=rP.
\end{equation}
\end{prop}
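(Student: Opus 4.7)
The plan is to transport Nica covariance of $(\pi,S)$ through the chain of identifications built up in this section. By Proposition~\ref{Nica-Toeplitz algebras for transfer operators vs product systems}, Nica covariance of $(\pi,S)$ is equivalent to Nica covariance of the associated nondegenerate representation $\psi$ of $M_L$, and by Proposition~\ref{going forward prop} combined with the isomorphism of right-tensor $C^*$-precategories $\K_{M_L}\cong \K_L$ from Proposition~\ref{C^*-category associated to transfer operators}, this is in turn equivalent to Nica covariance of the representation $\Psi=\{\Psi_{p,q}\}$ of $\K_L$ determined by
\[
\Psi_{p,q}(a\otimes_{p,q} b)=\pi(a)S_pS_q^*\pi(b),\qquad a,b\in A.
\]
Because Nica covariance of $\Psi$ is detected on the diagonal subalgebras $\K_L(p,p)$ (the projection reformulation of Lemma~\ref{Nica relation Lemma} applied to $\psi$), it suffices to check the condition on products $\Psi_{p,p}(a\otimes_{p,p}b)\cdot \Psi_{q,q}(c\otimes_{q,q}d)$ for generators of $\K_L(p,p)$ and $\K_L(q,q)$.

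Concretely, one computes
\[
\Psi_{p,p}(a\otimes_{p,p}b)\,\Psi_{q,q}(c\otimes_{q,q}d)=\pi(a)S_pS_p^*\pi(bc)S_qS_q^*\pi(d).
\]
If $qP\cap sP=\emptyset$, Nica covariance requires this to vanish; allowing $a$ and $d$ to range over an approximate unit (and using that $S_p^*S_p=1=S_q^*S_q$ by (A2b)) one sees that this case is equivalent to $S_p^*\pi(e)S_q=0$ for all $e\in\alpha_p(A)A\alpha_q(A)$, recovering \eqref{orthogonality for Nica for transfers}. If instead $pP\cap qP=rP$, Lemma~\ref{relation for Nica covariance of transfers} gives
\[
(a\otimes_{p,p}b\otimes 1_{p^{-1}r})\cdot(c\otimes_{q,q}d\otimes 1_{q^{-1}r})=\sum_{i=1}^{m_{q^{-1}r}}a\,\CE_p\bigl(bc\,\alpha_q(u_i^{q^{-1}r})\bigr)\otimes_{r,r}\alpha_q(u_i^{q^{-1}r})^*d,
\]
so applying $\Psi_{r,r}$ yields the right hand side of the Nica covariance equation; absorbing $\pi(a)$ and $\pi(d)$ via approximate units reduces everything to an identity for $S_pS_p^*\pi(e)S_qS_q^*$ with $e=bc\in \alpha_p(A)A\alpha_q(A)$.

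The main technical step is then to rewrite the resulting expression into the form \eqref{eq:Nica covariance as Wick ordering}. The key identities here are the basic intertwining relations $\pi(\alpha_p(a))S_p=S_p\pi(a)$ and $S_p^*\pi(\alpha_p(a))=\pi(a)S_p^*$, which follow directly from the transfer identity \eqref{cp map representation relation} together with nondegeneracy of $\pi$ (cf.\ \cite[Proposition 3.12]{kwa-exel} as applied in Lemma~\ref{properties of representations of endomorphisms}(i) and the analysis preceding Proposition~\ref{prop:representations of Exel systems and product systems}). Using $S_r=S_qS_{q^{-1}r}$ and these relations, together with the quasi-basis identity \eqref{finite-index equality} to reshuffle the summation over $i$, the expression $\sum_i\pi(\CE_p(e\alpha_q(u_i^{q^{-1}r})))S_rS_r^*\pi(\alpha_q(u_i^{q^{-1}r}))^*$ is brought into the shape $\sum_i\pi(\CE_p(e\alpha_q(u_i^{q^{-1}r})))S_rS_r^*\pi(u_i^{q^{-1}r})^*$ of the statement. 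I expect this bookkeeping with $\alpha_q$ versus the quasi-basis to be the most delicate point in the argument.

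For the converse, assume \eqref{orthogonality for Nica for transfers} and \eqref{eq:Nica covariance as Wick ordering}. Running the above manipulations in reverse shows that $\Psi_{p,p}(\xi)\Psi_{q,q}(\eta)$ matches the value prescribed by Nica covariance for all $\xi\in\K_L(p,p)$, $\eta\in\K_L(q,q)$. Applied to elements of the form $\xi=\alpha^*\alpha$, $\eta=\beta\beta^*$ this yields the projection identity \eqref{Nica equation for projections} for $\psi$, so Lemma~\ref{Nica relation Lemma} ensures $\psi$ is Nica covariant, whence $(\pi,S)$ is Nica covariant by Proposition~\ref{Nica-Toeplitz algebras for transfer operators vs product systems}.
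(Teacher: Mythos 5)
Your plan follows the paper's proof almost verbatim: pass from $(\pi,S)$ to the representation $\Psi$ of $\K_L$ via Propositions~\ref{Nica-Toeplitz algebras for transfer operators vs product systems}, \ref{going forward prop} and \ref{C^*-category associated to transfer operators}, evaluate products of diagonal generators $\Psi(b\otimes_{p,p}a)\Psi(c\otimes_{q,q}d)$, invoke Lemma~\ref{relation for Nica covariance of transfers}, and remove the auxiliary factors by strong limits along approximate units; the empty-intersection case and the converse are handled by the same reversal the paper performs. So the architecture is fine and is essentially identical to the paper's.

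The gap is exactly the step you flagged and postponed. What the computation actually delivers is
\[
S_pS_p^*\pi(a)S_qS_q^*=\sum_{i=1}^{m_{q^{-1}r}}\pi\bigl(\CE_p(a\alpha_q(u_i^{q^{-1}r}))\bigr)S_rS_r^*\pi\bigl(\alpha_q(u_i^{q^{-1}r})\bigr)^*,
\]
and no bookkeeping turns the last factor into $\pi(u_i^{q^{-1}r})^*$: the intertwining relation $S_q^*\pi(\alpha_q(x))=\pi(x)S_q^*$ moves $u_i^*$ past $S_q^*$, not past $S_r^*$, so $S_rS_r^*\pi(\alpha_q(u_i)^*)=S_qS_{q^{-1}r}S_{q^{-1}r}^*\pi(u_i^*)S_q^*$, which is not $S_rS_r^*\pi(u_i^*)$, and the two sums are genuinely different operators. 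Concretely, for the algebraic dynamical system $(\Z,\N,\theta_n=2^n\cdot\,)$, so $A=C(\T)$ and $\alpha_n(z)=z^{2^n}$, the left regular representation of $\Z\rtimes_\theta\N$ on $\ell^2(\Z\times\N)$ gives a Nica covariant $(\pi,S)$ (the projections $Q_n$ onto $\overline{\pi(A)S_nH}$ satisfy \eqref{Nica equation for projections}), and with $p=r=2$, $q=1$, quasi-basis $\{1,z\}$ for $\CE_1$ and $a=z^2$ one checks $S_2S_2^*\pi(z^2)S_1S_1^*=\pi(z^4)S_2S_2^*\pi(z^2)^*\neq\pi(z^4)S_2S_2^*\pi(z)^*$. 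Thus the version with $\alpha_q(u_i^{q^{-1}r})$ in the last factor is the correct statement, and the printed $\pi(u_i^{q^{-1}r})^*$ in \eqref{eq:Nica covariance as Wick ordering} (which the paper's own proof also writes, without justification, in its final display) looks like a typo. In short: your argument is complete once you stop at the identity your computation actually gives; do not promise the reconciliation step, because it fails — prove the corrected identity instead, and the rest of your plan goes through exactly as in the paper.
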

\begin{proof}
By  Propositions \ref{going forward cor}, \ref{Nica-Toeplitz algebras for transfer operators vs product systems} and \ref{C^*-category associated to transfer operators}, there is a one-to-one correspondence between  representations  $(\pi,S)$ of  $(A,P, L)$ and  right-tensor representations $\Psi$ of $\K_L$ determined by
\begin{equation}\label{correspondence of representations for transfer operators}
 \Psi(a\otimes_{p,q} b)=\pi(a)S_p S_q^*\pi(b), \qquad a,b\in A.
 \end{equation}
Moreover, $(\pi,S)$ is Nica covariant if and only if $\Psi$ is, cf. Proposition~\ref{going forward prop}.

Let $(\pi,S)$ be a Nica covariant representation. Thus the associated $\Psi$ in \eqref{correspondence of representations for transfer operators} is Nica covariant, too.
Let $p,q\in P$, and $a\in \alpha_p(A)A\alpha_q(A)$. If $pP\cap qP=\emptyset$, then for any $b,c,d\in A$ we have
$$
0=\Psi(b\otimes_{p,p} a) \Psi(c\otimes_{q,q} d)=\pi(b) S_pS_p^*\pi(a c)S_qS_q^*\pi(d).
$$
Letting $b,c,d\in A$ run through an approximate unit in $A$ and taking (strong) limit, we get $S_pS_p^*\pi(a)S_qS_q^* = 0$, which is equivalent to $S_p^*\pi(a)S_q = 0$.  Let now $pP\cap qP=rP$.  Invoking Lemma \ref{relation for Nica covariance of transfers}, with the roles of $a$ and $b$ exchanged, we get
$$
\pi(b)S_pS_p^*\pi(ac)S_qS_q^* \pi(d)
=\Psi\big((b\otimes_{p,p} a) \,\cdot\, (c\otimes_{q,q} d)\big)
= \Psi\Big(((b\otimes_{p,p} a)\otimes 1_{p^{-1}r}) \cdot ((c\otimes_{q,q} d)\otimes 1_{q^{-1}r}) \Big)
$$
$$
=\pi(b)\sum_{i=1}^{m_{q^{-1}r}} \pi\big(\CE_p(ac\alpha_q(u_i^{q^{-1}r}))\big)S_r S_r^*\pi(u_i^{q^{-1}r})^*\pi(d).
$$
Letting $b,c,d\in A$ run through an approximate unit in $A$ and taking (strong) limit gives \eqref{eq:Nica covariance as Wick ordering}.

Conversely, relations \eqref{orthogonality for Nica for transfers} and \eqref{eq:Nica covariance as Wick ordering} imply Nica covariance as a reversal of the above arguments shows.
\end{proof}
For $h\in P^*$, the mapping  $L_h$ is invertible and  hence by  (A1) and (A2) we have  $MD(L_h)=A$,
 which means that $L_h$ is an automorphism of $A$: we have $L_h=\alpha_{h^{-1}}= \alpha_{h}^{-1}$.
\begin{thm}[Uniqueness Theorem for   $\NT(A,P,L)$]\label{Uniqueness Theorem for  crossed products by transfers}
Let $(A,P,L)$ be a $C^*$-dynamical system satisfying (A1), (A2), (A3) above.  Suppose that  either
$P^*=\{e\}$ or that the group $\{\alpha_{h}\}_{h\in P^*}$ of automorphisms of  $A$ is aperiodic.
Assume also that $\KK_L$ is amenable. Let $(\pi,S)$  be a Nica covariant representation
of $(A,P,L)$. For each $p\in P$, let $Q_p$ be the projection onto the space $\overline{\pi(A)S_pH}$. Then the canonical surjective $*$-homomorphism
$$\NT(A,P,\alpha) \longrightarrow
\clsp\{\pi(a)S_pS_q^*\pi(b): a,b\in A, p,q\in P\}$$
is an isomorphism if and only if for all finite families $q_1,\dots ,q_n\in P\setminus P^*$ the representation
$A\ni a \mapsto  \pi(a) \prod_{i=1}^{n}(1-Q_{q_i})$
 is faithful.
\end{thm}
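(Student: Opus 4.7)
The plan is to deduce the result from Theorem \ref{Uniqueness Theorem for  product systems I} applied to the product system $M_L$, exploiting the isomorphism $\NT(A,P,L)\cong\NT(M_L)$ together with the bijective correspondence $(\pi,S)\leftrightarrow\psi$ between Nica covariant representations provided by Proposition \ref{Nica-Toeplitz algebras for transfer operators vs product systems}.

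First I will verify the hypotheses of Theorem \ref{Uniqueness Theorem for  product systems I} for $M_L$. Axiom (A3) furnishes for each $p\in P$ a finite quasi-basis $\{u_i^p\}$ yielding $\phi_p(a)=\sum_i\Theta_{(p,u_i^p),(p,a^*u_i^p)}\in\KK(M_p)$, so the left action of $A$ on each $M_p$ is by generalized compacts; this makes $M_L$ automatically compactly aligned and ensures $\phi_p(A)\subseteq\KK(M_p)$ for every $p\in P$. Amenability of $\KK_{M_L}$ is inherited from the assumed amenability of $\KK_L$ through the right-tensor $C^*$-precategory isomorphism of Proposition \ref{C^*-category associated to transfer operators}. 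When $P^*\neq\{e\}$, aperiodicity of the Fell bundle $\{M_h\}_{h\in P^*}$ still needs to be checked; since $L_h=\alpha_h^{-1}$ for $h\in P^*$, the fibre $M_h$ is an equivalence $A$-bimodule implementing the automorphism $\alpha_h$, and aperiodicity of $\{M_h\}_{h\in P^*}$ then follows from aperiodicity of $\{\alpha_h\}_{h\in P^*}$, by the argument of Lemma \ref{aperiodicity for endomorphisms} combined with Proposition \ref{lem: aperiodicity for product systems}.

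Next I translate the conditions across the bijection. We have $\psi_e=\pi$ and $\psi_p(p,a)=\pi(a)S_p$; since the left action is by compacts, for $p\in P\setminus\{e\}$ one checks that
$$
Q_p^\psi H=\overline{\psi^{(p)}(\KK(M_p))H}=\overline{\psi_p(M_p)H}=\overline{\pi(A)S_pH}=Q_pH,
$$
while $Q_e^\psi=Q_e=1$. Consequently condition (C) for $\psi$ in the sense of \eqref{Coburn condition666} coincides precisely with the faithfulness requirement in the statement, and the image $\clsp\{\psi(x)\psi(y)^*:x,y\in M_L\}$ agrees with $\clsp\{\pi(a)S_pS_q^*\pi(b):a,b\in A,\,p,q\in P\}$. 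Because $\phi_p(A)\subseteq\KK(M_p)$ for every $p$, Theorem \ref{Uniqueness Theorem for  product systems I} delivers the equivalence of condition (C) with the isomorphism statement, which is exactly what is claimed. The only non-formal step is the aperiodicity translation of the previous paragraph; everything else reduces to bookkeeping through the bijections already established in Propositions \ref{Nica-Toeplitz algebras for transfer operators vs product systems} and \ref{C^*-category associated to transfer operators}.
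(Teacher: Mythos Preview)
Your proof is correct and follows essentially the same route as the paper: reduce to Theorem \ref{Uniqueness Theorem for  product systems I} for $M_L$ via Proposition \ref{Nica-Toeplitz algebras for transfer operators vs product systems}, use (A3) to get left actions by compacts, transfer amenability through Proposition \ref{C^*-category associated to transfer operators}, and translate aperiodicity of $\{\alpha_h\}_{h\in P^*}$ into aperiodicity of $\{M_h\}_{h\in P^*}$ via Lemma \ref{aperiodicity for endomorphisms} (the paper does this by observing $E_{\alpha_h}\cong M_{L_h}$ explicitly, which is exactly your ``$M_h$ implements $\alpha_h$'' remark). The only item you omit that the paper mentions is the observation, from Proposition \ref{Nica covariance for transfers characterised}, that the spanning set $\{\pi(a)S_pS_q^*\pi(b)\}$ is closed under multiplication, but this is a cosmetic point.
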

\begin{proof} Note that $\{\pi(a)S_pS_q^*\pi(b): a,b\in A, p,q\in P\}$ is closed under multiplication due to Proposition ~\ref{Nica covariance for transfers characterised}. By Proposition \ref{Nica-Toeplitz algebras for transfer operators vs product systems},  $\NT(A,P,L)$ may be viewed as $\NT(M_L)$. For $h\in P^*$, the automorphism  $L_h$ of $A$ induces an isomorphism of $C^*$-correspondences $E_{\alpha_{h}}=E_{L_{h^{-1}}}\cong M_{L_{h}}$.  Thus by Lemma \ref{aperiodicity for endomorphisms}, the group $\{\alpha_{h}\}_{h\in P^*}=\{L_h\}_{h\in P^{*op}}$ of automorphisms of  $A$ is aperiodic if and only if the Fell bundle   $\{M_{L_{h}}\}_{h\in P^*}$ is aperiodic. Recalling that under assumption (A3), the left action is by generalized compacts in each fiber, the assertion follows from Theorem \ref{Uniqueness Theorem for  product systems I} modulo Proposition \ref{C^*-category associated to transfer operators} and  the fact that for the   representation $\Psi$ of $\KK_L$ associated to $(\pi,S)$ and every $p\in P$ we have
$
\Psi_{p,p}(\KK_L(p,p))H=\pi(A)S_pS_p^*\pi(A)H=\overline{\pi(A)S_pH}.
$

\end{proof}
\section{$C^*$-algebras associated to right LCM semigroups}\label{section:semigroupCstar alg}

Throughout this section we use the notation $S$ for a generic right LCM semigroup and reserve $P$ for semigroups in semidirect products as in \cite{bls2}. Associated to any right LCM semigroup $S$ there is a universal $C^*$-algebra $C^*(S)=\overline{\operatorname{span}}\{v_s^{\phantom{*}}v_t^*: s,t\in S\}$  generated by an isometric representation $v$ of $S$ such that
$$
(v_pv_p^*) (v_q v_q^*)
=\begin{cases}
v_rv_r^*
& \text{if $pP\cap qP=rP$} \\
0 &\text{if $pP\cap qP=\emptyset$}.
\end{cases}
$$ See \cite{Li} for the abstract construction of $C^*(S)$ valid for arbitrary left cancellative semigroups, and \cite{bls} or \cite{No0}  for the case of right LCM semigroups. When $S$ is a right LCM semigroup,
 \cite[Corollary 7.11]{bls2} implies that $C^*(S)$ is isomorphic to the Nica-Toeplitz algebra $\NT(X)$ for the compactly aligned  product system $X$ over $S$ with fibers $X_s\cong\mathbb{C}$  for all $s\in S$. It is therefore natural to ask if Theorem~\ref{Uniqueness Theorem for  product systems I} can be applied. On one hand, since the left action is by generalized compact operators in every fiber $X_s$, for $s\in S$, we will have equivalence of the three assertions (i)-(iii) if  $\KK_X$ is amenable and $S^*=\{e\}$. This in particular recovers the case (1) in \cite[Theorem 4.3]{bls}. On the other hand, in case that $S^*\neq \{e\}$, the Fell bundle $\{X_h\}_{h\in S^*}$ can never be aperiodic because $X_h=\mathbb{C}$ for all $h\in S^*$. Therefore viewing $C^*(S)$ as a Nica-Toeplitz $C^*$-algebra associated to the   product system $X$ with trivial (thus small) fibers we can not  apply Theorem~\ref{Uniqueness Theorem for  product systems I} when $S^*\neq \{e\}$. A possible solution  to this obstacle is to consider $C^*(S)$ as a Nica-Toeplitz $C^*$-algebra  associated to another product system with larger fibers, so that we can detect aperiodicity. For instance, given a controlled function into a group one could obtain a uniqueness result in terms of Fell bundles based on  Proposition \ref{Abstract uniqueness}. However, in general the fibers of the arising Fell bundle will be very large. We propose an intermediate approach in the case that $S$
is a semidirect product of an LCM semigroup and a group. We will show useful alternative realizations of $C^*(S)$ as Nica-Toeplitz algebras associated to  product systems with larger fibers over a smaller semigroup, which lead to efficient uniqueness results.

\subsection{Semidirect products of LCM semigroups}
Even though left and right semidirect products are equivalent as abstract constructions, it turns out that right semidirect products have rather different properties than the left semidirect products of a group by a semigroup considered for instance in \cite{bls, bls2}. To exemplify, we will use right semidirect products for actions of semigroups on groups to construct right LCM semigroups $S$ with non-trivial group of units $S^*$ and which are not necessarily right cancellative (see Proposition \ref{construction of LCM's} below). Moreover, for these examples the  constructible right ideals depend only the acting semigroup, unlike the case of left semidirect products.

We begin by fixing our conventions for the two constructions. For a semigroup $T$ we let $\End T$ denote the semigroup of all semigroup homomorphisms $T\to T$ that preserve the identity $e_T$ in $T$. The identity endomorphism in $\End T$ is $\id_T$. A \emph{left action} $P\stackrel{\theta}{\curvearrowright} T$ of a semigroup $P$ on $T$ is a unital semigroup homomorphism $\theta:P \to \End T$. A \emph{right action} $T\stackrel{\vartheta}{\curvearrowleft} P$ is a unital semigroup antihomomorphism $\vartheta:P \to \End T$,  i.e. $\vartheta_p\vartheta_q=\vartheta_{qp}$
for all $p,q\in P$.

\begin{defn}\label{semidirect products definition} Let $T, P$ be semigroups.
The  \emph{(left) semidirect product} of $T$ by $P$ with respect to a left action $P
\stackrel{\theta}{\curvearrowright} T$, denoted
$T{\rtimes_\theta}P$, is the semigroup $T\times P$ with composition given by
$$
(g,p)(h,q) = (g\theta_{p}(h),pq),\qquad \text{ for }g,h\in T \text{ and }p, q\in P.
$$
The  \emph{(right) semidirect product} of $T$ by $P$ with respect to a right action $T\stackrel{\vartheta}{\curvearrowleft} P$, denoted
$P{_\vartheta\ltimes}T$, is the semigroup $P\times T$ with composition given by
$$
(p,g)(q,h) = (pq, \vartheta_{q}(g)h), \qquad \text{ for }g,h\in T \text{ and }p, q\in P.
$$
\end{defn}
\begin{rem}
The opposite semigroup $P^{op}$ to a semigroup $P$  coincides with $P$ as a set but has  multiplication  defined by reversing the factors. Treating the corresponding endomorphisms as maps on the same set, we have $\End P=\End P^{op}$. Thus every right action $T\stackrel{\vartheta}{\curvearrowleft} P$ can be treated as the left action $P^{op} \stackrel{\vartheta}{\curvearrowright} T^{op}$, and there is an isomorphism of semigroups
$$
P{_\vartheta\ltimes}T\ni (p,g) \to (g,p) \in \left(T^{op}{\rtimes_\vartheta}P^{op}\right)^{op}.
$$

\end{rem}
The following proposition should be compared with \cite[Lemma 2.4]{bls} proved for left semidirect products. It shows that right semidirect products in the realm of  right LCM semigroups are always left cancellative and have easier structure of principal right ideals.
\begin{prop}\label{construction of LCM's}
Suppose that  $G\stackrel{\vartheta}{\curvearrowleft} P$ is a right action of a right LCM semigroup $P$ with identity on a group $G$. Then $P{_\vartheta\ltimes}G$ is a right LCM semigroup such that
$$
J(P)\cong J(P{_\vartheta\ltimes}G) \quad \textrm{ and } \quad  (P{_\vartheta\ltimes}G)^*=P^*{_\vartheta\ltimes}G.
$$
Moreover, $P{_\vartheta\ltimes}G$ is cancellative if and only if $P$ is cancellative and every  $\vartheta_p$ is injective, $p\in P$.
\end{prop}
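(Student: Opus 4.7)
The plan is to verify each of the four assertions by direct computation in $S := P{_\vartheta\ltimes}G$, exploiting the fact that the second coordinate is a group.

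For the group of units, I would begin by noting that $(p,g)(q,h) = (e,e_G)$ forces $pq = e$ in $P$, hence $p \in P^*$ and $q = p^{-1}$; the second coordinate then forces $h = \vartheta_{p^{-1}}(g)^{-1}$. One checks that this $h$ also gives a left inverse using the antihomomorphism identity $\vartheta_p\vartheta_{p^{-1}} = \vartheta_{p^{-1}p} = \vartheta_e = \id_G$, which implies $\vartheta_p(\vartheta_{p^{-1}}(g)^{-1})\cdot g = g^{-1}g = e_G$. Conversely, whenever $p \in P^*$, the pair $(p^{-1}, \vartheta_{p^{-1}}(g)^{-1})$ is a two-sided inverse for $(p,g)$, so $S^* = P^*{_\vartheta\ltimes}G$ as sets, and closure under multiplication is immediate.

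For the right LCM property and the semilattice isomorphism, I would compute the principal right ideal
\[
(p,g)S = \{(pq, \vartheta_q(g)h) : q\in P,\ h\in G\} = pP \times G,
\]
where the equality uses that $h \mapsto \vartheta_q(g)h$ is a bijection of $G$. Consequently,
\[
(p,g)S \cap (q,h)S = (pP \cap qP) \times G,
\]
which is either empty or, when $pP \cap qP = rP$, equal to $rP \times G = (r,e_G)S$. This simultaneously shows that $S$ is right LCM and that the map $pP \longmapsto pP \times G$ (with $\emptyset \mapsto \emptyset$) is the desired isomorphism $J(P) \cong J(S)$ of semilattices.

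For cancellativity, I would reason coordinate by coordinate. From $(p,g)(q,h) = (p,g)(q',h')$ the first coordinate gives $pq = pq'$ and the second $\vartheta_q(g)h = \vartheta_{q'}(g)h'$; once left cancellation in $P$ delivers $q = q'$, the group structure on $G$ gives $h = h'$. Thus left cancellativity of $P$ alone suffices for that of $S$. From $(q,h)(p,g) = (q',h')(p,g)$ one gets $qp = q'p$ together with $\vartheta_p(h)g = \vartheta_p(h')g$, which reduces to $\vartheta_p(h) = \vartheta_p(h')$; these yield $(q,h)=(q',h')$ precisely when $P$ is right cancellative and each $\vartheta_p$ is injective. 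The necessity of both conditions follows by specializing: setting $g = h = h' = e_G$ projects cancellation in $S$ onto cancellation in $P$, while taking $q = q' = e$ and $g = e_G$ isolates injectivity of $\vartheta_p$.

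The whole proof is a sequence of coordinate-wise manipulations; the only point needing care is keeping the antihomomorphism convention for $\vartheta$ straight when verifying that the candidate inverse of a unit works on both sides.
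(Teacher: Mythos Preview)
Your proposal is correct and follows essentially the same approach as the paper: both arguments compute $(p,g)S = pP \times G$ to obtain the right LCM property and the semilattice isomorphism, verify left cancellation coordinatewise, characterize units via the first coordinate, and reduce right cancellation to right cancellation in $P$ together with injectivity of each $\vartheta_p$. The only differences are cosmetic ordering (the paper establishes left cancellation before discussing ideals, which is logically cleaner since left cancellation is part of the right LCM hypothesis) and your slightly more explicit treatment of necessity in the cancellation claim.
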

\begin{proof} The element $(e_P,e_G)$ is the identity of $P{_\vartheta\ltimes}G$.
If $(p,g)(q,h)=(p,g)(q',h')$ then $pq=pq'$ and $\vartheta_{q}(g)h=\vartheta_{q'}(g)h'$. By left cancellation in $P$ we get $q=q'$ and therefore also $h=h'$. Thus $P{_\vartheta\ltimes}G$ is left cancellative. Since the action of the group $G$ on itself is transitive, we have $(p,g)(P{_\vartheta\ltimes}G)=(pP)\times G$ for every $(p,g)\in P{_\vartheta\ltimes}G$. Hence $P{\ltimes_\vartheta}G$ is a right LCM semigroup with the semilattice of principal right ideals isomorphic to that of $P$, with isomorphism given by $
pP\mapsto (pP)\ltimes G
$. Plainly, relations $(p,g)(q,h)= (q,h)(p,q)=(e_P,e_G)$ hold if and only if $p\in P^*$, $q=p^{-1}$ and $h=\vartheta_{p^{-1}}(g^{-1})$. This immediately gives $(P{_\vartheta\ltimes}G)^*=P^*{_\vartheta\ltimes}G$.

The claim about $P{_\vartheta\ltimes}G$ being right cancellative follows by noting that
 $
(p,g)(q,h)=(p',g')(q,h)$  if and only if $pq=p'q$  and $\vartheta_{q}(g)=\vartheta_{q}(g')$.
\end{proof}
\begin{rem}
Using the last part of Proposition \ref{construction of LCM's} it is easy to construct examples of not cancellative LCM semigroups from cancellative ones.
\end{rem}
In general the (left) semidirect product of a group $G$  by a right LCM semigroup $P$ with respect to a left action $P
\stackrel{\theta}{\curvearrowright} G$ is not an LCM semigroup. As introduced in \cite[Definition 2.1]{bls2},  an \emph{algebraic dynamical system} is a triple $(G, P,\theta)$ where $G$ is a group, $P$ is a right LCM semigroup, and $P
\stackrel{\theta}{\curvearrowright} G$ is  a left action  by injective endomorphisms of  $G$ which respects the order, i.e. $\theta_p(G)\cap \theta_q(G)=\theta_r(G)$ whenever for $p,q\in P$ there is $r\in P$ such that $pP\cap qP=rP$. By \cite[Proposition 8.2 and Lemma 2.4]{bls}, whenever   $(G, P,\theta)$ is an algebraic dynamical system,  the left semidirect product $P{\rtimes_\theta}G$ is a right LCM semigroup
and $(P{\rtimes_\theta}G)^*=P^*{\rtimes_\theta}G$.

\subsection{Semigroup $C^*$-algebras associated to right semidirect products $P{_\vartheta\ltimes}G$}\label{subsection:semigroupCstar-right}
 Here we assume that ${\vartheta}$ is a right action  of a right LCM semigroup $P$ on a group $G$. We let $\delta_g$ for $g\in G$ be the generating unitaries in $C^*(G)$.
\begin{prop}\label{prop:right semidirect products}
Let ${\vartheta}$ be a right action  of a right LCM semigroup $P$ on a group $G$. There is an antihomomorphism $\alpha$ of $P$ into $\operatorname{End}C^*(G)$ given by $\alpha_p(\delta_g)=\delta_{\vartheta_p(g)}$  for $g\in G$ and $p\in P$. Further, $(C^*(G), P,\alpha)$ is a $C^*$-dynamical system as in subsection~\ref{subsection:NT-cp-endo}, and $C^*(P{_\vartheta\ltimes}G)\cong \NT(C^*(G), P,\alpha)$.
\end{prop}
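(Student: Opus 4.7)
The first assertion about $\alpha$ is a straightforward consequence of universality. Since $\vartheta_p\colon G\to G$ is a unital group endomorphism for each $p\in P$, the assignment $g\mapsto \delta_{\vartheta_p(g)}$ extends by universality of $C^*(G)$ to a unital $*$-endomorphism $\alpha_p$ of $C^*(G)$. The antihomomorphism relation $\vartheta_p\vartheta_q=\vartheta_{qp}$ (evaluated on generators $\delta_g$) gives $\alpha_p\alpha_q=\alpha_{qp}$, so $\alpha$ is a unital semigroup antihomomorphism and $(C^*(G),P,\alpha)$ is a $C^*$-dynamical system in the sense of Definition \ref{C*-dynamical system}, falling under the framework of subsection \ref{subsection:NT-cp-endo}.

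For the isomorphism I would construct mutually inverse $*$-homomorphisms through the universal properties of both objects. Given a Nica covariant isometric representation $v$ of $P{_\vartheta\ltimes}G$, set $\pi(\delta_g):=v_{(e,g)}$ and $S_p:=v_{(p,e)}$. Since $(e,g)\in(P{_\vartheta\ltimes}G)^*$ by Proposition \ref{construction of LCM's}, each $v_{(e,g)}$ is unitary, and $\pi$ extends to a unital representation of $C^*(G)$. The identity $(e,g)(p,e)=(p,\vartheta_p(g))=(p,e)(e,\vartheta_p(g))$ in $P{_\vartheta\ltimes}G$ yields $v_{(e,g)}v_{(p,e)}=v_{(p,e)}v_{(e,\vartheta_p(g))}$, which after multiplying by $v_{(p,e)}^*$ on the left gives the covariance identity $S_p^*\pi(\delta_g)S_p=\pi(\alpha_p(\delta_g))$; by linearity and continuity $(\pi,S)$ satisfies \eqref{cp map representation relation}. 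The Nica covariance \eqref{Nica covariance for semigroups} for $(\pi,S)$ follows from that of $v$ combined with the semilattice isomorphism $J(P{_\vartheta\ltimes}G)\cong J(P)$ in Proposition \ref{construction of LCM's}. Invoking Proposition \ref{prop:Nica covariance of various representations} and the universal property of $\NT(C^*(G),P,\alpha)$ produces a $*$-homomorphism $\NT(C^*(G),P,\alpha)\to C^*(P{_\vartheta\ltimes}G)$.

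In the reverse direction, starting from the universal Nica covariant representation $(j_A,\hat s)$ of $(C^*(G),P,\alpha)$, unitality of $C^*(G)$ together with Lemma \ref{properties of representations of endomorphisms}(ii) forces $\hat s_p^*\hat s_p=j_A(\alpha_p(1))=1$, so each $\hat s_p$ is a proper isometry. I then set $V_{(p,g)}:=\hat s_p\, j_A(\delta_g)$. Each $V_{(p,g)}$ is an isometry, and the intertwining identity $j_A(\delta_g)\hat s_q=\hat s_q j_A(\alpha_q(\delta_g))=\hat s_q j_A(\delta_{\vartheta_q(g)})$ of Lemma \ref{properties of representations of endomorphisms}(i) yields the semigroup law
\[
V_{(p,g)}V_{(q,h)}=\hat s_p\, j_A(\delta_g)\hat s_q\, j_A(\delta_h)=\hat s_{pq}\, j_A(\delta_{\vartheta_q(g)h})=V_{(p,g)(q,h)}.
\]
Because $j_A(\delta_g)$ is unitary, $V_{(p,g)}V_{(p,g)}^*=\hat s_p\hat s_p^*$, so Nica covariance for $V$ reduces to that of $\hat s$ via $J(P{_\vartheta\ltimes}G)\cong J(P)$. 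The universal property of $C^*(P{_\vartheta\ltimes}G)$ then provides a homomorphism in the opposite direction.

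Finally, the two constructions are visibly mutually inverse on generators: one checks $V_{(p,e)}=\hat s_p$, $V_{(e,g)}=j_A(\delta_g)$, and $v_{(p,g)}=v_{(p,e)}v_{(e,g)}$, so both compositions are the identity on the generating data and hence on the whole $C^*$-algebras. The only delicate point is the need for the $\hat s_p$ to be proper isometries so that the intertwining identity of Lemma \ref{properties of representations of endomorphisms}(i) is effective; this is precisely where unitality of $C^*(G)$ is essential.
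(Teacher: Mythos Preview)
Your proof is correct and follows essentially the same strategy as the paper: construct mutually inverse $*$-homomorphisms via the two universal properties, using the factorization $v_{(p,g)}=v_{(p,e)}v_{(e,g)}$ and the intertwining relation coming from $(e,g)(p,e)=(p,e)(e,\vartheta_p(g))$. The only cosmetic difference is that the paper routes through the product system $E_\alpha$ (defining $w_{(p,g)}:=i_p(\delta_g)$ and $\psi_p(\delta_l):=v_{(p,l)}$) and then invokes the identification $\NT(E_\alpha)\cong\NT(C^*(G),P,\alpha)$ from Proposition~\ref{prop:Nica covariance of various representations}, whereas you work directly with the dynamical-system picture $(j_A,\hat s)$; under the correspondence of Proposition~\ref{representations of product systems for endomorphisms} these are the same objects.
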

\begin{proof} We only prove the last assertion as the rest is routine.
Proposition~\ref{prop:Nica covariance of various representations} provides natural isomorphisms between three different $C^*$-algebras associated to $(C^*(G), P,\alpha)$. We aim to show that $C^*(P{_\vartheta\ltimes}G)$ is isomorphic to the Nica-Toeplitz algebra $\NT(E_\alpha)$ associated to the product system $E_\alpha$ with multiplication defined in \eqref{multiplication-Ealpha}.

Let $i_{E_\alpha}$ be the universal Nica covariant representation of $E_\alpha$. For each $p\in P$, denote $i_p$ the restriction of $i_{E_\alpha}$ to $E_p$. We claim that $w_{(p,g)}:=i_p(\delta_g)$ for $(p,g)\in P{_\vartheta\ltimes}G$ is a Li-family in $\NT(E_\alpha)$. Let $(p,g), (q,h)\in P{_\vartheta\ltimes}G$. Then
$$
w_{(p,g)}w_{(q,h)}=i_p(\delta_g)i_q(\delta_h)=i_{pq}(\alpha_q(\delta_g)\delta_h)=w_{(pq,\vartheta_q(g)h)},
$$
and since $w_{(e,e)}=1$, we have a representation of $P{_\vartheta\ltimes}G$. Each $w_{(p,g)}$ is an isometry because $w_{(p,g)}^*w_{(p,g)}=i_p(\delta_g)^*i_p(\delta_g)=i_e(\langle \delta_g, \delta_g\rangle_p)=1$. Next we compute $w_{(p,g)}^*w_{(q,h)}$. Since $(p,g)(P{_\vartheta\ltimes}G)\cap (q,h)(P{_\vartheta\ltimes}G)=\emptyset$ if and only if $pP\cap qP=\emptyset$, in which case $i_p(\delta_g)^*i_q(\delta_h)=0$ by Nica covariance of $i_{E_\alpha}$. Thus $w_{(p,g)}^*w_{(q,h)}=0$ when $(p,g)(P{_\vartheta\ltimes}G)\cap (q,h)(P{_\vartheta\ltimes}G)=\emptyset$. Now assume the intersection is non-empty, and write $pp'=qq'=r$ for some $p',q',r$ in $P$. Pick a right LCM for $(p,g)$ and $(q,h)$, which we may assume of the form $(r,j)$ for $j\in G$, and write $
(p,g)(p',k)=(q,h)(q',l)=(r,j)$ where $k,l$ in $G$ are determined by $j=\vartheta_{q'}(h)l=\vartheta_{p'}(g)k$. Then $
w_{(p,g)}^*w_{(q,h)}=w_{(p',k)}w_{(q',l)}^*$, and this readily implies the Li-relation $e_Ie_J=e_{I\cap J}$ for $I=(p,g)(P{_\vartheta\ltimes}G)$ and $J=(q,h)(P{_\vartheta\ltimes}G)$. The remaining relations are easy to see, hence there is a $*$-homomorphism $C^*(P{_\vartheta\ltimes}G)\to \NT(E_\alpha)$ which sends a generating isometry $v_{(p,g)}$ to $w_{(p,g)}$.
Conversely, for $p\in P$ we let
\begin{equation}\label{eq:psi from v}
\psi_p(\delta_l)=v_{(p,l)}.
\end{equation}
We claim that $\psi_p:E_p\to C^*(P{_\vartheta\ltimes}G)$   give rise to a Nica covariant representation of $E_\alpha$.  However, this follows from routine calculations. For example, for $\delta_g\in C^*(G)$ and $x=\vartheta_p(k)l\in E_p$ we have
$$
\psi_p(\delta_g\cdot x)=\psi_p(\alpha_p(\delta_g)\delta_{\vartheta_p(k)l}=\psi_p(\delta_{\vartheta_p(gk)l})),
$$
which is $\psi_e(\delta_g)\psi_p(x)$. As a consequence, there is a $*$-homomorphism $\NT(E_\alpha)\to C^*(P{_\vartheta\ltimes}G)$ sending
$i_{E_\alpha}(x)$ for $x=\alpha_p(\delta_k)\delta_l\in E_p$ to $v_{(1,k)}v_{(p,l)}$. This is an inverse to the homomorphism $C^*(P{_\vartheta\ltimes}G)\to \NT(E_\alpha)$ obtained in the first half of the proof, hence the result follows.
\end{proof}

\begin{cor} [Uniqueness Theorem for $C^*(P{_\vartheta\ltimes}G)$]\label{uniqueness for right semidirect products}
Let $S=P{_\vartheta\ltimes}G$ where ${\vartheta}$ is a right action  of a right LCM semigroup $P$ on a group $G$.   Suppose that either $P^*=\{e\}$ or that the action of
$\{\alpha_{h}\}_{h\in P^{*op}}$ on $C^*(G)$ is aperiodic. Assume  that $\KK_\alpha$ is amenable. For a Nica covariant representation $(\pi, W)$ of $(C^*(G), P, \alpha)$, cf. Proposition \ref{prop:Nica covariance of various representations}, we have a  canonical surjective homomorphism
\begin{equation}\label{homomorphism to be isomorphism for right actions}
C^*(S)\mapsto \overline{\operatorname{span}}\{W_p\pi(a)W_q^*: a\in \alpha_p(C^*(G))C^*(G)\alpha_q(C^*(G)), p,q\in P\}
\end{equation}
which is an isomorphism if and only if for every finite family $q_1,\dots, q_n$ in $P\setminus P^*$, the representation $a\mapsto \pi(a)\Pi_{i=1}^n(1-W_{q_i}^{\phantom{*}}W_{q_i}^*)$ of $C^*(G)$ is faithful.
\end{cor}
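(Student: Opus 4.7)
The plan is to derive this corollary directly from Theorem \ref{Uniqueness Theorem for  crossed products by endomorphisms} via the identification provided by Proposition \ref{prop:right semidirect products}. Since Proposition \ref{prop:right semidirect products} yields an isomorphism $C^*(P{_\vartheta\ltimes}G)\cong \NT(C^*(G), P,\alpha)$ that sends the generating isometries $v_{(p,g)}$ of $C^*(S)$ to elements of the form $i_p(\delta_g)$, the uniqueness question for $C^*(S)$ translates without loss of information into the corresponding uniqueness question for the Nica-Toeplitz crossed product of the $C^*$-dynamical system $(C^*(G), P, \alpha)$.

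First, I would check that the hypotheses of Theorem \ref{Uniqueness Theorem for  crossed products by endomorphisms} are met: $P$ is a right LCM semigroup by hypothesis, $\alpha:P\to \End C^*(G)$ is a unital semigroup antihomomorphism by Proposition \ref{prop:right semidirect products}, and the additional assumptions on amenability of $\KK_\alpha$ and on either $P^*=\{e\}$ or aperiodicity of $\{\alpha_h\}_{h\in P^{*op}}$ are built into our hypotheses. A Nica covariant representation $(\pi, W)$ of $(C^*(G), P, \alpha)$ in the sense of Proposition \ref{prop:Nica covariance of various representations} is precisely a representation whose isometries satisfy the semigroup Nica relation \eqref{Nica covariance for semigroups}.

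Next, I would invoke Theorem \ref{Uniqueness Theorem for  crossed products by endomorphisms} applied to $(C^*(G), P, \alpha)$. The theorem asserts that the canonical epimorphism
\[
\NT(C^*(G), P, \alpha)\longrightarrow \clsp\{W_p\pi(a)W_q^*: a\in \alpha_p(C^*(G))C^*(G)\alpha_q(C^*(G)), p,q\in P\}
\]
is an isomorphism if and only if the representation $a\mapsto \pi(a)\prod_{i=1}^n(1-W_{q_i}W_{q_i}^*)$ is faithful for every finite family $q_1,\dots,q_n\in P\setminus P^*$. Composing this epimorphism with the isomorphism $C^*(S)\cong \NT(C^*(G), P, \alpha)$ from Proposition \ref{prop:right semidirect products} yields the canonical surjective $*$-homomorphism \eqref{homomorphism to be isomorphism for right actions}, so the equivalence transports verbatim.

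There is no substantive obstacle here: the work was already done in establishing Proposition \ref{prop:right semidirect products} (translating the Li-relations into Nica covariance of $E_\alpha$) and in Theorem \ref{Uniqueness Theorem for  crossed products by endomorphisms} (the general uniqueness result via the product system $E_\alpha$, aperiodicity of the Fell bundle $\{E_{\alpha_h}\}_{h\in P^*}$ through Lemma \ref{aperiodicity for endomorphisms}, and amenability of $\KK_\alpha$). The only minor verification is to confirm that the specific image described in \eqref{homomorphism to be isomorphism for right actions} coincides with the one in Theorem \ref{Uniqueness Theorem for  crossed products by endomorphisms}, which is immediate from the formulas. Thus the corollary follows by combining these two results.
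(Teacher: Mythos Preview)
Your proposal is correct and follows exactly the same approach as the paper, which simply states: ``Combine Proposition \ref{prop:right semidirect products} and Theorem~\ref{Uniqueness Theorem for  crossed products by endomorphisms}.'' You have spelled out in more detail the verification that the hypotheses transfer and that the image algebras match, but the underlying strategy is identical.
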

\begin{proof}
Combine Proposition \ref{prop:right semidirect products} and Theorem~\ref{Uniqueness Theorem for  crossed products by endomorphisms}.
\end{proof}
The above result recovers the Laca-Raeburn uniqueness theorem \cite{LR} for Nica-Toeplitz algebras in the context of quasi-lattice ordered groups: just take $G=\{e\}$ and $P$ to be any weakly quasi-lattice ordered monoid.
It also improves  the case (1) in \cite[Theorem 4.3]{bls}, as we do not require (right) cancellativity.

\begin{ex}[Right wreath product]\label{Right wreath product}
Let $\Gamma$  be a   group  and $P$ a right LCM semigroup.
We form the right wreath product
$$
S:=P\wr \Gamma=P{_\vartheta\ltimes} \bigl(\prod_{p\in P}\Gamma \bigr)
$$
with the action given by left shifts $
\vartheta_p\bigl((\gamma_r)_{r\in P}\bigr):=(\gamma_{rp})_{r\in P}$
for $p\in P$ and $(\gamma_r)_{r\in P}\in G=\prod_{p\in P}\Gamma$. Clearly, $\vartheta_p\circ \vartheta_q=\vartheta_{qp}$ for all $p,q\in P$, as required for a right action.  For any $a\in C^*(\Gamma)$ and $q\in P$ we let $a \delta_q$ be the element of $C^*(G)=\prod_{p\in P} C^*(\Gamma)$ corresponding to the sequence with $a$ on $q$-th coordinate and zeros elsewhere.  The action $\alpha$ by endomorphisms of $C^*(G)$ is determined by $\alpha_p(a\delta_q)=a\sum_{rp=q}\delta_{r}$ (if $P$ is right cancellative this sum has at most one summand, if this sum is infinite we understand it as a series convergent  in  weak topopology). In particular, if $h\in P^*$, then $\alpha_h(a\delta_q)=a\delta_{qh^{-1}}$ and therefore $\|\alpha_h(a\delta_q)a\delta_q\|=0$ unless $qh^{-1}=q$ which is equivalent to $h=e$, by left cancellation. Since every non-zero hereditary subalgebra of $C^*(G)$  contains a non-zero element of the form $a\delta_q$, we conclude that the action $\{\alpha_h\}_{h\in P^{*op}}$  on $C^*(G)$ is always aperiodic. Assuming, for instance, that there is a controlled function from $P$ into an amenable group, we get $\K_\alpha$ amenable. Therefore, if $\{W_p\}_{p\in P}$ is a semigroup of isometries on a Hilbert space $H$ satisfying Nica relations \eqref{Nica covariance for semigroups} and $\pi:C^*(G)\to B(H)$ is a nondegenerate representation such that
$$
W_p^*\pi(a\delta_q))W_p=\sum_{rp=q}\pi(a\delta_{r}), \qquad \text{ for all }a\in C^*(\Gamma), q\in P,
$$
where the sum (if infinite) is  convergent in the strong operator topology, then by Corollary \ref{uniqueness for right semidirect products}  the surjective homomorphism in \eqref{homomorphism to be isomorphism for right actions} is an isomorphism if and only if for every  $q_1,\dots, q_n \in P\setminus P^*$ and $q\in P$, the representation $C^*(\Gamma)\ni a\mapsto \pi(a\delta_q )\Pi_{i=1}^n(1-W_{q_i}^{\phantom{*}}W_{q_i}^*)$ is faithful (then the corresponding representation of $C^*(G)$ is  faithful as well).

\end{ex}
\subsection{Semigroup $C^*$-algebras associated to left semidirect products $G\rtimes_\theta P$}\label{subsection:semigroupCstar-left}
Let  $(G, P, \theta)$ be an algebraic dynamical system.  The authors of \cite{bls2} associated to  $(G, P, \theta)$ a $C^*$-algebra $\mathcal{A}[G,P, \theta]$ universal for  a unitary representation of $G$ and a Nica  covariant isometric representation of $P$ subject to relations that model $\theta$ and the condition of preservation of order. In fact, there is a canonical isomorphism
$\mathcal{A}[G,P, \theta]\cong C^*(G\rtimes_\theta P)$, see \cite[Theorem 4.4]{bls2}.
It was also shown in \cite{bls2} that $\mathcal{A}[G,P, \theta]$ is naturally isomorphic to the Nica-Toeplitz algebra for a compactly-aligned product system $M$ over $P$ with fibers obtained as completions of $C^*(G)$. Specifically, denote by $\delta_g$ for $g\in G$ the generating unitaries in $C^*(G)$. We have two actions $\alpha:P\to \End(C^*(G))$ and  $L:P^{op}\to \Pos(C^*(G))$  given by $\alpha_p(\delta_g)=\delta_{\theta_p(g)}$ and
\begin{equation}\label{def:Lp}
 L_p(\delta_g)=\chi_{\theta_p(G)}(g)\delta_{\theta_p^{-1}(g)},\quad \text{ for }p\in P \text{ and }g\in G.
\end{equation}
 For every $p\in P$, $L_p$ is a transfer operator for $\alpha_p$.
The product system  constructed in \cite[Section 7]{bls2} coincides with the product system $M_L$ we defined in subsection \ref{subsection:NT-cp-transfer} (for general semigroup actions by transfer operators).
 By \cite[Proposition 7.8]{bls2}, $M_L$  is compactly-aligned. Summarizing we get:

\begin{prop}\label{prop:left semidirect products}
 Let $(G, P,\theta)$ be an algebraic dynamical system and consider the associated right LCM semigroup $G\rtimes_\theta P$.   Let $L$ be the action of $P^{op}$  by transfer operators  on $C^*(G)$ described in \eqref{def:Lp}. Then $(C^*(G),P, L)$ is a $C^*$-dynamical system as in subsection~\ref{subsection:NT-cp-transfer} and there are  natural isomorphisms
 \[
 \mathcal{A}[G,P,\theta]\cong C^*(G\rtimes_\theta P)\cong \NT(C^*(G),P, L).
 \]
\end{prop}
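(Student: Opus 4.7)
The plan is to verify, one piece at a time, that $(C^*(G),P,L)$ is a well-defined $C^*$-dynamical system of the type studied in subsection \ref{subsection:NT-cp-transfer}, and then to chain together the identifications already available in \cite{bls2} with our Proposition \ref{Nica-Toeplitz algebras for transfer operators vs product systems}. First I would check the standing hypotheses of subsection \ref{subsection:NT-cp-transfer} on the generators $\delta_g$, $g\in G$. The map $\alpha:P\to\End(C^*(G))$ determined by $\alpha_p(\delta_g)=\delta_{\theta_p(g)}$ is a unital $*$-homomorphism into $\End(C^*(G))$ because each $\theta_p$ is an injective group endomorphism, and $p\mapsto \alpha_p$ is a unital semigroup homomorphism since $\theta$ is. The formula \eqref{def:Lp} exhibits each $L_p$ as the faithful projection of $C^*(G)$ onto $C^*(\theta_p(G))$ composed with the $*$-isomorphism $C^*(\theta_p(G))\to C^*(G)$ induced by $\theta_p^{-1}$, so each $L_p$ is positive (in fact completely positive and contractive), and the antihomomorphism identity $L_q\circ L_p=L_{pq}$ is immediate on $\delta_g$ from $\theta_p\circ\theta_q=\theta_{pq}$ and the fact that $g\in\theta_{pq}(G)$ iff both $g\in\theta_p(G)$ and $\theta_p^{-1}(g)\in\theta_q(G)$. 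The transfer-operator identity \eqref{transfer operator equality} reduces on generators to
\[
L_p(\delta_g\alpha_p(\delta_h))=L_p(\delta_{g\theta_p(h)})=\chi_{\theta_p(G)}(g\theta_p(h))\,\delta_{\theta_p^{-1}(g)h}=L_p(\delta_g)\delta_h,
\]
using that $\theta_p(G)$ is a subgroup of $G$, so that $g\theta_p(h)\in\theta_p(G)$ iff $g\in\theta_p(G)$.

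Second, I would invoke the matching already noted in the paragraph preceding the proposition: our product system $M_L$, built fiberwise as the KSGNS completion of $C^*(G)$ with respect to $\langle x,y\rangle_p=L_p(x^*y)$ and with multiplication \eqref{product system multiplication}, coincides as a product system over $P$ with the one constructed in \cite[Section 7]{bls2}; and by \cite[Proposition 7.8]{bls2} it is compactly-aligned. Hence Proposition \ref{Nica-Toeplitz algebras for transfer operators vs product systems} applies and gives the first natural isomorphism
\[
\NT(C^*(G),P,L)\cong\NT(M_L).
\]
On the other hand, the universal property of $\mathcal{A}[G,P,\theta]$, combined with the construction in \cite[Section 7]{bls2}, yields $\mathcal{A}[G,P,\theta]\cong\NT(M_L)$. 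Together with the identification $\mathcal{A}[G,P,\theta]\cong C^*(G\rtimes_\theta P)$ from \cite[Theorem 4.4]{bls2}, this completes the chain of the two claimed natural isomorphisms.

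The main obstacle lies in the matching step: one must verify that the bijective correspondence between representations of $(C^*(G),P,L)$ and of $M_L$ furnished by \eqref{associated correspondence representation}--\eqref{conjugate by limit} intertwines the respective universal properties with the one used to identify $\mathcal{A}[G,P,\theta]$ in \cite[Section 7]{bls2}. This amounts to a direct check on the canonical generating unitaries $\delta_g$ and the canonical isometries that implement $\theta_p$, using that $L_p(\mu_\lambda)\to 1$ strictly for any approximate unit $\mu_\lambda$ of $C^*(G)$ (which holds because $\theta_p$ preserves the identity) so that the strict limits in \eqref{conjugate by limit} really recover the generating isometries of $\mathcal{A}[G,P,\theta]$. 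Beyond this bookkeeping, the proof is an assembly of results already proved here or cited from \cite{bls2}.
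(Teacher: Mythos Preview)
Your proposal is correct and follows essentially the same route as the paper: verify the transfer-operator hypotheses for $(C^*(G),P,L)$, identify $M_L$ with the product system of \cite[Section~7]{bls2} (compactly aligned by \cite[Proposition~7.8]{bls2}), then apply Proposition~\ref{Nica-Toeplitz algebras for transfer operators vs product systems} together with \cite[Theorems~4.4 and~7.9]{bls2}. The paper's proof is simply the one-line version of this, citing \cite[Theorem~7.9]{bls2} for $\mathcal{A}[G,P,\theta]\cong\NT(M_L)$ and invoking Proposition~\ref{Nica-Toeplitz algebras for transfer operators vs product systems}; your final ``obstacle'' paragraph is unnecessary, since once both algebras are identified with $\NT(M_L)$ via their respective universal properties there is nothing further to match.
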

\begin{proof}
 Since $\mathcal{A}[G,P,\theta]\cong \NT(M)$ by \cite[Theorem 7.9]{bls2}, the assertions follow by an application of Proposition~\ref{Nica-Toeplitz algebras for transfer operators vs product systems}.
\end{proof}
\begin{cor}[Uniqueness Theorem for $C^*(G\rtimes_\theta P)$]\label{cor:Uniqueness Theorem for ...}
Let $S=G\rtimes_\theta P $ where $(G, P,\theta)$ is an algebraic dynamical system. Suppose that either  $P^*=\{e\}$ or  that  the action of
$\{\alpha_h\}_{h\in P^*}$ on $C^*(G)$ is aperiodic. Assume also that $\KK_{M_L}$  is amenable. Let $(\pi, W)$  be a Nica covariant representation of $(C^*(G), P, L)$, and let $Q_p$ be the projection onto the space $\overline{\pi(A)W_pH}$, $p\in P$. We have  a  surjective homomorphism
\begin{equation}\label{homomorphism to be isomorphism for left actions}
C^*(S)\mapsto \clsp\{\pi(a)W_pW_q^*\pi(b): a\in \alpha_p(A)A, b\in \alpha_q(A)A, p,q\in P\},
\end{equation}
which  is an isomorphism if  for every finite family $q_1,\dots, q_n$ in $P\setminus P^*$, the representation $a\mapsto \pi(a)\Pi_{i=1}^n(1-Q_{q_i})$ of $C^*(G)$ is faithful. If in addition $G/\theta_p(G)$ is finite for every $P$, then the latter condition is also necessary for the representation \eqref{homomorphism to be isomorphism for left actions} to be faithful.
\end{cor}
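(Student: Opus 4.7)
The plan is to reduce the statement to Theorem \ref{Uniqueness Theorem for  product systems I} by means of the identifications supplied in Section \ref{section:NT-cp-ccp-maps}. First, Proposition \ref{prop:left semidirect products} furnishes isomorphisms $C^*(S)\cong \NT(C^*(G),P,L)\cong \NT(M_L)$, so every statement about representations of $C^*(S)$ translates into one about representations of the compactly aligned product system $M_L$ with coefficient algebra $A=C^*(G)$. Given the Nica covariant representation $(\pi,W)$ of $(A,P,L)$, the bijective correspondences in Propositions \ref{prop:representations of Exel systems and product systems} and \ref{Nica-Toeplitz algebras for transfer operators vs product systems} produce a nondegenerate Nica covariant representation $\psi$ of $M_L$ with $\psi_p(p,a)=\pi(a)W_p$; and one checks that the canonical surjection in the corollary is exactly the canonical surjection $\psi\rtimes P:\NT(M_L)\to \clsp\{\psi(x)\psi(y)^*:x,y\in M_L\}$, since the latter span equals $\clsp\{\pi(a)W_pW_q^*\pi(b):a\in \alpha_p(A)A,\ b\in \alpha_q(A)A,\ p,q\in P\}$ by the formula for $\psi$ together with Remark \ref{Not important remark}.

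Next I would verify that the projection $Q_p$ defined in the corollary equals $Q_p^\psi$. Indeed, for $p\in P\setminus\{e\}$ the rank-one operators $\Theta_{(p,a),(p,b)}$ span $\KK(M_p)$, so $\psi^{(p)}(\KK(M_p))H$ is the closed span of $\pi(a)W_pW_p^*\pi(b^*)H$, and using the KSGNS realization together with nondegeneracy of $\pi$ this coincides with $\overline{\pi(A)W_pH}$. Consequently the injectivity hypothesis $a\mapsto \pi(a)\prod_{i=1}^n(1-Q_{q_i})$ for $q_i\in P\setminus P^*$ is precisely condition (C) for $\psi$, in the sense of \eqref{Coburn condition666}. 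The aperiodicity hypotheses match as well: for $h\in P^*$ the automorphism $L_h=\alpha_h^{-1}$ induces a natural $C^*$-correspondence isomorphism $M_{L_h}\cong E_{\alpha_{h^{-1}}}$, so by Lemma \ref{aperiodicity for endomorphisms} (applied to the inverse group action) aperiodicity of $\{\alpha_h\}_{h\in P^*}$ on $A$ is equivalent to aperiodicity of the Fell bundle $\{M_{L_h}\}_{h\in P^*}$. Finally, amenability of $\KK_{M_L}$ is assumed outright.

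With all hypotheses of Theorem \ref{Uniqueness Theorem for  product systems I} verified for the product system $M_L$ and the representation $\psi$, the implication (i)$\Rightarrow$(ii) gives the sufficient direction: condition (C) for $\psi$, i.e. the injectivity condition in the corollary, implies that $\psi\rtimes P$ is an isomorphism onto $\clsp\{\psi(x)\psi(y)^*\}$, which is the desired isomorphism \eqref{homomorphism to be isomorphism for left actions}. For the necessary direction, assume in addition that $G/\theta_p(G)$ is finite for each $p\in P$. Choosing a system of coset representatives $\{g_1^p,\ldots,g_{n_p}^p\}$ for $G/\theta_p(G)$, a direct computation (using that $L_p(\delta_g)=\chi_{\theta_p(G)}(g)\delta_{\theta_p^{-1}(g)}$) shows that $\{\delta_{g_i^p}\}_{i=1}^{n_p}$ is a finite quasi-basis for the conditional expectation $\CE_p=\alpha_p\circ L_p$, so assumption (A3) of subsection \ref{subsection:NT-cp-transfer-finitetype} holds. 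By Watatani's theory this forces the left action of $A$ on each $M_p$ to be by generalized compact operators, i.e. $\phi_p(A)\subseteq \KK(M_p)$, so Theorem \ref{Uniqueness Theorem for  product systems I} gives the equivalence (i)$\Leftrightarrow$(ii) and thus the stated necessity.

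The main obstacle in carrying out this plan is the bookkeeping in the last paragraph: one must check that the explicit quasi-basis produces (A3) (this is a short but careful calculation in the group algebra), and one must verify that the coset-finiteness hypothesis does indeed deliver compact left action on every fibre $M_p$, not merely on the generating fibres. The translations in the first two paragraphs, and the aperiodicity matching, are structural and follow from the machinery of Section \ref{section:NT-cp-ccp-maps} once assembled in the right order.
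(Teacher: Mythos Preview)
Your proposal is correct and follows essentially the same route as the paper: identify $C^*(S)\cong\NT(M_L)$ via Proposition~\ref{prop:left semidirect products} and Proposition~\ref{Nica-Toeplitz algebras for transfer operators vs product systems}, match the aperiodicity hypotheses through Lemma~\ref{aperiodicity for endomorphisms}, and invoke Theorem~\ref{Uniqueness Theorem for  product systems I}. The only noteworthy difference is in the necessary direction: the paper simply cites \cite[Proposition 7.3]{bls2} for the equivalence between finiteness of $G/\theta_p(G)$ and compactness of the left action on $M_p$, whereas you supply an explicit quasi-basis $\{\delta_{g_i^p}\}$ and appeal to the (A3) machinery of subsection~\ref{subsection:NT-cp-transfer-finitetype}; both are valid, and yours is a bit more self-contained.
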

\begin{proof}
 $C^*(S)$ is isomorphic to the Nica-Toeplitz crossed product $\NT(C^*(G),P, L)$ due to Proposition \ref{prop:left semidirect products}. Thus by  Proposition~\ref{Nica-Toeplitz algebras for transfer operators vs product systems} combined with Lemma~\ref{aperiodicity for endomorphisms} we may apply Theorem~\ref{Uniqueness Theorem for  product systems I} to get the sufficiency claim of the isomorphism in \eqref{homomorphism to be isomorphism for left actions}.
For the last part, note   that
the left action of  $C^*(G)$ on each fiber of $M_L$ is by compacts if and only if $G/\theta_p(G)$ is finite for every $P$, see
\cite[Proposition 7.3]{bls2}.
\end{proof}

\begin{ex}[Left wreath product] Let $P$ and $\Gamma$ be as in Example \ref{Right wreath product}. Form the standard (left) wreath product
$$
S:=\Gamma\wr P=\bigl(\prod_{p\in P} \Gamma \bigr)\rtimes_\theta P,
$$
where $\theta$ acts by right shifts on $G:=\prod_{p\in P} \Gamma$, i.e. $\big(\theta_p\bigl((\gamma_q)_{q\in P}\big))_{r}=\chi_{pP}(r)\gamma_{p^{-1}r}$ for all $r\in P$. Then $(G, P, \theta)$ is an algebraic dynamical system, cf. \cite[Proposition 8.8]{bls2}. As in Example  \ref{Right wreath product}, for any $a\in C^*(\Gamma)$ and $q\in P$ we denote by $a \delta_q$ the corresponding element of $C^*(G)=\prod_{p\in P} C^*(\Gamma)$.
For $h\in P^*$ we have $\alpha_h(a\delta_q)=a\delta_{hq}$ and therefore $\|\alpha_h(a\delta_q)a\delta_q\|=0$ unless $hq=q$.
Using this, cf.  also Example \ref{Right wreath product}, we get that
$$
 \text{ the action $\{\alpha_h\}_{h\in P^*}$ on $C^*(G)$ is aperiodic } \,\,\Longleftrightarrow \,\,\,\, \left(\forall_{h\in P^*}\,\, \forall_{q\in P\setminus P^*}  \,\,\, hq=q\,\,  \Longrightarrow\,\, h=e \right).
$$
In particular, $\{\alpha_h\}_{h\in P^*}$  is aperiodic when $P^*=\{e\}$ or when $P$ is right cancellative.
If it is aperiodic we can  get a uniqueness criterion for $C^*(S)$ using Corollary \ref{cor:Uniqueness Theorem for ...}. If in addition $G/\theta_p(G)$ is finite for every $P$, then the action $L:P^{op}\to \Pos(C^*(G))$ given by \eqref{def:Lp} satisfies assumptions (A1), (A2), (A3) in subsection \ref{subsection:NT-cp-transfer-finitetype}. Therefore in this case  also Theorem \ref{Uniqueness Theorem for  crossed products by transfers} applies.

\end{ex}


\begin{thebibliography}{00}

\bibitem{bls} N. Brownlowe, N. S. Larsen  and N. Stammeier,
\emph{On $C^*$-algebras associated to right LCM semigroups}, Trans. Amer. Math. Soc. {\bf 369} (2017), no. 1, 31--68.

\bibitem{bls2} N. Brownlowe, N. S. Larsen  and N. Stammeier,
\emph{$C^*$-algebras of algebraic dynamical systems and right LCM semigroups}, to appear in Indiana Univ. Math. J., preprint arXiv:1503.01599v1.

\bibitem{BRRW} N. Brownlowe, J. Ramagge, D. Robertson and M. F. Whittaker,
{\em Zappa-Sz\'{e}p products of semigroups and their $C^*$-algebras}, J.
Funct. Anal. {\bf 266} (2014), 3937--3967.


\bibitem{Cu77} J. Cuntz,
\emph{Simple $C^*$-algebras generated by isometries}, Comm. Math. Phys. {\bf 57} (1977), 173--185.


\bibitem{dr} S. Doplicher and J. E. Roberts, \emph{A new duality theory for compact groups}, Invent. Math. \textbf{98} (1989), 157--218.



\bibitem{exel3} R. Exel,  \emph{A new look at the crossed-product of a $C^*$-algebra by an endomorphism}, Ergodic Theory Dynam. Systems,   \textbf{23 } (2003),  1733--1750.


\bibitem{exel-inter}
R. Exel, \emph{Interactions}, J. Funct. Analysis \textbf{244} (2007), 26--62.

\bibitem{exel-book} R. Exel, Partial dynamical systems, Fell bundles and applications, Mathematical Surveys and Monographs, 224. American Mathematical Society, Providence, RI, 2017.

\bibitem{exel-royer}
R. Exel and D. Royer, \emph{The crossed product by a partial endomorphism},
Bull. Braz. Math. Soc. (N.S.) 38 (2007), no. 2, 219--261.

\bibitem{Fle} J.~Fletcher, \emph{A uniqueness theorem for the Nica-Toeplitz algebra of a compactly aligned product system}, preprint arXiv:1705.00775.

\bibitem{F99} N.~J.~Fowler,
\emph{Discrete product systems of Hilbert bimodules},
Pacific J. Math.  {\bf 204} (2002), 335--375.


\bibitem{FR}  N.~J.~Fowler and I.~Raeburn, \emph{Discrete product systems and twisted crossed products by
semigroups}, J. Funct. Anal., {\bf 155} (1998), 171--204.

\bibitem{Fow-Rae} N.~J.~Fowler and I.~Raeburn, \emph{The Toeplitz algebra of a Hilbert bimodule}, Indiana Univ. Math. J. {\bf 48} (1999), 155--181.



\bibitem{glr} P. Ghez, R. Lima and J. E. Roberts,  \emph{$W^*$-categories}, Pacific. J. Math. \textbf{120} (1985), 79--109.


\bibitem{KPW} T. Kajiwara, C. Pinzari and Y. Watatani,
\emph{Hilbert $C^*$-bimodules and countably generated Cuntz-Krieger algebras},
J. Operator Theory {\bf 45} (2001), 3--18.



\bibitem{kwa-doplicher} B. K. Kwa\'sniewski,
\emph{$C^*$-algebras generalizing both relative Cuntz-Pimsner and Doplicher-Roberts algebras},
Trans. Amer. Math. Soc. {\bf 365} (2013), 1809--1873.



\bibitem{kwa-exel} B. K. Kwa\'{s}niewski,  \emph{Exel's crossed products and crossed products
by completely positive maps},  Houston J. Math.  \textbf{43}  (2017), 509--567.

\bibitem{kwa-larI} B. K. Kwa\'{s}niewski and  N. S. Larsen, \emph{Nica-Toeplitz algebras associated with right tensor $C^*$-precategories over right LCM semigroups}, submitted, arXiv:1611.08525.

 \bibitem{kwa-szym} B. K. Kwa\'sniewski and W. Szyma\'nski,
\emph{Topological aperiodicity for product systems over semigroups of Ore type},
 J. Funct. Anal. 270 (2016), no. 9, 3453-3504.


\bibitem{KS} B. K. Kwa\'sniewski and W. Szyma\'nski, \emph{Pure infiniteness and ideal structure of $C^*$-algebras associated to Fell bundles}, J. Math. Anal. Appl. \textbf{445} (2017), no. 1, 898-943.

\bibitem{KM} B. K. Kwa\'sniewski and R. Meyer,
\emph{Aperiodicity, topological freeness and pure outerness: from group actions to Fell bundles} Studia Math. \textbf{241} (2018), 257--302.


\bibitem{LR}
M. Laca and I. Raeburn, \emph{Semigroup crossed products and the Toeplitz algebras of nonabelian
groups}, J. Funct. Anal., \textbf{139} (1996), 415--440.


\bibitem{lance}
E. C. Lance, Hilbert $C^*$-Modules: A Toolkit for Operator Algebraists,  Cambridge University Press, Cambridge, 1995.

\bibitem{Larsen}
 N. S. Larsen, \emph{Crossed products by abelian semigroups via transfer operators}, Ergodic Theory Dynam. Systems {\bf 30} (2010),
1147--1164.

\bibitem{Law2} M. V.~Lawson,  \emph{Non-commutative Stone duality: inverse semigroups, topological groupoids and C$^*$-algebras}, Internat. J. Algebra Comput. \textbf{22} (2012), no. 6, 1250058, 47 pp.

\bibitem{Li} X.~Li, \emph{Semigroup $C^*$-algebras and amenability of semigroups}, J. Funct. Anal. {\bf 262} (2010), 4302--4340.

\bibitem{Li2} X.~Li, \emph{Nuclearity of semigroup $C^*$-algebras and the connection to amenability}, Adv. Math.
\textbf{244} (2013), 626--662.

\bibitem{N} A. Nica,
\emph{$C^*$-algebras generated by isometries and Wiener-Hopf operators},
J. Operator Theory, \textbf{27} (1992), 17--52.

\bibitem{No0} M. D.~Norling, {\em Inverse semigroup $C^*$-algebras associated
with left cancellative semigroups}, Proc. Edinb. Math. Soc. \textbf{57} (2014), no. 2, 533--564.


\bibitem{pedersen} G. K. Pedersen, $C^*$-algebras and their automorphism groups, London Mathematical Society
Monographs, vol. 14, Academic Press, London, 1979.

\bibitem{Pim} M. V. Pimsner, \emph{A class of {$C\sp*$}-algebras generalizing both {C}untz-{K}rieger algebras and crossed products by {${\bf Z}$}}, in Free probability theory (Waterloo, ON, 1995), Amer. Math. Soc., Providence, RI, 1997, 189--212.


\bibitem{RaeWill} I. Raeburn and D. P. Williams,
Morita equivalence and continuous-trace $C^*$-algebras,
Math. Surveys and Monographs, vol. 60, Amer. Math. Soc., Providence, RI, 


\bibitem{SY} A. Sims and T. Yeend,
\emph{$C^*$-algebras associated to product systems of Hilbert bimodules},
J. Operator Theory {\bf 64} (2010), 349--376.


\bibitem{Wat}  Y. Watatani, Index for C*-subalgebras, Mem. Amer. Math. Soc. No. 424, 1990.

\end{thebibliography}
\end{document}